\documentclass[11pt,a4paper]{amsart}

\usepackage[T1]{fontenc}
\usepackage[utf8]{inputenc}
\usepackage[english]{babel}
\usepackage{amsmath,amssymb,amsthm,mathtools}
\usepackage[final,protrusion=true,expansion=false]{microtype}
\usepackage[margin=1in]{geometry}
\usepackage{hyperref}
\hypersetup{colorlinks=true,linkcolor=blue,citecolor=blue}

\theoremstyle{plain}
\newtheorem{theorem}{Theorem}[section]
\newtheorem{proposition}[theorem]{Proposition}
\newtheorem{lemma}[theorem]{Lemma}
\newtheorem{corollary}[theorem]{Corollary}

\newtheorem{mainthm}{Theorem}

\theoremstyle{definition}
\newtheorem{definition}[theorem]{Definition}
\newtheorem{remark}[theorem]{Remark}

\DeclareMathOperator{\ind}{ind}
\DeclareMathOperator{\nul}{nul}

\DeclareMathOperator{\Span}{span}

\DeclareMathOperator{\dist}{dist}
\newcommand{\R}{\mathbb{R}}
\newcommand{\Z}{\mathbb{Z}}

\newcommand{\HH}{\mathbb{H}}
\newcommand{\II}{\mathrm{II}}
\newcommand{\Sf}{\mathcal{S}}
\newcommand{\Lf}{L_{\Sigma}}

\title[Analytic local resolution of Medvedev's Morse index conjecture in $\HH^3$]%
{Analytic local resolution of Medvedev's Morse index conjecture for the critical spherical catenoid in $\HH^3$}

\author{Alexander Pigazzini}

\date{}

\subjclass[2020]{Primary 53A10; Secondary 53C42, 58J50, 35P15, 34B24}

\keywords{Free boundary minimal surfaces, spherical catenoid, Morse index, Robin eigenvalue problem, Jacobi operator, Picone identity, Sturm-Liouville theory}

\begin{document}

\begin{abstract}

Let $\Sigma_a\subset B^3(r(a))\subset\HH^3$ ($a>1/2$) be the critical spherical catenoid of the Mori family, a free boundary minimal surface in the geodesic ball. The Medvedev conjecture \cite[Remark 5.6]{Medvedev2023} asserts that $\ind(\Sigma_a)=4$ for every $a>1/2$. We consider here the \emph{strong form} of this conjecture, namely $\ind(\Sigma_a)=4$ and $\nul(\Sigma_a)=2$. The nullity condition $\nul(\Sigma_a)=2$ combines the mode-$|k|=1$ result $\nul_R(\Sigma_a)\big|_{|k|=1}=2$ of \cite[Cor. 4.4]{Pigazzini} with the additional requirement of vanishing kernel in modes $|k|=0$ and $|k|\geq 2$; this latter requirement, not addressed in \cite{Pigazzini}, is part of the analytic content of the present paper and is established in the local regime $a\in(1/2,1/2+\delta_0)$.

The principal quantitative result of the paper is the analytic local resolution of the strong Medvedev conjecture: there exists $\delta_0>0$ such that $\ind(\Sigma_a)=4$ and $\nul(\Sigma_a)=2$ for every $a\in(1/2,1/2+\delta_0)$. This follows from an explicit closed form for the leading asymptotic coefficient of $H(a):=\sinh r(a)/K(a)$ as $a\to(1/2)^+$,
\[
H(a)=\sigma_*\cosh\sigma_*+C_0\,(a-\tfrac{1}{2})+O\bigl((a-\tfrac{1}{2})^2\bigr),\qquad C_0=\frac{\sigma_*\cosh\sigma_*\,(\sinh^2\sigma_*-1)(3\sinh^2\sigma_*-2)}{12\sinh^2\sigma_*},
\]
where $\sigma_*>0$ is the unique positive root of $\sigma=\coth\sigma$, together with the analytic proof that $C_0>0$ via $\sigma_*>\log(1+\sqrt{2})$.

The route to the local resolution proceeds through three analytic reductions of independent interest: (i) the strong Medvedev conjecture is shown to be equivalent to the conjunction of two spectral conditions, $\mu_0^{\mathrm{even}}(2)>0$ (condition (E)) and $\mu_2(0)>0$ together with spectral non-degeneracy in mode $0$ (condition (F)); (ii) the eigenvalue inequality $\mu_2(0)>0$ is reduced, via a Sturm shooting-count argument, to the geometric positivity $\phi_a>0$ of the parametric Jacobi field on the principal branch; (iii) the positivity $\phi_a>0$ is in turn reduced, under the strict geometric inequality $B(s_0(a))^2>2K(a)^2$ (condition (G)), to the one-dimensional scalar differential inequality $H'(a)>0$, via a constant Wronskian identity and a Sturm separation argument.

Auxiliary results include: a Picone identity with base $f_*$ yielding the unconditional closure of the odd radial sector in modes $|k|\geq 2$; a second Picone identity with base $B$ proving (E) unconditionally on $(1/2,1]$ and, via explicit Hardy estimates, on $(1/2,A_*]$ for some $A_*>1$; the analytic closure of (G) on $(1/2,1]$ via strict concavity of a transcendental function; and a short alternative proof of the lower bound $\ind(\Sigma_a)\geq 4$ via the four Lorentz ambient coordinates as test functions.
\end{abstract}

\maketitle

\section{Introduction and main result}\label{sec:intro}

In the last decade, research on free boundary minimal surfaces in space forms has attracted much interest, see the constructions by Folha–Pacard–Zolotareva \cite{FolhaPacardZolotareva} in the Euclidean unit ball, the eigenvalue-theoretic approach of Lima–Menezes \cite{LimaMenezes} in spherical caps, the recent free boundary CMC constructions of Cerezo–Fernández–Mira \cite{Cerezo} in geodesic balls of $\mathbb S^3$ and $\mathbb H^3$, and the compactness and index estimates of Ambrozio–Carlotto–Sharp \cite{AmbrozioCarlottoSharp2018a, AmbrozioCarlottoSharp2018b} and Sargent \cite{Sargent}. The connection between FBMS in Euclidean balls and Steklov eigenvalue problems was initiated by Fraser–Schoen \cite{FraserSchoen2011, FraserSchoen2015}. For the Euclidean critical catenoid, the Morse index has been computed to equal 4 independently by Devyver \cite{Devyver}, Smith–Zhou \cite{SmithZhou}, and Tran \cite{Tran2016}; the present paper addresses the analogous question in the hyperbolic ambient.

\subsection{General framework}
\begin{remark}[Terminology]\label{rem:terminology}
The object studied here is the rotational minimal family $\Phi_a$, $a>1/2$, of equation (1.2) in \cite{Medvedev2023}, originally constructed by Mori \cite{Mori1981}. In the classification of do Carmo--Dajczer \cite{DCD83}, rotation hypersurfaces are classified according to the nature of their parallels (\emph{spherical} if the parallels are spheres, \emph{hyperbolic} if they are hyperbolic spaces, \emph{parabolic} if they are horospheres); the present family is of \emph{spherical type} (the case $\delta=+1$ of \cite[eq. (3.21)]{DCD83}, with fixed plane $P^2$ Lorentzian and circular parallels). Accordingly, Mori \cite{Mori1981} and Medvedev \cite{Medvedev2023} call it the \emph{spherical catenoid}, and the truncated free boundary piece in $B^3(r)\subset\HH^3$ is the \emph{critical spherical catenoid}.
\end{remark}

Let $\HH^3$ denote three-dimensional hyperbolic space of curvature $-1$, realized as the connected component $\{x\in\R^4_1:\langle x,x\rangle_L=-1,\;x_0>0\}$ of the hyperboloid in Lorentz space $(\R^4,\langle\,,\,\rangle_L)$ with inner product $\langle x,y\rangle_L= - x_0y_0+x_1y_1+x_2y_2+x_3y_3$. We fix the pole $p_0=(1,0,0,0)$ and denote by $r(p)=\dist_{\HH^3}(p_0,p)$ the geodesic distance, so that $\cosh r(p)=-\langle p,p_0\rangle_L$. For each $\rho>0$ we define the geodesic ball $B^3(\rho)=\{p\in\HH^3:r(p)<\rho\}$, whose boundary $\partial B^3(\rho)$ is the totally umbilical sphere of mean curvature $\coth\rho$.

The Mori family \cite{Mori1981} is the one-parameter family $\{\Sigma_a\}_{a>1/2}$ of rotationally symmetric minimal surfaces in $\HH^3$, given in coordinates $(s,\theta)\in\R\times[0,2\pi)$ by the immersion
\begin{equation}\label{eq:phi-immersion}
\Phi_a(s,\theta)=\bigl(A(s)\cosh\varphi(s),\,A(s)\sinh\varphi(s),\,B(s)\cos\theta,\,B(s)\sin\theta\bigr),
\end{equation}
where
\begin{equation}\label{eq:ABK}
A(s)^2=a\cosh(2s)+\tfrac{1}{2},\quad B(s)^2=a\cosh(2s)-\tfrac{1}{2},\quad K=\sqrt{a^2-\tfrac{1}{4}},
\end{equation}
and the angle $\varphi(s)$ is determined by $\varphi(0)=0$ and $\varphi'(s)=K/(A(s)^2 B(s))$, a condition that corresponds to minimality (cf. \cite{Mori1981,Pigazzini}). For $a>1/2$, the surface $\Sigma_a$ is a \emph{critical spherical catenoid} if there exists $s_0(a)>0$ such that $\Sigma_a$ restricted to $|s|\leq s_0$ is an FBMS in $B^3(r(a))$, where $r(a):=r(\Phi_a(s_0,0))$, and the free boundary condition fixes
\begin{equation}\label{eq:fbc}
\tanh\varphi(s_0)=\frac{B(s_0)\,K}{a\sinh(2s_0)}\qquad\text{(FBMS condition)}.
\end{equation}

The \emph{Jacobi operator} of $\Sigma_a$ is $\Lf=\Delta_g+|\II|^2-2$, where $\Delta_g$ is the induced metric Laplacian and $-2$ is twice the ambient sectional curvature (see \cite[Ch. 8]{Colding-Minicozzi}). The Robin quadratic form associated with an FBMS in $B^3(\rho)$, whose boundary has mean curvature $\coth\rho$, is
\begin{equation}\label{eq:S-form}
\Sf(u,u)=\int_{\Sigma}\bigl(|\nabla u|_g^2-(|\II|^2-2)u^2\bigr)\,dA-\coth r(a)\int_{\partial\Sigma}u^2\,dL.
\end{equation}
The \emph{Robin Morse index} $\ind_R(\Sigma_a)$ is the dimension of the maximal subspace of $H^1(\Sigma_a)$ on which $\Sf$ is negative definite, and the \emph{Robin nullity} $\nul_R(\Sigma_a)$ is the dimension of $\ker\Sf$, which coincides with the kernel of $\Lf$ under the Robin condition
\begin{equation}\label{eq:robin}
\partial_\eta u=\coth r(a)\,u\quad\text{on }\partial\Sigma_a,
\end{equation}
where $\eta$ is the outward conormal.

\begin{remark}[Equivalence with Medvedev's Morse index]\label{rem:robin-morse-equivalence}
The Robin Morse index $\ind_R(\Sigma_a)$ used throughout this paper coincides with the Morse index $\ind(\Sigma_a)$ defined by Medvedev \cite[Definition 5.1]{Medvedev2023}: the maximal dimension of a subspace of $C^\infty(\Sigma_a)$ on which the second-variation quadratic form \eqref{eq:S-form} is negative definite. By integration by parts,
\[
\Sf(u,u)=-\int_{\Sigma_a}u\,\Lf u\,dA+\int_{\partial\Sigma_a}u\,(\partial_\eta u-\coth r(a)\,u)\,dL,
\]
and the boundary term vanishes precisely on functions satisfying the Robin condition \eqref{eq:robin}. Consequently, on functions satisfying \eqref{eq:robin} one has $\Sf(u,u)=-\int_{\Sigma_a}u\,\Lf u\,dA$, so that the negative directions of $\Sf$ are exactly the Robin eigenfunctions of $\Lf$ with positive eigenvalue; equivalently, $\ind_R(\Sigma_a)$ is the number of negative eigenvalues of the second-variation form $\Sf$ (i.e.\ of the operator $-\Lf$ under the Robin boundary condition). Being the dimension of the negative subspace of the geometric form $\Sf$, the index is independent of the sign convention adopted for $\Delta_g$ (Medvedev: $\Delta_g=-\mathrm{div}_g\nabla$; here: $\Delta_g=+\mathrm{div}_g\nabla$): the convention only flips the signs of the eigenvalues of the Jacobi operator --- hence whether the index is read off the positive or the negative part of its spectrum, not the dimension of the negative subspace of $\Sf$. Throughout this paper we write $\ind_R(\Sigma_a)$ (respectively $\nul_R(\Sigma_a)$) in proofs and computations, where the Robin formulation is technically convenient, and $\ind(\Sigma_a)$ (respectively $\nul(\Sigma_a)$) in statements concerning Medvedev's conjecture; the two notations refer to the same geometric invariant. In particular, the Medvedev conjecture $\ind(\Sigma_a)=4$ and its strong form $\nul(\Sigma_a)=2$ are equivalent to $\ind_R(\Sigma_a)=4$ and $\nul_R(\Sigma_a)=2$, respectively, and the analytic local resolution (Corollary \ref{cor:eureka-local-closure}) resolves Medvedev's Morse index conjecture for $a\in(1/2,1/2+\delta_0)$ in its original formulation.
\end{remark}

\subsection{Known results}
In \cite{Pigazzini} the author establishes the following three properties of $\Sigma_a$:
\begin{enumerate}
\item[(P1)] $\ind_R(\Sigma_a)\big|_{|k|=1}=\nul_R(\Sigma_a)\big|_{|k|=1}=2$, with explicit Robin Jacobi field $f_*(s)=\partial_s\Phi^0_a(s,0)=\sinh r(s)\,r'(s)$;
\item[(P2)] $r(a)=\tfrac{3}{2}\log a+d_\infty+o(1)$ as $a\to\infty$, with $d_\infty=\log\bigl(\sqrt{2}\,\Gamma(1/4)^2/\pi^{3/2}\bigr)$;
\item[(P3)] $r(a)=c_*\sqrt{a-1/2}\,(1+o(1))$ as $a\to(1/2)^+$, with $c_*=\sigma_*\cosh\sigma_*$ and $\sigma_*=\coth\sigma_*$.
\end{enumerate}

\begin{remark}[The Euclidean limit, and what spectral continuity cannot see]\label{rem:euclid}
The equation $\sigma=\coth\sigma$ defining $\sigma_*$ is equivalent to $\sigma\tanh\sigma=1$, the equation of the
\emph{Euclidean} critical catenoid in the unit ball. This is no coincidence: under the rescaling $s=\rho\xi$,
$\rho=\sqrt{a-1/2}$, the radial problems degenerate onto those of the Euclidean critical catenoid in $\mathbb{B}^3$
(whose ambient radius is recovered as $\sqrt{\cosh^2\sigma_*+\sigma_*^2}=\sigma_*\cosh\sigma_*=c_*$). One may therefore ask whether the qualitative part of Corollary~\ref{cor:eureka-local-closure} could be obtained softly, from eigenvalue continuity together with the spectral non-degeneracy of the Euclidean limit in the modes
$|k|\neq 1$ \cite{Devyver,SmithZhou,Tran2016}. Three caveats separate such a route from an exercise. First, the
rescaling is singular --- the interval collapses ($s_0\sim\rho\sinh\sigma_*$), the Robin coefficient blows up ($\coth r\sim(c_*\rho)^{-1}$), the potential blows up ($|\II|^2\sim 2\rho^{-2}(1+\xi^2)^{-2}$), and the endpoint
moves with the FBC --- so the required convergence statement, not available in the literature, demands precisely the uniform control established in Proposition~\ref{prop:asymptotic-BvsK} and Lemma~\ref{lem:H-analytic-rho}; moreover, mode-by-mode non-degeneracy does not control all Fourier modes at once, so any such argument must in any case be supplemented by the uniform separation of Theorem~\ref{thm:sturm-strict} and the unconditional odd closure of Theorem~\ref{thm:odd-closure}. Second, and structurally, the Lorentzian ambient removes the Euclidean anchor of the mode-$0$ analysis: in $\R^3$ the dilation field $x\cdot\nu$ is a Jacobi field vanishing identically on $\partial\Sigma$ for every FBMS in a ball, whereas in the hyperboloid model the position vector is timelike with $\langle X,\nu\rangle_L\equiv 0$ trivially, and $\HH^3$ admits no homotheties, so no ambient deformation plays this role. Its place is taken by the parametric field $\phi_a$, which is generated by no isometry and whose Robin defect $-r'(a)\,\kappa_s$ involves the unknown monotonicity of $r(a)$ \cite[Question 6.3]{Pigazzini}. Third, the limit is borderline for the sign: $\phi_a(s_0)\to 0$ as $a\to(1/2)^+$, and the two shooting configurations $(n_z,\delta_\partial)=(0,1)$ and $(1,0)$ of Remark~\ref{rem:converse} --- which decide the sign of $H'(a)$, hence the strict monotonicity $r'(a)>0$ and the validity of the scalar criterion of
Theorem~\ref{thm:scalar-reduction} --- yield the same eigenvalue count and are therefore indistinguishable by
eigenvalue continuity alone. The dichotomy is resolved only at second order, by the closed-form coefficient $C_0$
of Theorem~\ref{thm:eureka-closed-form}, whose positivity reduces to the single transcendental inequality
$\sinh\sigma_*>1$, i.e.\ $\sigma_*>\log(1+\sqrt{2})$ --- the same inequality governing $\rho_*>1$ in
Proposition~\ref{prop:asymptotic-BvsK} and the limiting margin $\cosh^2\sigma_*>2$ of condition (G) in
Proposition~\ref{prop:y-asymp-half}. In this precise sense the signature of the ambient concentrates the local problem into a single sign question, which the present approach answers in closed form, yielding in addition the
rate of detachment from the Euclidean limit and the local strict monotonicity $r'(a)>0$.
\end{remark}

The Medvedev conjecture (\cite[Remark 5.6]{Medvedev2023}, reported in \cite[\S 6.1]{Pigazzini}) concerns the Morse index of $\Sigma_a$ and states:
\begin{equation}\label{eq:medvedev}
\ind(\Sigma_a)=4\quad\text{for every }a>1/2.
\end{equation}
No assertion on the nullity of $\Sigma_a$ is made in \cite{Medvedev2023}. We consider in addition the \emph{strong form} of the conjecture,
\begin{equation}\label{eq:medvedev-strong}
\ind(\Sigma_a)=4\quad\text{and}\quad\nul(\Sigma_a)=2\quad\text{for every }a>1/2.
\end{equation}
By the Fourier mode decomposition (\S\ref{subsec:key-conditions}), the nullity condition $\nul_R(\Sigma_a)=2$ in \eqref{eq:medvedev-strong} is equivalent to the conjunction of:
\begin{itemize}
\item[(N1)] $\nul_R(\Sigma_a)\big|_{|k|=1}=2$, corresponding to the two-dimensional kernel generated by $f_*\cos\theta$ and $f_*\sin\theta$, which is established in \cite[Cor. 4.4]{Pigazzini};
\item[(N2)] $\nul_R(\Sigma_a)\big|_{|k|=0}=0$ and $\nul_R(\Sigma_a)\big|_{|k|\geq 2}=0$, i.e. the absence of any Robin Jacobi field in the remaining Fourier modes.
\end{itemize}
Condition (N1) is the only nullity contribution exhibited in \cite{Pigazzini}; the cited paper does not address (N2). The closure of (N2) is part of the analytic content of the present paper: the mode-$0$ part is handled by Theorems \ref{thm:no-kernel-even} and \ref{thm:no-kernel-odd} under the geometric conditions (G) and $r'(a)\neq 0$; the odd radial sector in modes $|k|\geq 2$ is closed unconditionally by Theorem \ref{thm:odd-closure-main} (Picone identity with base $f_*$); the even radial sector in mode $|k|=2$ is closed by Theorem \ref{thm:E-closure-a-leq-1} (Picone identity with base $B$, valid on $(1/2,1]$); and the strict Sturm comparison of Theorem \ref{thm:sturm-strict} then propagates the closure of the even sector from $|k|=2$ to every $|k|\geq 3$ (cf. Remark \ref{rem:odd-even-residual}). The strong form \eqref{eq:medvedev-strong} is therefore strictly stronger than \eqref{eq:medvedev}, and to the author's best knowledge it has not been established in the literature; the present paper proves it analytically in the local regime $a\in(1/2,1/2+\delta_0)$ (Corollary \ref{cor:eureka-local-closure}). In \cite[Corollary 5.9]{Medvedev2023} the lower bound $\ind(\Sigma_a)\geq 4$ is established. In this paper we provide an alternative, short and completely elementary proof of this bound (Theorem \ref{thm:lower-bound-4}), in addition to proving the new results stated in the abstract.

\subsection{Mode conventions}
We decompose $u\in H^1(\Sigma_a)$ into a Fourier series in $\theta$:
\begin{equation}
u(s,\theta)=\sum_{k\in\Z}u_k(s)e^{ik\theta},\quad\text{or equivalently, in real form,}\quad u_0(s)+\sum_{k\geq 1}\bigl(\alpha_k(s)\cos k\theta+\beta_k(s)\sin k\theta\bigr).
\end{equation}
We denote by $\Sf_k^{\mathrm{rad}}$ the quadratic form induced on the radial parts of mode $k$. Each sector $\Sf_k^{\mathrm{rad}}$ further decomposes according to the parity under $s\mapsto-s$ (\emph{even} and \emph{odd} radial sectors), since the radial operator preserves parity. We denote by $\mu_n(k)$ the $(n+1)$-th eigenvalue of $\Sf_k^{\mathrm{rad}}$ (in increasing order, counted with multiplicity), and by $\mu_n^{\mathrm{even}}(k)$, $\mu_n^{\mathrm{odd}}(k)$ the restrictions to the two parity sectors.

\subsection{Key conditions}\label{subsec:key-conditions}
Throughout the paper, three conditions on the parameter $a>1/2$ play a structural role and are referred to repeatedly. We collect their precise definitions here for clarity:
\begin{itemize}
\item[\textbf{(E)}] \emph{Spectral condition (even sector of mode $2$):} 
\[
\mu_0^{\mathrm{even}}(2)>0.
\]
This is the first eigenvalue of $\Sf_2^{\mathrm{rad}}$ restricted to the even-parity radial sector. By Theorem \ref{thm:reduction-main} below, (E) is one of the two spectral inequalities to which the strong Medvedev conjecture reduces.

\item[\textbf{(F)}] \emph{Spectral condition (mode $0$):}
\[
\mu_2(0)>0\quad\text{together with}\quad\mu_n(0)\neq 0\text{ for every }n\geq 0.
\]
The first inequality is the kernel-positivity in mode $0$; the second is the spectral non-degeneracy (no zero eigenvalues). By Theorem \ref{thm:reduction-main} below, (F) is the second spectral inequality to which the strong Medvedev conjecture reduces. A weakening of (F), denoted (F$'$), is the single condition $\mu_2(0)>0$ (without the non-degeneracy requirement), used in Section \ref{sec:F-prime-strategy}.

\item[\textbf{(G)}] \emph{Geometric condition (strict pinching):}
\[
B(s_0(a))^2>2K(a)^2,
\]
equivalently $\cosh(2s_0(a))>2a$, equivalently $y(a):=B(s_0(a))^2/K(a)^2>2$, where $K(a)=\sqrt{a^2-1/4}$ and $r(a)$ is the geodesic radius of $\partial B^3(r(a))\subset\HH^3$. By Proposition \ref{prop:geometric-identity}, condition (G) is strictly stronger than the inequality $\sinh r(a)>2K(a)$ (the latter being equivalent to $B(s_0(a))^2\neq 2K(a)^2$, i.e.\ $y(a)\neq 2$), which it implies; the non-strict bound $\sinh r(a)\geq 2K(a)$ holds unconditionally for every $a>1/2$ (Proposition \ref{prop:geometric-identity}). Condition (G) is established analytically on $(1/2,1]$ (Theorem \ref{thm:G-closure-a-leq-1}) and is the form used in the scalar reduction (Theorem \ref{thm:scalar-reduction}); the closure of the odd-kernel part of (F) (Theorem \ref{thm:no-kernel-odd}) requires only the weaker inequality $\sinh r(a)>2K(a)$ (equivalently $B(s_0(a))^2\neq 2K(a)^2$), which (G) implies.
\end{itemize}

\subsection{Main results}
\begin{mainthm}[Picone identity with base $f_*$]\label{thm:picone-main}
Let $f_*(s)=\partial_s(A(s)\cosh\varphi(s))$ be the Robin Jacobi field of mode $|k|=1$ identified in \cite{Pigazzini}. For every $k\in\Z$ and every smooth $u(s)$ with $u(0)=0$, written as $u=f_*h$ with $h\in C^1([-s_0,s_0])$, one has
\begin{equation}\label{eq:picone-identity}
\Sf_k^{\mathrm{rad}}(u,u)=(k^2-1)\int_{-s_0}^{s_0}\frac{f_*(s)^2}{B(s)}\,h(s)^2\,ds+\int_{-s_0}^{s_0}B(s)\,f_*(s)^2\,h'(s)^2\,ds.
\end{equation}
\end{mainthm}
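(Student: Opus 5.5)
The plan is a Picone/ground-state substitution, using that $f_*$ solves the mode-$1$ radial Jacobi equation with the Robin condition (P1). The first step is to write $\Sf_k^{\mathrm{rad}}$ explicitly. The weights $B$ and $1/B$ in \eqref{eq:picone-identity} suggest that, in the parametrization used here, the induced metric has the form $g=ds^2+B(s)^2d\theta^2$. I will check this against \eqref{eq:phi-immersion}–\eqref{eq:ABK} and the formula for $\varphi'$; if the $ds^2$ coefficient is not $1$, the weights change and I will redo the bookkeeping with the correct ones. Granting this form, $dA=B\,ds\,d\theta$ and $|\nabla u|^2=u_s^2+B^{-2}u_\theta^2$. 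Write $V:=|\II|^2-2$. Inserting $u(s)e^{ik\theta}$ into \eqref{eq:S-form} and dropping the factor $2\pi$ gives
\[
\Sf_k^{\mathrm{rad}}(u,u)=\int_{-s_0}^{s_0}\Bigl(B u'^2+\frac{k^2}{B}u^2-BVu^2\Bigr)ds-\coth r(a)\bigl(B(s_0)u(s_0)^2+B(-s_0)u(-s_0)^2\bigr).
\]
The conormal derivative at $\pm s_0$ is $\pm\partial_s$. In these terms, (P1) says that $f_*$ satisfies the mode-$1$ equation $(Bf_*')'+BVf_*-f_*/B=0$ on $[-s_0,s_0]$ with $\pm f_*'(\pm s_0)=\coth r(a)\,f_*(\pm s_0)$.

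The core computation: substitute $u=f_*h$ and expand
\[
B(f_*'h+f_*h')^2=Bf_*'^2h^2+Bf_*^2h'^2+Bf_*f_*'(h^2)'.
\]
Integrating the cross term by parts gives
\[
\int Bf_*f_*'(h^2)'\,ds=\bigl[Bf_*f_*'h^2\bigr]_{-s_0}^{s_0}-\int (Bf_*')'f_*h^2\,ds-\int Bf_*'^2h^2\,ds.
\]
The last integral cancels the $Bf_*'^2h^2$ term. Collecting the remaining interior terms yields
\[
\int f_*h^2\Bigl(-(Bf_*')'+\frac{k^2}{B}f_*-BVf_*\Bigr)ds=(k^2-1)\int\frac{f_*^2}{B}h^2\,ds,
\]
by the mode-$1$ Jacobi equation. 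The boundary terms combine into $\sum_\pm B(\pm s_0)f_*(\pm s_0)h(\pm s_0)^2\bigl(\pm f_*'(\pm s_0)-\coth r(a)f_*(\pm s_0)\bigr)$, which vanishes by the Robin condition. What remains is exactly \eqref{eq:picone-identity}.

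The hypothesis $u(0)=0$ enters because $f_*=\sinh r\,r'$ vanishes at $s=0$, where $r'(0)=0$ by the symmetry $s\mapsto -s$. So the quotient $h=u/f_*$ can only be regular there if $u$ vanishes too. The statement assumes $h\in C^1$, so the integration by parts over the whole interval is legitimate and produces no interior boundary term at $s=0$. I would also note that $f_*\neq0$ on $(0,s_0]$ (again using $r'>0$ there), so that such an $h$ exists for smooth odd $u$, although this is not needed for the identity itself.

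I expect the main obstacle to be the first step: confirming that $s$ is an arclength parameter for the profile curve, i.e.\ $g=ds^2+B^2d\theta^2$. This needs a short computation of $|\partial_s\Phi_a|_L^2=-(A\cosh\varphi)'^2+((A\sinh\varphi)')^2+B'^2=-A'^2+A^2\varphi'^2+B'^2$ with $\varphi'=K/(A^2B)$. It also requires checking that the mode-$1$ Jacobi equation and Robin condition in \cite{Pigazzini} are stated in the same normalization. Once this is fixed, the rest is the standard Picone algebra above.
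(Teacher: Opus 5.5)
Your proposal is correct and follows essentially the same route as the paper. You substitute $u=f_*h$, integrate the cross term $Bf_*f_*'(h^2)'$ by parts, use the mode-$1$ equation $(Bf_*')'+B(|\II|^2-2-1/B^2)f_*=0$ to reduce the interior terms to $(k^2-1)\int f_*^2h^2/B\,ds$, and cancel $[Bf_*f_*'h^2]_{-s_0}^{s_0}$ against the Robin term via $\pm f_*'(\pm s_0)=\coth r(a)\,f_*(\pm s_0)$. The two checks you flag are carried out in the paper itself: the metric $ds^2+B^2d\theta^2$ is Lemma~\ref{lem:metric-mori}, and the Robin condition for $f_*$ is Lemma~\ref{lem:robin-fstar}, proved from $\coth r(a)=B'(s_0)/B(s_0)$ without relying on the normalization of \cite{Pigazzini}.
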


\begin{mainthm}[Closure of the odd radial sector for $|k|\geq 2$]\label{thm:odd-closure-main}
For every $a>1/2$ and every $|k|\geq 2$, $\mu_n^{\mathrm{odd}}(k)>0$ for all $n\geq 0$. Equivalently, the odd radial sector of mode $|k|\geq 2$ contains neither negative nor zero eigenvalues; in particular, every Robin Jacobi field of mode $|k|\geq 2$ must be radially even.
\end{mainthm}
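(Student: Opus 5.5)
The plan is to derive the statement directly from the Picone identity of Theorem~\ref{thm:picone-main}. An odd radial function $u\in H^1([-s_0,s_0])$ satisfies $u(0)=0$. Since $f_*$ is odd, the quotient $h=u/f_*$ is even and, at least formally, well defined. Once $h$ is shown to be admissible, the identity \eqref{eq:picone-identity} writes $\Sf_k^{\mathrm{rad}}(u,u)$ as the sum of two nonnegative terms, because $k^2-1\geq 3>0$ and $B>0$ on $[-s_0,s_0]$ (as $B^2=a\cosh 2s-\tfrac12>0$ for $a>1/2$). Positivity of every odd eigenvalue then reduces to three checks: the quotient $h$ is legitimate, the form vanishes only on $u\equiv 0$, and a density/closedness argument passes from smooth $u$ to the form domain.

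\emph{Step 1 (zeros of $f_*$).} First I would verify that $f_*(s)=\sinh r(s)\,r'(s)$ vanishes on $[-s_0,s_0]$ only at $s=0$, and that the zero there is simple. Here $r(s)$ is the distance to $p_0$ along the profile, and $\cosh r(s)=A(s)\cosh\varphi(s)$ is even in $s$. Its derivative $f_*$ equals $A'\cosh\varphi+A\varphi'\sinh\varphi$, in which both terms are positive for $s>0$ because $A'>0$, $\varphi'>0$ and $\varphi>0$ there. Hence $f_*>0$ on $(0,s_0]$ and $f_*<0$ on $[-s_0,0)$. For simplicity of the zero, $f_*'(0)=A''(0)+A(0)\varphi'(0)^2>0$.

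\emph{Step 2 (admissibility of $h$).} For smooth odd $u$, the function $h=u/f_*$ extends to a $C^1$ (indeed smooth) even function on $[-s_0,s_0]$, since both numerator and denominator have a simple-order zero at $0$. The identity therefore applies to every smooth odd $u$.

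\emph{Step 3 (strict positivity).} If $\Sf_k^{\mathrm{rad}}(u,u)=0$, the first term forces $f_*h\equiv0$, so $u\equiv0$. Hence $\Sf_k^{\mathrm{rad}}$ is positive definite on smooth odd functions, and the same must be shown for its eigenvalues. I expect this to be the main obstacle, because positivity on a dense set does not by itself give $\mu_0^{\mathrm{odd}}(k)>0$. The cleanest route avoids density altogether. Odd eigenfunctions are smooth, being solutions of a regular Sturm--Liouville problem with smooth coefficients and the Robin condition \eqref{eq:robin}. If $v$ is an odd eigenfunction with eigenvalue $\mu\leq 0$, Step 3 applies to $v$ itself and gives $\mu\|v\|^2=\Sf_k^{\mathrm{rad}}(v,v)>0$, which is a contradiction. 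Thus every $\mu_n^{\mathrm{odd}}(k)$ is strictly positive.

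\emph{Consequence.} Since the odd radial sector carries no zero eigenvalue, any Robin Jacobi field of mode $|k|\geq2$ has vanishing odd part, i.e.\ it is radially even. This needs care only in using that the radial operator preserves parity, as recalled in the mode conventions, so each eigenspace splits into even and odd parts.

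The only substantive point is Step 1, namely that $f_*$ has no zeros other than $s=0$ on the whole interval, up to and including $s_0$. The sign argument given there settles it, since for $s>0$ every factor is positive.
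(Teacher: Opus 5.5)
Your proposal is correct and follows the paper's proof. Both arguments rest on the Picone identity with base $f_*$, where $k^2-1\geq 3$ makes the term $(k^2-1)\int_{-s_0}^{s_0}u^2/B\,ds$ force strict positivity on every nonzero odd function. The only difference is technical: the paper extends the identity to all odd $u\in H^1$ via the Hardy inequality and density (Remark~\ref{rem:regularity-h}) and concludes from positivity of the form, whereas you apply it only to the odd eigenfunctions, which are smooth by ODE regularity, and this bypasses the extension step equally validly.
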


\begin{mainthm}[Alternative explicit proof of Medvedev's lower bound]\label{thm:lower-bound-main}
For every $a>1/2$ one has $\ind(\Sigma_a)\geq 4$. More precisely, the four functions $\Phi^0_a,\Phi^1_a,\Phi^2_a,\Phi^3_a$, given by the restrictions to $\Sigma_a$ of the Lorentz ambient coordinates, are linearly independent in $H^1(\Sigma_a)$ and $\Sf$ is diagonal and strictly negative on their span. The bound $\ind(\Sigma_a)\geq 4$ is already established by Medvedev \cite[Corollary 5.9]{Medvedev2023} via the same choice of test functions; we present here the explicit elementary proof both for completeness and for its role in the reduction (Theorem \ref{thm:reduction-main}).
\end{mainthm}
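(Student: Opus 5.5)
The approach is to use the classical fact that, for a minimal surface in $\HH^3\subset\R^4_1$, the ambient coordinate functions $\Phi^\alpha$ restricted to $\Sigma_a$ satisfy $\Delta_g\Phi^\alpha=2\Phi^\alpha$. The minimality equation in the hyperboloid model reads $\Delta_g\Phi=2\Phi$ (mean curvature vector zero, and the normal to the hyperboloid contributes $n\Phi$ with $n=2$). Hence
\[
\Lf\Phi^\alpha=(|\II|^2-2)\Phi^\alpha+2\Phi^\alpha=|\II|^2\Phi^\alpha .
\]

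Integrating by parts as in Remark \ref{rem:robin-morse-equivalence}, I will write
\[
\Sf(u,u)=-\int_{\Sigma_a}u\,\Lf u\,dA+\int_{\partial\Sigma_a}u\,(\partial_\eta u-\coth r(a)\,u)\,dL .
\]
For $u=\Phi^\alpha$ the bulk term is $-\int|\II|^2(\Phi^\alpha)^2\,dA$.

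For the boundary term I use the free boundary condition. It says $\eta$ is the outward unit normal to $\partial B^3(r(a))$, i.e. $\eta=\nabla r=(\Phi\cosh r-p_0)/\sinh r$ along $\partial\Sigma_a$ (ambient Lorentz vector). Hence
\[
\partial_\eta\Phi^\alpha=\frac{\cosh r\,\Phi^\alpha-p_0^\alpha}{\sinh r}.
\]
The boundary integrand is therefore $\Phi^\alpha(\partial_\eta\Phi^\alpha-\coth r\,\Phi^\alpha)=-\Phi^\alpha p_0^\alpha/\sinh r$. This vanishes for $\alpha=1,2,3$. For $\alpha=0$ it equals $-\Phi^0/\sinh r=-\cosh r/\sinh r<0$.

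Thus each $\Sf(\Phi^\alpha,\Phi^\alpha)<0$. This uses that $|\II|^2\not\equiv0$, since $\Sigma_a$ is not totally geodesic (for $\alpha=0$ the boundary term gives strict negativity anyway).

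For diagonality I polarize: $\Sf(\Phi^\alpha,\Phi^\beta)=-\int|\II|^2\Phi^\alpha\Phi^\beta\,dA$ plus a boundary term that vanishes for $\alpha\neq\beta$. It vanishes because $p_0^\alpha p_0^\beta$-type contributions are nonzero only when $\alpha=\beta=0$; more precisely the boundary integrand is $-\Phi^\alpha p_0^\beta/\sinh r$, which is zero unless $\beta=0$. When $\beta=0$ and $\alpha\neq0$ it integrates to zero by the $\theta$-symmetry or the $s\mapsto-s$ symmetry, as described next.

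To see the vanishing of the bulk cross terms I use the parametrization \eqref{eq:phi-immersion}. $|\II|^2$ depends only on $s$ and is even in $s$. $\Phi^0=A\cosh\varphi$ is even in $s$ and $\Phi^1=A\sinh\varphi$ is odd (as $\varphi$ is odd), while $\Phi^2,\Phi^3$ carry $\cos\theta$ and $\sin\theta$. Integrating over $\theta\in[0,2\pi)$ kills all pairs involving $\Phi^2$ or $\Phi^3$ with a different index. The pair $(\Phi^0,\Phi^1)$ is killed by oddness in $s$ on the symmetric interval $[-s_0,s_0]$; the boundary term there is $-\Phi^1/\sinh r$, odd, and also cancels between $s=\pm s_0$.

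Linear independence follows since $\Phi^\alpha$ are orthogonal and nonzero in $L^2$, with distinct mode/parity structure. A diagonal form with strictly negative diagonal on a 4-dimensional subspace gives $\ind(\Sigma_a)\geq4$ by Remark \ref{rem:robin-morse-equivalence}.

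The main obstacle I expect is the boundary computation. One must correctly identify $\eta$ with the ambient radial field $\nabla r$ in Lorentz coordinates, using the FBMS condition \eqref{eq:fbc}, and fix signs with the paper's convention $\Delta_g=+\mathrm{div}\nabla$. I would double-check it against the explicit parametrization at $s=\pm s_0$.
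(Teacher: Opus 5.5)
Your proposal is correct and follows essentially the paper's proof. You use $\Lf\Phi^\alpha=|\II|^2\Phi^\alpha$ together with the Green identity, where the Robin defect comes from $\eta=\nabla r$: your unified formula $\partial_\eta\Phi^\alpha-\coth r\,\Phi^\alpha=-\delta^\alpha_0/\sinh r$ is exactly the Robin/anti-Robin Lemma~\ref{lem:robin-spacelike}, and you kill the off-diagonal terms by $\theta$-integration and $s\mapsto -s$ parity, as the paper does. The only cosmetic difference is that you cancel the cross boundary term with $\beta=0$ by symmetry, whereas the paper simply places the Robin coordinate in the $u$ slot of the Green formula so that this term never appears.
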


\begin{mainthm}[Reduction of the Medvedev conjecture]\label{thm:reduction-main}
The strong form \eqref{eq:medvedev-strong} of the Medvedev conjecture is equivalent to the conjunction of the following two conditions:
\begin{enumerate}
\item[(E)] $\mu_0^{\mathrm{even}}(2)>0$ for every $a>1/2$;
\item[(F)] $\mu_2(0)>0$ for every $a>1/2$, and $\mu_n(0)\neq 0$ for every $n\geq 0$.
\end{enumerate}
\end{mainthm}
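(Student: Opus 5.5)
The plan is to use the Fourier decomposition of \S\ref{subsec:key-conditions} to write the index and nullity as sums over modes,
\[
\ind_R(\Sigma_a)=\sum_{k\in\Z}\#\{n:\mu_n(k)<0\},\qquad \nul_R(\Sigma_a)=\sum_{k\in\Z}\#\{n:\mu_n(k)=0\}.
\]
Here modes $\pm k$ contribute equally for $k\neq 0$, since the real form gives both cosine and sine parts. Each radial form $\Sf_k^{\mathrm{rad}}$ differs from $\Sf_0^{\mathrm{rad}}$ by the term $k^2\int B^{-1}u^2\,ds$ (or whatever the metric factor is), which is positive and increasing in $k^2$. By min--max, each $\mu_n(k)$ is therefore strictly increasing in $|k|$. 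This monotonicity is the only structural input beyond the cited results.

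Next I account for the modes already known.
\begin{itemize}
\item Mode $|k|=1$ contributes index $2$ and nullity $2$ by (P1).
\item Mode $0$ contributes at least $2$ to the index. The four Lorentz coordinates of Theorem~\ref{thm:lower-bound-main} split by mode: $\Phi^2_a,\Phi^3_a$ lie in $|k|=1$, while $\Phi^0_a,\Phi^1_a$ lie in $k=0$ and are radially even and odd respectively. Since $\Sf$ is diagonal and negative on their span, $\mu_1(0)<0$.
\item Modes $|k|\geq 2$ have odd sector strictly positive for every $a$, by Theorem~\ref{thm:odd-closure-main}.
\end{itemize}
Hence the total index is at least $4$, and the total nullity is at least $2$.

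For sufficiency, assume (E) and (F). Under (F), mode $0$ has $\mu_0(0)\le\mu_1(0)<0<\mu_2(0)$ and no zero eigenvalue, so it contributes exactly $2$ to the index and $0$ to the nullity. For $|k|\geq2$, the odd sector is handled by Theorem~\ref{thm:odd-closure-main}. The even sector satisfies $\mu_n^{\mathrm{even}}(k)\ge\mu_0^{\mathrm{even}}(k)\ge\mu_0^{\mathrm{even}}(2)>0$ by (E) and monotonicity in $|k|$. Summing gives $\ind=2+2=4$ and $\nul=2$.

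For necessity, assume $\ind=4$ and $\nul=2$.
\begin{itemize}
\item Modes $\pm1$ already use index $2$ and nullity $2$, and mode $0$ already uses index $\geq 2$. So mode $0$ contributes exactly $2$ to the index and nothing to the nullity. This gives $\mu_2(0)>0$ and $\mu_n(0)\neq0$ for all $n$, which is (F).
\item No mode with $|k|\geq2$ can carry a nonpositive eigenvalue. In particular $\mu_0^{\mathrm{even}}(2)>0$, which is (E).
\end{itemize}

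The main obstacle is making the mode decomposition rigorous on $H^1(\Sigma_a)$ with the Robin boundary term. I need the index and nullity of $\Sf$ to equal exactly the sum of the modal counts. This requires that finitely many modes carry all nonpositive directions, which follows once $\mu_0(k)\to+\infty$ via the $k^2$ term. It also requires that cross terms between distinct $k$ vanish by orthogonality of $e^{ik\theta}$, both in the area integral and in the boundary integral over the two circles $s=\pm s_0$.

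A second point needing care is the claim that $\Phi^0_a$ and $\Phi^1_a$ are mode $0$ and linearly independent as radial functions. This is immediate from \eqref{eq:phi-immersion}: $A\cosh\varphi$ is even and $A\sinh\varphi$ is odd, since $\varphi$ is odd.

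Finally, I would double check the factor-of-two bookkeeping for $k\neq0$. The statement of (P1) already counts both $\cos\theta$ and $\sin\theta$, so the mode-$1$ radial problem itself has exactly one negative and one zero eigenvalue.
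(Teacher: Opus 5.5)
Your proposal is correct and follows essentially the same route as the paper. The ingredients match: the modal decomposition of $\Sf$, the mode-$|k|=1$ contribution (index $2$, nullity $2$) from (P1), the two negative mode-$0$ directions $\Phi^0_a,\Phi^1_a$, the odd closure for $|k|\geq 2$, and the same index/nullity bookkeeping in both directions.

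The only difference is minor. You get positivity of the whole even sector in modes $|k|\geq 2$ directly from min--max monotonicity, $\mu_n^{\mathrm{even}}(k)\geq\mu_0^{\mathrm{even}}(k)\geq\mu_0^{\mathrm{even}}(2)>0$. This is slightly cleaner than the paper's route through the quantitative Sturm estimate and the separate comparison with $\mu_2(1)>0$ for $n\geq 1$.
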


The organization of the paper is as follows. Section \ref{sec:preliminari} establishes the basic geometric identities. Section \ref{sec:picone} proves Theorem \ref{thm:picone-main}. Section \ref{sec:odd-closure} proves Theorem \ref{thm:odd-closure-main} together with spectral non-degeneracy via strict Sturm comparison. Section \ref{sec:phi-A} establishes the Robin/anti-Robin properties of the ambient coordinates. Section \ref{sec:lower-bound} proves Theorem \ref{thm:lower-bound-main}. Section \ref{sec:reduction} proves Theorem \ref{thm:reduction-main}. Section \ref{sec:family-jacobi} develops the theory of parametric Jacobi fields and the axial boost field, and includes the closure of the no-kernel part of (F) (Theorems \ref{thm:no-kernel-even} and \ref{thm:no-kernel-odd}) and the analytic closure of (G) for $a\in(1/2,1]$ (Theorem \ref{thm:G-closure-a-leq-1}). Section \ref{sec:picone-B} develops the Picone identity with base $B(s)$ and proves the closure of (E) for $a\in(1/2,A_*]$ (Theorems \ref{thm:E-closure-a-leq-1} and \ref{thm:A-star-existence}). Section \ref{sec:F-prime-strategy} proves the Sturm shooting count theorem for the Robin BC (Lemma \ref{lem:sturm-count}) and derives the reduction of (F$'$) to the positivity of the field $\phi_a$ (Theorem \ref{thm:F-reduction-phi-positivity}), leading to the final reduction (Theorem \ref{thm:final-reduction}). Section \ref{sec:phi-scalar-reduction} establishes the further reduction of the positivity of $\phi_a$ to a one-dimensional scalar differential inequality on the strict monotonicity of $\sinh r(a)/K(a)$ (Theorem \ref{thm:scalar-reduction}), via a constant Wronskian identity (Lemma \ref{lem:wronskian}), a Sturm separation argument (Lemma \ref{lem:sturm-separation-phi-u}), and a closed formula for $\phi_a(s_0)$ (Proposition \ref{prop:phi-s0-closed}); the asymptotic analysis of $y(a)=B(s_0(a))^2/K(a)^2$ is carried out at the endpoints of the domain (Propositions \ref{prop:y-asymp-infty}--\ref{prop:y-asymp-half}). In the same section, the analytic local closure of the strong Medvedev conjecture is established via the closed form of the linear asymptotic coefficient (Theorem \ref{thm:eureka-closed-form} and Corollary \ref{cor:eureka-local-closure}). Section \ref{sec:discussion} summarizes the status of the conjecture and outlines possible strategies for its completion.

\section{Geometric preliminaries}\label{sec:preliminari}

We collect here the identities needed in the subsequent sections. The notation $A=A(s)$, $B=B(s)$, $\varphi=\varphi(s)$ is understood to be evaluated at the same $s$ throughout.

\begin{lemma}[Induced metric and Mori identity]\label{lem:metric-mori}
The parametrization \eqref{eq:phi-immersion} is radially isothermal, namely $g_{ss}=1$ and $g_{\theta\theta}=B(s)^2$. The profile $B$ satisfies the Mori identity
\begin{equation}\label{eq:mori-id}
B(s)\,B''(s)+(B'(s))^2=2B(s)^2+1.
\end{equation}
\end{lemma}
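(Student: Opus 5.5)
The plan is a direct computation in the Lorentz model. Everything needed is contained in the explicit formulas \eqref{eq:ABK} and the minimality condition $\varphi'=K/(A^2B)$. First I will differentiate the squared profiles: from $A^2=a\cosh(2s)+\tfrac12$ and $B^2=a\cosh(2s)-\tfrac12$ one gets $AA'=BB'=a\sinh(2s)$. Note that $B^2\geq a-\tfrac12>0$ for $a>1/2$, so $B$ is smooth and positive and these divisions are legitimate. Two algebraic identities will do all the work:
\[
A^2-B^2=1,\qquad A^2B^2=a^2\cosh^2(2s)-\tfrac14=a^2\sinh^2(2s)+K^2 .
\]

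Next I compute the tangent vectors. With $c=\cosh\varphi$ and $\sigma=\sinh\varphi$,
\[
\partial_s\Phi_a=(A'c+A\varphi'\sigma,\;A'\sigma+A\varphi'c,\;B'\cos\theta,\;B'\sin\theta),\qquad \partial_\theta\Phi_a=(0,0,-B\sin\theta,B\cos\theta).
\]
In the Lorentzian pairing of the first two components, the cross terms cancel and $c^2-\sigma^2=1$. This gives $g_{ss}=-A'^2+A^2\varphi'^2+B'^2$.

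Substituting $A'=a\sinh(2s)/A$, $B'=a\sinh(2s)/B$ and $A^2\varphi'^2=K^2/(A^2B^2)$ yields
\[
g_{ss}=\frac{a^2\sinh^2(2s)\,(A^2-B^2)+K^2}{A^2B^2}=1,
\]
using the two identities above. The remaining components are immediate: $g_{s\theta}=BB'(-\cos\theta\sin\theta+\sin\theta\cos\theta)=0$ and $g_{\theta\theta}=B^2$. This is the radially isothermal form. The induced metric is spacelike, as it must be, since $\Phi_a$ takes values in $\HH^3$.

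For the Mori identity I will differentiate $B^2$ twice. On one hand $(B^2)''=4a\cosh(2s)=4B^2+2$. On the other hand $(B^2)''=2(BB''+B'^2)$. Dividing by $2$ gives \eqref{eq:mori-id}.

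There is no genuine obstacle here. The only step requiring care is the simplification of $g_{ss}$. It hinges on recognizing $A^2B^2=a^2\sinh^2(2s)+K^2$, which is exactly where the choice $K^2=a^2-\tfrac14$ enters. This is also the place to confirm that the Lorentzian cross terms cancel with the correct signs.
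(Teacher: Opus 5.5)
Your proposal is correct and follows essentially the same direct computation as the paper: the Lorentzian cross terms cancel to give $g_{ss}=-(A')^2+A^2(\varphi')^2+(B')^2$, which simplifies to $1$ via $A^2-B^2=1$ and $A^2B^2=a^2\sinh^2(2s)+K^2$. You likewise obtain the Mori identity by differentiating $B^2$ twice. Your explicit check that $g_{s\theta}=0$ is a small addition that the paper leaves implicit.
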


\begin{proof}
We compute $g_{ss}=-(A')^2+A^2(\varphi')^2+(B')^2$ using the induced Lorentz metric on the spacelike submanifold $\HH^3$. From $A^2=a\cosh(2s)+1/2$ we obtain $2AA'=2a\sinh(2s)$, whence $A'=a\sinh(2s)/A$ and $(A')^2=a^2\sinh^2(2s)/A^2$. Analogously, $(B')^2=a^2\sinh^2(2s)/B^2$. Therefore
\[
g_{ss}=-\frac{a^2\sinh^2(2s)}{A^2}+\frac{K^2}{A^2B^2}+\frac{a^2\sinh^2(2s)}{B^2}=\frac{a^2\sinh^2(2s)(A^2-B^2)+K^2}{A^2B^2}.
\]
From \eqref{eq:ABK}, $A^2-B^2=1$ and $a^2\sinh^2(2s)+K^2=a^2\cosh^2(2s)-1/4=A^2B^2$, whence $g_{ss}=1$. The component $g_{\theta\theta}=B^2$ is immediate from \eqref{eq:phi-immersion}.

To prove \eqref{eq:mori-id}, differentiating $2BB'=2a\sinh(2s)$ we obtain $2(B')^2+2BB''=4a\cosh(2s)=2(2B^2+1)$ by \eqref{eq:ABK}.
\end{proof}

\begin{lemma}[Norm of the second fundamental form]\label{lem:II-norm}
On $\Sigma_a$ we have
\begin{equation}\label{eq:II-norm}
|\II|^2(s)\,B(s)^4=2K^2.
\end{equation}
\end{lemma}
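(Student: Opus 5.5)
The plan is to compute $|\II|^2$ directly from the two principal curvatures of the rotational surface, using minimality to reduce to a single curvature. Because $\Sigma_a$ is minimal, the principal curvatures are $\kappa$ and $-\kappa$, so $|\II|^2=2\kappa^2$. It therefore suffices to show $\kappa_\theta^2=K^2/B^4$ for the parallel direction $e_\theta=B^{-1}\partial_\theta$. Rotational symmetry in the $(x_2,x_3)$-plane makes $\partial_s,\partial_\theta$ principal directions, and the shape operator is diagonal in this frame.

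To compute $\kappa_\theta$ in the hyperboloid model:
\begin{itemize}
\item First construct the unit normal $\nu$. It must be Lorentz-orthogonal to $\Phi$, $\Phi_s$ and $\Phi_\theta$. Since $\Phi_\theta=(0,0,-B\sin\theta,B\cos\theta)$, the normal has the form $\nu=(\alpha\cosh\varphi+\beta\sinh\varphi,\ \alpha\sinh\varphi+\beta\cosh\varphi,\ \gamma\cos\theta,\ \gamma\sin\theta)$ with $\alpha,\beta,\gamma$ depending only on $s$.
\item Expressing this in the orthonormal Lorentz frame adapted to the boost by $\varphi$, the conditions $\langle\nu,\Phi\rangle_L=0$ and $\langle\nu,\Phi_s\rangle_L=0$ become two linear equations in $(\alpha,\beta,\gamma)$. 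They involve $A,A',B,B',\varphi'$.
\item Then $h_{\theta\theta}=\langle\Phi_{\theta\theta},\nu\rangle_L$, where $\Phi_{\theta\theta}=(0,0,-B\cos\theta,-B\sin\theta)$, gives $h_{\theta\theta}=-B\gamma$. Hence $\kappa_\theta=-\gamma/B$.
\end{itemize}

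It remains to identify $\gamma$. Solving the linear system together with the normalization $\langle\nu,\nu\rangle_L=1$, one expects $\gamma=\pm A^2\varphi'$, up to the factor arising from $g_{ss}=1$ (Lemma~\ref{lem:metric-mori}). Heuristically, $\gamma$ is the component of $\nu$ complementary to the $(A,\varphi)$-plane, and the relevant $2\times2$ minor is $A\cdot A\varphi'$ because $A^2-B^2=1$ has already been used. With $\varphi'=K/(A^2B)$ this gives $\gamma=\pm K/B$, so $\kappa_\theta=\mp K/B^2$ and
\[
|\II|^2=2K^2/B^4,
\]
which is \eqref{eq:II-norm}.

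The main obstacle is this determinant computation, which requires careful handling of Lorentz signs. To organize it, one can use a cross-product-type formula: $\nu$ is proportional to the Lorentz "cross product" of $\Phi,\Phi_s,\Phi_\theta$. Its $(x_2,x_3)$-components come from the $3\times3$ minors in the columns $x_0,x_1$ and one of $x_2,x_3$. After boosting by $-\varphi$, these minors reduce to $A\cdot(A\varphi')\cdot B$.

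Normalizing by the norm of this product, which equals $|\Phi_\theta|\cdot|\Phi_s|=B$ because $\Phi\perp\Phi_s\perp\Phi_\theta$ with $\Phi$ unit timelike, gives $|\gamma|=A^2\varphi'=K/B$.

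As a consistency check, the $s$-curvature should come out as $\kappa_s=+K/B^2$ with the opposite sign. This can be verified through the minimal-surface ODE, which is equivalent to the stated $\varphi'$ formula, so it need not be computed independently.
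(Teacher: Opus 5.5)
Your argument is correct, but it takes a different route from the paper.

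\textbf{The paper's route.} It argues intrinsically. The Gauss equation with $H=0$, combined with the Gaussian curvature $-B''/B$ of $ds^2+B^2d\theta^2$, gives $|\II|^2=2(B''/B-1)$. The Mori identity and $(B')^2B^2=(B^2+1/2)^2-a^2$ then collapse $|\II|^2B^4$ to $2a^2-1/2=2K^2$. No normal vector is ever constructed.

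\textbf{Your route.} You compute one principal curvature extrinsically in the boosted frame, where $\Phi=Ae_0+Be_2$, $\Phi_s=A'e_0+A\varphi'e_1+B'e_2$ and $\Phi_\theta=Be_3$. Your cofactor $A^2\varphi'B$ is right, and so is the normalization $\langle N,N\rangle_L=-\det(\mathrm{Gram})=B^2$. So the step you call heuristic is in fact justified. You can also confirm it by solving your linear system directly: it gives $\alpha=\gamma B/A$, $\beta=-\gamma a\sinh(2s)/(AK)$, and $\langle\nu,\nu\rangle_L=\gamma^2B^2/K^2$ via $a^2\sinh^2(2s)+K^2=A^2B^2$.

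You should state explicitly two facts you use tacitly:
\begin{itemize}
\item $h_{s\theta}=0$, because $\Phi_{s\theta}=B'e_3$ is $L$-orthogonal to $\nu$;
\item $h_{\theta\theta}=\langle\Phi_{\theta\theta},\nu\rangle_L$, because the connection of $\HH^3$ differs from the flat derivative of $\R^4_1$ by a multiple of $\Phi$, which is orthogonal to $\nu$.
\end{itemize}

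\textbf{Trade-offs.} The paper's proof is shorter and avoids all Lorentzian sign bookkeeping. Yours produces the explicit unit normal and the \emph{signed} curvatures $\kappa_\theta=-\kappa_s=-K/B^2$ (for $\gamma=+K/B$). The paper needs these later and obtains them by separate computations: the sign of $\kappa_s$ in Lemma~\ref{lem:phi-a-BC}, and the values $\nu^0(0)=K/A(0)$, $\nu^2(0)=K/B(0)$ in Lemma~\ref{lem:phi-a-properties}. Your setup would also let you verify $H=0$ itself by computing $h_{ss}$, whereas both proofs as written take minimality as given.
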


\begin{proof}
By the Gauss formula in $\HH^3$, $K_\Sigma=-1+\tfrac{1}{2}(H^2-|\II|^2)$. Since $\Sigma$ is minimal ($H=0$), $K_\Sigma=-1-\tfrac{1}{2}|\II|^2$. The Gaussian curvature of the metric $ds^2+B(s)^2 d\theta^2$ is $-B''/B$, whence $|\II|^2=2(B''/B-1)$. From \eqref{eq:mori-id}, $B''/B-1=(2B^2+1-(B')^2-B^2)/B^2=(B^2+1-(B')^2)/B^2$. Multiplying by $B^4$,
\[
|\II|^2 B^4=2B^2(B^2+1-(B')^2)=2B^4+2B^2-2(B')^2B^2.
\]
From $(B')^2=a^2\sinh^2(2s)/B^2=(a^2\cosh^2(2s)-a^2)/B^2=((B^2+1/2)^2-a^2)/B^2$ we obtain $(B')^2B^2=(B^2+1/2)^2-a^2$. Substituting,
\[
|\II|^2 B^4=2B^4+2B^2-2(B^2+1/2)^2+2a^2=2a^2 -1/2=2(a^2-1/4)=2K^2.\qedhere
\]
\end{proof}

\begin{lemma}[Key identity for the Jacobi field $f_*$]\label{lem:f-star}
Let $f_*(s):=\partial_s\bigl(A(s)\cosh\varphi(s)\bigr)$. Then:
\begin{enumerate}
\item[(i)] $f_*(s)=\sinh r(s)\cdot r'(s)$, where $\cosh r(s)=A(s)\cosh\varphi(s)$;
\item[(ii)] $f_*$ is odd, $f_*(0)=0$, $f_*'(0)\neq 0$, and $f_*(s)>0$ for every $s\in(0,s_0]$;
\item[(iii)] $f_*$ solves the radial Jacobi equation in mode $|k|=1$:
\begin{equation}\label{eq:jacobi-1}
(B f_*')'+B\bigl(|\II|^2-2-1/B^2\bigr)f_*=0.
\end{equation}
\end{enumerate}
\end{lemma}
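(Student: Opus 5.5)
The plan is to treat the three items in order; (i) and (ii) are elementary, and (iii) is the real content.

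For (i), I would use $\cosh r(p)=-\langle p,p_0\rangle_L$. By \eqref{eq:phi-immersion} this gives $\cosh r(s)=A(s)\cosh\varphi(s)$ along the profile curve $\theta=0$, and in fact for every $\theta$. Differentiating in $s$ yields $\sinh r(s)\,r'(s)=\partial_s(A\cosh\varphi)=f_*(s)$ at once.

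For (ii), the parity statements follow from \eqref{eq:ABK} and the definition of $\varphi$:
\begin{itemize}
\item $A$ is even.
\item $\varphi'=K/(A^2B)$ is even and $\varphi(0)=0$, so $\varphi$ is odd.
\item Hence $A\cosh\varphi$ is even, and its derivative $f_*$ is odd, so $f_*(0)=0$.
\end{itemize}
Expanding the derivative gives
\[
f_*=A'\cosh\varphi+A\varphi'\sinh\varphi,\qquad A'=\frac{a\sinh(2s)}{A}.
\]
For $s>0$ we have $A'>0$. Also $\varphi'>0$ everywhere (because $K>0$ for $a>1/2$), so $\varphi(s)>0$. Both terms are therefore strictly positive, which gives $f_*>0$ on $(0,s_0]$.

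For $f_*'(0)$, I would differentiate once more and evaluate at $s=0$, using $A'(0)=0=\varphi(0)$. This gives $f_*'(0)=A''(0)+A(0)\varphi'(0)^2$. From $AA''+(A')^2=2a\cosh(2s)$ we get $A''(0)=2a/A(0)>0$, hence $f_*'(0)>0$.

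For (iii), I would realize $f_*\cos\theta$ (up to a constant factor) as the normal component $\langle V,\nu\rangle_L$ of a Killing field of $\HH^3$. The candidate is the infinitesimal Lorentz boost in the $(x_0,x_2)$-plane, $V(x)=(x_2,0,x_0,0)$. The steps are:
\begin{enumerate}
\item Write the unit normal $\nu$ of $\Phi_a$ explicitly. It is the vector in $\R^4_1$ orthogonal to $\Phi$, $\Phi_s$ and $\Phi_\theta$, of the form $(\nu_0(s),\nu_1(s),\nu_2(s)\cos\theta,\nu_2(s)\sin\theta)$.
\item Compute $\langle V,\nu\rangle_L=(x_2\nu_0... )$, more precisely $-x_2\nu_0+x_0\nu_2\cos\theta$ evaluated on $\Phi$; it equals $g(s)\cos\theta$ for a radial profile $g$.
\item Check that $g=c\,f_*$ for a nonzero constant $c$, using $A^2-B^2=1$ and $\varphi'=K/(A^2B)$.
\item Normal components of Killing fields are Jacobi fields, so $\Lf(f_*\cos\theta)=0$.
\item In the metric $ds^2+B^2d\theta^2$ of Lemma \ref{lem:metric-mori}, $\Delta_g(h\cos\theta)=\bigl(B^{-1}(Bh')'-h/B^2\bigr)\cos\theta$. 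Substituting into $\Lf=\Delta_g+|\II|^2-2$ and multiplying by $B$ gives exactly \eqref{eq:jacobi-1}.
\end{enumerate}

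I expect step 3, the identification $g=c f_*$, to be the main obstacle, since it requires the explicit normal and some algebra with $A,B,\varphi$. If that algebra becomes unwieldy, the fallback is to verify \eqref{eq:jacobi-1} directly. One computes $f_*'$ and $f_*''$ from the formula for $f_*$ above. In the result, $\varphi''$ is expressed through $\varphi'=K/(A^2B)$, $A''$ and $B''$ through the Mori identity \eqref{eq:mori-id} and its $A$-analogue, and $|\II|^2=2K^2/B^4$ comes from Lemma \ref{lem:II-norm}. The coefficients of $\cosh\varphi$ and $\sinh\varphi$ should then vanish separately, and these are polynomial identities in $\cosh(2s)$.
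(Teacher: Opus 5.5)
Items (i) and (ii) are fine. Your value $f_*'(0)=A''(0)+A(0)\varphi'(0)^2=2a/A(0)+1/A(0)=2\sqrt{a+1/2}$ agrees with the paper's, which gets it from $f_*'=2g-(B'/B)f_*$ with $g=A\cosh\varphi$.

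The problem is step 3 of (iii). It fails for the Killing field you chose, and the reason is not unwieldy algebra. The unit normal is
\[
\nu=\Bigl(\frac{K\cosh\varphi}{A}-\frac{a\sinh(2s)\sinh\varphi}{AB},\ \frac{K\sinh\varphi}{A}-\frac{a\sinh(2s)\cosh\varphi}{AB},\ \frac{K}{B}\cos\theta,\ \frac{K}{B}\sin\theta\Bigr).
\]
For the boost $V=(x_2,0,x_0,0)$, using $A^2-B^2=1$,
\[
\langle V,\nu\rangle_L=-B\cos\theta\,\nu_0+\frac{KA\cosh\varphi}{B}\cos\theta=\Bigl(\frac{K\cosh\varphi}{AB}+\frac{a\sinh(2s)\sinh\varphi}{A}\Bigr)\cos\theta=\partial_s\bigl(A\sinh\varphi\bigr)\cos\theta .
\]
This profile is even in $s$ and equals $K/(A(0)B(0))=1$ at $s=0$. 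By your own item (ii), $f_*$ is odd with $f_*(0)=0$, so no constant $c$ with $g=cf_*$ exists.

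A parity check predicts this. The symmetry $s\mapsto -s$ of $\Sigma_a$ is induced by the ambient reflection $x_1\mapsto -x_1$. The $(x_0,x_2)$-boost commutes with that reflection, so its normal component is necessarily radially even.

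The repair is to use the rotation $V=(0,-x_2,x_1,0)$ in the $(x_1,x_2)$-plane. It anticommutes with the reflection, so its normal component is radially odd. It also fixes $p_0$, hence preserves $B^3(r(a))$, which is consistent with $f_*$ being Robin (Lemma \ref{lem:robin-fstar}). You then get
\[
\langle V,\nu\rangle_L=-B\cos\theta\,\nu_1+\frac{KA\sinh\varphi}{B}\cos\theta=\Bigl(\frac{a\sinh(2s)\cosh\varphi}{A}+\frac{K\sinh\varphi}{AB}\Bigr)\cos\theta=f_*\cos\theta
\]
exactly, with $c=1$, and your steps 4--5 go through unchanged.

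Your brute-force fallback is also sound in principle: the coefficients of $\cosh\varphi$ and $\sinh\varphi$ do vanish separately. As written, though, it is only a sketch. The paper avoids both the normal and the brute force. It takes the mode-$0$ equation $g''+(B'/B)g'-2g=0$ for $g=A\cosh\varphi$ (this is $\Delta_g\Phi^0=2\Phi^0$, Lemma \ref{lem:LSigma-PhiA}) and differentiates it once in $s$. The resulting equation for $f_*=g'$ coincides with \eqref{eq:jacobi-1} because $(B'/B)'=|\II|^2-1/B^2$, which is just the Mori identity \eqref{eq:mori-id}.
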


\begin{proof}
For (i), by the definition of $r$ we have $\cosh r(s)=A(s)\cosh\varphi(s)$. Differentiating gives $\sinh r\cdot r'=\partial_s(A\cosh\varphi)=:f_*$.

For (ii), a direct computation yields
\[
f_*=A'\cosh\varphi+A\varphi'\sinh\varphi=\frac{a\sinh(2s)}{A}\cosh\varphi+\frac{K}{AB}\sinh\varphi=\frac{aB\sinh(2s)\cosh\varphi+K\sinh\varphi}{AB}.
\]
The functions $A,B$ are even in $s$, while $\sinh(2s)$ and $\sinh\varphi$ are odd, and $\cosh\varphi$ is even. Hence the numerator is odd, $f_*$ is odd, and $f_*(0)=0$. Moreover, for $s\in(0,s_0]$ every term in the numerator is strictly positive: $\sinh(2s)>0$, $\cosh\varphi\geq 1$, and $\sinh\varphi(s)>0$ since $\varphi(0)=0$ and $\varphi'=K/(A^2B)>0$; hence $f_*(s)>0$. This justifies, in particular, the claim of Remark \ref{rem:regularity-h} that $f_*$ does not vanish on $(0,s_0)$.

For $f_*'(0)$, recall that $g(s):=A(s)\cosh\varphi(s)$ satisfies $g''+(B'/B)g'-2g=0$ on $(-s_0,s_0)$; this equation is proved in Lemma \ref{lem:LSigma-PhiA} and uses only algebraic identities on $A,B,\varphi$, independent of Lemma \ref{lem:f-star}. Since $f_*=g'$, we have $f_*'=g''=2g-(B'/B)g'=2g-(B'/B)f_*$. At $s=0$, $A(0)=\sqrt{a+1/2}>0$ and $\cosh\varphi(0)=1$, so $g(0)=\sqrt{a+1/2}$; moreover $B'(0)=2a\sinh(0)/B(0)=0$. Therefore
\[
f_*'(0)=2g(0)-0\cdot f_*(0)=2\sqrt{a+1/2}>0.
\]

For (iii), setting $g(s):=A(s)\cosh\varphi(s)$, we have $\Lf g=|\II|^2 g$ (Lemma \ref{lem:LSigma-PhiA}), which, restricted to mode $0$, gives $g''+(B'/B)g'+(|\II|^2-2)g=|\II|^2 g$, namely $g''+(B'/B)g'-2g=0$. Differentiating in $s$ and using $f_*=g'$,
\[
f_*''+(B'/B)f_*'+\bigl((B'/B)'-2\bigr)f_*=0.
\]
Comparing with \eqref{eq:jacobi-1} rewritten as $f_*''+(B'/B)f_*'+(|\II|^2-2-1/B^2)f_*=0$, the equivalence of the two equations is equivalent to $(B'/B)'-2=|\II|^2-2-1/B^2$, that is, $(B'/B)'=|\II|^2-1/B^2$. From $(B'/B)'=B''/B-(B'/B)^2$ and $|\II|^2=2B''/B-2$ (from the proof of Lemma \ref{lem:II-norm}), we verify
\[
B''/B-(B'/B)^2=2B''/B-2-1/B^2\iff B''/B=-(B'/B)^2+2+1/B^2\iff BB''=-(B')^2+2B^2+1,
\]
which is the Mori identity \eqref{eq:mori-id}.
\end{proof}

\begin{lemma}[$\Lf$ on the ambient coordinates]\label{lem:LSigma-PhiA}
Denote by $\Phi^A_a:\Sigma_a\to\R$, $A=0,1,2,3$, the restrictions to $\Sigma_a$ of the Cartesian Lorentz coordinates of $\R^4_1$. Then
\begin{equation}\label{eq:phi-eigenvalue}
\Lf\Phi^A_a=|\II|^2\Phi^A_a\qquad(A=0,1,2,3).
\end{equation}
\end{lemma}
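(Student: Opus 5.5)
The plan is to prove the identity $\Delta_g\Phi^A_a=2\Phi^A_a$ for each coordinate. Then \eqref{eq:phi-eigenvalue} follows at once from $\Lf=\Delta_g+|\II|^2-2$:
\[
\Lf\Phi^A_a=2\Phi^A_a+(|\II|^2-2)\Phi^A_a=|\II|^2\Phi^A_a.
\]
I would give two arguments: a short intrinsic one, followed by an explicit coordinate check using only the identities of Lemma \ref{lem:metric-mori}. The coordinate check is the one the later sections rely on, since the proof of Lemma \ref{lem:f-star} needs the mode-$0$ equation $g''+(B'/B)g'-2g=0$ for $g=A\cosh\varphi$.

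\emph{Intrinsic argument.} Let $X:\Sigma_a\to\R^4_1$ be the position vector. For any isometric immersion of a surface into $\R^4_1$, the vector Laplacian $\Delta_g X$ equals the mean curvature vector of $\Sigma_a$ in $\R^4_1$. This vector splits into two parts:
\begin{itemize}
\item the mean curvature vector of $\Sigma_a$ in $\HH^3$, which vanishes by minimality;
\item the contribution of the second fundamental form of $\HH^3\subset\R^4_1$.
\end{itemize}
The hyperboloid $\{\langle x,x\rangle_L=-1\}$ is umbilic with unit normal $X$. Its second fundamental form is $\mathrm{II}^{\HH^3}(v,w)=\langle v,w\rangle_L X$, which follows from $D_vX=v$ and $\langle X,X\rangle_L=-1$. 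Tracing over the two tangent directions of $\Sigma_a$ gives $\Delta_g X=2X$. Reading this componentwise yields $\Delta_g\Phi^A_a=2\Phi^A_a$. The only delicate point is the sign, which the Lorentz signature fixes as $+2$, the opposite of the spherical case $\Delta X=-2X$.

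\emph{Coordinate check.} By Lemma \ref{lem:metric-mori}, the metric is $ds^2+B^2d\theta^2$, so
\[
\Delta_g=\partial_s^2+(B'/B)\partial_s+B^{-2}\partial_\theta^2.
\]
\begin{itemize}
\item For $\Phi^2_a=B\cos\theta$ and $\Phi^3_a=B\sin\theta$, the Mori identity \eqref{eq:mori-id} gives
\[
B''+\frac{(B')^2}{B}-\frac1B=\frac{BB''+(B')^2-1}{B}=2B,
\]
as required.
\item For $\Phi^0_a=A\cosh\varphi$ and $\Phi^1_a=A\sinh\varphi$, I would work with the combinations $A e^{\pm\varphi}$. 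Their derivatives are $(A'\pm A\varphi')e^{\pm\varphi}$, and the target is $(A e^{\pm\varphi})''+(B'/B)(A e^{\pm\varphi})'=2Ae^{\pm\varphi}$. This reduces to the scalar identities
\[
A''+A(\varphi')^2+\frac{B'}{B}A'=2A,\qquad 2A'\varphi'+A\varphi''+\frac{B'}{B}A\varphi'=0.
\]
The second identity is $(A^2B\varphi')'=0$, which holds by $\varphi'=K/(A^2B)$. The first follows by differentiating $AA'=a\sinh(2s)$ and using $A^2-B^2=1$, $(A')^2=a^2\sinh^2(2s)/A^2$, $BB'=a\sinh(2s)$ and $K^2+a^2\sinh^2(2s)=A^2B^2$.
\end{itemize}
I expect this last algebraic verification to be the only real obstacle, and it is routine bookkeeping. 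Concretely, $AA''=2a\cosh(2s)-(A')^2$, so after multiplying by $A$ the first identity becomes
\[
2a\cosh(2s)-(A')^2+\frac{K^2}{A^2B^2}+\frac{a^2\sinh^2(2s)}{B^2}=2A^2.
\]
Using $K^2+a^2\sinh^2(2s)=A^2B^2$ (already used in Lemma \ref{lem:metric-mori}), the three middle terms sum to $1$, exactly as in the computation $g_{ss}=1$. The identity then reads $2a\cosh(2s)+1=2A^2$, which is the definition \eqref{eq:ABK} of $A^2$.
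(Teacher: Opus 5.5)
Your coordinate check is essentially the paper's proof. The paper groups by the coefficients of $\cosh\varphi$ and $\sinh\varphi$, and your $Ae^{\pm\varphi}$ combinations reproduce exactly those two brackets. Your $(A^2B\varphi')'=0$ and your use of the $g_{ss}=1$ combination are precisely its two scalar verifications.

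The intrinsic argument is a genuinely different route, and it is correct, sign included. The identity $\II^{\HH^3}(v,w)=\langle v,w\rangle_L X$ follows from $\langle D_vW,X\rangle_L=-\langle v,w\rangle_L$ and $\langle X,X\rangle_L=-1$. Hence the trace of the second fundamental form of $\Sigma_a\subset\R^4_1$ is $\vec H_{\Sigma_a\subset\HH^3}+2X$, which gives $\Delta_g X=2X$ with the paper's $+\mathrm{div}_g\nabla$ convention.

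The two routes are complementary:
\begin{itemize}
\item The intrinsic one takes the minimality of $\Sigma_a$ as input and needs no formulas for $A,B,\varphi$. It shows the identity holds for every minimal surface in $\HH^3$, uniformly for the timelike and spacelike coordinates.
\item The coordinate one uses only $\varphi'=K/(A^2B)$ and the algebra of $A,B$. Since $\Delta_gX-2X$ is exactly the mean curvature vector of $\Sigma_a$ in $\HH^3$, it in effect re-proves the minimality of the Mori parametrization, which the paper otherwise imports from the literature.
\end{itemize}

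One correction to your framing: the intrinsic argument alone also serves Lemma \ref{lem:f-star}, so the coordinate check is not needed for that purpose. With $g=ds^2+B^2d\theta^2$ (Lemma \ref{lem:metric-mori}), the identity $\Delta_g\Phi^0_a=2\Phi^0_a$ for the $\theta$-independent $\Phi^0_a$ is literally $f''+(B'/B)f'-2f=0$ with $f=A\cosh\varphi$. It does not depend on Lemma \ref{lem:f-star} any more than the coordinate check does.
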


\begin{proof}
We show directly that each $\Phi^A_a$ satisfies $\Delta_g\Phi^A_a=2\Phi^A_a$ in the induced metric $g=ds^2+B(s)^2 d\theta^2$; it then follows immediately that $\Lf\Phi^A_a=2\Phi^A_a+(|\II|^2-2)\Phi^A_a=|\II|^2\Phi^A_a$.

Consider first mode $|k|=1$, with $\Phi^2_a=B(s)\cos\theta$. We have
\[
\Delta_g(B\cos\theta)=B''\cos\theta+(B'/B)B'\cos\theta-\frac{1}{B^2}B\cos\theta=\cos\theta\Bigl[B''+\frac{(B')^2}{B}-\frac{1}{B}\Bigr].
\]
The condition $\Delta_g(B\cos\theta)=2B\cos\theta$ is equivalent to $BB''+(B')^2-1=2B^2$, namely the Mori identity \eqref{eq:mori-id}. The argument is identical for $\Phi^3_a=B(s)\sin\theta$.

We now turn to mode $|k|=0$, with $\Phi^0_a=A\cosh\varphi$ and $\Phi^1_a=A\sinh\varphi$. In mode $0$, $\Delta_g f=f''+(B'/B)f'$. We must verify that $f''+(B'/B)f'-2f=0$ for $f\in\{A\cosh\varphi,A\sinh\varphi\}$. Expanding $f''+(B'/B)f'-2f$ and grouping the terms in $\cosh\varphi$ and $\sinh\varphi$ (with symmetric computations in the two cases),
\begin{align*}
f''+(B'/B)f'-2f&=\cosh\varphi\bigl[A''+A(\varphi')^2+(B'/B)A'-2A\bigr]\\
&\quad+\sinh\varphi\bigl[2A'\varphi'+A\varphi''+(B'/B)A\varphi'\bigr]\quad\text{(for $f=A\cosh\varphi$)},
\end{align*}
with the roles of $\cosh\varphi$ and $\sinh\varphi$ exchanged for $f=A\sinh\varphi$. We show that both coefficients vanish.

For the coefficient in $\sinh\varphi$, the defining relation $A^2\varphi'=K/B$ (cf. \eqref{eq:phi-immersion} and the minimality condition) gives, upon differentiation, $(A^2\varphi')'=-KB'/B^2=-(B'/B)\,A^2\varphi'$. Expanding, $2AA'\varphi'+A^2\varphi''=-(B'/B)A^2\varphi'$. Dividing by $A$,
\[
2A'\varphi'+A\varphi''+(B'/B)A\varphi'=0.
\]

For the coefficient in $\cosh\varphi$, from $A^2=a\cosh(2s)+1/2$ we have $2AA'=2a\sinh(2s)$, and therefore $2(A')^2+2AA''=4a\cosh(2s)=2(2A^2-1)$, that is,
\[
A''=2A-\frac{1}{A}-\frac{(A')^2}{A}.
\]
Substituting into the coefficient $A''+A(\varphi')^2+(B'/B)A'-2A$:
\[
=-\frac{1}{A}-\frac{(A')^2}{A}+A(\varphi')^2+(B'/B)A'.
\]
Using $(A')^2=a^2\sinh^2(2s)/A^2$, $(\varphi')^2=K^2/(A^4B^2)$, $(B'/B)A'=a^2\sinh^2(2s)/(AB^2)$:
\[
=\frac{1}{A}\left[-1-\frac{a^2\sinh^2(2s)}{A^2}+\frac{K^2}{A^2B^2}+\frac{a^2\sinh^2(2s)}{B^2}\right].
\]
We compute the term in brackets. Using $A^2-B^2=1$,
\[
\frac{a^2\sinh^2(2s)}{B^2}-\frac{a^2\sinh^2(2s)}{A^2}=\frac{a^2\sinh^2(2s)(A^2-B^2)}{A^2B^2}=\frac{a^2\sinh^2(2s)}{A^2B^2}.
\]
Thus the bracket becomes
\[
-1+\frac{a^2\sinh^2(2s)+K^2}{A^2B^2}.
\]
By Lemma \ref{lem:metric-mori}, $a^2\sinh^2(2s)+K^2=A^2B^2$, whence $-1+1=0$. Both coefficients vanish, and therefore $f''+(B'/B)f'-2f=0$ for $f=A\cosh\varphi$. The verification for $f=A\sinh\varphi$ is identical by symmetry of the algebraic structure.

In each case, $\Delta_g\Phi^A_a=2\Phi^A_a$, and the conclusion follows.
\end{proof}

\begin{lemma}[Geometric identity $\coth r(a)=B'(s_0)/B(s_0)$]\label{lem:coth-identity}
At the boundary point $s=s_0$,
\begin{equation}\label{eq:coth-id}
\coth r(a)=\frac{B'(s_0)}{B(s_0)}=\frac{a\sinh(2s_0)}{B(s_0)^2}.
\end{equation}
\end{lemma}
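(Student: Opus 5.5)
The plan is to verify \eqref{eq:coth-id} directly from the free boundary condition \eqref{eq:fbc}, using only the algebraic identities already established for $A$, $B$, $K$. The second equality in \eqref{eq:coth-id} is immediate: differentiating $B^2=a\cosh(2s)-\tfrac12$ gives $B'=a\sinh(2s)/B$, so $B'(s_0)/B(s_0)=a\sinh(2s_0)/B(s_0)^2$. The content of the lemma is therefore the first equality, and I would prove its squared form and then fix signs.

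\textbf{Step 1: express $\coth r(a)$ through $t:=\tanh\varphi(s_0)$.} At the boundary point we have $\cosh r(a)=A(s_0)\cosh\varphi(s_0)$, by the definition of $r$ and \eqref{eq:phi-immersion}. Writing $A=A(s_0)$, $B=B(s_0)$ and $\cosh^2\varphi=1/(1-t^2)$, we get $\cosh^2 r=A^2/(1-t^2)$. Using $A^2-B^2=1$,
\[
\sinh^2 r=\cosh^2r-1=\frac{A^2-1+t^2}{1-t^2}=\frac{B^2+t^2}{1-t^2}.
\]
Hence
\[
\coth^2 r(a)=\frac{A^2}{B^2+t^2}.
\]

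\textbf{Step 2: insert the FBMS condition.} By \eqref{eq:fbc}, $t^2=B^2K^2/(a^2\sinh^2(2s_0))$. The identity $a^2\sinh^2(2s)+K^2=A^2B^2$ from the proof of Lemma~\ref{lem:metric-mori} then gives
\[
B^2+t^2=\frac{B^2\bigl(a^2\sinh^2(2s_0)+K^2\bigr)}{a^2\sinh^2(2s_0)}=\frac{A^2B^4}{a^2\sinh^2(2s_0)}.
\]
Substituting into Step 1 yields $\coth^2 r(a)=a^2\sinh^2(2s_0)/B^4$.

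\textbf{Step 3: take square roots.} Since $s_0>0$, we have $\sinh(2s_0)>0$. Since $r(a)>0$, we also have $\coth r(a)>0$. Taking positive square roots gives \eqref{eq:coth-id}.

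There is no real obstacle here. The only points needing care are using the correct sign of $\tanh\varphi(s_0)$ (only $t^2$ enters, so this is harmless) and remembering $A^2-B^2=1$ in Step 1. One could alternatively argue geometrically, since the free boundary condition states that the conormal is parallel to $\nabla r$ at $s_0$. However, the algebraic route above is shorter and self-contained.
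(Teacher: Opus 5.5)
Your proof is correct and follows essentially the same route as the paper: both use $\cosh r(a)=A(s_0)\cosh\varphi(s_0)$, eliminate $\varphi(s_0)$ via the free boundary condition \eqref{eq:fbc} and the identity $a^2\sinh^2(2s_0)+K^2=A^2B^2$, and then take positive square roots. The only difference is bookkeeping. The paper derives the separate formulas $\sinh^2 r=B^4/(B^2-K^2)$ and $\cosh^2 r=a^2\sinh^2(2s_0)/(B^2-K^2)$, which it reuses later (e.g.\ in Proposition~\ref{prop:geometric-identity}), whereas you compute $\coth^2 r$ directly in terms of $t=\tanh\varphi(s_0)$.
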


\begin{proof}
From the FBMS condition \eqref{eq:fbc}, a direct computation based on $\cosh^2\varphi-\sinh^2\varphi=1$ yields
\[
\sinh^2\varphi(s_0)=\frac{B^2K^2}{a^2\sinh^2(2s_0)-B^2K^2},\qquad\cosh^2\varphi(s_0)=\frac{a^2\sinh^2(2s_0)}{a^2\sinh^2(2s_0)-B^2K^2}.
\]
Setting $D:=a^2\sinh^2(2s_0)-B^2K^2$ and using $a^2\sinh^2(2s_0)+K^2=A^2B^2$ (Lemma \ref{lem:metric-mori}),
\[
D=A^2B^2-K^2-B^2K^2=A^2B^2-K^2(1+B^2)=A^2B^2-K^2 A^2=A^2(B^2-K^2).
\]
From $\sinh^2 r=A^2\cosh^2\varphi-1$,
\[
\sinh^2 r=\frac{A^2 a^2\sinh^2(2s_0)-D}{D}=\frac{(A^2-1)a^2\sinh^2(2s_0)+B^2K^2}{D}=\frac{B^2(a^2\sinh^2(2s_0)+K^2)}{D}=
\]
\[ = \frac{B^2\cdot A^2B^2}{A^2(B^2-K^2)}=\frac{B^4}{B^2-K^2}.
\]
\medskip

\noindent Analogously, $\cosh^2 r=A^2\cosh^2\varphi=A^2 a^2\sinh^2(2s_0)/D=a^2\sinh^2(2s_0)/(B^2-K^2)$. Therefore
\[
\coth^2 r=\frac{a^2\sinh^2(2s_0)}{B^4}=\Bigl(\frac{B'(s_0)}{B(s_0)}\Bigr)^2.
\]
Taking positive roots (both sides are positive for $s_0>0$ and $r(a)>0$), we obtain \eqref{eq:coth-id}.
\end{proof}

\begin{remark}\label{rem:coth-id-importance}
The identity \eqref{eq:coth-id} is an analytic consequence of the FBC, not emphasized in \cite{Pigazzini}, but it will play a central role in all the verifications of Robin BC that follow.
\end{remark}

\begin{lemma}[Robin BC for $f_*$]\label{lem:robin-fstar}
The field $f_*$ satisfies the Robin condition of mode $|k|=1$, namely $f_*'(s_0)=\coth r(a)\,f_*(s_0)$ and $f_*'(-s_0)=-\coth r(a)\,f_*(-s_0)$.
\end{lemma}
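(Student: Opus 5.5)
The plan is to reduce the Robin condition to the single identity $r'(s_0)=1$ and then check that identity algebraically from the FBC. By Lemma \ref{lem:f-star}, $f_*=g'$ with $g=A\cosh\varphi=\cosh r$. The proof of that lemma also gives the first-order relation
\[
f_*'=2g-\frac{B'}{B}f_* .
\]
At $s=s_0$, Lemma \ref{lem:coth-identity} gives $B'(s_0)/B(s_0)=\coth r(a)$. So the condition $f_*'(s_0)=\coth r(a)\,f_*(s_0)$ is equivalent to
\[
2\cosh r(a)=2\coth r(a)\,f_*(s_0),\qquad\text{i.e.}\qquad f_*(s_0)=\sinh r(a).
\]
Since $f_*=\sinh r\cdot r'$, this is exactly $r'(s_0)=1$. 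Geometrically, this says that the unit-speed meridian (recall $g_{ss}=1$) meets $\partial B^3(r(a))$ orthogonally. That is the content of the free boundary condition, but I will verify it directly by computation.

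\textbf{Computing $f_*(s_0)$.} Start from the explicit formula in Lemma \ref{lem:f-star}(ii):
\[
f_*=\frac{aB\sinh(2s)\cosh\varphi+K\sinh\varphi}{AB}.
\]
Substitute the values of $\cosh\varphi(s_0)$ and $\sinh\varphi(s_0)$ from the proof of Lemma \ref{lem:coth-identity}. Taking positive roots, which is legitimate because $\varphi(s_0)>0$, these are
\[
\cosh\varphi(s_0)=\frac{a\sinh(2s_0)}{\sqrt D},\qquad \sinh\varphi(s_0)=\frac{BK}{\sqrt D},\qquad \sqrt D=A\sqrt{B^2-K^2}.
\]
The numerator then becomes $B\bigl(a^2\sinh^2(2s_0)+K^2\bigr)/\sqrt D$. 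By Lemma \ref{lem:metric-mori}, $a^2\sinh^2(2s_0)+K^2=A^2B^2$, so the numerator equals $B\cdot A^2B^2/\sqrt D$. Dividing by $AB$ and inserting $\sqrt D=A\sqrt{B^2-K^2}$ gives
\[
f_*(s_0)=\frac{B^2}{\sqrt{B^2-K^2}}.
\]
This is exactly the positive root of $\sinh^2 r(a)=B^4/(B^2-K^2)$ obtained in Lemma \ref{lem:coth-identity}, so $f_*(s_0)=\sinh r(a)$. Together with the reduction above, this proves the condition at $s_0$.

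\textbf{The endpoint $-s_0$.} Here I use parity. By Lemma \ref{lem:f-star}(ii), $f_*$ is odd, so $f_*'$ is even. Hence
\[
f_*'(-s_0)=f_*'(s_0)=\coth r(a)\,f_*(s_0)=-\coth r(a)\,f_*(-s_0).
\]
This is the Robin condition with respect to the outward conormal $-\partial_s$ at $s=-s_0$.

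The only delicate point is the sign bookkeeping: choosing the positive roots for $\cosh\varphi(s_0)$, $\sinh\varphi(s_0)$ and $\sinh r(a)$, and orienting the conormal correctly at each endpoint. The rest is the algebra already carried out in Lemma \ref{lem:coth-identity}.
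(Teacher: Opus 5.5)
Your proof is correct and follows essentially the same route as the paper: the relation $f_*'=2g-(B'/B)f_*$, the identity $B'(s_0)/B(s_0)=\coth r(a)$, the values $g(s_0)=\cosh r(a)$ and $f_*(s_0)=\sinh r(a)$, and oddness of $f_*$ at the endpoint $-s_0$. Your explicit computation $f_*(s_0)=B(s_0)^2/\sqrt{B(s_0)^2-K^2}$ from the FBC is a genuine improvement: the paper cites Lemma~\ref{lem:f-star}(i) for $f_*(s_0)=\sinh r(a)$, but that lemma only gives $f_*(s_0)=\sinh r(a)\,r'(s_0)$, so the paper leaves implicit the fact $r'(s_0)=1$ (the unit-speed meridian meets the boundary sphere orthogonally), which you verify directly.
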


\begin{proof}
Since $f_*=g'$ with $g=A\cosh\varphi$ satisfying $g''+(B'/B)g'-2g=0$ (Lemma \ref{lem:f-star}(iii)),
\[
f_*'(s_0)=g''(s_0)=2g(s_0)-\frac{B'(s_0)}{B(s_0)}f_*(s_0).
\]
By \eqref{eq:coth-id}, $B'(s_0)/B(s_0)=\coth r(a)$. Moreover, $g(s_0)=\cosh r(a)$ and $f_*(s_0)=\sinh r(a)$ by Lemma \ref{lem:f-star}(i). Hence
\[
f_*'(s_0)=2\cosh r-\coth r\cdot\sinh r=2\cosh r-\cosh r=\cosh r=\coth r\cdot\sinh r=\coth r\cdot f_*(s_0).
\]
The property at $s=-s_0$ follows by the oddness of $f_*$ and the evenness of $f_*'$.
\end{proof}

\section{Picone identity with base $f_*$}\label{sec:picone}

\begin{lemma}[Weighted Jacobi equation]\label{lem:weighted-jacobi}
Let $W_k(s):=|\II|^2(s)-2-k^2/B(s)^2$. Then $f_*$ satisfies the self-adjoint form
\begin{equation}\label{eq:weighted-jacobi}
(B f_*')'+B\,W_1\,f_*=0\quad\text{on }(-s_0,s_0).
\end{equation}
\end{lemma}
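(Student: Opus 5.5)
The plan is to obtain \eqref{eq:weighted-jacobi} from Lemma~\ref{lem:f-star}(iii); it is essentially a notational rewriting of that result. With $k=1$ the definition gives $W_1=|\II|^2-2-1/B^2$. Substituting this into \eqref{eq:jacobi-1} yields
\[
(Bf_*')'+B\,W_1\,f_*=0
\]
on $(-s_0,s_0)$, which is exactly the claim. If one prefers a self-contained argument, the steps are as follows.

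First, start from the mode-$0$ equation $g''+(B'/B)g'-2g=0$ for $g=A\cosh\varphi$, which is Lemma~\ref{lem:LSigma-PhiA}. Differentiate it once and set $f_*=g'$. This gives
\[
f_*''+(B'/B)f_*'+\bigl((B'/B)'-2\bigr)f_*=0.
\]

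Second, multiply through by $B>0$; positivity holds because $B^2=a\cosh 2s-\tfrac12\ge a-\tfrac12>0$. Since $Bf_*''+B'f_*'=(Bf_*')'$, the equation becomes the divergence form
\[
(Bf_*')'+B\bigl((B'/B)'-2\bigr)f_*=0.
\]

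Third, identify the potential: one needs $(B'/B)'=|\II|^2-1/B^2$. Using $|\II|^2=2B''/B-2$ from the proof of Lemma~\ref{lem:II-norm}, this identity is equivalent to the Mori identity \eqref{eq:mori-id}. That verification was already carried out in the proof of Lemma~\ref{lem:f-star}(iii).

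No step is a genuine obstacle. The only points needing care are that $B$ stays strictly positive, so the coefficients are smooth on the whole interval $[-s_0,s_0]$, and that the sign convention for $W_k$ matches the potential in \eqref{eq:S-form}. The self-adjoint form with weight $B$ arises because the radial part of $\Delta_g$ in the metric $ds^2+B^2d\theta^2$ is $B^{-1}\partial_s(B\partial_s)$. The mode-$k$ angular term contributes $-k^2/B^2$, so $BW_k$ is precisely the radial potential of $\Sf_k^{\mathrm{rad}}$ with area weight $B\,ds$.
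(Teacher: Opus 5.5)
Your proposal is correct and follows the paper's route: the lemma is \eqref{eq:jacobi-1} with $W_1=|\II|^2-2-1/B^2$ substituted, and the paper's proof only expands $(Bf_*')'=Bf_*''+B'f_*'$ and factors out $B$ to match the non-divergence form derived in Lemma~\ref{lem:f-star}(iii). Your optional self-contained derivation (differentiating the mode-$0$ equation for $A\cosh\varphi$, multiplying by $B>0$, and identifying $(B'/B)'=|\II|^2-1/B^2$ via the Mori identity) is the same argument as the paper's proof of Lemma~\ref{lem:f-star}(iii).
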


\begin{proof}
This is a direct consequence of \eqref{eq:jacobi-1}:
\[
(Bf_*')'+B(|\II|^2-2-1/B^2)f_*=Bf_*''+B'f_*'+B(|\II|^2-2-1/B^2)f_*=B[f_*''+(B'/B)f_*'+(|\II|^2-2-1/B^2)f_*]=0.\qedhere
\]
\end{proof}

\begin{lemma}[Radial quadratic form of mode $k$]\label{lem:S-rad}
For every $u\in C^2([-s_0,s_0])$ and $k\in\Z$, define
\begin{equation}\label{eq:Skrad}
\Sf_k^{\mathrm{rad}}(u,u):=\int_{-s_0}^{s_0}\bigl[B(u')^2-BW_k u^2\bigr]ds-\coth r(a)\,B(s_0)\bigl[u(s_0)^2+u(-s_0)^2\bigr],
\end{equation}
where $W_k(s)=|\II|^2(s)-2-k^2/B(s)^2$. Setting $\widetilde u(s,\theta):=u(s)\Theta_k(\theta)$, with $\Theta_0\equiv 1$ and $\Theta_k(\theta)\in\{\cos k\theta,\sin k\theta\}$ for $|k|\geq 1$, we have
\[
\Sf(\widetilde u,\widetilde u)=c_k\,\Sf_k^{\mathrm{rad}}(u,u),\qquad c_0=2\pi,\;c_{|k|\geq 1}=\pi,
\]
so that the forms $\Sf$ restricted to mode $k$ and $\Sf_k^{\mathrm{rad}}$ share the same eigenvalues (the global factors $c_k$ do not alter the spectrum).
\end{lemma}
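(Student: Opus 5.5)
The plan is to compute $\Sf(\widetilde u,\widetilde u)$ directly in the coordinates $(s,\theta)$, using the induced metric $g=ds^2+B(s)^2d\theta^2$ from Lemma \ref{lem:metric-mori}. In these coordinates the area element is $dA=B(s)\,ds\,d\theta$ and $|\nabla \widetilde u|_g^2=(\partial_s\widetilde u)^2+B^{-2}(\partial_\theta\widetilde u)^2$. For $\widetilde u=u(s)\Theta_k(\theta)$ this gives
\[
|\nabla\widetilde u|_g^2=(u')^2\Theta_k^2+\frac{k^2u^2}{B^2}(\Theta_k')^2/k^2\cdot k^2\Big/k^2,
\]
that is, $(u')^2\Theta_k^2+B^{-2}u^2(\Theta_k')^2$ with $(\Theta_k')^2=k^2\sin^2 k\theta$ or $k^2\cos^2k\theta$. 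Integrating in $\theta$ over $[0,2\pi)$, one has $\int\Theta_k^2=\int(\Theta_k')^2/k^2=\pi$ for $|k|\geq1$ and $\int\Theta_0^2=2\pi$. The $\theta'$-term vanishes for $k=0$, consistent with $k^2=0$. The interior integral therefore becomes
\[
c_k\int_{-s_0}^{s_0}\Bigl[B(u')^2+\frac{k^2}{B}u^2-B(|\II|^2-2)u^2\Bigr]ds
=c_k\int_{-s_0}^{s_0}\bigl[B(u')^2-BW_ku^2\bigr]ds,
\]
where I use that $|\II|^2$ depends only on $s$ (Lemma \ref{lem:II-norm}) and the definition of $W_k$.

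For the boundary term, $\partial\Sigma_a$ consists of the two circles $s=\pm s_0$. The induced length element on each is $B(\pm s_0)\,d\theta=B(s_0)\,d\theta$, since $B$ is even. Hence
\[
\int_{\partial\Sigma}\widetilde u^2\,dL=B(s_0)\bigl[u(s_0)^2+u(-s_0)^2\bigr]\int_0^{2\pi}\Theta_k^2\,d\theta=c_kB(s_0)\bigl[u(s_0)^2+u(-s_0)^2\bigr].
\]
Multiplying by $-\coth r(a)$ and adding the interior contribution gives $\Sf(\widetilde u,\widetilde u)=c_k\Sf_k^{\mathrm{rad}}(u,u)$.

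For the spectral statement, I will note that the Fourier modes are $\Sf$-orthogonal and $L^2(dA)$-orthogonal. The cross terms in $\theta$ integrate to zero because every coefficient of the form depends only on $s$. The $L^2$ norm also factors as $c_k\int_{-s_0}^{s_0} Bu^2\,ds$. The Rayleigh quotients of $\Sf$ on mode $k$ and of $\Sf_k^{\mathrm{rad}}$ with respect to the weight $B\,ds$ therefore coincide, because the factor $c_k$ cancels. By min–max, the two forms share the same eigenvalues.

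This is routine. The only points needing care are the constants $c_k$ and checking that the boundary length element carries the factor $B(s_0)$.
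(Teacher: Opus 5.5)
Your proposal is correct and is essentially the paper's proof. Both arguments compute $\Sf(\widetilde u,\widetilde u)$ directly in $(s,\theta)$ using $dA=B\,ds\,d\theta$, $dL=B(s_0)\,d\theta$, $\int_0^{2\pi}\Theta_k^2\,d\theta=c_k$ and $\int_0^{2\pi}(\Theta_k')^2\,d\theta=k^2c_k$; your Rayleigh-quotient remark (the factor $c_k$ multiplies both $\Sf$ and the $L^2(dA)$ norm, so it cancels) simply makes explicit what the paper states parenthetically. The only blemish is your first display for $|\nabla\widetilde u|_g^2$, which is needlessly convoluted and undefined at $k=0$; write $(u')^2\Theta_k^2+B^{-2}u^2(\Theta_k')^2$ directly, as you do in the following line.
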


\begin{proof}
For $\widetilde u(s,\theta)=u(s)\Theta_k(\theta)$, $|\nabla\widetilde u|_g^2=(u')^2\Theta_k^2+(u^2/B^2)(\Theta_k')^2$. From $\int_0^{2\pi}\Theta_k^2\,d\theta=c_k$ (with $c_0=2\pi$, $c_{|k|\geq 1}=\pi$) and $\int_0^{2\pi}(\Theta_k')^2\,d\theta=k^2 c_k$, we obtain, using $dA=B\,ds\,d\theta$ and $dL=B(s_0)\,d\theta$ at $s=\pm s_0$,
\begin{align*}
\Sf(\widetilde u,\widetilde u)&=\int_0^{2\pi}\!\!\int_{-s_0}^{s_0}\Bigl((u')^2\Theta_k^2+\frac{k^2}{B^2}u^2\Theta_k^2-(|\II|^2-2)u^2\Theta_k^2\Bigr)B\,ds\,d\theta\\
&\quad-\coth r(a)\int_0^{2\pi}\bigl(u(s_0)^2+u(-s_0)^2\bigr)B(s_0)\,\Theta_k^2\,d\theta\\
&=c_k\int_{-s_0}^{s_0}\Bigl[B(u')^2+\frac{k^2}{B}u^2-B(|\II|^2-2)u^2\Bigr]ds-c_k\coth r(a)B(s_0)\bigl[u(s_0)^2+u(-s_0)^2\bigr]\\
&=c_k\,\Sf_k^{\mathrm{rad}}(u,u).\qedhere
\end{align*}
\end{proof}

We now prove Theorem \ref{thm:picone-main} from the Introduction, in the slightly more general form below (Theorem \ref{thm:picone-detailed}), which also explicitly addresses the regularity of $h$.

\begin{theorem}[Picone identity with base $f_*$; detailed form of Theorem \ref{thm:picone-main}]\label{thm:picone-detailed}
Let $u\in C^1([-s_0,s_0])$ with $u(0)=0$, and set $h=u/f_*$ (extended by continuity at $0$ to $u'(0)/f_*'(0)$). Then $h\in C^0([-s_0,s_0])$, and $h\in C^1$ if $u\in C^2$. We have
\begin{equation}\label{eq:picone}
\Sf_k^{\mathrm{rad}}(f_*\,h,f_*\,h)=(k^2-1)\int_{-s_0}^{s_0}\frac{f_*^2}{B}h^2\,ds+\int_{-s_0}^{s_0}B\,f_*^2(h')^2\,ds.
\end{equation}
\end{theorem}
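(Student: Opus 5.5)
The argument is the standard Picone/ground-state computation with base $f_*$. We substitute $u=f_*h$ into \eqref{eq:Skrad}, integrate the cross term by parts, and use the weighted Jacobi equation (Lemma \ref{lem:weighted-jacobi}) together with the Robin condition for $f_*$ (Lemma \ref{lem:robin-fstar}) to absorb the potential and the boundary terms.

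\textbf{Regularity.} By Lemma \ref{lem:f-star}(ii), $f_*$ is odd with $f_*'(0)=2\sqrt{a+1/2}\neq 0$, and $f_*\neq 0$ on $[-s_0,s_0]\setminus\{0\}$. Away from $0$, $h=u/f_*$ is therefore as smooth as $u$. Near $0$ we write $u(s)=\int_0^1 u'(ts)\,dt\cdot s$ and $f_*(s)=\int_0^1 f_*'(ts)\,dt\cdot s$. The quotient of the two integrals is continuous, with value $u'(0)/f_*'(0)$ at $s=0$, because the denominator is nonzero near $0$. If $u\in C^2$, the integral $\int_0^1u'(ts)\,dt$ is $C^1$, and since $f_*$ is smooth, $h$ is $C^1$ on the whole interval.

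\textbf{The identity.} Expanding $(u')^2=f_*'^2h^2+2f_*f_*'hh'+f_*^2h'^2$ gives
\[
\int B(u')^2=\int Bf_*^2h'^2+\int Bf_*f_*'(h^2)'+\int Bf_*'^2h^2.
\]
We integrate the middle term by parts:
\[
\int Bf_*f_*'(h^2)'=\bigl[Bf_*f_*'h^2\bigr]_{-s_0}^{s_0}-\int\bigl((Bf_*')'f_*+Bf_*'^2\bigr)h^2.
\]
The $Bf_*'^2h^2$ terms cancel. By \eqref{eq:weighted-jacobi}, $-(Bf_*')'f_*=BW_1f_*^2$. This leaves
\[
\int B(u')^2=\int Bf_*^2h'^2+\int BW_1f_*^2h^2+\bigl[Bf_*f_*'h^2\bigr]_{-s_0}^{s_0}.
\]
Subtracting $\int BW_ku^2$ and using $W_1-W_k=(k^2-1)/B^2$ produces the bulk term $(k^2-1)\int f_*^2h^2/B$.

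For the boundary term, Lemma \ref{lem:robin-fstar} gives $f_*'(\pm s_0)=\pm\coth r(a)f_*(\pm s_0)$, and $B$ is even. Hence
\[
\bigl[Bf_*f_*'h^2\bigr]_{-s_0}^{s_0}=\coth r(a)B(s_0)\bigl(u(s_0)^2+u(-s_0)^2\bigr),
\]
which cancels exactly the Robin term in \eqref{eq:Skrad}. This yields \eqref{eq:picone}.

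\textbf{Main obstacle.} The only delicate point is justifying the integration by parts across $s=0$, where $f_*$ vanishes. For $u\in C^2$, the $C^1$ regularity of $h$ established above makes the computation valid on the whole interval. For $u$ merely $C^1$, $h$ is continuous and is $C^1$ away from $0$; there we integrate by parts on $[-s_0,-\varepsilon]\cup[\varepsilon,s_0]$. The interior boundary terms $Bf_*f_*'h^2$ at $\pm\varepsilon$ tend to $0$ because $f_*(\pm\varepsilon)\to 0$ while $h$ stays bounded. The term $f_*h'=u'-f_*'h$ is bounded, so all integrals converge, and letting $\varepsilon\to 0$ gives the identity. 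Alternatively, one can prove the identity for $u\in C^2$ and pass to $C^1$ by density, since both sides are continuous in $u$.
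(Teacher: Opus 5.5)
Your proposal is correct and follows essentially the same route as the paper: substitute $u=f_*h$, integrate the cross term by parts, use the weighted Jacobi equation to produce $W_1$ and hence the factor $(k^2-1)/B^2$, and cancel the boundary term against the Robin term through $f_*'(\pm s_0)=\pm\coth r(a)\,f_*(\pm s_0)$. The only differences are minor. You prove the regularity of $h$ explicitly via $u(s)=s\int_0^1u'(ts)\,dt$, which the paper asserts without proof. You also handle the $C^1$ case directly by excising $(-\varepsilon,\varepsilon)$ rather than by the paper's density argument; both approaches are valid.
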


\begin{proof}
We first establish \eqref{eq:picone} for $u\in C^2$ (so that $h\in C^1$ and the integrations by parts below are justified); the general case $u\in C^1$ with $u(0)=0$, and more generally $u\in H^1$ with $u(0)=0$, then follows by density from the invariant reformulation of Remark \ref{rem:regularity-h}.

Setting $u=f_*h$, we have $u'=f_*'h+f_*h'$, whence
\[
B(u')^2=B(f_*')^2 h^2+B f_*'f_*(h^2)'+B f_*^2(h')^2.
\]
Integrating over $(-s_0,s_0)$ and integrating by parts the cross term,
\[
\int_{-s_0}^{s_0}B f_*'f_*(h^2)'\,ds=\bigl[B f_*'f_*\,h^2\bigr]_{-s_0}^{s_0}-\int_{-s_0}^{s_0}(B f_*'f_*)'\,h^2\,ds.
\]
By \eqref{eq:weighted-jacobi}, $(B f_*')'=-B W_1 f_*$, whence $(B f_*'f_*)'=(B f_*')'f_*+B f_*'\cdot f_*'=-BW_1 f_*^2+B(f_*')^2$. Substituting,
\[
\int B(u')^2\,ds=\bigl[B f_*'f_*\,h^2\bigr]_{-s_0}^{s_0}+\int B W_1 f_*^2 h^2\,ds+\int B f_*^2(h')^2\,ds.
\]
By \eqref{eq:Skrad},
\[
\Sf_k^{\mathrm{rad}}(u,u)=\bigl[B f_*'f_*\,h^2\bigr]_{-s_0}^{s_0}+\int B(W_1-W_k)f_*^2 h^2\,ds+\int B f_*^2(h')^2\,ds-\coth r\cdot B(s_0)f_*(s_0)^2[h(s_0)^2+h(-s_0)^2],
\]
where we used $f_*(\pm s_0)^2=f_*(s_0)^2$ (odd squared is even).

The boundary term is computed using $f_*'(s_0)/f_*(s_0)=\coth r$ and $f_*'(-s_0)/f_*(-s_0)=-\coth r$ (Lemma \ref{lem:robin-fstar} together with oddness):
\[
\bigl[B f_*'f_*\,h^2\bigr]_{-s_0}^{s_0}=B(s_0)\frac{f_*'(s_0)}{f_*(s_0)}f_*(s_0)^2 h(s_0)^2-B(s_0)\frac{f_*'(-s_0)}{f_*(-s_0)}f_*(s_0)^2 h(-s_0)^2
\]
\[
=B(s_0)f_*(s_0)^2\bigl(\coth r\cdot h(s_0)^2+\coth r\cdot h(-s_0)^2\bigr).
\]
The two boundary contributions therefore cancel exactly:
\[
\bigl[B f_*'f_*\,h^2\bigr]_{-s_0}^{s_0}-\coth r\cdot B(s_0)f_*(s_0)^2[h(s_0)^2+h(-s_0)^2]=0.
\]
Finally, $W_1-W_k=-1/B^2-(-k^2/B^2)=(k^2-1)/B^2$. Substituting in what remains,
\[
\Sf_k^{\mathrm{rad}}(u,u)=\int B\cdot\frac{k^2-1}{B^2}f_*^2 h^2\,ds+\int B f_*^2(h')^2\,ds=(k^2-1)\int\frac{f_*^2}{B}h^2\,ds+\int B f_*^2(h')^2\,ds.\qedhere
\]
\end{proof}

\begin{remark}[Extension to $H^1$ via invariant quantities]\label{rem:regularity-h}
The identity \eqref{eq:picone} is proved for $u\in C^2$ with $u(0)=0$, but the first integral on the right-hand side coincides with $(k^2-1)\int u^2/B\,ds$ (since $f_*^2 h^2=u^2$), a quantity well-defined for every $u\in L^2$. The second integral can be rewritten, using $f_* h'=u'-(f_*'/f_*)u$ (from $u'=f_*'h+f_*h'$), as
\[
\int_{-s_0}^{s_0}B\,f_*^2(h')^2\,ds=\int_{-s_0}^{s_0}B\bigl(u'-\tfrac{f_*'}{f_*}u\bigr)^2 ds.
\]
For $u\in H^1$ with $u(0)=0$ in the sense of trace (in particular for every odd $u\in H^1$, given the embedding $H^1\hookrightarrow C^{0,1/2}$ in dimension $1$), $u(s)=\int_0^s u'(t)\,dt$, and by Cauchy--Schwarz $|u(s)|\leq|s|^{1/2}\|u'\|_{L^2}$. Moreover $f_*'/f_*\sim 1/s$ near $s=0$, with regular coefficient away from $0$, since $f_*$ has a simple zero at $0$ and $f_*\neq 0$ elsewhere on $(-s_0,s_0)$ (cf. Lemma \ref{lem:f-star}(ii)). Hence $(f_*'/f_*)u\in L^2$ by the one-dimensional Hardy inequality $\int_0^{s_0}(u/s)^2 ds\leq 4\int_0^{s_0}(u')^2 ds$, and the integral $\int B(u'-(f_*'/f_*)u)^2 ds$ is finite. The validity of \eqref{eq:picone}, rewritten in terms of these invariant quantities, extends by density to the whole subspace $\{u\in H^1:u(0)=0\}$, and in particular to the odd radial sector.
\end{remark}

\section{Closure of the odd radial sector for $|k|\geq 2$}\label{sec:odd-closure}

We now prove Theorem \ref{thm:odd-closure-main} from the Introduction. The detailed statement, with full quantitative information ($\Sf_k^{\mathrm{rad}}(u,u)>0$ strictly for every nonzero odd $u\in H^1_{\mathrm{rad}}$), is given by Theorem \ref{thm:odd-closure} below.

\begin{theorem}[Strict positivity in the odd radial sector of mode $|k|\geq 2$; detailed form of Theorem \ref{thm:odd-closure-main}]\label{thm:odd-closure}
For every $a>1/2$, every $|k|\geq 2$, and every $u\in H^1_{\mathrm{rad}}(\Sigma_a)$ odd (with respect to $s\mapsto-s$), $u\not\equiv 0$:
\begin{equation}\label{eq:strict-positivity}
\Sf_k^{\mathrm{rad}}(u,u)>0.
\end{equation}
In particular, $\mu_n^{\mathrm{odd}}(k)>0$ for every $n\geq 0$ and $|k|\geq 2$, and no Robin Jacobi field of mode $|k|\geq 2$ is radially odd.
\end{theorem}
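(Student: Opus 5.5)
The proof applies the Picone identity of Theorem \ref{thm:picone-detailed}, extended to $H^1$ as in Remark \ref{rem:regularity-h}, to odd functions. Let $u\in H^1_{\mathrm{rad}}$ be odd and $u\not\equiv 0$. Since $H^1\hookrightarrow C^{0,1/2}$ in one dimension, $u$ is continuous, and oddness forces $u(0)=0$. So $u$ lies in the subspace $\{u\in H^1:u(0)=0\}$ on which the invariant form of \eqref{eq:picone} holds:
\[
\Sf_k^{\mathrm{rad}}(u,u)=(k^2-1)\int_{-s_0}^{s_0}\frac{u^2}{B}\,ds+\int_{-s_0}^{s_0}B\Bigl(u'-\tfrac{f_*'}{f_*}u\Bigr)^2ds .
\]
For $|k|\geq 2$ we have $k^2-1\geq 3>0$. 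On $[-s_0,s_0]$ the profile satisfies $B^2=a\cosh(2s)-1/2\geq a-1/2>0$, so $B$ is bounded and strictly positive. Hence both terms are nonnegative. The first term is strictly positive because $u\not\equiv 0$ in $L^2$. This gives \eqref{eq:strict-positivity}.

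The only delicate point is the density step. I would make it rigorous as follows. Take odd $C^2$ functions $u_n\to u$ in $H^1$; these exist because odd smooth functions are dense in odd $H^1$, for instance by symmetrizing mollified approximations. Both sides of the identity then pass to the limit.
\begin{itemize}
\item The left side is continuous in $H^1$, since the trace terms $u(\pm s_0)^2$ are controlled by the $H^1$ norm.
\item On the right, the first integral is continuous in $L^2$.
\item For the second integral, it suffices that $u\mapsto (f_*'/f_*)u$ is bounded from $\{u\in H^1:u(0)=0\}$ to $L^2$.
\end{itemize}
That bound follows from the one-dimensional Hardy inequality near $s=0$, where $f_*'/f_*\sim 1/s$ because $f_*$ has a simple zero (Lemma \ref{lem:f-star}(ii)). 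Away from $0$, the coefficient $f_*'/f_*$ is bounded, because $f_*>0$ on $(0,s_0]$ and $f_*$ is odd.

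To get the spectral statements, note that the quadratic form restricted to the odd sector is strictly positive on nonzero vectors. Its eigenfunctions are nonzero odd $H^1$ functions, and the Rayleigh quotient at an eigenfunction equals the eigenvalue up to the positive factor $\int Bu^2$. So every eigenvalue $\mu_n^{\mathrm{odd}}(k)$ is $>0$. Since the odd sector has a compact resolvent, its spectrum consists only of eigenvalues, so no further argument is needed there.

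Finally, a Robin Jacobi field of mode $|k|\geq 2$ has radial part in $\ker\Sf_k^{\mathrm{rad}}$. The radial operator commutes with $s\mapsto -s$, because $B$ and $|\II|^2$ are even and the Robin coefficients at $\pm s_0$ are symmetric. So the radial part splits into an even and an odd part, each also in the kernel. The odd part must vanish by strict positivity, so every such Jacobi field is radially even.
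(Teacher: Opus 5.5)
Your proposal is correct and follows the paper's own argument. You apply the Picone identity with base $f_*$ (Theorem~\ref{thm:picone-detailed}, extended to $\{u\in H^1:u(0)=0\}$ via the Hardy inequality as in Remark~\ref{rem:regularity-h}) to an odd $u$, and use that $(k^2-1)\int u^2/B\,ds>0$ for $|k|\geq 2$ while the remaining term is nonnegative; your explicit density argument and your derivation of the spectral and kernel consequences only spell out details the paper leaves implicit.
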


\begin{proof}
For $u\in H^1_{\mathrm{rad}}$ odd, $u(0)=0$ in the sense of trace. By Theorem \ref{thm:picone-detailed} extended to $H^1$ (Remark \ref{rem:regularity-h}),
\[
\Sf_k^{\mathrm{rad}}(u,u)=(k^2-1)\int_{-s_0}^{s_0}\frac{u^2}{B}\,ds+\int_{-s_0}^{s_0}B\Bigl(u'-\frac{f_*'}{f_*}u\Bigr)^2 ds.
\]
For $|k|\geq 2$ the coefficient $k^2-1\geq 3>0$, both integrals are nonnegative, and hence $\Sf_k^{\mathrm{rad}}(u,u)\geq 0$. If $u\not\equiv 0$, since $1/B>0$ on $[-s_0,s_0]$, the first integral is strictly positive, and therefore $\Sf_k^{\mathrm{rad}}(u,u)>0$.
\end{proof}

\begin{theorem}[Strict Sturm comparison]\label{thm:sturm-strict}
Let $\sharp\in\{\mathrm{rad},\mathrm{even},\mathrm{odd}\}$, where ``rad'' denotes the complete radial sector and ``even'', ``odd'' the parity subsectors. For every $|k|\geq 2$ and every $n\geq 0$,
\begin{equation}\label{eq:sturm-strict}
\mu_n^\sharp(k)\geq\mu_n^\sharp(1)+\frac{k^2-1}{B(s_0)^2}>\mu_n^\sharp(1).
\end{equation}
\end{theorem}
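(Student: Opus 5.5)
The plan is to compare the radial forms of modes $k$ and $1$ directly and then pass to eigenvalues by the min--max principle. From the definition \eqref{eq:Skrad} and $W_1-W_k=(k^2-1)/B^2$, for every $u\in H^1([-s_0,s_0])$ one has the exact splitting
\[
\Sf_k^{\mathrm{rad}}(u,u)=\Sf_1^{\mathrm{rad}}(u,u)+(k^2-1)\int_{-s_0}^{s_0}\frac{u^2}{B}\,ds .
\]
The boundary terms are identical for all $k$, and only the potential changes. This is the same mechanism that produced the factor $(k^2-1)$ in Theorem \ref{thm:picone-detailed}, but here no Picone substitution is needed, so the identity holds on the whole radial space and not only on $\{u(0)=0\}$.

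Next I fix the normalization of the eigenvalues $\mu_n(k)$. By Lemma \ref{lem:S-rad}, $\Sf$ restricted to mode $k$ is $c_k\Sf_k^{\mathrm{rad}}$, and the geometric $L^2(\Sigma_a)$ norm of $u\Theta_k$ is $c_k\int u^2 B\,ds$. The $\mu_n(k)$ are therefore the Rayleigh--Ritz values of $\Sf_k^{\mathrm{rad}}$ with respect to the weighted inner product $\langle u,v\rangle_B=\int uv\,B\,ds$. In that norm I write
\[
\int\frac{u^2}{B}\,ds=\int\frac{1}{B^2}\,u^2\,B\,ds\geq\frac{1}{\max_{[-s_0,s_0]}B^2}\,\|u\|_B^2 .
\]
Since $B(s)^2=a\cosh(2s)-\tfrac12$ is even and strictly increasing in $|s|$, the maximum is $B(s_0)^2$. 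This gives the form inequality
\[
\Sf_k^{\mathrm{rad}}(u,u)\geq\Sf_1^{\mathrm{rad}}(u,u)+\frac{k^2-1}{B(s_0)^2}\|u\|_B^2 .
\]

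Then I apply Courant--Fischer. Both forms have the same domain $H^1$ (the Robin form has no essential boundary conditions) and are bounded below, closed and with compact resolvent on the compact interval. Hence
\[
\mu_n(k)=\min_{\dim V=n+1}\max_{u\in V\setminus 0}\frac{\Sf_k^{\mathrm{rad}}(u,u)}{\|u\|_B^2}\geq\mu_n(1)+\frac{k^2-1}{B(s_0)^2}.
\]
For the parity subsectors I run the same min--max inside the even or odd subspace of $H^1$. This is legitimate because $B$, $W_k$ and the symmetric boundary term are all invariant under $s\mapsto -s$, so each form restricts to these closed subspaces and the subsector eigenvalues are exactly the min--max values there. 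The strict inequality in \eqref{eq:sturm-strict} follows from $k^2-1\geq 3>0$ and $B(s_0)<\infty$.

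The only delicate point I expect is the normalization check in the second step. If the $\mu_n$ were instead taken with respect to $ds$, the same argument would give the shift $(k^2-1)/B(s_0)$ rather than $(k^2-1)/B(s_0)^2$. I would therefore verify explicitly, via Lemma \ref{lem:S-rad} and Remark \ref{rem:robin-morse-equivalence}, that the spectrum is that of $-\Lf$ in $L^2(dA)$, i.e. the weight $B\,ds$, before invoking min--max.
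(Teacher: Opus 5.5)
Your proposal is correct and follows the paper's proof essentially step for step. The ingredients are the same: the splitting $\Sf_k^{\mathrm{rad}}(u,u)=\Sf_1^{\mathrm{rad}}(u,u)+(k^2-1)\int_{-s_0}^{s_0}u^2/B\,ds$, the bound $1/B^2\geq 1/B(s_0)^2$ applied in the weighted norm $\|u\|_B^2=\int u^2B\,ds$, and Courant--Fischer on each closed parity subspace $H^\sharp$. Your explicit check that the eigenvalues are normalized with respect to $B\,ds$ — which is what produces the shift $(k^2-1)/B(s_0)^2$ rather than $(k^2-1)/B(s_0)$ — is a point the paper uses only implicitly through $\|u\|_B$.
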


\begin{proof}
We denote by $H^\sharp$ the subspace of $H^1((-s_0,s_0))$ corresponding to the sector $\sharp$ (the whole $H^1$ for $\sharp=\mathrm{rad}$, the even functions for $\sharp=\mathrm{even}$, the odd functions for $\sharp=\mathrm{odd}$); each is closed, and $\Sf_k^{\mathrm{rad}}$ restricts to it (it preserves parity by the symmetry $s\mapsto-s$ of the coefficients $B$ and $W_k$). By Lemma \ref{lem:S-rad},
\[
\Sf_k^{\mathrm{rad}}(u,u)-\Sf_1^{\mathrm{rad}}(u,u)=\int_{-s_0}^{s_0}B(W_1-W_k)u^2\,ds=(k^2-1)\int_{-s_0}^{s_0}\frac{u^2}{B}\,ds.
\]
The profile $B^2(s)=a\cosh(2s)-1/2$ is strictly increasing in $|s|$, so $B(s)\leq B(s_0)$ for $|s|\leq s_0$, that is, $1/B(s)^2\geq 1/B(s_0)^2$ uniformly. Therefore
\[
\int_{-s_0}^{s_0}\frac{u^2}{B}\,ds=\int_{-s_0}^{s_0}\frac{1}{B^2}\cdot Bu^2\,ds\geq\frac{1}{B(s_0)^2}\int_{-s_0}^{s_0}Bu^2\,ds=\frac{\|u\|_B^2}{B(s_0)^2}.
\]
Setting $c:=(k^2-1)/B(s_0)^2$, for every $u\in H^\sharp\setminus\{0\}$,
\[
\frac{\Sf_k^{\mathrm{rad}}(u,u)}{\|u\|_B^2}\geq\frac{\Sf_1^{\mathrm{rad}}(u,u)}{\|u\|_B^2}+c.
\]
By the Courant--Fischer min-max principle applied to the sector $H^\sharp$,
\[
\mu_n^\sharp(k)=\inf_{\substack{V\subset H^\sharp\\ \dim V=n+1}}\sup_{u\in V\setminus\{0\}}\frac{\Sf_k^{\mathrm{rad}}(u,u)}{\|u\|_B^2}\geq\inf_{\substack{V\subset H^\sharp\\ \dim V=n+1}}\Bigl(\sup_{u\in V\setminus\{0\}}\frac{\Sf_1^{\mathrm{rad}}(u,u)}{\|u\|_B^2}+c\Bigr)=\mu_n^\sharp(1)+c,
\]
where the first inequality uses the pointwise monotonicity $\Sf_k\geq\Sf_1+c\|\cdot\|_B^2$ (preserved under the supremum over $V$) and is then preserved under the infimum over $V$. Since $c>0$, we obtain \eqref{eq:sturm-strict}.
\end{proof}

\begin{corollary}[Absence of Robin kernel in modes $|k|\geq 2$, odd sector]\label{cor:odd-no-kernel}
For every $|k|\geq 2$, $\mu_n^{\mathrm{odd}}(k)>0$ for every $n\geq 0$.
\end{corollary}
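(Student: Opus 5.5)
This corollary is essentially a restatement of Theorem \ref{thm:odd-closure}, so the plan is to read it off from the strict positivity \eqref{eq:strict-positivity} via the variational characterization of the odd-sector eigenvalues. Concretely, fix $|k|\geq 2$ and work in the closed subspace $H^{\mathrm{odd}}\subset H^1((-s_0,s_0))$ of odd functions, on which $\Sf_k^{\mathrm{rad}}$ restricts because the coefficients $B$ and $W_k$ are even. The eigenvalues $\mu_n^{\mathrm{odd}}(k)$ are given by the Courant--Fischer min-max with respect to the weighted norm $\|u\|_B^2=\int Bu^2\,ds$, exactly as in the proof of Theorem \ref{thm:sturm-strict}.

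The first step is to observe that the Rayleigh quotient $\Sf_k^{\mathrm{rad}}(u,u)/\|u\|_B^2$ is strictly positive for every nonzero odd $u$, which is Theorem \ref{thm:odd-closure}. Positivity of a quotient alone only gives $\mu_n^{\mathrm{odd}}(k)\geq 0$, so the second step upgrades this to strictness. The cleanest route uses the Picone identity in its $H^1$ form (Remark \ref{rem:regularity-h}), which gives the quantitative bound
\[
\Sf_k^{\mathrm{rad}}(u,u)\geq (k^2-1)\int_{-s_0}^{s_0}\frac{u^2}{B}\,ds\geq \frac{k^2-1}{B(s_0)^2}\,\|u\|_B^2 ,
\]
using $B(s)\leq B(s_0)$ on $[-s_0,s_0]$ as in Theorem \ref{thm:sturm-strict}. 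Inserting this into the min-max formula then yields $\mu_n^{\mathrm{odd}}(k)\geq (k^2-1)/B(s_0)^2>0$ for every $n\geq 0$.

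The only point requiring care, and hence the main obstacle, is that the Picone identity was proved for $u$ with $u(0)=0$ and extended by density. This hypothesis is automatic for odd $H^1$ functions through the continuous embedding $H^1\hookrightarrow C^0$ in one dimension, and Remark \ref{rem:regularity-h} handles the Hardy-type justification. One should also note that the min-max is attained by genuine eigenfunctions, since the Robin problem on a compact interval has compact resolvent. Alternatively, if one wants to avoid the quantitative bound, the same conclusion follows because the lowest eigenvalue $\mu_0^{\mathrm{odd}}(k)$ is attained by an odd eigenfunction $u_0\neq0$ with $\mu_0^{\mathrm{odd}}(k)\|u_0\|_B^2=\Sf_k^{\mathrm{rad}}(u_0,u_0)>0$. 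Finally, $\mu_n^{\mathrm{odd}}(k)\geq\mu_0^{\mathrm{odd}}(k)>0$ for all $n$.
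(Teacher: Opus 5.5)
Your proposal is correct and follows the paper's route. The paper reads the corollary off from the strict positivity of Theorem~\ref{thm:odd-closure}, which rests on the Picone identity with base $f_*$ extended to odd $H^1$ functions (Remark~\ref{rem:regularity-h}), and you do the same. Your extra bound $\mu_n^{\mathrm{odd}}(k)\geq (k^2-1)/B(s_0)^2$, obtained by combining that identity with $B\leq B(s_0)$, is a valid quantitative refinement; it coincides with the gap the paper derives in Remark~\ref{rem:alternative-route} by Sturm comparison with $\mu_0^{\mathrm{odd}}(1)=0$.
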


\begin{proof}
This is a direct consequence of Theorem \ref{thm:odd-closure}.
\end{proof}

\begin{remark}\label{rem:alternative-route}
The proof of Theorem \ref{thm:sturm-strict} admits an alternative route: $f_*$ is an odd eigenfunction of $\Sf_1^{\mathrm{rad}}$ with eigenvalue $0$ (Lemma \ref{lem:robin-fstar}), having one node at $s=0$, so that $\mu_0^{\mathrm{odd}}(1)=0$ by Sturm--Liouville theory on the odd sector. Theorem \ref{thm:sturm-strict} with $\sharp=\mathrm{odd}$ then gives $\mu_0^{\mathrm{odd}}(k)\geq(k^2-1)/B(s_0)^2>0$ for $|k|\geq 2$, and by the monotonicity of the eigenvalues, $\mu_n^{\mathrm{odd}}(k)\geq\mu_0^{\mathrm{odd}}(k)>0$ for $n\geq 1$.
\end{remark}

\begin{remark}\label{rem:odd-even-residual}
Corollary \ref{cor:odd-no-kernel} does not close the even radial sector of mode $|k|\geq 2$. In particular, the base eigenvalue $\mu_0^{\mathrm{even}}(k)$ may a priori be negative, zero, or positive. Theorem \ref{thm:sturm-strict} only provides the estimate
\[
\mu_0^{\mathrm{even}}(k)\geq\mu_0(1)+\frac{k^2-1}{B(s_0)^2},
\]
which is positive if and only if $|\mu_0(1)|<(k^2-1)/B(s_0)^2$. We return to this point in Section \ref{sec:reduction}.
\end{remark}

\section{Ambient coordinates as test functions}\label{sec:phi-A}

\begin{lemma}[Gradient of the distance function at the pole]\label{lem:nabla-r}
Let $p\in\HH^3$ with $r(p)>0$, and let $\nabla^{\HH^3}r$ denote the intrinsic gradient. In Lorentz components $(\nabla^{\HH^3}r)^A$, $A=0,1,2,3$,
\begin{equation}\label{eq:grad-r}
(\nabla^{\HH^3}r)^A\big|_p=\frac{\cosh r(p)\cdot p^A-\delta^A_0}{\sinh r(p)}.
\end{equation}
\end{lemma}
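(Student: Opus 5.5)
We compute the gradient of $r$ in the hyperboloid model by first differentiating the ambient extension $F(x):=-\langle x,p_0\rangle_L$ of $\cosh r$, then projecting onto $T_p\HH^3$. On $\HH^3$ we have $\cosh r(p)=F(p)$, so $\sinh r\,\nabla^{\HH^3}r=\nabla^{\HH^3}F$. We then divide by $\sinh r(p)$, which is allowed because $r(p)>0$.

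First, the Lorentzian gradient of $F$ in $\R^4_1$ is the vector $G$ with $\langle G,v\rangle_L=dF(v)=-\langle v,p_0\rangle_L$ for all $v$. Hence $G=-p_0$, that is, $G^A=-\delta^A_0$. (Check: $\langle -p_0,v\rangle_L=-(-v_0)=v_0$, and indeed $-\langle v,p_0\rangle_L=v_0$.)

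Second, $T_p\HH^3=\{v:\langle v,p\rangle_L=0\}$, and the normal direction is $p$ itself, with $\langle p,p\rangle_L=-1$. The tangential projection is therefore
\[
v^{\top}=v+\langle v,p\rangle_L\,p .
\]
The sign follows from the timelike normalization: $\langle v^\top,p\rangle_L=\langle v,p\rangle_L+\langle v,p\rangle_L\langle p,p\rangle_L=0$. Since $\HH^3$ carries the induced (spacelike) metric, the intrinsic gradient is the tangential projection of the ambient gradient. Applying this to $G=-p_0$ and using $\langle -p_0,p\rangle_L=\cosh r(p)$ gives
\[
\nabla^{\HH^3}F=-p_0+\cosh r(p)\,p .
\]
In components this is $\cosh r\,p^A-\delta^A_0$.

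Third, dividing by $\sinh r(p)$ yields \eqref{eq:grad-r}.

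As a sanity check, we verify that the result is a unit vector:
\[
|{\cosh r\,p-p_0}|_L^2=-\cosh^2 r+2\cosh r\cdot\cosh r\cdot(-1)\cdot(-1)\cdots .
\]
Written out, this is $\cosh^2 r\langle p,p\rangle_L-2\cosh r\langle p,p_0\rangle_L+\langle p_0,p_0\rangle_L=-\cosh^2r+2\cosh^2 r-1=\sinh^2 r$, so the quotient has norm $1$, as it must.

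The only delicate point is sign bookkeeping in the Lorentzian setting: the ambient gradient uses the Lorentz metric, which is why it is $-p_0$ rather than $(1,0,0,0)$, and the projection carries a $+$ sign because $p$ is timelike. The unit-norm check above guards against errors in either sign.
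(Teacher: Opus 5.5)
Your argument is correct, but it takes a different route from the paper. The paper writes down the unit-speed geodesic $\gamma(t)=\cosh t\,p_0+\sinh t\,v$ from the pole, solves $\gamma(r)=p$ for $v=(p-\cosh r\,p_0)/\sinh r$, and identifies $\nabla^{\HH^3}r|_p$ with $\dot\gamma(r)=(\cosh r\,p-p_0)/\sinh r$. That step uses the Gauss-lemma fact that the gradient of a distance function is the velocity of the minimizing unit-speed geodesic, which is unique in $\HH^3$.

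You instead differentiate the ambient extension $F(x)=-\langle x,p_0\rangle_L=x_0$ of $\cosh r$. You take its Lorentzian gradient $-p_0$ and project onto $T_p\HH^3=p^{\perp_L}$ via $v^\top=v+\langle v,p\rangle_L\,p$. All your signs check out, including $\langle -p_0,p\rangle_L=\cosh r(p)$ and the unit-norm verification. The garbled first line of that display should be deleted, since the written-out line after it is right.

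Your route is more self-contained. It uses only $\cosh r=-\langle p,p_0\rangle_L$ and linear algebra, with no appeal to geodesics or the Gauss lemma. It also gives smoothness of $r$ on $\{r>0\}$ for free from $r=\operatorname{arccosh}F$ with $F>1$. Unit length then appears only as a consistency check.

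The paper's route is more geometric. It exhibits the gradient directly as the outward radial velocity $\dot\gamma(r)$, with unit length automatic. That is the interpretation used immediately afterwards in Lemma~\ref{lem:robin-spacelike}, where the conormal $\eta$ is identified with $\nabla^{\HH^3}r$.
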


\begin{proof}
The unit-speed geodesic $\gamma$ from $p_0=(1,0,0,0)$ to $p$ is $\gamma(t)=\cosh t\cdot p_0+\sinh t\cdot v$, with $v\in T_{p_0}\HH^3=\{p_0\}^{\perp_L}$ unit, namely $\langle v,v\rangle_L=1$, $v^0=0$. From $\gamma(r)=p$, $v=(p-\cosh r\cdot p_0)/\sinh r$. Therefore
\[
\dot\gamma(r)=\sinh r\cdot p_0+\cosh r\cdot v=\sinh r\cdot p_0+\cosh r\cdot\frac{p-\cosh r\cdot p_0}{\sinh r}=
\]
\[\frac{\sinh^2 r\cdot p_0+\cosh r\cdot p-\cosh^2 r\cdot p_0}{\sinh r}=\frac{\cosh r\cdot p-p_0}{\sinh r}.
\]
Componentwise, $\dot\gamma(r)^0=(\cosh^2 r-1)/\sinh r=\sinh r$ and $\dot\gamma(r)^i=\cosh r\cdot p^i/\sinh r$ for $i=1,2,3$. Since $\nabla^{\HH^3}r\big|_p=\dot\gamma(r)$ (the gradient of the distance function along the unit geodesic), we obtain \eqref{eq:grad-r}.
\end{proof}

\begin{lemma}[Robin/anti-Robin for the ambient coordinates]\label{lem:robin-spacelike}
Let $\Sigma\subset\HH^3$ be an FBMS in $B^3(\rho)$, and let $\Phi^A:\Sigma\to\R$ denote the restrictions of the Lorentz coordinates of $\R^4_1$. On $\partial\Sigma$, the outward unit conormal $\eta$ coincides with $\nabla^{\HH^3}r/|\nabla^{\HH^3}r|=\nabla^{\HH^3}r$ (since $|\nabla^{\HH^3}r|=1$). The following hold:
\begin{enumerate}
\item[(i)] $\partial_\eta\Phi^i\big|_{\partial\Sigma}=\coth\rho\cdot\Phi^i\big|_{\partial\Sigma}$ for $i=1,2,3$;
\item[(ii)] $\partial_\eta\Phi^0\big|_{\partial\Sigma}=\tanh\rho\cdot\Phi^0\big|_{\partial\Sigma}$.
\end{enumerate}
In particular, $\Phi^1,\Phi^2,\Phi^3$ satisfy the Robin condition \eqref{eq:robin}, while $\Phi^0$ satisfies the dual condition (anti-Robin).
\end{lemma}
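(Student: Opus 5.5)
The plan is to compute $\partial_\eta\Phi^A$ directly from the extrinsic description of the conormal, using the explicit gradient formula of Lemma \ref{lem:nabla-r}. There are three steps: identify $\eta$ with $\nabla^{\HH^3}r$ along $\partial\Sigma$, express $\partial_\eta\Phi^A$ as the $A$-th Lorentz component of $\eta$, and substitute $r=\rho$ on $\partial\Sigma$.

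\emph{Step 1: identification of the conormal.} Since $\partial\Sigma\subset\partial B^3(\rho)=\{r=\rho\}$, the geodesic sphere has outward unit normal $\nabla^{\HH^3}r$ there, because $|\nabla^{\HH^3}r|=1$ away from the pole. The free boundary condition says $\Sigma$ meets $\partial B^3(\rho)$ orthogonally. Hence the outward unit conormal $\eta$ of $\partial\Sigma$ in $\Sigma$, which is tangent to $\Sigma$ and normal to $\partial\Sigma$, coincides with the unit normal of the sphere. Here "outward" means pointing toward increasing $r$. Concretely, $T_p\Sigma$ contains both $T_p\partial\Sigma$ and the sphere normal at every boundary point, so $\eta=\nabla^{\HH^3}r$. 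For $\Sigma_a$ this is exactly the content of \eqref{eq:fbc}. The identity $\coth r(a)=B'(s_0)/B(s_0)$ of Lemma \ref{lem:coth-identity} could serve as a sanity check, since it is the mode-$1$ shadow of (i).

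\emph{Step 2: derivative of a linear coordinate.} Each $\Phi^A$ is the restriction to $\Sigma$ of the linear function $x\mapsto x^A$ on $\R^4$. Its derivative along any tangent vector $V\in T_p\Sigma\subset\R^4$ is therefore simply the component $V^A$, with no Lorentz index lowering involved. Thus $\partial_\eta\Phi^A=\eta^A=(\nabla^{\HH^3}r)^A$.

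\emph{Step 3: evaluation on $\partial\Sigma$.} Plug in Lemma \ref{lem:nabla-r} with $r(p)=\rho$.
\begin{itemize}
\item For $i=1,2,3$: $\eta^i=\cosh\rho\,p^i/\sinh\rho=\coth\rho\,\Phi^i$, which gives (i).
\item For $A=0$, use $p^0=-\langle p,p_0\rangle_L=\cosh\rho$. Then $\eta^0=(\cosh^2\rho-1)/\sinh\rho=\sinh\rho=\tanh\rho\cdot\cosh\rho=\tanh\rho\,\Phi^0$, which gives (ii).
\end{itemize}

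The only point requiring care is Step 1: the orthogonality and the orientation of $\eta$. I would justify these from the definition of FBMS (critical point of area among variations preserving $\partial B^3(\rho)$, which forces $\eta\perp\partial B^3(\rho)$). The rest is direct substitution.
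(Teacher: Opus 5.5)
Your proposal is correct and follows the paper's proof essentially step for step. The free boundary condition identifies $\eta$ with $\nabla^{\HH^3}r$, the derivative of the linear coordinate $\Phi^A$ along $\eta$ is the component $\eta^A$, and setting $r=\rho$ in Lemma \ref{lem:nabla-r} gives $\coth\rho\,\Phi^i$ for the spacelike coordinates and $\sinh\rho=\tanh\rho\,\Phi^0$ for the timelike one. The mode-$1$ consistency check against Lemma \ref{lem:coth-identity} is a sensible sanity check but is not needed for the argument.
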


\begin{proof}
By the FBMS condition, $\Sigma\perp\partial B^3(\rho)$ at $\partial\Sigma$, so the unit conormal $\eta$ of $\Sigma$ at $\partial\Sigma$ coincides with the unit radial vector $\nabla^{\HH^3}r$ (both are unit, orthogonal to $T_p(\partial\Sigma)$, and outward-pointing from $B^3(\rho)$). Hence $\partial_\eta=\partial_{\nabla^{\HH^3}r}$.

For the coordinate functions $\Phi^A$, $d\Phi^A=dx^A$ is the ambient differential, and for every tangent vector $V\in T_p\HH^3$ we have $d\Phi^A(V)=V^A$, the $A$-th Lorentz component of $V$. In particular,
\[
\partial_\eta\Phi^A\big|_p=d\Phi^A(\eta)=\eta^A=(\nabla^{\HH^3}r)^A\big|_p.
\]
By Lemma \ref{lem:nabla-r},
\[
\partial_\eta\Phi^A\big|_p=\frac{\cosh\rho\cdot p^A-\delta^A_0}{\sinh\rho}.
\]
For $A=0$, $\partial_\eta\Phi^0=(\cosh^2\rho-1)/\sinh\rho=\sinh\rho$, while $\Phi^0(p)=p^0=\cosh\rho$. Hence $\partial_\eta\Phi^0/\Phi^0=\tanh\rho$, which gives (ii). For $A=i\in\{1,2,3\}$, $\partial_\eta\Phi^i=\cosh\rho\cdot p^i/\sinh\rho=\coth\rho\cdot p^i=\coth\rho\cdot\Phi^i(p)$, which gives (i).
\end{proof}

\begin{remark}[Geometric origin of the phenomenon]\label{rem:geometric-source}
Lemma \ref{lem:robin-spacelike} is a purely kinematic identity: the spacelike nature of $x^1,x^2,x^3$ with respect to the pole $p_0$ produces the Robin condition with coefficient $\coth\rho$, while the timelike nature of $x^0$ produces the anti-Robin one with coefficient $\tanh\rho$. The phenomenon is purely hyperbolic: in Euclidean $\R^3$ there is no dual distinction, and the ambient coordinates all satisfy the same Neumann-type BC (cf. \cite{Devyver}).
\end{remark}

\begin{lemma}[Modal decomposition of $\Phi^A_a$]\label{lem:modal-PhiA}
On the Mori family \eqref{eq:phi-immersion},
\[
\Phi^0_a(s,\theta)=A(s)\cosh\varphi(s),\quad\Phi^1_a(s,\theta)=A(s)\sinh\varphi(s),
\]
\[
\Phi^2_a(s,\theta)=B(s)\cos\theta,\quad\Phi^3_a(s,\theta)=B(s)\sin\theta.
\]
Hence $\Phi^0$ is of mode $0$ even, $\Phi^1$ is of mode $0$ odd, and $\Phi^2,\Phi^3$ are of mode $|k|=1$ with even radial profile $B(s)$.
\end{lemma}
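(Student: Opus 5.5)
The proof is a direct substitution of the immersion \eqref{eq:phi-immersion} into the Lorentz coordinates, followed by a parity check. Reading off the four components of $\Phi_a(s,\theta)$ gives immediately $\Phi^0_a=A(s)\cosh\varphi(s)$, $\Phi^1_a=A(s)\sinh\varphi(s)$, $\Phi^2_a=B(s)\cos\theta$ and $\Phi^3_a=B(s)\sin\theta$. The first two are independent of $\theta$, so they are of Fourier mode $0$. The last two are of the form (radial profile)$\times\Theta_1(\theta)$ with $\Theta_1\in\{\cos\theta,\sin\theta\}$, so they are of mode $|k|=1$ with radial profile $B$.

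It remains to determine the parity in $s$.

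\textbf{Profiles $A$ and $B$.} By \eqref{eq:ABK}, $A(s)^2=a\cosh(2s)+\tfrac12$ and $B(s)^2=a\cosh(2s)-\tfrac12$ are even. For $a>1/2$ both are bounded below by $a\mp\tfrac12>0$, so the positive square roots $A$ and $B$ are smooth, positive and even.

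\textbf{Angle $\varphi$.} The derivative $\varphi'(s)=K/(A(s)^2B(s))$ is even. Together with the normalization $\varphi(0)=0$, this gives $\varphi(s)=\int_0^s\varphi'$, which is odd.

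\textbf{Parity of the coordinates.} Since $\cosh\varphi$ is even and $\sinh\varphi$ is odd, the products give:
\begin{itemize}
\item $A\cosh\varphi$ is even;
\item $A\sinh\varphi$ is odd;
\item $B$ is even.
\end{itemize}
This yields the stated classification: $\Phi^0$ is mode $0$ even, $\Phi^1$ is mode $0$ odd, and $\Phi^2,\Phi^3$ are mode $1$ with even profile $B$.

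No step here is a genuine obstacle. The only points needing care are the positivity of $B^2$, which guarantees that the square root is smooth and even, and the use of the initial condition $\varphi(0)=0$, which is what makes $\varphi$ odd rather than merely having an even derivative.
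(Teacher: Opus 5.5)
Your proposal is correct and follows the same route as the paper: read the four components off the immersion \eqref{eq:phi-immersion} and use that $A,B$ are even and $\varphi$ is odd. Where the paper simply asserts these parities, you also derive the oddness of $\varphi$ from its even derivative together with $\varphi(0)=0$.
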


\begin{proof}
The statement is immediate from \eqref{eq:phi-immersion}, recalling that $\cosh\varphi$ is even, $\sinh\varphi$ is odd, and $A,B$ are even.
\end{proof}

\section{Lower bound $\ind(\Sigma_a)\geq 4$ via ambient coordinates}\label{sec:lower-bound}

The lower bound $\mathrm{ind}(\Sigma_a) \geq 4$ via ambient coordinates is in the spirit of the classical index estimation approach for minimal submanifolds in space forms going back to Simons \cite{Simons}, and refined for the FBMS setting by Sargent \cite{Sargent} and Ambrozio–Carlotto–Sharp \cite{AmbrozioCarlottoSharp2018b}.

\medskip 
We now prove Theorem \ref{thm:lower-bound-main} from the Introduction. The full quantitative statement, including the diagonality of $\Sf$ on the four-dimensional span and the strict negativity, is given by Theorem \ref{thm:lower-bound-4} below.

\begin{theorem}[Four explicit negative directions via ambient coordinates; detailed form of Theorem \ref{thm:lower-bound-main}]\label{thm:lower-bound-4}
For every $a>1/2$, the four functions $\Phi^0_a,\Phi^1_a,\Phi^2_a,\Phi^3_a\in H^1(\Sigma_a)$ are linearly independent, and the quadratic form $\Sf$ is diagonal and strictly negative on their span. In particular, $\ind(\Sigma_a)\geq 4$, reconfirming directly Medvedev's lower bound \cite[Corollary 5.9]{Medvedev2023}.
\end{theorem}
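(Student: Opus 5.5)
The plan is to compute the full $4\times 4$ matrix $\Sf(\Phi^A_a,\Phi^B_a)$ of the polarized bilinear form explicitly, using the integration-by-parts formula of Remark \ref{rem:robin-morse-equivalence} in its bilinear version,
\[
\Sf(u,v)=-\int_{\Sigma_a}u\,\Lf v\,dA+\int_{\partial\Sigma_a}u\,\bigl(\partial_\eta v-\coth r(a)\,v\bigr)\,dL .
\]
I will always put the ``second slot'' $v=\Phi^B_a$. Lemma \ref{lem:LSigma-PhiA} gives $\Lf\Phi^B_a=|\II|^2\Phi^B_a$. Lemma \ref{lem:robin-spacelike} shows that the boundary defect vanishes for $B=1,2,3$. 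For $B=0$ it equals $(\tanh\rho-\coth\rho)\Phi^0_a=-\Phi^0_a/(\sinh\rho\cosh\rho)$, with $\rho=r(a)$. Hence
\[
\Sf(\Phi^A_a,\Phi^B_a)=-\int_{\Sigma_a}|\II|^2\Phi^A_a\Phi^B_a\,dA-\frac{\delta^B_0}{\sinh\rho\cosh\rho}\int_{\partial\Sigma_a}\Phi^A_a\Phi^0_a\,dL .
\]

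Off-diagonal vanishing follows from the symmetry structure recorded in Lemma \ref{lem:modal-PhiA}. We have $|\II|^2=2K^2/B^4$ (Lemma \ref{lem:II-norm}), which depends only on $s$ and is even, and the measures $dA=B\,ds\,d\theta$ and $dL=B(s_0)\,d\theta$ are $\theta$-invariant and even in $s$. The pairs $(\Phi^2,\Phi^3)$ and $(\Phi^{0\text{ or }1},\Phi^{2\text{ or }3})$ then integrate to zero in $\theta$, since $\int\cos\theta\sin\theta=\int\cos\theta=\int\sin\theta=0$. For the pair $(\Phi^0,\Phi^1)$ I will choose the slot $v=\Phi^1$, which is Robin, so only the interior term survives; it vanishes because $A\cosh\varphi\cdot A\sinh\varphi$ is odd in $s$ on the symmetric interval. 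Since $\Sf$ is symmetric, this settles all off-diagonal entries, whichever slot is used to compute them.

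For the diagonal entries, $\Sf(\Phi^i_a,\Phi^i_a)=-\int|\II|^2(\Phi^i_a)^2\,dA<0$ for $i=1,2,3$. This holds because $|\II|^2=2K^2/B^4>0$ (using $K>0$ for $a>1/2$) and each $\Phi^i_a$ is not identically zero. Indeed, $B>0$, and $A\sinh\varphi>0$ for $s>0$ because $\varphi'>0$. For $A=0$ the boundary term adds a further strictly negative quantity, $-(\sinh\rho\cosh\rho)^{-1}\int_{\partial\Sigma}\cosh^2\rho\,dL$. Therefore $\Sf$ restricted to the span is a diagonal matrix with four strictly negative entries.

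Linear independence then comes for free. If $\sum c_A\Phi^A_a=0$, pairing with $\Phi^B_a$ gives $c_B\,\Sf(\Phi^B_a,\Phi^B_a)=0$, hence $c_B=0$. Thus the span is a $4$-dimensional subspace of $C^\infty(\Sigma_a)\subset H^1(\Sigma_a)$ on which $\Sf$ is negative definite, and $\ind(\Sigma_a)\geq 4$ by Remark \ref{rem:robin-morse-equivalence}. The only delicate point I anticipate is justifying the bilinear integration-by-parts formula with the correct sign convention $\Delta_g=+\mathrm{div}\nabla$ and outward $\eta$, and making sure the Robin slot is chosen consistently for the mixed $(\Phi^0,\Phi^1)$ entry. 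The rest is bookkeeping.
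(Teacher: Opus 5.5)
Your proposal is correct and follows essentially the same route as the paper: the bilinear Green formula with the Robin slot taken from $\{\Phi^1,\Phi^2,\Phi^3\}$ (Lemmas \ref{lem:LSigma-PhiA} and \ref{lem:robin-spacelike}), Fourier and parity cancellation for the off-diagonal entries, and the extra anti-Robin boundary contribution $-\coth r(a)\,|\partial\Sigma_a|$ for $\Phi^0$. The only difference is in the linear independence step: you deduce it from the fact that $\Sf$ is diagonal with nonzero entries on the four functions, whereas the paper deduces it from the four functions lying in mutually $L^2$-orthogonal modal/parity subspaces; both arguments are valid.
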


\begin{proof}
We first establish linear independence. By Lemma \ref{lem:modal-PhiA}, $\Phi^0$ and $\Phi^1$ are in mode $0$ with different parity, while $\Phi^2$ and $\Phi^3$ are in mode $|k|=1$ and are mutually orthogonal through the factors $\cos\theta$ and $\sin\theta$. The four elements belong to four distinct $L^2(\Sigma_a)$-orthogonal subspaces, and hence they are linearly independent.

We next prove the diagonality of $\Sf$. For $A\neq B$, we compute $\Sf(\Phi^A,\Phi^B)$ via the Green formula (weak form):
\begin{equation}\label{eq:S-pairing}
\Sf(u,v)=-\int_\Sigma v\,\Lf u\,dA+\int_{\partial\Sigma}v\,(\partial_\eta u-\coth r\cdot u)\,dL.
\end{equation}
This formula follows by integrating by parts the term $\int|\nabla u|^2\,dA=-\int u\Delta_g u\,dA+\int_{\partial\Sigma}u\partial_\eta u\,dL$ in \eqref{eq:S-form} and using $\Lf=\Delta_g+|\II|^2-2$. We set $R^B:=\partial_\eta\Phi^B-\coth r\cdot\Phi^B$. By Lemma \ref{lem:robin-spacelike}, $R^i\equiv 0$ on $\partial\Sigma$ for $i=1,2,3$, and
\begin{equation}\label{eq:R0-formula}
R^0=(\tanh r(a)-\coth r(a))\Phi^0=-\frac{1}{\sinh r(a)\cosh r(a)}\Phi^0\quad\text{on }\partial\Sigma.
\end{equation}
By the symmetry of the quadratic form $\Sf$, it suffices to verify $\Sf(\Phi^A,\Phi^B)=0$ for each unordered pair $\{A,B\}$, choosing appropriately which of the two indices plays the role of $u$ in \eqref{eq:S-pairing}. For $\{A,B\}\neq\{0,0'\}$ we choose $u=\Phi^B$ with $B\in\{1,2,3\}$, so that $R^B=0$ and
\[
\Sf(\Phi^A,\Phi^B)=-\int_\Sigma|\II|^2\Phi^A\Phi^B\,dA.
\]
We verify the vanishing of the volume integral in each case. For the pair $\{0,1\}$ (with the choice $u=\Phi^1$, $B=1$), $\Phi^0\Phi^1=A^2\cosh\varphi\sinh\varphi$ is odd in $s$ and $|\II|^2(s)$ is even, so the integrand is odd in $s$ and the integration over $(-s_0,s_0)$ in $s$ yields $0$. For the pair $\{0,2\}$ (with $u=\Phi^2$, $B=2$), the integrand contains $\cos\theta$, and $\int_0^{2\pi}\cos\theta\,d\theta=0$; the pair $\{0,3\}$ is identical with $\sin\theta$. For the pairs $\{1,2\}$ and $\{1,3\}$, the integrands are proportional to $\cos\theta$ or $\sin\theta$, with vanishing integration. For the pair $\{2,3\}$ (with $u=\Phi^2$), the integrand is proportional to $\cos\theta\sin\theta$, with vanishing integration. The only remaining pairs $\{i,j\}$ with $i,j\in\{1,2,3\}$ are $\{2,3\}$, $\{1,2\}$, $\{1,3\}$, which have all been treated. Therefore $\Sf(\Phi^A,\Phi^B)=0$ for every $A\neq B$.

We now verify the negativity of the diagonal terms. For $\Sf(\Phi^0,\Phi^0)$, using \eqref{eq:S-pairing} with $u=v=\Phi^0$,
\[
\Sf(\Phi^0,\Phi^0)=-\int|\II|^2(\Phi^0)^2\,dA+\int_{\partial}\Phi^0 R^0\,dL=-\int|\II|^2(\Phi^0)^2\,dA-\int_{\partial}\frac{(\Phi^0)^2}{\sinh r\cosh r}\,dL.
\]
On $\partial\Sigma$, $\Phi^0=\cosh r$, whence $(\Phi^0)^2/(\sinh r\cosh r)=\cosh r/\sinh r=\coth r$, constant. Moreover $|\partial\Sigma|=2\cdot 2\pi B(s_0)=4\pi B(s_0)$. Therefore
\[
\Sf(\Phi^0,\Phi^0)=-\int|\II|^2(\Phi^0)^2\,dA-4\pi B(s_0)\coth r<0,
\]
with both terms strictly negative (the first because $|\II|^2\not\equiv 0$ and $\Phi^0>0$, the second because $B(s_0),\coth r>0$). For $\Sf(\Phi^A,\Phi^A)$ with $A=1,2,3$, $R^A=0$, and hence
\[
\Sf(\Phi^A,\Phi^A)=-\int|\II|^2(\Phi^A)^2\,dA<0,
\]
strictly negative since $\Phi^A\not\equiv 0$ ($\Phi^1$ is nonzero for $s\neq 0$, while $\Phi^2,\Phi^3$ are nonzero for generic $\theta$).

Combining the previous estimates, the matrix of $\Sf$ with respect to the basis $\{\Phi^0,\Phi^1,\Phi^2,\Phi^3\}$ is diagonal with four strictly negative diagonal entries; hence $\Sf$ is negative definite on $V_4:=\Span\{\Phi^0,\Phi^1,\Phi^2,\Phi^3\}$ with $\dim V_4=4$. By the Courant--Fischer characterization of the Morse index, $\ind_R(\Sigma_a)\geq\dim V_4=4$.
\end{proof}

\begin{remark}[Relation with Medvedev and with the Euclidean case]\label{rem:medvedev-comparison}
The lower bound $\ind_R(\Sigma_a)\geq 4$ is already established by Medvedev \cite[Theorem 5.7 and Corollary 5.9]{Medvedev2023} via the general theory of isometric embeddings of FBMS in $B^n(r)\subset\HH^n$ through eigenfunctions $v_0\in V_0$ and $v_1,\ldots,v_n\in V_1$, yielding $\ind(\Sigma)\geq\ind_S(\Sigma)+n$ with $\ind_S(\Sigma)\geq 1$ contributed by the timelike eigenfunction; for $n=3$ this gives $\geq 1+3=4$. The test functions used in his proof coincide with our $\Phi^A$ ($A=0,1,2,3$), which are precisely the ambient coordinates restricted to $\Sigma_a$.

The strategy of explicit test functions used in Theorem \ref{thm:lower-bound-4} is, moreover, the hyperbolic analogue of the argument used in \cite{Devyver} for the Euclidean critical catenoid. The structural difference in the hyperbolic case is the presence of a timelike coordinate $\Phi^0$ with anti-Robin boundary condition \eqref{eq:R0-formula}, which produces a contribution distinct from that of the three spacelike coordinates $\Phi^i$ ($i=1,2,3$), which satisfy the standard Robin condition. This anti-Robin/Robin duality is a purely hyperbolic phenomenon, absent in the Euclidean case.

The value of our exposition here lies not in a new lower bound, but rather in (i) the explicit concreteness of the computation in the specific case of the critical spherical catenoid, (ii) its role as a prerequisite for the reduction (Theorem \ref{thm:reduction}), and (iii) the highlighting of the anti-Robin algebraic structure for $\Phi^0$.
\end{remark}

\section{Reduction of the Medvedev conjecture}\label{sec:reduction}

In this section we prove Theorem \ref{thm:reduction-main}, reducing the conjecture $\ind_R(\Sigma_a)=4$ to two well-defined spectral conditions. By the modal decomposition, $\ind_R=\sum_{k\in\Z}\ind_R\big|_k$ with
\[
\ind_R\big|_k=\#\{n:\mu_n(k)<0\}\cdot m_k,\quad m_0=1,\;m_{|k|\geq 1}=2.
\]

\begin{proposition}[Known modal contributions]\label{prop:modal-contributions}
For every $a>1/2$:
\begin{enumerate}
\item[(a)] $\ind_R\big|_1=2$ and $\nul_R\big|_1=2$ (\cite{Pigazzini});
\item[(b)] $\ind_R\big|_0\geq 2$;
\item[(c)] $\ind_R\big|_{|k|\geq 2}=2\cdot\#\{\mu_0^{\mathrm{even}}(k)<0\}$ (the odd radial sector is positive by Theorem \ref{thm:odd-closure}; possible positive eigenvalues in the even-radial sector $\mu_n^{\mathrm{even}}(k)$ with $n\geq 1$ satisfy $\mu_n^{\mathrm{even}}(k)>\mu_0^{\mathrm{even}}(k)$, and hence do not contribute when $\mu_0^{\mathrm{even}}(k)\geq 0$). More precisely, for $|k|\geq 2$, $\ind_R\big|_k=2$ if $\mu_0^{\mathrm{even}}(k)<0$ and $\ind_R\big|_k=0$ if $\mu_0^{\mathrm{even}}(k)\geq 0$; the case $\mu_0^{\mathrm{even}}(k)=0$ would give $\nul_R\big|_k\geq 2$.
\end{enumerate}
\end{proposition}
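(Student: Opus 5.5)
Each of the three items comes from results already in the paper, so the plan is to assemble them in turn.

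\emph{Item (a).} This is property (P1) from \cite{Pigazzini}. It says $\ind_R|_{|k|=1}=\nul_R|_{|k|=1}=2$, with kernel spanned by $f_*\cos\theta$ and $f_*\sin\theta$. For a self-contained check of the kernel, Lemma \ref{lem:f-star}(iii) and Lemma \ref{lem:robin-fstar} show that $f_*$ is an odd Robin zero mode of $\Sf_1^{\mathrm{rad}}$. The index count of $2$ is taken from \cite{Pigazzini} as stated. A direct consistency check is available: $B$ is an even, positive test function with $\Sf_1^{\mathrm{rad}}(B,B)<0$, by Lemma \ref{lem:LSigma-PhiA} and Lemma \ref{lem:robin-spacelike}(i) (this is the computation of Theorem \ref{thm:lower-bound-4}). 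Hence $\mu_0(1)<0$, and the multiplicity $m_1=2$ gives at least $2$.

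\emph{Item (b).} Restrict the computation of Theorem \ref{thm:lower-bound-4} to mode $0$. The functions $\Phi^0_a$ (even) and $\Phi^1_a$ (odd) are radial functions of mode $0$. Lemma \ref{lem:S-rad} gives $\Sf(\Phi^A,\Phi^A)=2\pi\,\Sf_0^{\mathrm{rad}}$ evaluated on their profiles. Theorem \ref{thm:lower-bound-4} shows these values are strictly negative and that the cross term vanishes by parity. So $\Sf_0^{\mathrm{rad}}$ is negative definite on a two-dimensional subspace, and min--max gives $\mu_1(0)<0$, i.e.\ $\ind_R|_0\geq 2$.

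\emph{Item (c).} Fix $|k|\geq 2$ and split the radial space into its even and odd sectors, which $\Sf_k^{\mathrm{rad}}$ preserves. By Theorem \ref{thm:odd-closure}, the odd sector contributes neither negative nor zero eigenvalues. The remaining point is that in the even sector at most one eigenvalue can be negative. My first attempt would be standard Sturm--Liouville theory for the even sector. This is the regular Robin problem on $[0,s_0]$ with the Neumann condition $u'(0)=0$. Its eigenvalues are simple, so $\mu_n^{\mathrm{even}}(k)>\mu_0^{\mathrm{even}}(k)$ for $n\geq1$.

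This alone does not rule out $\mu_1^{\mathrm{even}}(k)<0$, and that is the main obstacle. I would close it by Sturm comparison with mode $1$ via Theorem \ref{thm:sturm-strict}: $\mu_1^{\mathrm{even}}(k)>\mu_1^{\mathrm{even}}(1)$. Next I would use that mode $1$ has index exactly $2$, which comes from the single even eigenvalue $\mu_0(1)<0$ (the eigenfunction is even, with $B$ as a negative test function), and that its nullity is exhausted by the odd field $f_*$. Together these force $\mu_1^{\mathrm{even}}(1)>0$. Hence $\mu_1^{\mathrm{even}}(k)>0$, and at most one negative eigenvalue remains, namely $\mu_0^{\mathrm{even}}(k)$.

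Multiplying by $m_k=2$ (the $\cos k\theta$ and $\sin k\theta$ copies) then gives $\ind_R|_k\in\{0,2\}$ according to the sign of $\mu_0^{\mathrm{even}}(k)$. If $\mu_0^{\mathrm{even}}(k)=0$, the same two copies give a kernel of dimension at least $2$.
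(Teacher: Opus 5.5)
Your proposal is correct and follows the paper's proof essentially step by step. Item (a) is taken by citation. Item (b) uses the mode-$0$ ambient coordinates $\Phi^0_a,\Phi^1_a$ from Theorem~\ref{thm:lower-bound-4}. Item (c) combines Theorem~\ref{thm:odd-closure} for the odd sector with the strict Sturm comparison $\mu_1^{\mathrm{even}}(k)\geq\mu_1^{\mathrm{even}}(1)+(k^2-1)/B(s_0)^2$ of Theorem~\ref{thm:sturm-strict}. For $\mu_1^{\mathrm{even}}(1)>0$, the paper writes $\mu_1^{\mathrm{even}}(1)\geq\mu_2(1)>0$ because the zero mode $f_*$ is odd, which is exactly your argument that the mode-$1$ negative eigenvalue is even and the kernel is exhausted by $f_*$.
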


\begin{proof}
For (a), this is \cite[Cor. 4.4]{Pigazzini}. For (b), the two functions $\Phi^0_a$ (mode $0$ even) and $\Phi^1_a$ (mode $0$ odd) are linearly independent in $H^1$, and $\Sf$ is negative definite on their span (the proof is contained in Theorem \ref{thm:lower-bound-4}, parts (a) and (b) for $A=0,1$). Hence $\ind_R\big|_0\geq 2$.

For (c), by Theorem \ref{thm:odd-closure}, every odd-radial eigenvalue of mode $|k|\geq 2$ is strictly positive. By Sturm--Liouville theory in the even-radial sector, the eigenvalues $\mu_n^{\mathrm{even}}(k)$ form a strictly increasing sequence with eigenfunctions having respectively $0,2,4,\ldots$ nodes. Hence $\mu_n^{\mathrm{even}}(k)<0$ implies $\mu_m^{\mathrm{even}}(k)<\mu_n^{\mathrm{even}}(k)<0$ for every $m<n$. Moreover, by the strict Sturm estimate (Theorem \ref{thm:sturm-strict}), $\mu_n^{\mathrm{even}}(k)\geq\mu_n^{\mathrm{even}}(1)+(k^2-1)/B(s_0)^2$. For $n\geq 1$, $\mu_n^{\mathrm{even}}(1)\geq\mu_2(1)>0$ (since $f_*$, the eigenfunction associated with $\mu_1(1)=0$, is odd), whence $\mu_n^{\mathrm{even}}(k)>0$ for $n\geq 1$. Only $\mu_0^{\mathrm{even}}(k)$ may be negative; if so, the contribution to $\ind_R$ is $1\cdot 2=2$ (by the multiplicity $m_k=2$).
\end{proof}

\begin{theorem}[Explicit Medvedev reduction; detailed form of Theorem \ref{thm:reduction-main}]\label{thm:reduction}
The strong form \eqref{eq:medvedev-strong} of the Medvedev conjecture holds for a given $a>1/2$ if and only if the following two conditions hold jointly:
\begin{enumerate}
\item[(E)] $\mu_0^{\mathrm{even}}(2)>0$;
\item[(F)] $\mu_2(0)>0$ and $\mu_n(0)\neq 0$ for every $n\geq 0$.
\end{enumerate}
\end{theorem}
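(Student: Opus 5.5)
The plan is to compute $\ind_R$ and $\nul_R$ mode by mode, using the decomposition $\ind_R=\sum_k \ind_R|_k$ (and the analogous sum for the nullity, with multiplicities $m_0=1$, $m_{|k|\ge1}=2$). The inputs are Proposition~\ref{prop:modal-contributions}, Theorem~\ref{thm:sturm-strict}, and Theorem~\ref{thm:odd-closure}. Mode $|k|=1$ always contributes exactly $2$ to the index and $2$ to the nullity, by part (a). So the strong conjecture \eqref{eq:medvedev-strong} is equivalent to
\[
\textstyle\ind_R|_0+\sum_{|k|\ge2}\ind_R|_k=2,\qquad \nul_R|_0+\sum_{|k|\ge2}\nul_R|_k=0 .
\]
By Proposition~\ref{prop:modal-contributions}(b), $\ind_R|_0\ge2$. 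Hence the first equation holds if and only if $\ind_R|_0=2$ and $\ind_R|_k=0$ for all $|k|\ge2$. The nullity equation requires that every mode other than $|k|=1$ has trivial kernel.

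Sufficiency proceeds as follows. Assume (E) and (F). For mode $0$, the condition $\mu_2(0)>0$ means at most two eigenvalues $\mu_0(0),\mu_1(0)$ can be nonpositive, so $\ind_R|_0\le2$, and with (b) we get $\ind_R|_0=2$. The condition $\mu_n(0)\neq0$ gives $\nul_R|_0=0$.

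For $|k|\ge2$, the odd sector is strictly positive by Theorem~\ref{thm:odd-closure}. In the even sector, (E) gives $\mu_0^{\rm even}(2)>0$. To pass to $|k|\ge3$, I would use the monotonicity identity from the proof of Theorem~\ref{thm:sturm-strict}, namely $\Sf_k^{\rm rad}-\Sf_2^{\rm rad}=(k^2-4)\int u^2/B$. Running the same min-max argument with base mode $2$ instead of $1$ gives $\mu_n^{\rm even}(k)>\mu_n^{\rm even}(2)$. Combined with Proposition~\ref{prop:modal-contributions}(c), which says $\mu_n^{\rm even}(k)>0$ for $n\ge1$, all even eigenvalues are then strictly positive. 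So modes $|k|\ge2$ contribute neither index nor nullity, and the totals come out to $4$ and $2$.

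Necessity proceeds as follows. Suppose $\ind_R=4$ and $\nul_R=2$. The counting above forces $\ind_R|_0=2$, $\nul_R|_0=0$, $\ind_R|_2=0$ and $\nul_R|_2=0$.
\begin{itemize}
\item From $\ind_R|_0=2$ and $\nul_R|_0=0$: exactly two eigenvalues of mode $0$ are negative and none vanishes, so $\mu_2(0)>0$ and $\mu_n(0)\ne0$ for all $n$. This is (F).
\item From $\ind_R|_2=0$: $\mu_0^{\rm even}(2)\ge0$.
\item From $\nul_R|_2=0$: $\mu_0^{\rm even}(2)\neq0$, since a zero eigenvalue would give $\nul_R|_2\ge2$ by (c). Together with the previous item this is (E).
\end{itemize}

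The main obstacle is the bookkeeping that legitimizes the modal decomposition for index and nullity in $H^1$. This uses the $L^2$-orthogonality of Fourier modes together with the fact that $\Sf$ splits as in Lemma~\ref{lem:S-rad}; I would invoke this as standard. The secondary point is the mode-$2$ to mode-$k$ comparison, which is needed to propagate (E) to all $|k|\ge2$. I would handle it by repeating the Courant–Fischer argument of Theorem~\ref{thm:sturm-strict} verbatim, with base mode $2$ in place of $1$.
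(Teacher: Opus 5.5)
Your proposal is correct and follows the paper's proof essentially step for step. It uses the same modal bookkeeping, with $\ind_R|_1=\nul_R|_1=2$ and $\ind_R|_0\ge 2$, and the same extraction of (E) and (F) in the forward direction. It also propagates (E) from $|k|=2$ to $|k|\ge 3$ in the same way, by rerunning the Courant--Fischer comparison of Theorem~\ref{thm:sturm-strict} with base mode $2$; the paper invokes this step loosely via \eqref{eq:sturm-strict}, so your explicit remark is a useful clarification.
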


\begin{proof}
\textbf{($\Rightarrow$).} Assume $\ind_R(\Sigma_a)=4$ and $\nul_R(\Sigma_a)=2$. By Proposition \ref{prop:modal-contributions}(a), $\ind_R\big|_1=2$ and $\nul_R\big|_1=2$. Hence $\ind_R\big|_0+\sum_{|k|\geq 2}\ind_R\big|_k=2$ and $\nul_R\big|_0+\sum_{|k|\geq 2}\nul_R\big|_k=0$.

From the second relation, $\nul_R\big|_0=0$ and $\nul_R\big|_k=0$ for every $|k|\geq 2$.

From the first relation, and since $\ind_R\big|_0\geq 2$ by Proposition \ref{prop:modal-contributions}(b), we deduce $\ind_R\big|_0=2$ and $\sum_{|k|\geq 2}\ind_R\big|_k=0$, whence $\ind_R\big|_k=0$ for every $|k|\geq 2$.

By Proposition \ref{prop:modal-contributions}(c), $\ind_R\big|_k=0$ with $|k|\geq 2$ implies $\mu_0^{\mathrm{even}}(k)\geq 0$, but $\nul_R\big|_k=0$ excludes equality, so $\mu_0^{\mathrm{even}}(k)>0$. For $|k|=2$ we obtain (E).

For mode $0$, $\ind_R\big|_0=2$ means exactly two negative eigenvalues, $\mu_0(0)$ and $\mu_1(0)$. By Sturm--Liouville theory, the subsequent eigenvalues satisfy $\mu_n(0)\geq\mu_2(0)$ for $n\geq 2$. The condition $\ind_R\big|_0=2$ requires $\mu_2(0)\geq 0$; combined with $\nul_R\big|_0=0$ (namely $\mu_n(0)\neq 0$ for every $n$), we obtain $\mu_2(0)>0$ and $\mu_n(0)\neq 0$ for every $n$, which is (F).

We add a useful observation: by the strict Sturm estimate \eqref{eq:sturm-strict}, (E) for $|k|=2$ implies $\mu_0^{\mathrm{even}}(k)\geq\mu_0^{\mathrm{even}}(2)+(k^2-4)/B(s_0)^2>0$ for every $|k|\geq 3$. Hence (E) restricted to $|k|=2$ is sufficient to guarantee positivity in all modes $|k|\geq 2$.

\textbf{($\Leftarrow$).} Assume (E) and (F). For modes $|k|\geq 2$, by the Sturm estimate just recalled, $\mu_0^{\mathrm{even}}(k)>0$ for every $|k|\geq 2$. Combined with Theorem \ref{thm:odd-closure} ($\mu_n^{\mathrm{odd}}(k)>0$) and with $\mu_n^{\mathrm{even}}(k)>\mu_2(1)>0$ for $n\geq 1$ (by the Sturm estimate of Theorem \ref{thm:sturm-strict} applied in the even-radial sector and since $\mu_2(1)>\mu_1(1)=0$), all eigenvalues in mode $|k|\geq 2$ are positive: $\ind_R\big|_k=0$, $\nul_R\big|_k=0$.

For mode $0$, (F) gives $\mu_n(0)\neq 0$ for every $n$, namely $\nul_R\big|_0=0$. Moreover $\mu_2(0)>0$, and by classical Sturm--Liouville theory, $\mu_n(0)\geq\mu_2(0)>0$ for every $n\geq 2$. Combined with Theorem \ref{thm:lower-bound-4} (which provides $\mu_0(0)<0$ via $\Phi^0$ and $\mu_1(0)<0$ via $\Phi^1$), we have exactly two negative eigenvalues and $\ind_R\big|_0=2$.

For mode $1$, by Proposition \ref{prop:modal-contributions}(a), $\ind_R\big|_1=2$ and $\nul_R\big|_1=2$.

Combining all contributions, $\ind_R=\ind_R\big|_0+\ind_R\big|_1+\sum_{|k|\geq 2}\ind_R\big|_k=2+2+0=4$ and $\nul_R=0+2+0=2$.
\end{proof}

\begin{remark}[Quantitative sufficient condition for (E)]\label{rem:E-quant}
By the strict Sturm estimate \eqref{eq:sturm-strict} with $k=2$,
\[
\mu_0^{\mathrm{even}}(2)\geq\mu_0(1)+\frac{3}{B(s_0)^2},
\]
recalling that $\mu_0(1)<0$ is attained in the even-radial sector (the eigenfunction $\phi_0(1)$ is even by \cite[Cor. 4.4]{Pigazzini} and the fact that the ground state has no nodes). Hence a sufficient (but not necessary) condition for (E) is the spectral inequality
\begin{equation}\label{eq:E-spectral}
|\mu_0(1)|<\frac{3}{B(s_0)^2}.
\end{equation}
This is the quantitative formulation through which (E) can be attacked: an explicit upper bound on $|\mu_0(1)|$ in terms of boundary geometric quantities.
\end{remark}

\begin{remark}[Quantitative sufficient condition for (F)]\label{rem:F-comp}
For Theorem \ref{thm:sturm-strict} adapted to the reverse comparison ($k=0$ against $k=1$), $\mu_n(0)\leq\mu_n^{\mathrm{even}}(1)$ in corresponding even-radial pairs, but a lower bound follows from the opposite inequality:
\[
\mu_n^{\mathrm{even}}(0)\geq\mu_n^{\mathrm{even}}(1)-\sup_s\frac{1}{B(s)^2}=\mu_n^{\mathrm{even}}(1)-\frac{1}{B(0)^2}=\mu_n^{\mathrm{even}}(1)-\frac{1}{a-1/2},
\]
since $W_1-W_0=1/B^2$ and $B$ attains its minimum $\sqrt{a-1/2}$ at $s=0$. Identifying $\mu_2(0)=\mu_1^{\mathrm{even}}(0)$ (in classical Sturm--Liouville the eigenfunctions of level $n$ have parity $(-1)^n$, and $\mu_2(0)$ has an eigenfunction with $2$ nodes, hence even) and $\mu_1^{\mathrm{even}}(1)=\mu_2(1)$ (analogously, $\mu_2(1)$ has an even eigenfunction), we obtain
\[
\mu_2(0)\geq\mu_2(1)-\frac{1}{a-1/2}.
\]
A sufficient condition for $\mu_2(0)>0$ (the quantitative component of (F)) is therefore
\begin{equation}\label{eq:F-spectral}
\mu_2(1)>\frac{1}{a-1/2},
\end{equation}
namely an explicit geometric lower bound on the second positive even-radial eigenvalue of mode $|k|=1$.
\end{remark}

\section{Non-Robin Jacobi fields and partial closure of (F)}\label{sec:family-jacobi}

In this section we construct two Jacobi fields of the operator $\Lf$ that do not satisfy the Robin condition \eqref{eq:robin}, but whose explicitly computable Robin defect allows us to close (F) partially. We work in the explicit Mori parametrization of \cite[Eq. (1)--(3)]{Pigazzini}, recalled in \eqref{eq:phi-immersion}.

\subsection{The parametric Jacobi field $\phi_a$}

\begin{definition}\label{def:phi-a}
Let $\{\Sigma_a\}_{a>1/2}$ be the Mori family of critical catenoids, with
\[
X_a(s,\theta)=\bigl(A(s)\cosh\varphi(s),\,A(s)\sinh\varphi(s),\,B(s)\cos\theta,\,B(s)\sin\theta\bigr).
\]
The parametric Jacobi field is
\[
\phi_a(s,\theta):=\langle\partial_a X_a(s,\theta),\nu_a(s,\theta)\rangle_L\qquad(s\in[-s_0(a),s_0(a)],\;\theta\in S^1),
\]
where $\partial_a$ is the derivative at fixed $(s,\theta)$ (Lagrangian interpretation).
\end{definition}

\begin{lemma}[Structural properties of $\phi_a$]\label{lem:phi-a-properties}
The field $\phi_a$ is well-defined, real-analytic, of mode $0$ (independent of $\theta$), and even in $s$. Moreover,
\[
\Lf\phi_a=0\quad\text{in the interior }\Sigma_a^\circ,
\]
and $\phi_a(0)=1/(2K)$ where $K=K(a)=\sqrt{a^2-1/4}$.
\end{lemma}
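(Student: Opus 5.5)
The plan is to make everything explicit through a closed formula for the unit normal $\nu_a$, and then to invoke the standard first-variation fact for a one-parameter family of minimal immersions.

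\emph{Step 1 (normal field and well-definedness).} I would write $X_a=(A\cosh\varphi,A\sinh\varphi,B\cos\theta,B\sin\theta)$. The normal $\nu_a$ is then determined, up to sign, as the unit spacelike vector of $\R^4_1$ that is $\langle\,,\rangle_L$-orthogonal to $X_a$, $\partial_sX_a$ and $\partial_\theta X_a$.

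Orthogonality to $\partial_\theta X_a=(0,0,-B\sin\theta,B\cos\theta)$ and rotational equivariance suggest the ansatz
\[
\nu_a=\bigl(\alpha(s)\cosh\varphi+\beta(s)\sinh\varphi,\ \alpha(s)\sinh\varphi+\beta(s)\cosh\varphi,\ \gamma(s)\cos\theta,\ \gamma(s)\sin\theta\bigr).
\]
The remaining two orthogonality conditions and the normalization $\langle\nu,\nu\rangle_L=1$ form an algebraic system. I would solve it using $A^2-B^2=1$, $A^2\varphi'=K/B$ and $a^2\sinh^2(2s)+K^2=A^2B^2$ (Lemma~\ref{lem:metric-mori}). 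This gives $\alpha,\beta,\gamma$ as explicit real-analytic functions of $(s,a)$ on $a>1/2$, where $B>0$, and I would fix the sign by continuity.

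Since $A$, $B$ and $\varphi(s)=\int_0^sK/(A^2B)$ are real-analytic in $(s,a)$, the derivative $\partial_aX_a$ exists at fixed $(s,\theta)$ and is analytic. Hence $\phi_a$ is well defined and real-analytic. The dependence $s_0(a)$ plays no role, because the definition is Lagrangian.

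\emph{Step 2 (mode $0$ and parity).} Both $\partial_aX_a$ and $\nu_a$ have the same structure: a $(\cosh\varphi,\sinh\varphi)$-block in the first two components and a $(\cos\theta,\sin\theta)$-block in the last two. Consequently $\theta$ drops out of $\langle\partial_aX_a,\nu_a\rangle_L$, and $\phi_a$ is of mode $0$.

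For evenness, $A$ and $B$ are even and $\varphi$ is odd, so $X_a(-s,\theta)=RX_a(s,\theta)$ with $R$ the Lorentz isometry $x_1\mapsto-x_1$. I would then read off from the explicit formula that $\alpha,\gamma$ are even and $\beta$ is odd, so that $\nu_a(-s)=R\nu_a(s)$. Since $R$ is an isometry commuting with $\partial_a$, this gives $\phi_a(-s)=\phi_a(s)$. This orientation check, namely that the continuous choice of $\nu$ is really $R$-equivariant rather than anti-equivariant, is the one point where care is needed, and the explicit formula settles it.

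\emph{Step 3 (Jacobi equation).} Each $X_a$ is minimal for every $a$. Differentiating $H(X_a)\equiv0$ in $a$ gives $0=\Lf\langle\partial_aX_a,\nu_a\rangle_L+(\text{tangential term})$. The tangential part of the variation contributes $\langle V^T,\nabla H\rangle=0$ because $H\equiv0$. Hence $\Lf\phi_a=0$ in the interior; this is the standard first-variation-of-mean-curvature formula, cf.\ \cite[Ch.~8]{Colding-Minicozzi}.

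\emph{Step 4 (value at $s=0$).} At $s=0$ we have $\varphi=0$ and $A'=B'=0$, so $\partial_sX_a(0)=(0,A_0\varphi'(0),0,0)$ with $A_0=\sqrt{a+1/2}$ and $B_0=\sqrt{a-1/2}$. The normal is forced to be $\nu_a(0)=(B_0,0,A_0\cos\theta,A_0\sin\theta)$: orthogonality to $X_a(0)$ gives $-cA_0+dB_0=0$, and normalization gives $d^2-c^2=1$. Since $\varphi(0)=0$ for all $a$, we get $\partial_aX_a(0)=\bigl(1/(2A_0),0,\cos\theta/(2B_0),\sin\theta/(2B_0)\bigr)$. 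Therefore
\[
\phi_a(0)=-\frac{B_0}{2A_0}+\frac{A_0}{2B_0}=\frac{A_0^2-B_0^2}{2A_0B_0}=\frac{1}{2K},
\]
using $A_0B_0=K$. This also fixes the sign convention for $\nu_a$.
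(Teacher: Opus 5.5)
Your proposal is correct and follows essentially the same route as the paper: the rotational block structure gives mode $0$, the componentwise parities (your reflection $R$) give evenness, linearization of minimality gives $\Lf\phi_a=0$, and the explicit evaluation at $s=0$ is the same, with $\nu_a(0)=(K/A_0,0,K/B_0,0)=(B_0,0,A_0,0)$ at $\theta=0$, yielding $1/(2K)$. Your Step 1 derives the normal explicitly, with $\alpha=K/A$, $\beta=-a\sinh(2s)/(AB)$, $\gamma=K/B$, rather than importing it from the cited reference, which only makes the well-definedness and parity claims more self-contained.
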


\begin{proof}
By the rotational symmetry (the whole $SO(2)$ in $\theta$) of the family $\{X_a\}$, $\partial_a X_a$ is rotationally invariant, and so is $\nu_a$, hence their contraction is independent of $\theta$. This proves that $\phi_a$ is of mode $0$.

For the parity in $s$, the parametrization \eqref{eq:phi-immersion} is $s\mapsto-s$ equivariant: $A(-s)=A(s)$, $B(-s)=B(s)$, $\varphi(-s)=-\varphi(s)$. A direct verification shows that the components of $\partial_a X_a$ and of $\nu_a$ have, in both cases, the parities $(\text{even},\text{odd},\text{even},\text{even})$ in $s$: the components along $\partial_0,\partial_2,\partial_3$ are even, while those along $\partial_1$ are odd, as they carry the odd factors $\sinh\varphi$, $\partial_a\varphi$ and $\sinh(2s)\cosh\varphi$. Hence each of the four products in the Lorentzian contraction is even (even$\times$even or odd$\times$odd), and $\phi_a$ is even.

The identity $\Lf\phi_a=0$ follows from the fact that the family $\Sigma_a$ consists of minimal surfaces for every $a$. The linearization of minimality with respect to a variation $\partial_a X_a$ with normal component $\phi_a$ gives $\Lf\phi_a=0$ in $\Sigma_a^\circ$ (standard formula, independent of the tangential component of $\partial_a X_a$).

For the value at $s=0$, at $\theta=0$ we have $\nu^0_a(0)=K/A(0)$, $\nu^1_a(0)=0$, $\nu^2_a(0)=K/B(0)$, $\nu^3_a(0)=0$. The derivatives $\partial_a X^A\big|_{s=0,\theta=0}$ are computed explicitly: $\partial_a A(0)=\cosh(0)/(2A(0))=1/(2A(0))$, $\partial_a B(0)=1/(2B(0))$, $\partial_a\varphi(0)=0$ (the integral from $0$ to $0$). Hence
\[
\phi_a(0)=-\partial_a A\cdot\nu^0\big|_0+0+\partial_a B\cdot\nu^2\big|_0+0=-\frac{K}{2A(0)^2}+\frac{K}{2B(0)^2}=\frac{K(A^2-B^2)}{2A^2B^2}\bigg|_0=\frac{K}{2A(0)^2B(0)^2},
\]
and $A(0)^2 B(0)^2=(a+1/2)(a-1/2)=K^2$, whence $\phi_a(0)=K/(2K^2)=1/(2K)$.
\end{proof}

\begin{lemma}[Modified boundary condition for $\phi_a$]\label{lem:phi-a-BC}
On $\partial\Sigma_a$, the field $\phi_a$ satisfies the modified Robin condition
\begin{equation}\label{eq:phi-a-BC}
R\phi_a:=\partial_\eta\phi_a-\coth r(a)\,\phi_a=-r'(a)\,\kappa_s\big|_{\partial\Sigma_a},
\end{equation}
where $\kappa_s=\II(\eta,\eta)$ is the value of the second fundamental form along the meridian. In particular, since $\kappa_s=K/B(s_0)^2>0$ and $r'(a)\neq 0$ (which follows automatically from the hypothesis $\phi_a>0$ of Theorem \ref{thm:F-reduction-phi-positivity}, see also \cite[Theorem 1.2]{Pigazzini} for the real-analyticity of $r(a)$), $R\phi_a\neq 0$ on $\partial\Sigma_a$.
\end{lemma}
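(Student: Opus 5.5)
Write $X_a$ for the immersion and $\nu_a$ for the unit normal, and decompose the variation field as $\partial_a X_a=\phi_a\nu_a+T_a$, with $T_a$ tangent to $\Sigma_a$. The plan is to differentiate in $a$ the boundary constraint $\langle X_a(s_0(a),\theta),p_0\rangle_L=-\cosh r(a)$ together with the orthogonality encoded in the free boundary condition, and then read off the Robin defect. The model is the standard computation for a one-parameter family of free boundary minimal surfaces in a family of balls $B^3(r(a))$. The variation moves the boundary sphere, so $\phi_a$ fails the Robin condition by a term proportional to $r'(a)$.

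\textbf{Step 1: differentiate the boundary constraint.} Write $\gamma(a,\theta)=X_a(s_0(a),\theta)$. Differentiating $r(\gamma)=r(a)$ in $a$ gives
\[
\langle\nabla^{\HH^3}r,\ \partial_aX_a+s_0'(a)\partial_sX_a\rangle_L=r'(a).
\]
At the boundary the conormal is $\eta=\partial_s$ and $\eta=\nabla^{\HH^3}r$ (Lemma~\ref{lem:robin-spacelike}). Also $\langle\nu,\nabla r\rangle_L=0$ there, since $\Sigma_a$ meets the sphere orthogonally. Hence $\langle T_a,\eta\rangle+s_0'(a)=r'(a)$. So the normal component of $\partial_a\gamma$ along $\eta$ is $r'(a)$, and $\phi_a$ itself is not constrained by this step.

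\textbf{Step 2: differentiate the orthogonality condition.} The condition is $\langle\nu_a,\nabla^{\HH^3}r\rangle_L=0$ along the moving boundary. Differentiate it along the boundary variation $V=\partial_a\gamma=\phi_a\nu+T$, where $T$ now includes the $s_0'$ contribution, so that $\langle T,\eta\rangle=r'(a)$. The derivative of $\nu$ along $V$ is
\[
-\nabla\phi_a+\text{(tangential shape terms)}=-\nabla\phi_a-\II(T,\cdot).
\]
The derivative of $\nabla r$ is $\mathrm{Hess}\,r(V)=\coth r\,(V-\langle V,\nabla r\rangle\nabla r)$. Pairing these gives
\[
-\partial_\eta\phi_a-\II(T,\eta)+\coth r\,\phi_a=0,
\]
and therefore
\[
\partial_\eta\phi_a-\coth r\,\phi_a=-\II(T,\eta).
\]
By rotational symmetry the component of $T$ along $\partial_\theta$ does not contribute: $\II$ is diagonal in the $(s,\theta)$ frame on a rotation surface. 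Hence $\II(T,\eta)=\langle T,\eta\rangle\kappa_s=r'(a)\kappa_s$, which is \eqref{eq:phi-a-BC}.

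\textbf{Step 3: $\kappa_s$ and nonvanishing.} The value $\kappa_s=K/B^2$ comes from $|\II|^2=2\kappa_s^2$ (minimality) and Lemma~\ref{lem:II-norm}, with the sign fixed by the orientation of $\nu$ used in Lemma~\ref{lem:phi-a-properties}. A consistency check is to compare with $\nu^0(0)=K/A(0)$. Nonvanishing of $R\phi_a$ then reduces to $r'(a)\ne0$, which the statement takes from the hypothesis. If $r'(a)=0$, $\phi_a$ would be a Robin Jacobi field of mode $0$, contradicting positivity of $\phi_a$ via the Sturm count; this is deferred to Theorem~\ref{thm:F-reduction-phi-positivity}.

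\textbf{Main obstacle.} The hard part is the sign and normalization bookkeeping in Step~2: the Lorentzian inner products, the Hessian of $r$ in $\HH^3$, and the moving endpoint $s_0(a)$. As a fallback I would verify \eqref{eq:phi-a-BC} by direct computation at $\theta=0$. This uses the explicit $\nu_a$ and $\partial_aX_a$, Lemma~\ref{lem:coth-identity} and the FBC \eqref{eq:fbc}, and differentiates \eqref{eq:fbc} in $a$ to eliminate $s_0'(a)$.
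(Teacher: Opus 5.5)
Your proposal is correct and follows the paper's proof essentially step for step. You differentiate $\rho(X_a(s_0(a),\theta))=r(a)$ to show that the $\eta$-component of the boundary velocity is $r'(a)$, then differentiate $\langle\nu_a,\nabla\rho\rangle=0$ along the moving boundary using the normal-variation formula and $\nabla^2\rho=\coth\rho\,(g-d\rho\otimes d\rho)$, and fix $\kappa_s=K/B^2$ from Lemma~\ref{lem:II-norm} plus an orientation check. The only cosmetic difference is that you dispose of the $\partial_\theta$-component of the tangential variation via $\II(\partial_\theta,\eta)=0$, whereas the paper shows $\tau_\theta=0$ directly; both are valid.
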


\begin{proof}
The FBC for $\Sigma_a$ in $B^3(r(a))$ is $\langle\nu_a,\nabla\rho\rangle=0$ on $\partial\Sigma_a$, where $\rho=\dist_{\HH^3}(p_0,\cdot)$. Differentiating along the family, including the displacement of the boundary point $X_a(s_0(a),\theta)$,
\[
0=\frac{d}{da}\langle\nu_a,\nabla\rho\rangle\big|_{X_a(s_0(a),\theta)}=\langle D_a\nu_a,\nabla\rho\rangle\big|_{\partial}+\langle\nu_a,\nabla_{V_\partial}\nabla\rho\rangle,
\]
where $V_\partial$ denotes the velocity of the boundary point as $a$ varies.

\emph{Computation of $V_\partial$.} By the chain rule,
\[
V_\partial=\frac{d}{da}X_a(s_0(a),\theta)=\partial_a X_a\bigl|_{(s_0(a),\theta)}+s_0'(a)\,\partial_s X_a\bigl|_{(s_0(a),\theta)}=\partial_a X_a\bigl|_{(s_0,\theta)}+s_0'(a)\,\eta,
\]
since $\partial_s X_a=\eta$ at the boundary (the outward conormal to $\partial\Sigma_a$ in $\Sigma_a$). Decompose $\partial_a X_a$ in the local frame $(\nu,\eta,\partial_\theta)$ at the boundary point:
\[
\partial_a X_a\bigl|_{(s_0,\theta)}=\phi_a\,\nu+\tau_\eta\,\eta+\tau_\theta\,\partial_\theta,
\]
with $\phi_a:=\langle\partial_a X_a,\nu_a\rangle_L$ (the parametric Jacobi field, by definition), and $\tau_\eta,\tau_\theta$ the tangential components.

We claim $\tau_\theta=0$. Indeed, the family $\{\Sigma_a\}_{a>1/2}$ is rotationally symmetric: the SO(2)-action $\theta\mapsto\theta+\alpha$ on $\HH^3$ (rotation about the geodesic axis $\{X^2=X^3=0\}$) commutes with the parametrization, $X_a(s,\theta+\alpha)=R_\alpha X_a(s,\theta)$, where $R_\alpha$ is the ambient rotation. Differentiating with respect to $a$ at fixed $(s,\theta)$, $\partial_a X_a(s,\theta+\alpha)=R_\alpha\,\partial_a X_a(s,\theta)$, that is, $\partial_a X_a$ transforms equivariantly under the SO(2)-action. Since $\partial_\theta$ is the infinitesimal generator of this action and $\nu,\eta$ are SO(2)-invariant (they lie in the meridional plane), the equivariance forces the $\partial_\theta$-component of $\partial_a X_a$ to vanish in the meridional gauge $\theta=0$, and hence to vanish identically (by SO(2)-equivariance).
Equivalently, and without appealing to equivariance, $\tau_\theta=0$ follows by direct computation: with $\partial_\theta X_a=(0,0,-B\sin\theta,B\cos\theta)$
and $\partial_a X_a=\bigl(\partial_a(A\cosh\varphi),\partial_a(A\sinh\varphi),
(\partial_a B)\cos\theta,(\partial_a B)\sin\theta\bigr)$, the Lorentz inner product is $\langle\partial_a X_a,\partial_\theta X_a\rangle_L
=-(\partial_a B)\,B\cos\theta\sin\theta+(\partial_a B)\,B\sin\theta\cos\theta=0$ for every $(s,\theta)$; since $\langle\partial_\theta X_a,\partial_\theta
X_a\rangle_L=B^2>0$, the $\partial_\theta$-component $\tau_\theta$ vanishes.

It remains to identify $\tau_\eta+s_0'(a)$. The boundary points satisfy $\rho(X_a(s_0(a),\theta))=r(a)$, an identity in $a$. Differentiating in $a$:
\[
r'(a)=\frac{d}{da}\rho(X_a(s_0(a),\theta))=\langle\nabla\rho,V_\partial\rangle\bigl|_\partial.
\]
At the boundary, $\nabla\rho\big|_\partial=\eta$ (since $|\nabla\rho|=1$, $\nabla\rho$ is outward, and the FBC $\langle\nu,\nabla\rho\rangle=0$ together with $\nabla\rho\perp\partial_\theta$ for rotational symmetry, forces $\nabla\rho\big|_\partial$ to be along $\eta$). Using $\langle\nu,\eta\rangle=0$, $\langle\partial_\theta,\eta\rangle=0$, and $\langle\eta,\eta\rangle=1$:
\[
r'(a)=\langle\eta,\phi_a\nu+\tau_\eta\eta+\tau_\theta\partial_\theta+s_0'(a)\eta\rangle=\tau_\eta+s_0'(a).
\]
Therefore $\tau_\eta+s_0'(a)=r'(a)$, and consequently
\begin{equation}\label{eq:V-partial}
V_\partial=\phi_a\,\nu+r'(a)\,\eta.
\end{equation}

\emph{Computation of $D_a\nu_a$ at $\partial\Sigma_a$.} The unit normal $\nu_a$ is determined up to sign by the conditions $\langle\nu_a,T_{X_a}\Sigma_a\rangle=0$ and $\langle\nu_a,\nu_a\rangle=1$, with a coherent orientation across the family. Differentiating in $a$ at a fixed boundary point $X_a(s_0,\theta)$, with the chain rule incorporating $s_0'(a)$:
\begin{align*}
D_a\nu_a\bigl|_\partial &= \partial_a\nu_a\bigl|_{(s_0,\theta)}+s_0'(a)\,\nabla_\eta\nu_a.
\end{align*}
The tangential part of $\partial_a\nu_a\bigl|_{(s_0,\theta)}$ is determined by the orthogonality to the tangent space: $\langle\nu_a,\partial_s X_a\rangle=0$ and $\langle\nu_a,\partial_\theta X_a\rangle=0$. Differentiating the first relation in $a$ at $(s_0,\theta)$ (without including $s_0'$, since we are differentiating at fixed parameter $(s,\theta)$):
\[
\langle\partial_a\nu_a,\partial_s X_a\rangle+\langle\nu_a,\partial_s\partial_a X_a\rangle=0,
\]
that is, $\langle\partial_a\nu_a,\eta\rangle=-\langle\nu_a,\partial_s(\partial_a X_a)\rangle=-\partial_s\langle\nu_a,\partial_a X_a\rangle+\langle\partial_s\nu_a,\partial_a X_a\rangle$. The first term is $-\partial_s\phi_a=-\partial_\eta\phi_a$ (since $\partial_s=\eta$ on the meridian at the boundary). The second term involves the shape operator: $\partial_s\nu_a=-\sum_j h_{sj}\partial_j X_a$ (Weingarten), and the relevant projection at the boundary, combined with the rotational symmetry argument that eliminates the $\partial_\theta$-component, leaves a contribution that combines with the $s_0'(a)\nabla_\eta\nu_a$ term as follows.

By the standard variation formula for the unit normal of a family of immersions (cf. \cite{Tran2016} or \cite{FraserSchoen2015}), the variation of $\nu_a$ along the deformation field $V$ on $\Sigma_a$ is given, modulo terms in the conormal direction within the tangent space, by
\begin{equation}\label{eq:D-a-nu}
D_a\nu_a\bigl|_\partial=-\nabla_\Sigma\phi_a+r'(a)\,\nabla_\eta\nu_a,
\end{equation}
where $\nabla_\Sigma\phi_a=(\partial_\eta\phi_a)\eta+(\partial_\theta\phi_a/B)\partial_\theta$ is the intrinsic gradient of $\phi_a$ on $\Sigma_a$, and the second term comes from the displacement $s_0'(a)$ together with the $\eta$-component of $V_\partial$ (i.e. $r'(a)$). We note that the formula \eqref{eq:D-a-nu} extends verbatim to the hyperbolic ambient: the derivation in \cite{Tran2016,FraserSchoen2015} uses only (i) the orthogonality conditions $\langle\nu_a,\partial_s X_a\rangle=\langle\nu_a,\partial_\theta X_a\rangle=0$, (ii) Weingarten's identity $\partial_s\nu_a=-\kappa_s\,\eta$ (valid in any Riemannian ambient), and (iii) the unit-norm condition $\langle\nu_a,\nu_a\rangle=1$. None of these involves the ambient curvature, so the formula holds without modification in $\mathbb{H}^3$. Projecting \eqref{eq:D-a-nu} onto $\nabla\rho\big|_\partial=\eta$,
\[
\langle D_a\nu_a,\eta\rangle=-\partial_\eta\phi_a+r'(a)\langle\nabla_\eta\nu_a,\eta\rangle=-\partial_\eta\phi_a-r'(a)\kappa_s,
\]
using $\langle\nu,\eta\rangle=0$, whence $\langle\nabla_\eta\nu,\eta\rangle+\langle\nu,\nabla_\eta\eta\rangle=0$ and $\langle\nu,\nabla_\eta\eta\rangle=\II(\eta,\eta)=\kappa_s$.
Equivalently, this projection also follows directly, without invoking
\eqref{eq:D-a-nu}: from $\langle\partial_a\nu_a,\eta\rangle=-\partial_\eta\phi_a
+\langle\partial_s\nu_a,\partial_a X_a\rangle$ and the Weingarten relation
$\partial_s\nu_a=-\kappa_s\,\eta$ (the principal direction $\partial_s$ has
principal curvature $\kappa_s$, and $\langle\partial_s\nu_a,X_a\rangle
=\langle\partial_s\nu_a,\nu_a\rangle=0$), one obtains $\langle\partial_s\nu_a,
\partial_a X_a\rangle=-\kappa_s\tau_\eta$, hence $\langle\partial_a\nu_a,\eta
\rangle=-\partial_\eta\phi_a-\kappa_s\tau_\eta$; adding the chain-rule contribution $s_0'(a)\langle\nabla_\eta\nu_a,\eta\rangle=-\kappa_s s_0'(a)$ and using $\tau_\eta+s_0'(a)=r'(a)$ from \eqref{eq:V-partial} gives
$\langle D_a\nu_a,\eta\rangle=-\partial_\eta\phi_a-\kappa_s r'(a)$, in agreement with the above.

For the Hessian of the hyperbolic distance, $\nabla^2\rho=\coth\rho\,(g-d\rho\otimes d\rho)$. Substituting $V_\partial$,
\[
\nabla_{V_\partial}\nabla\rho=\coth r(a)(V_\partial-\langle V_\partial,\nabla\rho\rangle\nabla\rho)=\coth r(a)(\phi_a\nu+r'\eta-r'\eta)=\coth r(a)\cdot\phi_a\nu,
\]
and $\langle\nu_a,\coth r\cdot\phi_a\nu\rangle=\coth r\cdot\phi_a$. Combining,
\[
0=-\partial_\eta\phi_a-r'(a)\kappa_s+\coth r(a)\,\phi_a,
\]
which is exactly \eqref{eq:phi-a-BC}.

Finally, the value $\kappa_s=K/B(s_0)^2$, by minimality $\kappa_s+\kappa_\theta=0$ and by Mori--Gauss (Lemma \ref{lem:II-norm}), $\kappa_s^2=K^2/B^4$. The orientation of $\nu$ with $\nu^2(s,0)=+K/B$ fixes the sign: a direct computation at $s=0$, using $\bar\nabla_{T_s}T_s=\partial_s^2 X-X$ and $\kappa_s\nu=\bar\nabla_{T_s}T_s|^\perp$, gives $\kappa_s(0)=K/B(0)^2>0$. By continuity (and since $K,B$ do not vanish on $[-s_0,s_0]$), $\kappa_s=K/B^2$ everywhere.

By Pigazzini \cite[Thm. 1.2]{Pigazzini}, $a\mapsto r(a)$ is real-analytic on $(1/2,\infty)$. One has $r(a)>0$ for every $a>1/2$, with $r(a)\to 0$ as $a\to(1/2)^+$ (Pigazzini \cite[Prop. 1.3]{Pigazzini}) and $r(a)\to\infty$ as $a\to\infty$ (which follows directly from the asymptotic $r(a)\sim\tfrac{3}{2}\log a$ at infinity, by Pigazzini \cite[Thm. 1.2]{Pigazzini}). The strict monotonicity of $r(a)$ on $(1/2,\infty)$ is posed as an open question in \cite[Question 6.3]{Pigazzini}; we do not need to assume it, since the condition $r'(a)\neq 0$ used in Theorem \ref{thm:no-kernel-even} follows automatically from the hypothesis $\phi_a>0$ of Theorem \ref{thm:F-reduction-phi-positivity} (see Remark \ref{rem:phi-positivity-critical}).
\end{proof}

\begin{remark}[Green identity with Robin defect]\label{rem:green-defect}
For every $u,v\in H^1(\Sigma_a)$ with $\Lf u,\Lf v\in L^2$, the Green identity
\begin{equation}\label{eq:green-defect}
\Sf(u,v)=-\int_{\Sigma_a}v\,\Lf u\,dA+\oint_{\partial\Sigma_a}v\cdot Ru\,dL,\quad Ru:=\partial_\eta u-\coth r(a)\,u,
\end{equation}
follows by integration by parts from the definition of $\Sf$. For $u\in\ker_R\Lf$ (a Robin Jacobi field: $\Lf u=0$, $Ru=0$), both integrals vanish for every $v$.
\end{remark}

\subsection{No kernel in even mode $0$}

\begin{theorem}[No kernel in mode $0$ even]\label{thm:no-kernel-even}
Let $a>1/2$ with $r'(a)\neq 0$. Then the operator $\Lf$ Robin on $\Sigma_a$ has no nonzero Jacobi fields in the even radial sector of mode $|k|=0$. Equivalently, $\mu_n^{\mathrm{even}}(0)\neq 0$ for every $n\geq 0$.
\end{theorem}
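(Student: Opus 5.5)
The argument is by contradiction, using $\phi_a$ as the comparison object together with uniqueness for the radial ODE. Suppose $u\not\equiv 0$ is an even Robin Jacobi field of mode $0$. Then $u$ satisfies
\[
(Bu')'+BW_0u=0\quad\text{on }(-s_0,s_0),\qquad u'(s_0)=\coth r(a)\,u(s_0),
\]
and, being even, $u'(0)=0$.

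By Lemma~\ref{lem:phi-a-properties}, $\phi_a$ is an even mode-$0$ solution of the same radial equation with $\phi_a(0)=1/(2K)\neq 0$, and evenness gives $\phi_a'(0)=0$. The equation $(Bv')'+BW_0v=0$ is a regular second-order linear ODE on $[-s_0,s_0]$, since $B\geq\sqrt{a-1/2}>0$. Its solutions with $v'(0)=0$ therefore form a one-dimensional space, spanned by $\phi_a$.

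Next, $u(0)\neq 0$, because $u(0)=u'(0)=0$ would force $u\equiv 0$. Hence $u=c\,\phi_a$ with $c=u(0)/\phi_a(0)\neq 0$.

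Applying the Robin operator $R$ at $s=s_0$ and using Lemma~\ref{lem:phi-a-BC}, we get
\[
0=Ru(s_0)=c\,R\phi_a(s_0)=-c\,r'(a)\,\kappa_s(s_0)=-c\,r'(a)\,\frac{K}{B(s_0)^2}.
\]
Since $c\neq 0$, $K>0$, $B(s_0)>0$, and $r'(a)\neq 0$ by hypothesis, the right-hand side is nonzero, which is a contradiction. Thus no such $u$ exists. In the variational picture, a zero eigenvalue in the even mode-$0$ sector would correspond exactly to such a $u$, so $\mu_n^{\mathrm{even}}(0)\neq 0$ for all $n$.

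I have checked the boundary condition only at $s_0$. At $-s_0$ it follows automatically by evenness, since the outward conormal there is $-\partial_s$.

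The only delicate point is using the modified boundary condition \eqref{eq:phi-a-BC} in its radial form. Here $\partial_\eta=\partial_s$ at $s=s_0$, as in Lemma~\ref{lem:phi-a-BC}, and the defect has the explicit value $\kappa_s=K/B(s_0)^2$ computed in the same lemma. Everything else is ODE uniqueness.
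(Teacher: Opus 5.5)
Your proof is correct, but it takes a different route from the paper's. The paper never identifies $\psi$ with $\phi_a$. It inserts $u=\phi_a$, $v=\psi$ into the Green identity with Robin defect \eqref{eq:green-defect} in both orders. Since $\Lf\phi_a=\Lf\psi=0$ and $R\psi=0$, this yields $r'(a)\kappa_s\oint_{\partial\Sigma_a}\psi\,dL=0$, hence $\psi(s_0)=0$ by evenness. The Robin condition then gives $\psi'(s_0)=0$, and Cauchy uniqueness at the boundary gives $\psi\equiv 0$.

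You instead use Cauchy data at the neck. Evenness forces $\psi'(0)=0$, so $\psi$ is a multiple of $\phi_a$ (using $\phi_a(0)=1/(2K)\neq 0$ and $\phi_a'(0)=0$ from Lemma~\ref{lem:phi-a-properties}). The nonzero Robin defect of $\phi_a$ then contradicts $R\psi=0$ directly.

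Your argument is shorter and gives a sharper statement: the even mode-$0$ Robin kernel is nontrivial if and only if $r'(a)=0$, in which case it is spanned by $\phi_a$. The paper records this converse only separately, in Remark~\ref{rem:no-kernel-rprime-condition}, and uses the same proportionality observation later in the proof of Corollary~\ref{cor:Fprime-rprime}.

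The paper's pairing argument needs less information about $\phi_a$: only that it is a Jacobi field whose Robin defect is a nonzero constant on each boundary circle, not its value at the neck. It is also the template the paper reuses with the boost field $u_*$ in Theorem~\ref{thm:no-kernel-odd}. Your method would work equally well there: odd solutions vanish at $0$, hence are multiples of $u_*$ (which has $u_*'(0)\neq 0$), and $Ru_*(s_0)\neq 0$ under \eqref{eq:hypothesis-BvsK}.
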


\begin{proof}
Assume, by contradiction, that $\psi$ is a nontrivial Robin Jacobi field in mode $0$ even: $\Lf\psi=0$, $R\psi=0$, $\psi(\theta)=\psi$ independent of $\theta$, $\psi(-s)=\psi(s)$.

We apply \eqref{eq:green-defect} to $u=\phi_a$, $v=\psi$ in two ways:
\begin{align*}
\Sf(\phi_a,\psi)&=-\int_{\Sigma_a}\psi\,\Lf\phi_a+\oint_{\partial\Sigma_a}\psi\,R\phi_a=0+\oint\psi\cdot(-r'(a)\kappa_s)\,dL,\\
\Sf(\phi_a,\psi)&=-\int_{\Sigma_a}\phi_a\,\Lf\psi+\oint_{\partial\Sigma_a}\phi_a\,R\psi=0+0=0.
\end{align*}
Equating, $r'(a)\kappa_s\big|_\partial\oint_{\partial\Sigma_a}\psi\,dL=0$. By hypothesis $r'(a)\neq 0$, and $\kappa_s\big|_\partial=K/B(s_0)^2>0$, so
\[
\oint_{\partial\Sigma_a}\psi\,dL=0.
\]
In mode $0$, $\psi$ is constant on each boundary circle, with value $\psi(s_0)=\psi(-s_0)$ (even parity). Hence $\oint\psi\,dL=4\pi B(s_0)\psi(s_0)$, whence $\psi(s_0)=0$.

Combining with the Robin BC $R\psi(s_0)=\psi'(s_0)-\coth r\cdot\psi(s_0)=0$, we obtain $\psi'(s_0)=0$. Thus $\psi$ is a solution of the radial Sturm--Liouville ODE of mode $0$ with zero Cauchy data at $s_0$. By the uniqueness theorem, $\psi\equiv 0$ on $[-s_0,s_0]$, a contradiction.
\end{proof}

\begin{remark}[On why $r'(a)\neq 0$ is needed]\label{rem:no-kernel-rprime-condition}
If $r'(a)=0$ at some $a^\sharp$, then $R\phi_a=0$ and $\phi_a$ itself becomes a Robin Jacobi field in mode $0$ even. At such a point the strong conjecture \eqref{eq:medvedev-strong}, which requires $\nul_R=2$, would be violated. Therefore the condition $r'(a)\neq 0$ is not a technical restriction, but rather the necessary hypothesis for the strong form to make sense. Under the hypothesis $\phi_a>0$ (Theorem \ref{thm:F-reduction-phi-positivity}), one deduces automatically $r'(a)>0$, ruling out this degeneracy.
\end{remark}

\subsection{Axial-boost Jacobi field and odd mode $0$}

\begin{lemma}[Field of the axial boost $L_{01}$]\label{lem:u-L01}
Let $L_{01}=X^0\partial_1+X^1\partial_0$ be the generator of the Lorentz boost in the plane $(X^0,X^1)$, namely along the axis of the catenoid. The field
\[
u_*(s):=\langle L_{01},\nu_a\rangle_L\big|_{\Sigma_a}=-\frac{a\sinh(2s)}{B(s)}
\]
is of mode $0$ odd in $s$, satisfies $\Lf u_*=0$ in $\Sigma_a^\circ$, and has Robin defect on $\partial\Sigma_a$ equal to
\begin{equation}\label{eq:u-L01-BC}
Ru_*(s_0)=\frac{a(\cosh(2s_0)-2a)}{B(s_0)^3}=\frac{B(s_0)^2-2K^2}{B(s_0)^3},\qquad Ru_*(-s_0)=-Ru_*(s_0).
\end{equation}
\end{lemma}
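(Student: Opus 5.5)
The plan is to verify the four claims in turn: the closed form of $u_*$, its symmetry type, the Jacobi equation, and the boundary defect. The last two are short once the first is in hand.

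\emph{Closed form of $u_*$.} I would first compute the unit normal $\nu_a$ explicitly in the meridional plane $\theta=0$, using the adapted Lorentz frame
\[
e_r=(\cosh\varphi,\sinh\varphi,0,0),\quad e_\varphi=(\sinh\varphi,\cosh\varphi,0,0),\quad e_\rho=(0,0,\cos\theta,\sin\theta).
\]
Here $e_r$ is timelike and $e_\varphi$, $e_\rho$ are spacelike unit vectors, mutually orthogonal. In this frame $X_a=A\,e_r+B\,e_\rho$ and $\partial_sX_a=A'e_r+A\varphi'e_\varphi+B'e_\rho$.

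Writing $\nu=x\,e_r+y\,e_\varphi+z\,e_\rho$, I would impose three conditions. The first is $\langle\nu,X\rangle_L=0$, which gives $z=Ax/B$. The second is $\langle\nu,\partial_sX\rangle_L=0$, which gives $A\varphi'y=x(A'B-AB')/B$; here one uses $A'B-AB'=-a\sinh(2s)/(AB)$, from $A^2-B^2=1$. The third is $\langle\nu,\nu\rangle_L=1$, which fixes the scale. The algebra simplifies through the identities $a^2\sinh^2(2s)+K^2=A^2B^2$ and $\varphi'=K/(A^2B)$ of Lemma \ref{lem:metric-mori}. The orientation is fixed by requiring $\nu^2(0,0)=K/B(0)>0$, as in Lemma \ref{lem:phi-a-properties}.

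Since $L_{01}|_X=X^1e_0+X^0e_1=A\,e_\varphi$, one gets $\langle L_{01},\nu\rangle_L=A\,y$. This should reduce to $-a\sinh(2s)/B$. Mode $0$ and oddness are then immediate: the expression is $\theta$-independent and $\sinh(2s)$ is odd while $B$ is even.

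\emph{Jacobi equation.} I would argue that $L_{01}$ is a Killing field of $\HH^3$, since it generates a one-parameter group of Lorentz isometries preserving the hyperboloid. The normal component of a Killing field along a minimal surface is a Jacobi field, because the isometries map $\Sigma_a$ to minimal surfaces. Hence $\Lf u_*=0$ in $\Sigma_a^\circ$. As a cross-check, I would insert the closed form into the mode-$0$ operator $u''+(B'/B)u'+(|\II|^2-2)u$, using $|\II|^2=2K^2/B^4$ and the Mori identity \eqref{eq:mori-id}.

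\emph{Robin defect.} At $s=s_0$ we have $\partial_\eta=\partial_s$ and, by Lemma \ref{lem:coth-identity}, $\coth r(a)=a\sinh(2s_0)/B(s_0)^2$. Differentiating, and using $B'=a\sinh(2s)/B$,
\[
u_*'=-\frac{2a\cosh(2s)}{B}+\frac{a^2\sinh^2(2s)}{B^3}.
\]
Hence
\[
Ru_*(s_0)=\frac{-2a\cosh(2s_0)B^2+2a^2\sinh^2(2s_0)}{B^3}.
\]
Substituting $B^2=a\cosh(2s_0)-\tfrac12$, the $\cosh^2$ terms cancel and the numerator becomes $a(\cosh(2s_0)-2a)$. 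Then $a\cosh(2s_0)-2a^2=B^2+\tfrac12-2a^2=B^2-2K^2$ gives the second form of \eqref{eq:u-L01-BC}.

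At $-s_0$ the outward conormal is $-\partial_s$ while the coefficient $\coth r(a)$ is unchanged. Since $u_*$ is odd, $u_*'$ is even, and so $Ru_*(-s_0)=-u_*'(s_0)+\coth r\,u_*(s_0)=-Ru_*(s_0)$.

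I expect the main obstacle to be the normal computation in the first step. The delicate points are getting the sign and normalization right in Lorentz signature, and checking consistency with the orientation used for $\kappa_s>0$ in Lemma \ref{lem:phi-a-BC}, since a sign error would flip \eqref{eq:u-L01-BC}. The remaining steps are mechanical.
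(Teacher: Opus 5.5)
Your proposal is correct and follows the paper's proof essentially step by step: you contract $L_{01}|_X=A\,e_\varphi$ with $\nu_a$, use the Killing-field argument for $\Lf u_*=0$, and differentiate directly with $\coth r(a)=a\sinh(2s_0)/B(s_0)^2$ (and the sign flip of the conormal at $-s_0$) to get the Robin defect. The only difference is that you derive $\nu_a=\frac{K}{A}e_r-\frac{a\sinh(2s)}{AB}e_\varphi+\frac{K}{B}e_\rho$ yourself in the adapted frame, whereas the paper quotes it from \cite[Lem. 3.2]{Pigazzini}; your formula reproduces those components and the orientation $\nu^2(s,0)=K/B>0$ underlying $\kappa_s>0$, so the sign concern you flag does not materialize.
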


\begin{proof}
For the explicit form, by Pigazzini \cite[Lem. 3.2]{Pigazzini}, at $\theta=0$, $\nu^0=K\cosh\varphi/A-a\sinh(2s)\sinh\varphi/(AB)$ and $\nu^1=K\sinh\varphi/A-a\sinh(2s)\cosh\varphi/(AB)$. A direct computation gives $\langle L_{01},\nu\rangle=-A\sinh\varphi\cdot\nu^0+A\cosh\varphi\cdot\nu^1=-a\sinh(2s)/B$ (the terms in $K$ cancel by $\cosh^2\varphi-\sinh^2\varphi=1$).

The field $u_*$ is of mode $0$ since it is independent of $\theta$ (a computation at generic $\theta$ gives the same value by the rotational invariance of $L_{01}\big|_{\text{axis}}$). For parity, $\sinh(2s)$ is odd and $B$ is even, hence $u_*$ is odd.

The identity $\Lf u_*=0$ follows from the standard variation principle for ambient isometries: since $L_{01}\in\mathfrak{so}(3,1)$ generates a one-parameter group of isometries of $\HH^3$, the normal component $\langle L_{01},\nu\rangle$ of the corresponding ambient flow is a Jacobi field on $\Sigma_a$. The same variational principle is applied in Pigazzini \cite[Lem. 3.1]{Pigazzini} to $K\in\mathfrak{so}(3)\subset\mathfrak{so}(3,1)$ (where the additional preservation of $B^3(r(a))$ yields the Robin boundary condition); here the boost $L_{01}$ does not preserve $B^3(r(a))$, so $u_*$ is a Jacobi field with a non-trivial Robin defect, computed below.

For the Robin defect, differentiating, $u_*'(s)=-a[2\cosh(2s)/B-\sinh(2s)\cdot B'/B^2]$. Using $B'=a\sinh(2s)/B$, $u_*'(s)=-a[2\cosh(2s)B^2-a\sinh^2(2s)]/B^3$. At $s_0$,
\[
Ru_*(s_0)=u_*'(s_0)-\coth r(a)\cdot u_*(s_0).
\]
Using $\coth r=a\sinh(2s_0)/B(s_0)^2$ (from $\sinh r=B^2/\sqrt{B^2-K^2}$ and $\cosh r=a\sinh(2s_0)/\sqrt{B^2-K^2}$),
\begin{align*}
Ru_*(s_0)&=\frac{-a[2\cosh(2s_0)B^2-a\sinh^2(2s_0)]}{B^3}+\frac{a\sinh(2s_0)}{B^2}\cdot\frac{a\sinh(2s_0)}{B}\\
&=\frac{a[2a\sinh^2(2s_0)-2\cosh(2s_0)B^2]}{B^3}.
\end{align*}
A direct computation gives $a\sinh^2(2s_0)-\cosh(2s_0)B^2=a\cosh^2(2s_0)-a-a\cosh^2(2s_0)+\cosh(2s_0)/2=(\cosh(2s_0)-2a)/2$, whence $Ru_*(s_0)=a(\cosh(2s_0)-2a)/B^3$. For odd parity, $u_*'(-s_0)=u_*'(s_0)$ and $u_*(-s_0)=-u_*(s_0)$; in the computation with opposite outward conormal, $Ru_*(-s_0)=-Ru_*(s_0)$.

The identity $\cosh(2s_0)-2a=(B(s_0)^2-2K^2)/a$ follows from $a\cosh(2s_0)=B^2+1/2$ and $2K^2=2a^2-1/2$.
\end{proof}

\begin{theorem}[No kernel in mode $0$ odd, conditional]\label{thm:no-kernel-odd}
Under the assumption
\begin{equation}\label{eq:hypothesis-BvsK}
B(s_0(a))^2\neq 2K(a)^2\quad(\text{equivalently, }\cosh(2s_0(a))\neq 2a),
\end{equation}
$\Lf$ on $\Sigma_a$ has no nonzero Jacobi fields in the odd radial sector of mode $|k|=0$. Equivalently, $\mu_n^{\mathrm{odd}}(0)\neq 0$ for every $n\geq 0$.
\end{theorem}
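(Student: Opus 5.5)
The argument mirrors the proof of Theorem \ref{thm:no-kernel-even}, with the parametric field $\phi_a$ replaced by the axial-boost field $u_*$ of Lemma \ref{lem:u-L01}. That field is the natural odd, non-Robin Jacobi field in mode $0$.

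Suppose, for contradiction, that $\psi\not\equiv 0$ is a Robin Jacobi field of mode $0$, odd in $s$. Then $\Lf\psi=0$, $R\psi=0$ on $\partial\Sigma_a$, and $\psi(-s)=-\psi(s)$.

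We evaluate $\Sf(u_*,\psi)$ in two ways using the Green identity \eqref{eq:green-defect}.
\begin{itemize}
\item Taking $u=u_*$, $v=\psi$ and using $\Lf u_*=0$, only the boundary term survives:
\[
\Sf(u_*,\psi)=\oint_{\partial\Sigma_a}\psi\,Ru_*\,dL.
\]
\item Taking $u=\psi$, $v=u_*$, both terms vanish since $\Lf\psi=0$ and $R\psi=0$, so $\Sf(u_*,\psi)=0$.
\end{itemize}
The regularity hypotheses of the Green identity hold because $u_*$ is smooth and explicit and $\psi$ solves a regular ODE.

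Next we compute the boundary integral. Both $\psi$ and $u_*$ are of mode $0$, so they are constant on each boundary circle, and each circle has length $2\pi B(s_0)$. By \eqref{eq:u-L01-BC}, $Ru_*(-s_0)=-Ru_*(s_0)$, and by oddness $\psi(-s_0)=-\psi(s_0)$. The two circles therefore contribute equally, not with opposite signs:
\[
0=\oint_{\partial\Sigma_a}\psi\,Ru_*\,dL=2\pi B(s_0)\bigl[\psi(s_0)Ru_*(s_0)+\psi(-s_0)Ru_*(-s_0)\bigr]=4\pi B(s_0)\,\psi(s_0)\,Ru_*(s_0).
\]
By \eqref{eq:u-L01-BC}, $Ru_*(s_0)=(B(s_0)^2-2K^2)/B(s_0)^3$. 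This is nonzero exactly under hypothesis \eqref{eq:hypothesis-BvsK}, so $\psi(s_0)=0$.

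The Robin condition $\psi'(s_0)=\coth r(a)\,\psi(s_0)$ then gives $\psi'(s_0)=0$. The radial mode-$0$ equation $(B\psi')'+B W_0\psi=0$ is a regular second-order linear ODE on $[-s_0,s_0]$, since $B>0$ there. By Cauchy uniqueness, zero data at $s_0$ force $\psi\equiv 0$, a contradiction. Hence there is no odd mode-$0$ Robin kernel, which is the statement $\mu_n^{\mathrm{odd}}(0)\neq 0$ for all $n$.

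The only delicate step is the sign bookkeeping in the boundary integral. One must check that the oddness of $\psi$ and the antisymmetry $Ru_*(-s_0)=-Ru_*(s_0)$ combine constructively rather than cancel. This follows from the convention for the outward conormal at $-s_0$ fixed in Lemma \ref{lem:u-L01}, and I would re-verify it directly: there $\partial_\eta=-\partial_s$, and $u_*'$ is even while $u_*$ is odd. Everything else is a direct transcription of the even-sector argument.
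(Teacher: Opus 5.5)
Your proposal is correct and matches the paper's proof essentially step for step. You pair a putative odd Robin Jacobi field $\psi$ with the boost field $u_*$ through the Green identity, and combine $Ru_*(-s_0)=-Ru_*(s_0)$ with the oddness of $\psi$ to get $4\pi B(s_0)\psi(s_0)Ru_*(s_0)=0$. Under the hypothesis $B(s_0)^2\neq 2K^2$ this forces $\psi(s_0)=0$, the Robin condition then gives $\psi'(s_0)=0$, and Cauchy uniqueness concludes. Your explicit check of the conormal sign at $-s_0$ (using $\partial_\eta=-\partial_s$, $u_*'$ even and $u_*$ odd) is correct and spells out a step the paper only asserts.
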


\begin{proof}
Let $\psi$ be a nontrivial Robin Jacobi field in mode $0$ odd, by contradiction. We apply \eqref{eq:green-defect} with $u=u_*$, $v=\psi$:
\begin{align*}
\Sf(u_*,\psi)&=0+\oint\psi\cdot Ru_*\,dL,\\
\Sf(u_*,\psi)&=0+0=0.
\end{align*}
The value $Ru_*(s_0)$ is constant (by rotational symmetry) on each boundary circle, with opposite signs by odd parity: $Ru_*(-s_0)=-Ru_*(s_0)$. The field $\psi$ is odd, so $\psi(-s_0)=-\psi(s_0)$, constant on each circle. Therefore
\[
\oint\psi Ru_*\,dL=2\pi B(s_0)\bigl[\psi(s_0)Ru_*(s_0)+\psi(-s_0)Ru_*(-s_0)\bigr]=4\pi B(s_0)\,\psi(s_0)\,Ru_*(s_0).
\]
Under \eqref{eq:hypothesis-BvsK}, $Ru_*(s_0)\neq 0$, so $\psi(s_0)=0$. Combined with $R\psi(s_0)=0$, $\psi'(s_0)=0$. With zero Cauchy data for the radial odd ODE of mode $0$, by uniqueness $\psi\equiv 0$.
\end{proof}

\begin{proposition}[Asymptotic verification of \eqref{eq:hypothesis-BvsK}]\label{prop:asymptotic-BvsK}
Condition \eqref{eq:hypothesis-BvsK} holds for $a$ sufficiently close to $1/2^+$. More precisely, the asymptotic expansion
\[
\cosh(2s_0(a))-2a=2(\rho_*^2-1)(a-1/2)+o(a-1/2)\qquad(a\to 1/2^+)
\]
holds, where $\rho_*=\sinh\sigma_*$ and $\sigma_*$ is the unique positive root of $\sigma=\coth\sigma$. In particular, $\rho_*^2>1$ (since $\sigma_*>\mathrm{arcsinh}(1)=\log(1+\sqrt{2})$), and hence $\cosh(2s_0)-2a>0$ near $1/2^+$.
\end{proposition}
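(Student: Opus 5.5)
The plan is to make the rescaling of Remark~\ref{rem:euclid} quantitative for the boundary point only: show that $s_0(a)=\rho\,\xi_0(\rho)$ with $\rho=\sqrt{a-1/2}$ and $\xi_0(\rho)\to\sinh\sigma_*$, and then read off $\cosh(2s_0)-2a$ algebraically. By the identity at the end of the proof of Lemma~\ref{lem:u-L01}, $\cosh(2s_0)-2a=(B(s_0)^2-2K^2)/a$. It therefore suffices to expand $B(s_0)^2$ and $K^2$ to first order in $\varepsilon:=a-\tfrac12=\rho^2$.

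\textbf{Algebraic step.} We have $K^2=\varepsilon(\varepsilon+1)=\varepsilon+O(\varepsilon^2)$, and
\[
B(s)^2=a\cosh(2s)-\tfrac12=\varepsilon+2a\sinh^2 s .
\]
With $s=\rho\xi$ and $\xi$ bounded, this gives $B^2=\varepsilon(1+\xi^2)+O(\varepsilon^2)$, uniformly for $\xi$ in compact sets. Hence, once we know $\xi_0(\rho)=\xi_*+o(1)$ with $\xi_*:=\sinh\sigma_*$, we get
\[
\cosh(2s_0)-2a=\frac{\varepsilon(1+\xi_0^2)-2\varepsilon+O(\varepsilon^2)}{\tfrac12+\varepsilon}=2(\xi_*^2-1)\,\varepsilon+o(\varepsilon).
\]
Here only $o(1)$ control on $\xi_0$ is needed, since it is multiplied by $\varepsilon$. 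The positivity $\rho_*^2=\sinh^2\sigma_*>1$ follows because $g(\sigma):=\sigma-\coth\sigma$ is strictly increasing on $(0,\infty)$ and $g(\log(1+\sqrt2))=\log(1+\sqrt2)-\sqrt2<0$ (recall $\sinh(\log(1+\sqrt2))=1$, so $\coth=\sqrt2$). Hence $\sigma_*>\log(1+\sqrt2)$.

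\textbf{Rescaled FBC.} To locate $\xi_0$, rewrite \eqref{eq:fbc} in the variable $\xi$. Since $\varphi'=K/(A^2B)$ with $A^2=1+O(\varepsilon)$ (uniformly on $\xi$-compacts), $K=\rho(1+O(\varepsilon))$ and $B=\rho\sqrt{1+\xi^2}\,(1+O(\varepsilon))$, we get
\[
\frac{d\varphi}{d\xi}=\rho\,\varphi'(\rho\xi)=\frac{\rho}{\sqrt{1+\xi^2}}\bigl(1+O(\varepsilon)\bigr),
\]
hence $\varphi(\rho\xi)=\rho\,\mathrm{arcsinh}\,\xi\,(1+O(\varepsilon))$ and $\tanh\varphi=\rho\,\mathrm{arcsinh}\,\xi+O(\rho^3)$. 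For the right-hand side, $a\sinh(2\rho\xi)=\rho\xi\,(1+O(\varepsilon))$ gives
\[
\frac{BK}{a\sinh 2s}=\rho\,\frac{\sqrt{1+\xi^2}}{\xi}\bigl(1+O(\varepsilon)\bigr).
\]
Dividing by $\rho$, the FBC becomes $F(\xi,\rho)=0$, where
\[
F(\xi,\rho):=\mathrm{arcsinh}\,\xi-\frac{\sqrt{1+\xi^2}}{\xi}+E(\xi,\rho),\qquad E=O(\rho^2)\ \text{uniformly on compacts of }(0,\infty).
\]
The limit equation, after substituting $\xi=\sinh\sigma$, is exactly $\sigma=\coth\sigma$, with unique root $\xi_*=\sinh\sigma_*$. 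Its $\xi$-derivative there is $1/\sqrt{1+\xi^2}+1/(\xi^2\sqrt{1+\xi^2})>0$. Because $F$ is smooth in $(\xi,\rho^2)$, being built from analytic functions of $a$ and $s$, the implicit function theorem then gives a unique branch $\xi_0(\rho)\to\xi_*$.

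\textbf{Main obstacle.} The delicate point is identifying this implicit-function branch with the $s_0(a)$ that defines the critical catenoid, i.e.\ excluding roots of the FBC with $s_0/\rho\to0$ or $s_0/\rho\to\infty$. I would handle it using (P3): $\sinh r(a)$ equals $B^2/\sqrt{B^2-K^2}$ by the proof of Lemma~\ref{lem:coth-identity}, which rescales to $\rho(1+\xi_0^2)/\xi_0$. Meanwhile (P3) gives $r(a)\sim c_*\rho$ with $c_*=\sigma_*\cosh\sigma_*$. Hence
\[
\frac{1+\xi_0^2}{\xi_0}\to\sigma_*\cosh\sigma_*=\frac{\cosh^2\sigma_*}{\sinh\sigma_*}
\]
(using $\sigma_*=\coth\sigma_*$). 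This forces $\xi_0$ to remain in a compact subset of $(0,\infty)$, where only the branch above exists; the other solution $1/\xi_*$ of that quadratic is excluded, since it does not satisfy the limiting FBC $F(\cdot,0)=0$. Alternatively, if one prefers to avoid invoking (P3), I would bound $\xi_0$ above and below directly from the monotonicity in $\xi$ of the two sides of the rescaled FBC.
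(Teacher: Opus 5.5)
Your proof is correct, but it localizes $\xi_0=s_0/\rho$ by a different route from the paper. The algebra and the transcendental inequality are essentially the paper's: the identity $\cosh(2s_0)-2a=(B(s_0)^2-2K^2)/a$, the rescaled FBC $F=\Phi+O(\rho^2)$, and $\sigma_*>\log(1+\sqrt2)$.

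\emph{The paper's route.} It proves a monotone-crossing step (Step 0). For each fixed $a$, the function $\Delta_a(s)=\tanh\varphi(s;a)-B(s)K/(a\sinh 2s)$ is strictly increasing on $(0,\infty)$. The first term increases since $\varphi'>0$; the second decreases since $\frac{d}{ds}[B/\sinh 2s]<0$, by the discriminant $1-4a^2<0$. Hence $s_0(a)$ is the unique zero, and it is squeezed by the sign of $\Delta_a$ at the two points $\rho(\rho_*\pm\epsilon)$. This needs only pointwise limits of $\rho^{-1}\Delta_a(\rho\xi)$ at two fixed values of $\xi$. There is no implicit function theorem, no uniformity, and no external input, and uniqueness of $s_0(a)$ comes for free; it is reused in Lemma \ref{lem:G-FBC-reformulation}.

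\emph{Your route.} You import (P3) to confine $\xi_0$ to a compact subset of $(0,\infty)$. That is legitimate here, since (P3) is a cited result. Be aware, though, that it carries essentially the content of the claim: via $\sinh r/\rho\approx(1+\xi_0^2)/\xi_0$ it already pins $\xi_0$ to $\{\xi_*,1/\xi_*\}$. The FBC is then used only to discard $1/\xi_*$, because $\Phi$ has its unique zero at $\xi_*>1$.

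\emph{Two small corrections.}
\begin{enumerate}
\item Your phrase ``rescales to $\rho(1+\xi_0^2)/\xi_0$'' presupposes the boundedness you are trying to derive. Do it exactly. Set $X:=\sqrt{2a}\,\sinh s_0/\rho$. Then $B(s_0)^2=\rho^2(1+X^2)$ and $B(s_0)^2-K^2=\rho^2(X^2-\rho^2)$, so $\sinh r/\rho=(1+X^2)/\sqrt{X^2-\rho^2}\ge X+1/X$. Boundedness of $\sinh r/\rho$ therefore forces $X$ into a compact subset of $(0,\infty)$. It follows that $s_0\to0$ and $\xi_0=X(1+O(\rho^2))$.
\item Once $\xi_0$ lies in a compact set, the implicit function theorem is superfluous. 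Uniform convergence $F(\cdot,\rho)\to\Phi$ there, together with uniqueness of the zero of $\Phi$, already gives $\xi_0\to\xi_*$. What the implicit function theorem does buy is real-analyticity of $\xi_0$ in $\rho^2$, which the paper establishes separately in Lemma \ref{lem:H-analytic-rho}.
\end{enumerate}

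Your fallback idea, bounding $\xi_0$ via monotonicity of the two sides of the FBC, is exactly the paper's Step 0.
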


\begin{proof}
Set $\rho:=\sqrt{a-1/2}$ and $\xi_0(a):=s_0(a)/\rho$.

\emph{Step 0 (monotone crossing).} For fixed $a>1/2$ and $s>0$ let
\[
\Delta_a(s):=\tanh\varphi(s;a)-\frac{B(s)K(a)}{a\sinh(2s)}.
\]
The first term is strictly increasing in $s$, since $\varphi'=K/(A^2B)>0$. For the second term, using $B'=a\sinh(2s)/B$ and $2B^2\cosh(2s)=2a\cosh^2(2s)-\cosh(2s)$,
\[
\frac{d}{ds}\Bigl[\frac{B(s)}{\sinh(2s)}\Bigr]=\frac{a\sinh^2(2s)-2B^2\cosh(2s)}{B\sinh^2(2s)}=-\frac{a\cosh^2(2s)-\cosh(2s)+a}{B\sinh^2(2s)}<0,
\]
because the quadratic $aX^2-X+a$ has positive leading coefficient and discriminant $1-4a^2<0$ for $a>1/2$. Hence $\Delta_a$ is strictly increasing on $(0,\infty)$; since $\Delta_a(s_0(a))=0$ by the FBC \eqref{eq:fbc}, we conclude that $\Delta_a(s)<0$ for $0<s<s_0(a)$ and $\Delta_a(s)>0$ for $s>s_0(a)$.

\emph{Step 1 (pointwise rescaled limits).} Fix $\xi>0$ and write $a=1/2+\rho^2$. From the exact identity
\[
B(\rho\eta)^2=a\cosh(2\rho\eta)-\tfrac12=\sinh^2(\rho\eta)+\rho^2\cosh(2\rho\eta),
\]
we obtain $\rho^{-1}B(\rho\eta)\to\sqrt{1+\eta^2}$ as $\rho\to 0^+$ for each fixed $\eta\geq 0$, together with the uniform lower bound $\rho^{-1}B(\rho\eta)\geq\sqrt{1+\eta^2}$ (from $\sinh x\geq x$ and $\cosh\geq 1$). Substituting $t=\rho\eta$ in the defining integral of $\varphi$ and using $K=\rho\sqrt{1+\rho^2}$,
\[
\frac{\varphi(\rho\xi;a)}{\rho}=\sqrt{1+\rho^2}\int_0^{\xi}\frac{d\eta}{A(\rho\eta)^2\cdot\rho^{-1}B(\rho\eta)}\;\longrightarrow\;\int_0^{\xi}\frac{d\eta}{\sqrt{1+\eta^2}}=\mathrm{arcsinh}(\xi)
\]
by bounded convergence ($A(\rho\eta)^2\geq 1$, $A(\rho\eta)^2\to 1$, and the integrand is bounded by $\sqrt{1+\rho^2}/\sqrt{1+\eta^2}$ on $[0,\xi]$). Since $\varphi(\rho\xi;a)=O(\rho)$ and $\tanh x=x(1+O(x^2))$, also $\rho^{-1}\tanh\varphi(\rho\xi;a)\to\mathrm{arcsinh}(\xi)$. Analogously,
\[
\frac{1}{\rho}\cdot\frac{B(\rho\xi)K}{a\sinh(2\rho\xi)}=\frac{\rho^{-1}B(\rho\xi)\cdot\sqrt{1+\rho^2}}{a\cdot\rho^{-1}\sinh(2\rho\xi)}\;\longrightarrow\;\frac{\sqrt{1+\xi^2}}{\tfrac12\cdot 2\xi}=\frac{\sqrt{1+\xi^2}}{\xi}.
\]
Therefore, for each fixed $\xi>0$,
\[
\rho^{-1}\Delta_a(\rho\xi)\;\longrightarrow\;\Phi(\xi):=\mathrm{arcsinh}(\xi)-\frac{\sqrt{1+\xi^2}}{\xi}\qquad\bigl(a\to(1/2)^+\bigr).
\]

\emph{Step 2 (limit equation).} The function $\Phi$ is strictly increasing on $(0,\infty)$, with $\Phi'(\xi)=\sqrt{1+\xi^2}/\xi^2>0$, $\Phi(0^+)=-\infty$ and $\Phi(+\infty)=+\infty$; its unique zero is $\xi=\rho_*=\sinh\sigma_*$, since $\Phi(\sinh\sigma_*)=\sigma_*-\coth\sigma_*=0$ (the transcendental equation of \cite[Rem. 1.4]{Pigazzini}).

\emph{Step 3 (convergence of $\xi_0$).} Fix $\varepsilon\in(0,\rho_*)$. By Step 2, $\Phi(\rho_*-\varepsilon)<0<\Phi(\rho_*+\varepsilon)$; hence, by Step 1, for every $a$ sufficiently close to $(1/2)^+$,
\[
\rho^{-1}\Delta_a\bigl(\rho(\rho_*-\varepsilon)\bigr)<0<\rho^{-1}\Delta_a\bigl(\rho(\rho_*+\varepsilon)\bigr),
\]
and Step 0 then forces $\rho(\rho_*-\varepsilon)<s_0(a)<\rho(\rho_*+\varepsilon)$, i.e. $|\xi_0(a)-\rho_*|<\varepsilon$. Therefore $\xi_0(a)\to\rho_*$ as $a\to(1/2)^+$, that is, $s_0(a)=\rho_*\sqrt{a-1/2}\,(1+o(1))$.

\emph{Step 4 (expansion).} Since $\xi_0(a)$ is bounded near $(1/2)^+$, $\cosh(2s_0)-1=2s_0^2+O(s_0^4)=2\xi_0(a)^2(a-1/2)+O((a-1/2)^2)$, while $2a-1=2(a-1/2)$. Subtracting, and using $\xi_0(a)^2\to\rho_*^2$,
\[
\cosh(2s_0)-2a=2\bigl(\xi_0(a)^2-1\bigr)(a-1/2)+O((a-1/2)^2)=2(\rho_*^2-1)(a-1/2)+o(a-1/2).
\]
For the positivity $\rho_*>1$: at $\sigma=\log(1+\sqrt{2})$, $\sinh\sigma=1$, $\cosh\sigma=\sqrt{2}$, $\coth\sigma=\sqrt{2}$. The function $\sigma-\coth\sigma$ is strictly increasing on $(0,\infty)$ (with derivative $1+\mathrm{csch}^2\sigma>0$), and vanishes at $\sigma=\sigma_*$. By the strict inequality $\log(1+x)<x$ for $x>0$ (concavity of $\log$), with $x=\sqrt{2}$, $\log(1+\sqrt{2})-\sqrt{2}<0$; hence the root satisfies $\sigma_*>\log(1+\sqrt{2})$, whence $\rho_*=\sinh\sigma_*>1$.
\end{proof}

\begin{proposition}[Geometric identity and reformulation of \eqref{eq:hypothesis-BvsK}]\label{prop:geometric-identity}
The identity
\begin{equation}\label{eq:geometric-identity}
\sinh^2 r(a)-4K(a)^2=\frac{\bigl(B(s_0(a))^2-2K(a)^2\bigr)^2}{B(s_0(a))^2-K(a)^2}\quad\text{for every }a>1/2
\end{equation}
holds. In particular, $\sinh^2 r(a)\geq 4K(a)^2$ for every $a>1/2$ (unconditionally), with equality if and only if $B(s_0(a))^2=2K(a)^2$. Equivalently,
\begin{equation}\label{eq:geometric-equivalence}
\sinh r(a)\geq 2K(a),\quad\text{with equality iff }B(s_0(a))^2=2K(a)^2,
\end{equation}
and the condition \eqref{eq:hypothesis-BvsK} of Theorem \ref{thm:no-kernel-odd} is strictly equivalent to the strict geometric inequality
\begin{equation}\label{eq:strict-geometric}
\sinh r(a)>2K(a).
\end{equation}
\end{proposition}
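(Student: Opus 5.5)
The identity is purely algebraic once we use the closed form $\sinh^2 r(a)=B^4/(B^2-K^2)$, with $B=B(s_0(a))$ and $K=K(a)$. This expression was derived in the proof of Lemma \ref{lem:coth-identity} from the FBMS condition \eqref{eq:fbc}. So the plan is to recall that formula and then do one line of algebra:
\[
\sinh^2 r-4K^2=\frac{B^4-4K^2(B^2-K^2)}{B^2-K^2}=\frac{B^4-4K^2B^2+4K^4}{B^2-K^2}=\frac{(B^2-2K^2)^2}{B^2-K^2},
\]
which is \eqref{eq:geometric-identity}.

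The one point needing care is the positivity of the denominator $B(s_0)^2-K^2$. I would get it from the computation $D=A^2(B^2-K^2)$ in the proof of Lemma \ref{lem:coth-identity}, where $D=a^2\sinh^2(2s_0)-B^2K^2$. By \eqref{eq:fbc}, $\tanh\varphi(s_0)\in(0,1)$, because $\varphi(s_0)>0$ follows from $\varphi'>0$. Hence $BK<a\sinh(2s_0)$, so $D>0$, and therefore $B(s_0)^2>K^2$. Alternatively, positivity follows because $\sinh^2 r=B^4/(B^2-K^2)$ is finite and positive.

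With the denominator positive, the right-hand side of \eqref{eq:geometric-identity} is a nonnegative square divided by a positive number. So $\sinh^2 r\ge 4K^2$, with equality exactly when $B(s_0)^2=2K^2$. Taking positive square roots ($\sinh r>0$ and $K>0$) gives \eqref{eq:geometric-equivalence}. The strict version \eqref{eq:strict-geometric} is therefore equivalent to $B(s_0)^2\ne 2K^2$, which is \eqref{eq:hypothesis-BvsK}. The equivalence with $\cosh(2s_0)\ne 2a$ was already recorded in Lemma \ref{lem:u-L01}.

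There is no real obstacle here. The only step needing attention is justifying $B(s_0)^2>K^2$ rather than merely $\ne$, and the argument through $\tanh\varphi(s_0)<1$ above handles it.
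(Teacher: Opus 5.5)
Your proposal is correct and matches the paper's proof. It uses the same one-line algebra starting from $\sinh^2 r=B^4/(B^2-K^2)$, the same positivity check on $B(s_0)^2-K^2$, and the same translation of \eqref{eq:hypothesis-BvsK} via $B^2-2K^2=a(\cosh(2s_0)-2a)$. The paper justifies $B(s_0)^2>K^2$ with the sign argument you list as your alternative. Your primary route, through $\tanh\varphi(s_0)<1$ and $D=A^2(B^2-K^2)>0$, is slightly more self-contained, because it establishes directly that the division by $D$ in Lemma \ref{lem:coth-identity} was legitimate.
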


\begin{remark}\label{rem:geometric-portata}
Proposition \ref{prop:geometric-identity} is a geometric reformulation of condition \eqref{eq:hypothesis-BvsK} of Theorem \ref{thm:no-kernel-odd}, and does not constitute an analytic closure of Theorem \ref{thm:no-kernel-odd} itself. The non-strict inequality $\sinh r\geq 2K$ is automatic by algebraic construction, but the strict version $\sinh r>2K$ (necessary in Theorem \ref{thm:no-kernel-odd}) is logically equivalent to the original hypothesis $\cosh(2s_0)\neq 2a$ and remains open as an analytic problem. The value of Proposition \ref{prop:geometric-identity} lies in identity \eqref{eq:geometric-identity}, which identifies explicitly the quantity $(B^2-2K^2)^2/(B^2-K^2)$ as an algebraic measure of the distance from the degenerate case, and in the more tractable geometric reformulation of the hypothesis.
\end{remark}

\begin{proof}
From the identity $\sinh^2 r=B^4/(B^2-K^2)$ established in the proof of Lemma \ref{lem:coth-identity},
\[
\sinh^2 r-4K^2=\frac{B^4-4K^2(B^2-K^2)}{B^2-K^2}=\frac{B^4-4K^2 B^2+4K^4}{B^2-K^2}=\frac{(B^2-2K^2)^2}{B^2-K^2},
\]
which is \eqref{eq:geometric-identity}. We verify $B(s_0)^2-K^2>0$ as follows. By definition, $r(a)$ is the positive geodesic radius of the boundary sphere $\partial B^3(r(a))$, so $\sinh r(a)>0$. The identity $\sinh^2 r=B^4/(B^2-K^2)$, with $B(s_0)^2>0$ and $\sinh^2 r>0$, forces the denominator $B^2-K^2$ to have the same sign as the numerator $B^4>0$, hence $B^2-K^2>0$ strictly. Therefore, the ratio on the right-hand side of \eqref{eq:geometric-identity} is nonnegative, with equality if and only if $B^2=2K^2$. Taking positive roots (both sides are positive) gives \eqref{eq:geometric-equivalence}. Finally, condition \eqref{eq:hypothesis-BvsK} reads $\cosh(2s_0)\neq 2a$; using $B^2=a\cosh(2s_0)-1/2$ and $K^2=a^2-1/4$, $B^2-2K^2=a(\cosh(2s_0)-2a)$, whence $B^2\neq 2K^2\iff\cosh(2s_0)\neq 2a$. Combined with the non-strict inequality \eqref{eq:geometric-equivalence}, we obtain \eqref{eq:strict-geometric}.
\end{proof}

\begin{remark}[Status of the residual open problem]\label{rem:BvsK-global}
Proposition \ref{prop:geometric-identity} establishes unconditionally that $\sinh r(a)\geq 2K(a)$ for every $a>1/2$, reducing the condition of Theorem \ref{thm:no-kernel-odd} to the strict inequality \eqref{eq:strict-geometric}. The residual open problem is the possible existence of a value $a^*\in(1/2,\infty)$ with
\begin{equation}\label{eq:degenerate-condition}
\sinh r(a^*)=2K(a^*),\quad\text{equivalently}\quad B(s_0(a^*))^2=2K(a^*)^2.
\end{equation}
By Proposition \ref{prop:asymptotic-BvsK}, \eqref{eq:degenerate-condition} does not hold for $a$ near $1/2^+$ (asymptotically, $B^2-2K^2\sim 2a(\rho_*^2-1)(a-1/2)>0$). For $a\to\infty$, by Proposition \ref{prop:y-asymp-infty} below (whose proof is independent of the present remark; recall $e^{2d_\infty}=2\,\Gamma(1/4)^4/\pi^3$),
\[
B(s_0(a))^2\sim\frac{\Gamma(1/4)^4}{2\pi^3}\,a^3,\quad 2K(a)^2\sim 2a^2,\quad B^2-2K^2\sim\frac{\Gamma(1/4)^4}{2\pi^3}\,a^3\to+\infty,
\]
so \eqref{eq:degenerate-condition} does not hold asymptotically as $a\to\infty$ either. In Section \ref{subsec:G-closure-a-leq-1}, Theorem \ref{thm:G-closure-a-leq-1}, we prove analytically the absence of critical values $a^*$ for $a\in(1/2,1]$. For $a\in(1,\infty)$, a rigorous analytic proof remains open; it is equivalent to the strict transcendental inequality
\[
\tanh\varphi\Bigl(\tfrac{1}{2}\mathrm{arccosh}(2a)\Bigr)\neq\sqrt{\tfrac{1}{2}-\tfrac{1}{8a^2}}\quad\forall a>1.
\]
\end{remark}

\subsection{Status of condition (F)}

\begin{corollary}[Partial closure of (F)]\label{cor:F-partial}
The following hold:
\begin{enumerate}
\item[(a)] The part $\mu_n^{\mathrm{even}}(0)\neq 0$ for every $n\geq 0$ of condition (F) of Theorem \ref{thm:reduction} is proved under the condition $r'(a)\neq 0$ (Theorem \ref{thm:no-kernel-even}); this condition follows automatically both from the hypothesis $\phi_a>0$ of Theorem \ref{thm:F-reduction-phi-positivity} (Remark \ref{rem:phi-positivity-critical}) and from the spectral inequality $\mu_2(0)>0$ alone (Corollary \ref{cor:Fprime-rprime}).
\item[(b)] The part $\mu_n^{\mathrm{odd}}(0)\neq 0$ for every $n\geq 0$ is proved under the strict geometric inequality $\sinh r(a)>2K(a)$ (Theorem \ref{thm:no-kernel-odd} and Proposition \ref{prop:geometric-identity}). This inequality holds automatically in non-strict form \eqref{eq:geometric-equivalence} for every $a>1/2$; its strict version is proved analytically for $a\in(1/2,1]$ in Theorem \ref{thm:G-closure-a-leq-1}.
\item[(c)] The remaining part of (F), namely the strict inequality $\mu_2(0)>0$, is denoted by (F$'$) and will be reduced in Section \ref{sec:F-prime-strategy} to the single geometric condition of positivity of $\phi_a$ on the principal branch.
\end{enumerate}
\end{corollary}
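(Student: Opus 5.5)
The corollary is a bookkeeping statement that collects what has already been proved about condition (F). The plan is to verify each item by pointing to the relevant earlier result and checking that its hypotheses match. The decomposition I use throughout is the parity splitting of mode $0$. The radial operator of mode $0$ commutes with $s\mapsto -s$ because $B$ and $W_0$ are even. Hence every eigenvalue $\mu_n(0)$ has an eigenspace spanned by even or by odd eigenfunctions. Consequently the non-degeneracy requirement ``$\mu_n(0)\neq 0$ for all $n$'' in (F) is equivalent to the conjunction of $\mu_n^{\mathrm{even}}(0)\neq 0$ and $\mu_n^{\mathrm{odd}}(0)\neq 0$ for all $n$. This justifies splitting (F) into parts (a), (b) and the residual part (c).

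For (a), I invoke Theorem \ref{thm:no-kernel-even} directly. Its only hypothesis is $r'(a)\neq 0$, and its conclusion is exactly $\mu_n^{\mathrm{even}}(0)\neq 0$. The mechanism is the one already used there: the Green identity \eqref{eq:green-defect} is applied to $\phi_a$ and to a putative even Robin Jacobi field. The Robin defect $-r'(a)\kappa_s$ from Lemma \ref{lem:phi-a-BC} forces the boundary value of that field to vanish. The Robin condition then also gives a vanishing derivative at $s_0$, and Cauchy uniqueness finishes the argument. The further claim, that $r'(a)\neq 0$ follows from $\phi_a>0$ or from $\mu_2(0)>0$, is only a forward pointer to Remark \ref{rem:phi-positivity-critical} and Corollary \ref{cor:Fprime-rprime}. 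At this point I would simply record it as established later, not reprove it. For the first implication I would nonetheless sketch the idea. If $r'(a)=0$, then $\phi_a$ is an even Robin Jacobi field by Remark \ref{rem:no-kernel-rprime-condition}. A nodeless such field would then be a ground state with eigenvalue $0$, contradicting $\mu_0(0)<0$ from Theorem \ref{thm:lower-bound-4}.

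For (b), I combine Theorem \ref{thm:no-kernel-odd} with Proposition \ref{prop:geometric-identity}. The theorem requires $B(s_0)^2\neq 2K^2$, which by \eqref{eq:geometric-identity} and \eqref{eq:geometric-equivalence} is equivalent to $\sinh r(a)>2K(a)$. The non-strict version is the unconditional part of Proposition \ref{prop:geometric-identity}. The strict version on $(1/2,1]$ is cited from Theorem \ref{thm:G-closure-a-leq-1}. As an independent check near $1/2$, I would also note Proposition \ref{prop:asymptotic-BvsK}. For (c), once (a) and (b) are in place, the only part of (F) not yet covered is $\mu_2(0)>0$. This part is named (F$'$), and its reduction is deferred to Section \ref{sec:F-prime-strategy}.

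The only step needing genuine care is the parity argument in the first paragraph. One must check that a zero eigenvalue in mode $0$ cannot have an eigenspace mixing parities in a way that escapes both Theorem \ref{thm:no-kernel-even} and Theorem \ref{thm:no-kernel-odd}. This is handled by projecting any kernel element onto its even and odd parts, each of which is again a Robin Jacobi field, so the two theorems together cover the whole kernel. Everything else consists of citations whose hypotheses match verbatim.
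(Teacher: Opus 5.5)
Your proposal is correct and follows the paper's proof. Part (a) is Theorem \ref{thm:no-kernel-even} with forward pointers for $r'(a)\neq 0$, and part (b) is Theorem \ref{thm:no-kernel-odd} combined with Proposition \ref{prop:geometric-identity} and Theorem \ref{thm:G-closure-a-leq-1}. Your parity splitting of the mode-$0$ kernel is exactly the paper's remark that the mode-$0$ spectrum is the union of the even and odd subsector spectra. Your aside that $\phi_a>0$ forces $r'(a)\neq 0$ is a valid, lighter variant of the shooting-count argument in Remark \ref{rem:phi-positivity-critical}: a nodeless even Robin Jacobi field would be a ground state with eigenvalue $0$, contradicting $\mu_0(0)<0$. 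The paper's shooting-count route additionally yields the sign $r'(a)>0$.
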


\begin{proof}
For (a), by Theorem \ref{thm:no-kernel-even}, $\mu_n^{\mathrm{even}}(0)\neq 0$ for every $n$ whenever $r'(a)\neq 0$; the two sufficient conditions for $r'(a)\neq 0$ are given by Remark \ref{rem:phi-positivity-critical} and Corollary \ref{cor:Fprime-rprime}, respectively. For (b), by Proposition \ref{prop:geometric-identity}, the strict condition $\sinh r(a)>2K(a)$ is equivalent to condition \eqref{eq:hypothesis-BvsK} of Theorem \ref{thm:no-kernel-odd}; under this, $\mu_n^{\mathrm{odd}}(0)\neq 0$ for every $n$. The radial spectrum of mode $0$ is the union (alternating in parity by classical Sturm--Liouville theory) of $\{\mu_n^{\mathrm{even}}(0)\}$ and $\{\mu_n^{\mathrm{odd}}(0)\}$, whence $\mu_n(0)\neq 0$ for every $n$ under $\sinh r>2K$ and $r'(a)\neq 0$. For (c), the reduction of $\mu_2(0)>0$ is the subject of Section \ref{sec:F-prime-strategy}.
\end{proof}

\subsection{Analytic closure of (G) for $a\in(1/2,1]$}\label{subsec:G-closure-a-leq-1}

In this subsection we prove analytically that $\cosh(2s_0(a))>2a$, equivalently $B(s_0(a))^2>2K(a)^2$ (condition (G)), for every $a\in(1/2,1]$; in particular the weaker inequality $\sinh r(a)>2K(a)$ required by Theorem \ref{thm:no-kernel-odd} holds in this regime.

\begin{lemma}[Reformulation of (G) as crossing of the FBC]\label{lem:G-FBC-reformulation}
Let $s^*(a):=\tfrac{1}{2}\mathrm{arccosh}(2a)$, defined for $a\geq 1/2$. The condition $\cosh(2s_0(a))>2a$ is equivalent to $s_0(a)>s^*(a)$, and is implied by the inequality
\begin{equation}\label{eq:G-FBC-bound}
\tanh\varphi(s^*(a);a)<\sqrt{\tfrac{1}{2}-\tfrac{1}{8a^2}}.
\end{equation}
\end{lemma}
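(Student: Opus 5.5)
The lemma follows from two elementary observations about quantities the paper has already analysed. The first claim uses only monotonicity of $\cosh$; the second uses the monotone crossing established in Step 0 of the proof of Proposition \ref{prop:asymptotic-BvsK}.

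\emph{Equivalence.} On $[0,\infty)$ the map $s\mapsto\cosh(2s)$ is strictly increasing, and for $a\geq 1/2$ we have $2a\geq 1$, so $s^*(a)=\tfrac12\mathrm{arccosh}(2a)\geq 0$ is well defined. Since $\cosh(2s^*(a))=2a$ by construction, and both $s_0(a)>0$ and $s^*(a)\geq 0$, we get
\[
\cosh(2s_0(a))>2a=\cosh(2s^*(a))\iff s_0(a)>s^*(a).
\]

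\emph{Sufficiency of \eqref{eq:G-FBC-bound}.} Step 0 defines
\[
\Delta_a(s)=\tanh\varphi(s;a)-\frac{B(s)K}{a\sinh(2s)}
\]
and shows it is strictly increasing on $(0,\infty)$ with its unique zero at $s_0(a)$. Hence $s_0(a)>s^*(a)$ is equivalent to $\Delta_a(s^*(a))<0$, provided $s^*(a)>0$, which holds since $a>1/2$.

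It remains to evaluate the comparison term at $s=s^*$. There $\cosh(2s^*)=2a$, so
\[
B(s^*)^2=a\cosh(2s^*)-\tfrac12=2a^2-\tfrac12=2K^2,\qquad \sinh(2s^*)=\sqrt{4a^2-1}=2K.
\]
Therefore
\[
\frac{B(s^*)K}{a\sinh(2s^*)}=\frac{\sqrt2\,K\cdot K}{2aK}=\frac{K}{\sqrt2\,a}=\sqrt{\frac{a^2-1/4}{2a^2}}=\sqrt{\tfrac12-\tfrac1{8a^2}}.
\]
So $\Delta_a(s^*(a))<0$ is exactly \eqref{eq:G-FBC-bound}, which gives $s_0(a)>s^*(a)$ and hence (G). By the strict monotonicity of $\Delta_a$, the implication is in fact an equivalence. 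This also matches the transcendental reformulation stated in Remark \ref{rem:BvsK-global}.

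\emph{Main point of care.} The only delicate step is invoking Step 0, namely the strict monotonicity of $\Delta_a$ and the uniqueness of its zero at $s_0(a)$. That monotonicity used the discriminant $1-4a^2<0$, so it requires $a>1/2$ strictly. The endpoint $a=1/2$ is degenerate ($K=0$) and should be excluded from the conclusion. Everything else in the argument is algebra.
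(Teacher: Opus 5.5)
Your proof is correct and follows the paper's argument: the paper likewise invokes the monotone crossing from Step 0 of Proposition \ref{prop:asymptotic-BvsK} and evaluates the right-hand side of the FBC at $s^*$ to get $\sqrt{1/2-1/(8a^2)}$. The paper leaves implicit the equivalence $\cosh(2s_0)>2a\iff s_0>s^*$ via monotonicity of $\cosh$, which you spell out. It also does not note, as you do, that strict monotonicity of $\Delta_a$ makes the implication an equivalence.
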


\begin{proof}
The FBC $\tanh\varphi(s;a)=B(s;a)K(a)/(a\sinh(2s))$ has a unique solution $s_0(a)$ since $\partial_s\tanh\varphi>0$ and $\partial_s[BK/(a\sinh 2s)]<0$ (see Step 0 in the proof of Proposition \ref{prop:asymptotic-BvsK}). At $s=s^*(a)$, $\cosh(2s^*)=2a$, $\sinh(2s^*)=2K$, $B(s^*)^2=2K^2$, $B(s^*)=K\sqrt{2}$. Hence
\[
\text{RHS of the FBC}(s^*;a)=\frac{B(s^*)K}{a\sinh(2s^*)}=\frac{K\sqrt{2}\cdot K}{2aK}=\frac{K}{a\sqrt{2}}=\sqrt{\tfrac{1}{2}-\tfrac{1}{8a^2}}.
\]
If $\tanh\varphi(s^*;a)<\sqrt{1/2-1/(8a^2)}$, then at $s=s^*$ the LHS of the FBC is strictly smaller than the RHS; by monotonicity, the crossing point $s_0(a)$ lies at $s>s^*(a)$.
\end{proof}

\begin{lemma}[Upper bound on $\tanh\varphi(s^*;a)$]\label{lem:tanh-phi-bound}
For every $a>1/2$,
\begin{equation}\label{eq:tanh-phi-bound}
\tanh\varphi(s^*(a);a)<\frac{s^*(a)}{\sqrt{a+1/2}}=\frac{\mathrm{arccosh}(2a)}{2\sqrt{a+1/2}}.
\end{equation}
\end{lemma}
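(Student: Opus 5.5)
The bound will follow from $\tanh x<x$ for $x>0$ combined with a bound on $\varphi$ itself: $\varphi(s^*;a)\le s^*/\sqrt{a+1/2}$, with the strict inequality supplied by $\tanh$. Since $\varphi(0)=0$ and $\varphi'=K/(A^2B)$, we have
\[
\varphi(s^*;a)=\int_0^{s^*}\frac{K}{A(s)^2B(s)}\,ds .
\]
It therefore suffices to show that the integrand is bounded by $1/\sqrt{a+1/2}=1/A(0)$ on $[0,s^*]$.

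The first step uses monotonicity. Both $A^2=a\cosh(2s)+\tfrac12$ and $B^2=a\cosh(2s)-\tfrac12$ are increasing in $s\ge 0$, so $K/(A^2B)$ is decreasing and attains its maximum at $s=0$. There its value is
\[
\frac{K}{A(0)^2B(0)}=\frac{\sqrt{(a+\tfrac12)(a-\tfrac12)}}{(a+\tfrac12)\sqrt{a-\tfrac12}}=\frac{1}{\sqrt{a+1/2}}.
\]
Hence $\varphi(s^*;a)\le s^*/\sqrt{a+1/2}$. The inequality is in fact strict, because $s^*>0$ for $a>1/2$ (as $\cosh(2s^*)=2a>1$) and the integrand is strictly decreasing.

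The second step applies $\tanh\varphi<\varphi$, valid since $\varphi(s^*)>0$. This gives
\[
\tanh\varphi(s^*;a)<\frac{s^*}{\sqrt{a+1/2}},
\]
and substituting $s^*=\tfrac12\mathrm{arccosh}(2a)$ yields the second expression in \eqref{eq:tanh-phi-bound}.

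No step is a genuine obstacle: the only point to check is the algebraic identity $K=A(0)B(0)$, which holds because $A(0)^2B(0)^2=a^2-\tfrac14=K^2$. This bound is probably crude for large $a$, but the statement asks for nothing more.
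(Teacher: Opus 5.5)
Your proposal is correct and follows the paper's proof essentially verbatim. Both bound the integrand $K/(A^2B)$ on $[0,s^*]$ by its value $1/\sqrt{a+1/2}$ at $s=0$, using monotonicity of $A^2$ and $B$, then integrate and conclude with $\tanh x<x$; your checks that $s^*>0$ and $\varphi(s^*)>0$ make the strictness slightly more explicit than the paper does.
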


\begin{proof}
For $s\in[0,s^*]$, $A^2(s)\geq A^2(0)=a+1/2$ and $B(s)\geq B(0)=\sqrt{a-1/2}$. Therefore
\[
\varphi(s^*;a)=K\int_0^{s^*}\frac{dt}{A^2(t)B(t)}<\frac{K\cdot s^*}{(a+1/2)\sqrt{a-1/2}}=\frac{\sqrt{(a-1/2)(a+1/2)}\cdot s^*}{(a+1/2)\sqrt{a-1/2}}=\frac{s^*}{\sqrt{a+1/2}}.
\]
From $\tanh x<x$ for $x>0$ we obtain \eqref{eq:tanh-phi-bound}.
\end{proof}

\begin{lemma}[Transcendental inequality for closure]\label{lem:hu-positive}
Setting $u:=2a-1$, for every $u\in(0,1]$,
\begin{equation}\label{eq:hu-positive}
h(u):=(u+2)\sqrt{u}-(u+1)\mathrm{arccosh}(u+1)>0.
\end{equation}
\end{lemma}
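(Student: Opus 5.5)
The plan is a direct elementary argument: bound the transcendental term by an algebraic one, after which the inequality becomes linear in $u$. The endpoint values already show there is room. As $u\to 0^+$ one has $h(u)\approx(2-\sqrt2)\sqrt u>0$, and at $u=1$ one has $h(1)=3-2\,\mathrm{arccosh}(2)\approx 0.37>0$. So no fine concavity analysis of $h$ should be needed on $(0,1]$.

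\emph{Step 1 (comparison bound).} I will show that
\[
\mathrm{arccosh}(1+u)<\sqrt{2u}\qquad\text{for every }u>0 .
\]
Both sides vanish at $u=0$. Their derivatives are
\[
\frac{d}{du}\,\mathrm{arccosh}(1+u)=\frac{1}{\sqrt{u(u+2)}},\qquad \frac{d}{du}\sqrt{2u}=\frac{1}{\sqrt{2u}} .
\]
Since $u+2>2$ for $u>0$, the first derivative is strictly smaller than the second on $(0,\infty)$. Integrating from $0$ gives the strict inequality. The integrals converge, because both derivatives are $O(u^{-1/2})$ near $0$.

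\emph{Step 2 (reduction to a linear inequality).} Insert the bound of Step 1 into $h$, using $u+1>0$:
\[
h(u)>(u+2)\sqrt u-(u+1)\sqrt2\,\sqrt u=\sqrt u\,\bigl[(2-\sqrt2)-(\sqrt2-1)\,u\bigr].
\]
The bracket is affine and decreasing in $u$. Its value at $u=1$ is $3-2\sqrt2=(\sqrt2-1)^2>0$, so it is strictly positive on $[0,1]$. Since also $\sqrt u>0$, this gives $h(u)>0$ for every $u\in(0,1]$.

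\emph{Expected obstacle.} There is essentially none beyond choosing the right comparison function; $\sqrt{2u}$ is suggested by the leading term of $\mathrm{arccosh}(1+u)$ at $u=0$. The only point to check is that the loss from replacing $\mathrm{arccosh}$ by $\sqrt{2u}$ still leaves a positive margin at the right endpoint $u=1$, which the value $3-2\sqrt2>0$ confirms. The same estimate actually gives positivity on $(0,\sqrt2+1)$, since the bracket vanishes at $u=(2-\sqrt2)/(\sqrt2-1)=\sqrt2$, with some slack beyond. This slack would be useful if the closure of (G) were later extended past $a=1$.
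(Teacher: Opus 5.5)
Your proof is correct, but it takes a different route from the paper's proof of this lemma.

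The paper substitutes $u+1=\cosh(2\tau)$ and writes $h(u)=2\cosh^2\tau\,D(\tau)$ with $D(\tau)=\sqrt2\sinh\tau-2\tau+\tau\,\mathrm{sech}^2\tau$. It then proves $D>0$ on $(0,\tau_1]$, where $\tau_1=\tfrac12\mathrm{arccosh}(2)$, from three facts: $D(0)=0$, $D(\tau_1)>0$, and a strict-concavity estimate for $D''$. That estimate uses $\tanh^2\tau\le 1/3$ and $\cosh\tau\le\sqrt{3/2}$, so it is calibrated exactly to $\tau\le\tau_1$.

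The endpoint bound $D(\tau_1)>0$ comes from $\mathrm{arccosh}(2)\le\sqrt2$. That is your Step 1 evaluated at the single point $u=1$. You apply the comparison $\mathrm{arccosh}(1+u)<\sqrt{2u}$ for all $u$ at once, which removes the need for the concavity analysis. The result is a shorter proof that also covers a larger range. In fact, the paper records exactly your argument right afterwards, in Remark~\ref{rem:G-extension}, as a ``two-line extension'' of Theorem~\ref{thm:G-closure-a-leq-1}.

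There is one slip in your closing comment. The bracket $(2-\sqrt2)-(\sqrt2-1)u$ vanishes at $u=\sqrt2$, so the estimate yields positivity only up to $u=\sqrt2$, not on $(0,\sqrt2+1)$. In terms of the parameter, $u<\sqrt2$ means $2a<1+\sqrt2$, which is presumably where the number $\sqrt2+1$ came from. For $u>\sqrt2$ your lower bound is negative and proves nothing, even though $h$ itself stays positive there.
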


\begin{proof}
Under the substitution $u+1=\cosh(2\tau)$ with $\tau\geq 0$, $u\in(0,1]\iff\tau\in(0,\tau_1]$ where $\tau_1=\tfrac{1}{2}\mathrm{arccosh}(2)$. Using $\cosh(2\tau)+1=2\cosh^2\tau$ and $\cosh(2\tau)-1=2\sinh^2\tau$, we obtain $\sqrt{u}=\sqrt{2}\sinh\tau$, $u+2=2\cosh^2\tau$, $u+1=\cosh(2\tau)$, $\mathrm{arccosh}(u+1)=2\tau$, and
\[
h(u)=2\sqrt{2}\cosh^2\tau\sinh\tau-2\tau\cosh(2\tau)=2\cosh^2\tau\cdot D(\tau),
\]
where
\begin{equation}\label{eq:D-tau}
D(\tau):=\sqrt{2}\sinh\tau-2\tau+\tau\,\mathrm{sech}^2\tau.
\end{equation}
Since $\cosh^2\tau>0$, it suffices to prove $D(\tau)>0$ for $\tau\in(0,\tau_1]$.

For the boundary values, $D(0)=0$. At $\tau=\tau_1=\tfrac{1}{2}\mathrm{arccosh}(2)$, $\cosh(2\tau_1)=2$, whence $\cosh^2\tau_1=3/2$, $\sinh^2\tau_1=1/2$, $\mathrm{sech}^2\tau_1=2/3$. Therefore
\[
D(\tau_1)=\sqrt{2}\cdot\tfrac{1}{\sqrt{2}}-\mathrm{arccosh}(2)+\tfrac{1}{2}\mathrm{arccosh}(2)\cdot\tfrac{2}{3}=1-\tfrac{2}{3}\mathrm{arccosh}(2).
\]
From $\mathrm{arccosh}(2)=\int_1^2(t^2-1)^{-1/2}dt\leq\int_1^2(2(t-1))^{-1/2}dt=\sqrt{2}$,
\[
D(\tau_1)\geq 1-\tfrac{2\sqrt{2}}{3}=\tfrac{3-2\sqrt{2}}{3}>0,
\]
since $(2\sqrt{2})^2=8<9=3^2$.

We now show strict concavity of $D$ on $[0,\tau_1]$. We compute
\[
D'(\tau)=\sqrt{2}\cosh\tau+\mathrm{sech}^2\tau\bigl(1-2\tau\tanh\tau\bigr)-2,
\]
\[
D''(\tau)=\sqrt{2}\sinh\tau+\mathrm{sech}^2\tau\bigl(2\tau(3\tanh^2\tau-1)-4\tanh\tau\bigr).
\]
For $\tau\in(0,\tau_1]$, since $\tanh\tau\leq\tanh\tau_1=1/\sqrt{3}$, we have $\tanh^2\tau\leq 1/3$, whence $3\tanh^2\tau-1\leq 0$ and $2\tau(3\tanh^2\tau-1)\leq 0$. Therefore
\[
-D''(\tau)\geq-\sqrt{2}\sinh\tau+4\,\mathrm{sech}^2\tau\tanh\tau=\frac{\sinh\tau\bigl(4-\sqrt{2}\cosh^3\tau\bigr)}{\cosh^3\tau}.
\]
On $[0,\tau_1]$, $\cosh\tau\leq\sqrt{3/2}$, whence $\sqrt{2}\cosh^3\tau\leq\sqrt{2}\cdot(3/2)^{3/2}=3\sqrt{3}/2$. From $(3\sqrt{3})^2=27<64=8^2$ we have $3\sqrt{3}/2<4$, and hence
\[
-D''(\tau)\geq\frac{\sinh\tau\bigl(4-3\sqrt{3}/2\bigr)}{\cosh^3\tau}>0\quad\text{for }\tau\in(0,\tau_1].
\]
Therefore $D''<0$ strictly on $(0,\tau_1]$, and $D$ is strictly concave on $[0,\tau_1]$.

By strict concavity of $D$ with $D(0)=0$ and $D(\tau_1)>0$,
\[
D(\tau)\geq(1-\tau/\tau_1)D(0)+(\tau/\tau_1)D(\tau_1)=(\tau/\tau_1)D(\tau_1),
\]
with strict inequality for $\tau\in(0,\tau_1)$. Hence $D(\tau)>0$ for every $\tau\in(0,\tau_1]$.
\end{proof}

\begin{theorem}[{Analytic closure of (G) for $a\in(1/2,1]$}]\label{thm:G-closure-a-leq-1}
For every $a\in(1/2,1]$, $\cosh(2s_0(a))>2a$, equivalently $B(s_0(a))^2>2K(a)^2$ (condition (G)), strictly; in particular $\sinh r(a)>2K(a)$.
\end{theorem}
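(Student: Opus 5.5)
The plan is to chain the three preceding lemmas. By Lemma \ref{lem:G-FBC-reformulation}, it suffices to establish the strict inequality \eqref{eq:G-FBC-bound}, namely $\tanh\varphi(s^*(a);a)<\sqrt{1/2-1/(8a^2)}$, for every $a\in(1/2,1]$. Lemma \ref{lem:tanh-phi-bound} bounds the left-hand side strictly by $\mathrm{arccosh}(2a)/(2\sqrt{a+1/2})$. It will therefore be enough to prove the non-strict comparison
\[
\frac{\mathrm{arccosh}(2a)}{2\sqrt{a+1/2}}\leq\sqrt{\tfrac{1}{2}-\tfrac{1}{8a^2}}=\frac{K}{a\sqrt{2}},
\]
and strictness will be inherited from Lemma \ref{lem:tanh-phi-bound}. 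In fact we will obtain this comparison strictly.

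Next, I will rewrite this comparison in the variable $u=2a-1\in(0,1]$. Here $2a=u+1$, $a+1/2=(u+2)/2$ and $K^2=a^2-1/4=u(u+2)/4$. Then $K/(a\sqrt2)=\sqrt{u(u+2)}/(\sqrt2(u+1))$, while the left-hand side equals $\mathrm{arccosh}(u+1)/\sqrt{2(u+2)}$. Cross-multiplying by the positive quantity $\sqrt{2(u+2)}\,(u+1)$ turns the desired inequality into
\[
(u+1)\,\mathrm{arccosh}(u+1)<(u+2)\sqrt{u},
\]
which is exactly $h(u)>0$ from Lemma \ref{lem:hu-positive}. That lemma therefore closes the chain, giving $s_0(a)>s^*(a)$ and hence $\cosh(2s_0(a))>2a$.

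Finally, I will translate this into the other forms of (G). From $B^2-2K^2=a(\cosh(2s_0)-2a)$ (proof of Proposition \ref{prop:geometric-identity}) we get $B(s_0)^2>2K^2$. Identity \eqref{eq:geometric-identity} then gives $\sinh^2r-4K^2>0$, that is, $\sinh r(a)>2K(a)$.

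The main obstacle is only bookkeeping: checking the algebra of the substitution $u=2a-1$ so that the resulting inequality matches $h$ exactly, including the factor $\sqrt2$. All the analytic content, namely the concavity argument, is already contained in Lemma \ref{lem:hu-positive}.
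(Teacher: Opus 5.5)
Your proposal is correct and follows the paper's proof essentially verbatim: Lemma \ref{lem:G-FBC-reformulation} reduces (G) to \eqref{eq:G-FBC-bound}, Lemma \ref{lem:tanh-phi-bound} supplies the strict upper bound, and the substitution $u=2a-1$ turns the remaining comparison into $h(u)>0$ of Lemma \ref{lem:hu-positive}; your algebra (including the $\sqrt{2}$ factors) checks out, and cross-multiplying instead of squaring as the paper does makes no difference. Your final translation via $B^2-2K^2=a(\cosh(2s_0)-2a)$ and \eqref{eq:geometric-identity} is also fine, since the denominator $B(s_0)^2-K^2$ is positive directly from $B(s_0)^2>2K^2$.
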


\begin{proof}
Combining Lemmas \ref{lem:tanh-phi-bound} and \ref{lem:hu-positive}, for $u=2a-1\in(0,1]$ the inequality $s^*(a)/\sqrt{a+1/2}<\sqrt{1/2-1/(8a^2)}$ is equivalent, after squaring and simplification, to $h(u)>0$ with $u=2a-1$. Explicitly, from $\sqrt{1/2-1/(8a^2)}=\sqrt{4a^2-1}/(2a\sqrt{2})$ and $s^*(a)=\tfrac{1}{2}\mathrm{arccosh}(2a)$,
\begin{align*}
\frac{\mathrm{arccosh}(2a)}{2\sqrt{a+1/2}}&<\frac{\sqrt{4a^2-1}}{2a\sqrt{2}}\\
\iff\quad a\sqrt{2}\,\mathrm{arccosh}(2a)&<\sqrt{a+1/2}\,\sqrt{4a^2-1}\\
\iff\quad 2a^2\mathrm{arccosh}^2(2a)&<(a+1/2)(4a^2-1)=(a+1/2)(2a-1)(2a+1)\\
&=\tfrac{1}{2}(2a+1)^2(2a-1).
\end{align*}
Substituting $u=2a-1$ (whence $2a=u+1$, $2a+1=u+2$, $a=(u+1)/2$),
\[
\tfrac{1}{2}(u+1)^2\mathrm{arccosh}^2(u+1)<\tfrac{1}{2}(u+2)^2 u\iff[(u+1)\mathrm{arccosh}(u+1)]^2<[(u+2)\sqrt{u}]^2.
\]
Both sides are positive for $u>0$ (since $\mathrm{arccosh}(u+1)>0$ for $u>0$), so the inequality is equivalent to $(u+1)\mathrm{arccosh}(u+1)<(u+2)\sqrt{u}$, namely $h(u)>0$, which holds by Lemma \ref{lem:hu-positive}.

Combining: for $a\in(1/2,1]$, $\tanh\varphi(s^*(a);a)<s^*(a)/\sqrt{a+1/2}<\sqrt{1/2-1/(8a^2)}$, and by Lemma \ref{lem:G-FBC-reformulation}, $s_0(a)>s^*(a)$, equivalently $\cosh(2s_0(a))>2a$.
\end{proof}

\begin{remark}[A two-line extension of Theorem \ref{thm:G-closure-a-leq-1}]\label{rem:G-extension}
Since $u^2+2u\geq 2u$, integrating $(u^2+2u)^{-1/2}\leq(2u)^{-1/2}$ from $0$ gives $\mathrm{arccosh}(1+u)\leq\sqrt{2u}$ for every $u\geq 0$, strictly for $u>0$. Hence, with $h(u)$ as in Lemma \ref{lem:hu-positive},
\[
h(u)\;\geq\;(u+2)\sqrt{u}-(u+1)\sqrt{2u}\;=\;\sqrt{u}\,\bigl[(1-\sqrt{2})\,u+2-\sqrt{2}\bigr]\;>\;0\qquad\text{for } 0<u<\tfrac{2-\sqrt{2}}{\sqrt{2}-1}=\sqrt{2}.
\]
Via Lemma \ref{lem:G-FBC-reformulation} and Lemma \ref{lem:tanh-phi-bound} (which hold for every $a>1/2$), this extends Theorem \ref{thm:G-closure-a-leq-1} to every $a\in\bigl(\tfrac{1}{2},\tfrac{1+\sqrt{2}}{2}\bigr)$. We keep the statement on $(1/2,1]$ in the sequel, which suffices for our purposes; the validity of $h(u)>0$ for every $u>0$ (numerically, $\min_{u\geq 1}h\approx 0.37$) would close condition (G) on the whole range $a>1/2$ and is left to future work.
\end{remark}

\begin{corollary}[{Complete closure of no-kernel of (F) for $a\in(1/2,1]$}]\label{cor:F-closure-a-leq-1}
For every $a\in(1/2,1]$ the odd part of the no-kernel condition of (F) of Theorem \ref{thm:reduction} is satisfied unconditionally ($\mu_n^{\mathrm{odd}}(0)\neq 0$ for every $n\geq 0$), while the even part ($\mu_n^{\mathrm{even}}(0)\neq 0$ for every $n\geq 0$) holds under $r'(a)\neq 0$ (Theorem \ref{thm:no-kernel-even}), a condition implied by (F$'$) itself (Corollary \ref{cor:Fprime-rprime}). Combined with Theorem \ref{thm:E-closure-a-leq-1}, the strong Medvedev conjecture \eqref{eq:medvedev-strong} for $a\in(1/2,1]$ is equivalent to the single strict spectral inequality (F$'$)$=\{\mu_2(0)>0\}$ (Corollary \ref{cor:Fprime-rprime} for $\Leftarrow$, Theorem \ref{thm:reduction} for $\Rightarrow$), which will in turn be reduced in Section \ref{sec:F-prime-strategy} to the geometric inequality $\phi_a>0$ on the principal branch.
\end{corollary}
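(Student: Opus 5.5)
The corollary is a bookkeeping statement, so the plan is to assemble results already proved for $a\in(1/2,1]$ and read off the equivalence. I will take the three claims in turn: odd no-kernel, even no-kernel, and the equivalence with (F$'$).

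\emph{Odd no-kernel.} Theorem \ref{thm:G-closure-a-leq-1} gives $B(s_0(a))^2>2K(a)^2$ on $(1/2,1]$. In particular $B(s_0)^2\neq 2K^2$, which is hypothesis \eqref{eq:hypothesis-BvsK}. Theorem \ref{thm:no-kernel-odd} then yields $\mu_n^{\mathrm{odd}}(0)\neq 0$ for all $n$, with no further assumption. This is the unconditional odd part.

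\emph{Even no-kernel.} Theorem \ref{thm:no-kernel-even} gives $\mu_n^{\mathrm{even}}(0)\neq 0$ as soon as $r'(a)\neq 0$. I will cite Corollary \ref{cor:Fprime-rprime} (stated later but available) for the implication (F$'$)$\Rightarrow r'(a)\neq 0$. As a sanity check on its mechanism: if $r'(a)=0$, then $R\phi_a=0$, so $\phi_a$ is an even mode-$0$ Robin Jacobi field with $\phi_a(0)=1/(2K)\neq 0$. A shooting or node-count argument should then place $\phi_a$ at an eigenvalue $0$ with index at most $2$, contradicting $\mu_2(0)>0$. I would not reprove this step, only invoke it.

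\emph{Equivalence.} The direction ($\Rightarrow$) is immediate: the strong conjecture implies (F) by Theorem \ref{thm:reduction}, and (F) contains (F$'$). For ($\Leftarrow$), assume $\mu_2(0)>0$.
\begin{enumerate}
\item Corollary \ref{cor:Fprime-rprime} gives $r'(a)\neq 0$, so the even part of the no-kernel condition holds.
\item The odd part holds unconditionally by the first step.
\item Since the mode-$0$ radial spectrum is the union of the even and odd spectra, $\mu_n(0)\neq 0$ for every $n$.
\item Hence (F) holds in full.
\item Theorem \ref{thm:E-closure-a-leq-1} supplies (E) on $(1/2,1]$.
\item Theorem \ref{thm:reduction} ($\Leftarrow$) then gives $\ind=4$ and $\nul=2$.
\end{enumerate}
The forward reference to Section \ref{sec:F-prime-strategy} is only descriptive and needs no proof.

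\emph{Main obstacle.} The only nontrivial dependence is Corollary \ref{cor:Fprime-rprime}, i.e.\ deriving $r'(a)\neq 0$ from $\mu_2(0)>0$. The corollary assumes it. If I had to supply it, I would argue by contradiction as above, using the Sturm count of Lemma \ref{lem:sturm-count} on the even sector to bound the index of the zero eigenvalue carried by $\phi_a$.
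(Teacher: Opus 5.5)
Your proposal is correct and follows the paper's proof essentially step for step: (G) on $(1/2,1]$ from Theorem \ref{thm:G-closure-a-leq-1} feeds Theorem \ref{thm:no-kernel-odd}, the even part comes from Theorem \ref{thm:no-kernel-even} with $r'(a)\neq 0$ supplied by Corollary \ref{cor:Fprime-rprime}, and (E) from Theorem \ref{thm:E-closure-a-leq-1} closes the equivalence via Theorem \ref{thm:reduction}. One small remark on your aside: in Corollary \ref{cor:Fprime-rprime}, the index of the zero eigenvalue carried by $\phi_a$ is pinned down by three ingredients together: the Sturm separation Lemma \ref{lem:sturm-separation-phi-u}, which gives at most one zero of $\phi_a$ by comparison with $u_*<0$; the fact $\mu_0^{\mathrm{even}}(0)<0$; and parity, which forces that eigenvalue to be exactly $\mu_2(0)$. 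The shooting count by itself only converts zeros into eigenvalue counts, and "index at most $2$" alone would not contradict $\mu_2(0)>0$.
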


\begin{proof}
By Theorem \ref{thm:G-closure-a-leq-1}, $\sinh r(a)>2K(a)$ strictly for every $a\in(1/2,1]$, so the hypothesis of Theorem \ref{thm:no-kernel-odd} is satisfied, and Corollary \ref{cor:F-partial}(b) closes the odd part of the no-kernel of (F). Combined with (a) of the same corollary, the even part holds under $r'(a)\neq 0$, a condition implied by (F$'$) itself (Corollary \ref{cor:Fprime-rprime}); hence, under (F$'$), $\mu_n(0)\neq 0$ for every $n$. Condition (E) is closed by Theorem \ref{thm:E-closure-a-leq-1}. Only (F$'$)$=\{\mu_2(0)>0\}$ remains, whose further reduction is the subject of Section \ref{sec:F-prime-strategy}.
\end{proof}

\section{Picone identity with base $B(s)$ and partial closure of (E)}\label{sec:picone-B}

In this section we develop a second Picone identity, with base the function $B(s)$ (the even eigenfunction of mode $|k|=1$, cf. Lemma \ref{lem:LSigma-PhiA}), which allows us to close condition (E) of Theorem \ref{thm:reduction} (namely $\mu_0^{\mathrm{even}}(2)>0$) for an explicit interval of values of $a$.

\subsection{The identity with base $B$}

\begin{lemma}[Weighted equation for $B$]\label{lem:B-eq}
Pointwise on $(-s_0,s_0)$,
\begin{equation}\label{eq:Beq}
B\,B''+(B')^2=1+2B^2,\quad\text{equivalently}\quad (BB')'=1+2B^2=2a\cosh(2s).
\end{equation}
\end{lemma}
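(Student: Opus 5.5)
This identity is essentially the Mori identity of Lemma~\ref{lem:metric-mori}, repackaged in divergence form, so I will derive it directly from the defining relation \eqref{eq:ABK} for $B^2$ rather than from any spectral fact.

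\textbf{Step 1 (the first-order identity).} Starting from $B(s)^2=a\cosh(2s)-\tfrac12$, differentiate once to get $2BB'=2a\sinh(2s)$. This gives
\[
BB'=a\sinh(2s).
\]

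\textbf{Step 2 (divergence form).} Differentiate the relation from Step 1 again. The left-hand side gives
\[
(BB')'=BB''+(B')^2,
\]
and the right-hand side gives $2a\cosh(2s)$. So $BB''+(B')^2=2a\cosh(2s)$.

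\textbf{Step 3 (rewriting in terms of $B$).} Use \eqref{eq:ABK} once more, in the form $2a\cosh(2s)=2\bigl(B^2+\tfrac12\bigr)=1+2B^2$. This yields both forms in \eqref{eq:Beq} at once.

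Equivalently, one could simply quote \eqref{eq:mori-id} and note that its left-hand side equals $(BB')'$.

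\textbf{Regularity.} The only point worth checking is that the computation is valid pointwise on $(-s_0,s_0)$. Since $a>1/2$, we have $B^2\geq a-\tfrac12>0$ there. Hence $B$ is smooth and strictly positive, and dividing by $B$ in Step 1 is legitimate.

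I do not expect any real obstacle. The statement is a two-line computation, and its role is only to prepare the Picone identity with base $B$ that follows.
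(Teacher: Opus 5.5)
Your proposal is correct and follows essentially the same route as the paper: differentiate $B^2=a\cosh(2s)-\tfrac12$ twice to get $(BB')'=BB''+(B')^2=2a\cosh(2s)$, then rewrite $2a\cosh(2s)=1+2B^2$. One small remark: no division by $B$ actually occurs in Step~1, and the positivity $B^2\geq a-\tfrac12>0$ is really used to guarantee that $B$ is smooth, so that $B''$ exists.
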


\begin{proof}
From $B^2=a\cosh(2s)-1/2$ we obtain $2BB'=2a\sinh(2s)$, whence $BB'=a\sinh(2s)$ and $(BB')'=2a\cosh(2s)$. On the other hand $(BB')'=(B')^2+BB''$, and $2a\cosh(2s)=2(B^2+1/2)=1+2B^2$. Therefore $BB''+(B')^2=1+2B^2$.
\end{proof}

\begin{remark}
Identity \eqref{eq:Beq} expresses the fact that $B$ is a radial eigenfunction of the Jacobi operator in mode $|k|=1$ with eigenvalue $|\II|^2$: $L_{\Sigma,1}B=|\II|^2 B$, namely $\Phi^2=B\cos\theta$ is an eigenfunction of $L_\Sigma$ with eigenvalue $|\II|^2$ (cf. Lemma \ref{lem:LSigma-PhiA}).
\end{remark}

\begin{lemma}[Picone identity with base $B$]\label{lem:picone-B}
Let $h\in H^1((-s_0,s_0))$ and $u:=Bh$. Then $u$ satisfies the Robin condition of mode $|k|=2$ on $\partial$ if and only if $h'(\pm s_0)=0$ (Neumann condition on $h$). Under this condition, for every integer $k$,
\begin{equation}\label{eq:picone-B}
\Sf_k^{\mathrm{rad}}(Bh,Bh)=\int_{-s_0}^{s_0}\Bigl[B^3(h')^2+\bigl((k^2-1)B^2-2K^2\bigr)\frac{h^2}{B}\Bigr]ds.
\end{equation}
\end{lemma}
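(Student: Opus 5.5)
The plan is to repeat the Picone computation of Theorem \ref{thm:picone-detailed}, now with the even profile $B$ in place of $f_*$. Since $B\ge B(0)=\sqrt{a-1/2}>0$ on $[-s_0,s_0]$, no regularity issue at $s=0$ arises. The map $h\mapsto Bh$ is an isomorphism of $H^1$, so I will prove the identity for $h\in C^2$ and pass to $H^1$ by density; both sides are continuous in the $H^1$ norm, since $B$ is smooth and bounded away from zero.

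\emph{Step 1: the boundary condition.} With $u=Bh$ we have $u'=B'h+Bh'$. At $s_0$, Lemma \ref{lem:coth-identity} gives $\coth r(a)=B'(s_0)/B(s_0)$. The Robin condition $u'(s_0)=\coth r(a)\,u(s_0)$ therefore reads $B'h+Bh'=B'h$, that is, $h'(s_0)=0$. At $-s_0$, $B$ is even and $B'$ is odd, so the condition $u'(-s_0)=-\coth r(a)\,u(-s_0)$ likewise reduces to $h'(-s_0)=0$. The Robin condition does not depend on $k$, so this settles the ``if and only if'' for mode $|k|=2$, and indeed for every mode.

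\emph{Step 2: expansion and integration by parts.} First expand
\[
B(u')^2=B(B')^2h^2+B^2B'(h^2)'+B^3(h')^2 .
\]
Then integrate the cross term by parts:
\[
\int B^2B'(h^2)'\,ds=\bigl[B^2B'h^2\bigr]_{-s_0}^{s_0}-\int (B^2B')'h^2\,ds .
\]
The boundary bracket equals $B(s_0)^3\coth r(a)\bigl[h(s_0)^2+h(-s_0)^2\bigr]$, using $B'(\pm s_0)=\pm\coth r(a)\,B(s_0)$. This cancels exactly the Robin boundary term $-\coth r(a)\,B(s_0)\bigl[u(s_0)^2+u(-s_0)^2\bigr]$ of \eqref{eq:Skrad}. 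The cancellation holds for every $h$; the Neumann condition is needed only for the equivalence in Step 1.

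\emph{Step 3: the potential.} This is the only real computation. The coefficient of $h^2$ is
\[
B(B')^2-(B^2B')'-B^3W_k=-B\bigl((B')^2+BB''\bigr)-B^3|\II|^2+2B^3+k^2B .
\]
Substitute $(B')^2+BB''=1+2B^2$ from Lemma \ref{lem:B-eq} and $B^3|\II|^2=2K^2/B$ from Lemma \ref{lem:II-norm}. The $B^3$ terms cancel, leaving $(k^2-1)B-2K^2/B=\bigl((k^2-1)B^2-2K^2\bigr)/B$. Together with the surviving term $\int B^3(h')^2\,ds$, this gives \eqref{eq:picone-B}.

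The main point of care is the sign bookkeeping at $-s_0$: one must combine the outward conormal with the oddness of $B'$ so that the two boundary contributions genuinely cancel rather than add. The algebraic simplification in Step 3 is routine once the Mori identity and Lemma \ref{lem:II-norm} are inserted.
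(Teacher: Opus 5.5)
Your proposal is correct and follows the paper's proof step by step. You translate the boundary condition via $B'(s_0)=\coth r(a)\,B(s_0)$, and you integrate the cross term $B^2B'(h^2)'$ by parts so that its boundary bracket cancels the Robin term for every $h$. You then reduce the bulk coefficient to $\bigl((k^2-1)B^2-2K^2\bigr)/B$ using the Mori identity and $|\II|^2B^4=2K^2$. The only cosmetic difference is that you group $B(B')^2-(B^2B')'=-B\bigl((B')^2+BB''\bigr)$ directly, whereas the paper first computes $(B^2B')'=B(B')^2+B+2B^3$.
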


\begin{proof}
For the translation of the BC, if $u=Bh$, then $u'(s_0)=B'(s_0)h(s_0)+B(s_0)h'(s_0)$. The Robin condition $u'(s_0)=\coth r\cdot u(s_0)=\coth r\cdot B(s_0)h(s_0)$, combined with $B'(s_0)/B(s_0)=\coth r$ from Lemma \ref{lem:coth-identity}, becomes $h'(s_0)=0$. Analogously, $h'(-s_0)=0$ by parity.

For the computation of the identity, setting $u=Bh$, $u'=B'h+Bh'$, whence $B(u')^2=B(B'h+Bh')^2=B(B')^2h^2+2BB'B\,hh'+B^3(h')^2$, and $2BB'B\,hh'=B^2B'(h^2)'$. Integrating in $s$ and integrating by parts the cross term,
\[
\int_{-s_0}^{s_0}B^2B'(h^2)'ds=\bigl[B^2B'\,h^2\bigr]_{-s_0}^{s_0}-\int_{-s_0}^{s_0}(B^2B')'h^2ds.
\]
From $(B^2B')'=2B(B')^2+B^2B''$ and the identity $BB''=1+2B^2-(B')^2$ of Lemma \ref{lem:B-eq},
\[
B^2B''=B(1+2B^2-(B')^2)=B+2B^3-B(B')^2,
\]
whence $(B^2B')'=2B(B')^2+B+2B^3-B(B')^2=B(B')^2+B+2B^3$. Therefore
\begin{align*}
\int B(u')^2ds&=\int B(B')^2h^2+\bigl[B^2B'h^2\bigr]_{-s_0}^{s_0}-\int(B(B')^2+B+2B^3)h^2+\int B^3(h')^2\\
&=\bigl[B^2B'h^2\bigr]_{-s_0}^{s_0}-\int(B+2B^3)h^2+\int B^3(h')^2.
\end{align*}
Combining with the potential term $-\int BW_ku^2=-\int B(|\II|^2-2-k^2/B^2)B^2h^2$, and using $|\II|^2 B^3=2K^2/B$ (Lemma \ref{lem:II-norm}),
\begin{align*}
\Sf_k^{\mathrm{rad}}(Bh,Bh)&=\bigl[B^2B'h^2\bigr]_{-s_0}^{s_0}-\int(B+2B^3)h^2+\int B^3(h')^2-\int(B^3|\II|^2-2B^3-k^2B)h^2\\
&\quad-\coth r\,B(s_0)^3\bigl[h(s_0)^2+h(-s_0)^2\bigr]\\
&=\bigl[B^2B'h^2\bigr]_{-s_0}^{s_0}+\int B^3(h')^2+\int(-B-2B^3-2K^2/B+2B^3+k^2B)h^2\\
&\quad-\coth r\,B(s_0)^3\bigl[h(s_0)^2+h(-s_0)^2\bigr]\\
&=\bigl[B^2B'h^2\bigr]_{-s_0}^{s_0}+\int B^3(h')^2+\int B((k^2-1)-2K^2/B^2)h^2-\coth r\,B(s_0)^3\bigl[h(s_0)^2+h(-s_0)^2\bigr].
\end{align*}

The boundary terms cancel exactly. From $B$ even, $B'$ odd, $B(\pm s_0)=B(s_0)$, $B'(\pm s_0)=\pm B'(s_0)$,
\[
\bigl[B^2B'h^2\bigr]_{-s_0}^{s_0}=B(s_0)^2B'(s_0)[h(s_0)^2+h(-s_0)^2].
\]
By Lemma \ref{lem:coth-identity}, $B'(s_0)=\coth r\,B(s_0)$, whence $B(s_0)^2B'(s_0)=\coth r\,B(s_0)^3$, which cancels with the last term. Rearranging the bulk, we obtain \eqref{eq:picone-B}.
\end{proof}

\begin{remark}[Validity of \eqref{eq:picone-B} for $H^1$ functions]\label{rem:picone-B-H1}
Identity \eqref{eq:picone-B} holds for every $h\in H^1((-s_0,s_0))$, irrespective of the boundary condition on $h$: the cancellation of the boundary terms in the proof uses only the geometric identity $B'(s_0)=\coth r\,B(s_0)$ of Lemma \ref{lem:coth-identity}, and not the Neumann condition $h'(\pm s_0)=0$. The latter enters only in the equivalence between the Robin condition of mode $|k|=2$ for $u=Bh$ and the Neumann condition for $h$.
\end{remark}

\subsection{Closure of (E) for $a\in(1/2,1]$}

\begin{theorem}[Closure of condition (E), regime $a\leq 1$]\label{thm:E-closure-a-leq-1}
For every $a\in(1/2,1]$, $\mu_0^{\mathrm{even}}(2)>0$. Consequently, applying Theorem \ref{thm:sturm-strict}, $\mu_0^{\mathrm{even}}(k)>0$ for every $|k|\geq 2$, closing all even radial modes of mode $|k|\geq 2$.
\end{theorem}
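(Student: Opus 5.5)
The plan is to apply the Picone identity with base $B$ (Lemma \ref{lem:picone-B}) with $k=2$, in the boundary-condition-free form of Remark \ref{rem:picone-B-H1}, and to show that the resulting potential is pointwise nonnegative and nonvanishing almost everywhere when $a\in(1/2,1]$.

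\textbf{Step 1: reduce to a quotient $h=u/B$.} Since $B\geq B(0)=\sqrt{a-1/2}>0$ is smooth on $[-s_0,s_0]$, every $u\in H^1((-s_0,s_0))$ can be written as $u=Bh$ with $h:=u/B\in H^1$, and $h\not\equiv 0$ whenever $u\not\equiv 0$. By Remark \ref{rem:picone-B-H1}, identity \eqref{eq:picone-B} holds for every such $h$ with no boundary condition needed. For $k=2$ it reads
\[
\Sf_2^{\mathrm{rad}}(u,u)=\int_{-s_0}^{s_0}\Bigl[B^3(h')^2+\bigl(3B^2-2K^2\bigr)\frac{h^2}{B}\Bigr]ds.
\]

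\textbf{Step 2: pointwise sign of the potential.} The first term is nonnegative, so the task is to bound $3B^2-2K^2$ from below. Since $B^2=a\cosh(2s)-\tfrac12\geq a-\tfrac12$, with equality only at $s=0$, we get
\[
3B^2-2K^2\;\geq\;3\bigl(a-\tfrac12\bigr)-2a^2+\tfrac12\;=\;-2a^2+3a-1\;=\;(2a-1)(1-a).
\]
This lower bound is $\geq 0$ for $a\in(1/2,1]$. Moreover, for $s\neq 0$ the inequality $B(s)^2>a-1/2$ is strict, so $3B^2-2K^2>0$ on $[-s_0,s_0]\setminus\{0\}$.

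\textbf{Step 3: strict positivity of the form.} By Steps 1 and 2, for every nonzero $u\in H^1$ we have
\[
\Sf_2^{\mathrm{rad}}(u,u)\;\geq\;\int_{-s_0}^{s_0}(3B^2-2K^2)\,\frac{h^2}{B}\,ds\;>\;0.
\]
The strict inequality holds because the weight is positive almost everywhere and $h\not\equiv 0$. This covers the endpoint $a=1$, where the weight vanishes only at $s=0$; that endpoint is the only delicate point, and it is handled by this almost-everywhere argument.

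\textbf{Step 4: from the form to the eigenvalue.} The radial Robin problem on the compact interval has compact resolvent, so $\mu_0^{\mathrm{even}}(2)$ is attained by an eigenfunction $\psi\neq 0$ in the even sector. Hence
\[
\mu_0^{\mathrm{even}}(2)=\frac{\Sf_2^{\mathrm{rad}}(\psi,\psi)}{\|\psi\|_B^2}>0.
\]
In fact Step 3 gives $\mu_0(2)>0$ on the full radial sector, not only the even one.

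\textbf{Step 5: higher modes.} For $|k|\geq 3$, the comparison argument in the proof of Theorem \ref{thm:sturm-strict}, applied between modes $2$ and $k$, gives $\Sf_k^{\mathrm{rad}}-\Sf_2^{\mathrm{rad}}=(k^2-4)\int u^2/B\,ds$. Together with min-max this yields
\[
\mu_0^{\mathrm{even}}(k)\;\geq\;\mu_0^{\mathrm{even}}(2)+\frac{k^2-4}{B(s_0)^2}\;>\;0 .
\]
Alternatively, one can simply repeat Step 2 with the larger coefficient $k^2-1$ in place of $3$.
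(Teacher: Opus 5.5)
Your proof is correct and follows the paper's argument: you use the Picone identity with base $B$ at $k=2$, the pointwise bound $3B^2-2K^2\geq(2a-1)(1-a)\geq 0$ (vanishing only at $s=0$ when $a=1$), and the mode-$2$-to-mode-$k$ comparison $\mu_0^{\mathrm{even}}(k)\geq\mu_0^{\mathrm{even}}(2)+(k^2-4)/B(s_0)^2$ for $|k|\geq 3$. The only minor difference is that you apply the boundary-condition-free form on all of $H^1$ and conclude with a single almost-everywhere positivity argument, whereas the paper restricts to Robin functions and splits into the cases of constant and non-constant $h$.
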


\begin{proof}
Let $u\in H^1((-s_0,s_0))$ be even and nonzero, with Robin condition of mode $|k|=2$. Setting $h:=u/B$ (with $B>0$ on $[-s_0,s_0]$), $h\in H^1$ is even with $h'(\pm s_0)=0$ by Lemma \ref{lem:picone-B}. The identity \eqref{eq:picone-B} with $k=2$ gives
\begin{equation}\label{eq:picone-B-k2}
\Sf_2^{\mathrm{rad}}(u,u)=\int_{-s_0}^{s_0}\Bigl[B^3(h')^2+(3B^2-2K^2)\frac{h^2}{B}\Bigr]ds.
\end{equation}
The factor $3B^2-2K^2$ is an even function of $s$, strictly increasing in $|s|$, with minimum at $s=0$:
\[
3B(0)^2-2K^2=3(a-1/2)-2(a^2-1/4)=-(2a-1)(a-1)\geq 0\quad\text{for }a\in(1/2,1],
\]
with equality only for $a=1$ at $s=0$. Therefore $3B^2-2K^2\geq 0$ pointwise on $[-s_0,s_0]$, with possible equality only on the set of measure zero $\{s=0\}$ when $a=1$.

Both integrands on the right-hand side of \eqref{eq:picone-B-k2} are nonnegative. If $h$ is not constant, $\int B^3(h')^2>0$ and hence $\Sf_2^{\mathrm{rad}}(u,u)>0$. If $h\equiv c\neq 0$ is constant, then
\[
\Sf_2^{\mathrm{rad}}(u,u)=c^2\int_{-s_0}^{s_0}(3B^2-2K^2)/B\,ds>0,
\]
since the integrand is nonnegative with zero set of measure zero. In either case $\Sf_2^{\mathrm{rad}}(u,u)>0$, hence $\mu_0^{\mathrm{even}}(2)>0$.

The extension $\mu_0^{\mathrm{even}}(k)>0$ for $|k|\geq 2$ follows from Theorem \ref{thm:sturm-strict}: $\mu_0^{\mathrm{even}}(k)\geq\mu_0^{\mathrm{even}}(2)+(k^2-4)/B(s_0)^2>0$ for $|k|>2$, and directly for $|k|=2$.
\end{proof}

\subsection{Extension to $a>1$ via Hardy estimate}

For $a>1$, the polynomial $3B^2-2K^2$ in \eqref{eq:picone-B-k2} changes sign: $3B^2-2K^2<0$ on $(-s^*(a),s^*(a))$ and $>0$ elsewhere, where
\begin{equation}\label{eq:s-star}
\cosh(2s^*(a))=\frac{2a^2+1}{3a}.
\end{equation}
The positivity of $\Sf_2^{\mathrm{rad}}$ then requires a Hardy estimate that controls the negative region $|s|<s^*$ via the gradient term $\int B^3(h')^2$.

\begin{proposition}[Extension via Hardy]\label{prop:hardy-extension}
For $a>1$, define
\begin{align*}
I_V(a)&:=\int_0^{s^*(a)}\frac{|3B(s)^2-2K^2|}{B(s)}ds,&I_V^+(a)&:=\int_{s^*(a)}^{s_0(a)}\frac{3B(s)^2-2K^2}{B(s)}ds,\\
K_*(a)&:=\int_0^{s^*(a)}\frac{\tilde I_V(t)}{B(t)^3}dt,&K_*^+(a)&:=\int_{s^*(a)}^{s_0(a)}\frac{\tilde I_V^+(t)}{B(t)^3}dt,
\end{align*}
with $\tilde I_V(t):=\int_0^t|3B^2-2K^2|/B\,ds$ and $\tilde I_V^+(t):=\int_t^{s_0}(3B^2-2K^2)/B\,ds$. If both conditions
\begin{equation}\label{eq:hardy-conditions}
2K_*(a)<1\quad\text{and}\quad 2I_V(a)\frac{1+K_*^+(a)}{I_V^+(a)}<1
\end{equation}
hold, then $\mu_0^{\mathrm{even}}(2)>0$.
\end{proposition}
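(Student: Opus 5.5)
By Remark \ref{rem:picone-B-H1} and evenness, it suffices to prove strict positivity of
\[
Q(h):=\int_0^{s_0}\Bigl[B^3(h')^2+V\,h^2\Bigr]ds,\qquad V:=\frac{3B^2-2K^2}{B},
\]
for every nonzero $h\in H^1((0,s_0))$, since $\Sf_2^{\mathrm{rad}}(Bh,Bh)=2Q(h)$ for even $h$. Write $V=V^+-V^-$, where $V^-=|V|$ is supported on $[0,s^*]$ and $V^+$ on $[s^*,s_0]$. The plan is to control the negative integral $\int_0^{s^*}V^-h^2$ in two pieces. One piece is absorbed by the gradient energy on $[0,s^*]$; the other is paid for by the positive potential on $[s^*,s_0]$ together with the gradient energy there.

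\textbf{Step 1: the inner region.} Set $m:=h(s^*)$. On $[0,s^*]$ I integrate by parts with the primitive $\tilde I_V(t)=\int_0^tV^-$, which vanishes at $0$ and equals $I_V$ at $s^*$:
\[
\int_0^{s^*}V^-h^2=I_Vm^2-\int_0^{s^*}\tilde I_V\,2hh'.
\]
By Cauchy--Schwarz with weights $B^3$ and $\tilde I_V/B^3$ (or equivalently a weighted Hardy inequality with kernel $\tilde I_V/B^3$), I bound the cross term by $\bigl(\int\tilde I_V h^2/B^3\cdot\ldots\bigr)$. A cleaner route is to write $h(t)=m-\int_t^{s^*}h'$, which gives
\[
\int_0^{s^*}V^-h^2\le 2I_Vm^2+2\int_0^{s^*}V^-(t)\Bigl(\int_t^{s^*}h'\Bigr)^2dt .
\]
Applying Cauchy--Schwarz to the inner integral, $\bigl(\int_t^{s^*}h'\bigr)^2\le\int_t^{s^*}B^3(h')^2\cdot\int_t^{s^*}B^{-3}$, and then exchanging the order of integration (Fubini), the second term is at most $2K_*\int_0^{s^*}B^3(h')^2$. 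This requires identifying the Fubini kernel with $\int_0^{s}\tilde I_V\,dt/B^3$-type quantities, and I expect this is exactly how $K_*$ arises. The condition $2K_*<1$ then lets the inner gradient energy absorb this term strictly.

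\textbf{Step 2: the outer region.} It remains to show $2I_Vm^2<\int_{s^*}^{s_0}[B^3(h')^2+V^+h^2]$ plus any leftover. Symmetrically, I write $h(t)=m+\int_{s^*}^th'$, which gives the reverse-type estimate
\[
m^2I_V^+\le(1+\varepsilon)\int V^+h^2+(1+\varepsilon^{-1})K_*^+\int_{s^*}^{s_0}B^3(h')^2 .
\]
Here the Fubini kernel with $\tilde I_V^+$ produces $K_*^+$. Choosing the constants (taking $\varepsilon=1$ gives the factor structure $1+K_*^+$ after normalising), I obtain $m^2\le\frac{1+K_*^+}{I_V^+}\int_{s^*}^{s_0}[B^3(h')^2+V^+h^2]$. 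The second condition in \eqref{eq:hardy-conditions} then gives $2I_Vm^2<\int_{s^*}^{s_0}[\cdots]$ whenever the right side is positive.

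\textbf{Step 3: assembling and the main obstacle.} Summing the two steps gives $Q(h)\ge(1-2K_*)\int_0^{s^*}B^3(h')^2+\delta\int_{s^*}^{s_0}[\cdots]$ with $\delta>0$. Equality forces $h'\equiv0$ on both pieces and $h\equiv0$ on $[s^*,s_0]$, hence $h\equiv0$, so $\mu_0^{\mathrm{even}}(2)>0$. The main obstacle is the bookkeeping of constants. The exact factors $2$ and $1+K_*^+$ in \eqref{eq:hardy-conditions} depend on the precise splitting, namely the elementary inequality $(x+y)^2\le2x^2+2y^2$ and the normalisation in Step 2. If my first splitting does not reproduce them, I would adjust the Young parameters until the stated conditions emerge, since any consistent choice yields a sufficient criterion of the same shape.
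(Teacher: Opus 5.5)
Your proposal is correct and follows the paper's proof essentially step by step. It uses the same splitting $h=h(s^*)+(h-h(s^*))$ on each side of $s^*$, the same weighted Cauchy--Schwarz with weight $B^3$, and the same Fubini exchange producing $K_*$ and $K_*^+$; your $(1+\varepsilon)/(1+\varepsilon^{-1})$ form of the outer estimate is the paper's reverse inequality reparametrized by $\varepsilon=\epsilon/(1-\epsilon)$.

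The one slip is the Young parameter in Step 2. Taking $\varepsilon=1$ does not give $1+K_*^+$, but $\varepsilon=K_*^+$ does, yielding exactly $h(s^*)^2I_V^+\le(1+K_*^+)\int_{s^*}^{s_0}[B^3(h')^2+V^+h^2]$ in your notation; this is the paper's choice $\epsilon=K_*^+/(1+K_*^+)$. Your way of closing, keeping the fraction $1-2I_V(1+K_*^+)/I_V^+$ of the outer energy, gives strict positivity directly and avoids the paper's separate corner case $J_-=0$, $h(s^*)=0$.
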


\begin{proof}
Let $u=Bh$, $h$ even on $[-s_0,s_0]$, $h'(\pm s_0)=0$, $u$ nonzero. By symmetry $h(-s)=h(s)$, hence $\Sf_2^{\mathrm{rad}}(u,u)=2I$ where
\[
I:=\int_0^{s_0}\bigl[B^3(h')^2+(3B^2-2K^2)\frac{h^2}{B}\bigr]ds=J_-+J_++\int_0^{s^*}V\frac{h^2}{B}+\int_{s^*}^{s_0}V\frac{h^2}{B}
\]
with $V:=3B^2-2K^2$, $J_-:=\int_0^{s^*}B^3(h')^2$, $J_+:=\int_{s^*}^{s_0}B^3(h')^2$.

For the estimate of the negative term, for $s\in[0,s^*]$, $h(s)-h(s^*)=-\int_s^{s^*}h'(t)dt$ and weighted Cauchy--Schwarz give $|h(s)-h(s^*)|^2\leq(\int_s^{s^*}1/B^3)J_-$. Thus $h(s)^2\leq 2h(s^*)^2+2(\int_s^{s^*}1/B^3)J_-$. Multiplying by $|V(s)|/B(s)$ and integrating,
\[
\int_0^{s^*}|V|\frac{h^2}{B}ds\leq 2I_V h(s^*)^2+2\Bigl[\int_0^{s^*}\frac{|V|(s)}{B(s)}\int_s^{s^*}\frac{dt}{B(t)^3}ds\Bigr]J_-.
\]
By Fubini, the double integral becomes $\int_0^{s^*}\bigl(\int_0^t|V|/B\,ds\bigr)/B(t)^3\,dt=K_*(a)$. Therefore
\begin{equation}\label{eq:hardy-neg}
\int_0^{s^*}|V|\frac{h^2}{B}ds\leq 2I_V h(s^*)^2+2K_*(a)J_-.
\end{equation}

For the estimate of the positive term, for $s\in[s^*,s_0]$, $h(s)^2\geq(1-\epsilon)h(s^*)^2-(\epsilon^{-1}-1)(h(s)-h(s^*))^2$ (reverse Cauchy--Schwarz) with $\epsilon\in(0,1)$, and the analogous estimate $(h(s)-h(s^*))^2\leq(\int_{s^*}^s 1/B^3)J_+$. By Fubini,
\begin{equation}\label{eq:hardy-pos}
\int_{s^*}^{s_0}V\frac{h^2}{B}ds\geq(1-\epsilon)I_V^+ h(s^*)^2-(\epsilon^{-1}-1)K_*^+(a)J_+.
\end{equation}

Substituting \eqref{eq:hardy-neg} and \eqref{eq:hardy-pos} in $I$,
\[
I\geq(1-2K_*(a))J_-+(1-(\epsilon^{-1}-1)K_*^+(a))J_++\bigl((1-\epsilon)I_V^+-2I_V\bigr)h(s^*)^2.
\]
The choice $\epsilon=K_*^+/(1+K_*^+)$ annihilates the coefficient of $J_+$ at $0^+$. For admissibility we must have $(1-\epsilon)I_V^+\geq 2I_V$, namely $I_V^+/(1+K_*^+)\geq 2I_V$, that is, $2I_V(1+K_*^+)/I_V^+\leq 1$, which is the second condition of \eqref{eq:hardy-conditions}. For the first condition, $1-2K_*(a)>0$ ensures that the coefficient of $J_-$ is positive. Hence $I\geq 0$, with $I>0$ unless $J_-=0$ and $h(s^*)=0$. In the remaining corner case $h(s^*)=0$ and $J_-=0$ one has $h\equiv 0$ on $[0,s^*]$, and then $I=\int_{s^*}^{s_0}\bigl(B^3(h')^2+V h^2/B\bigr)ds$ with $V>0$ a.e.\ on $(s^*,s_0]$, which is strictly positive unless $h\equiv 0$, i.e.\ unless $u\equiv 0$. Hence $I>0$ strictly, and $\Sf_2^{\mathrm{rad}}(u,u)=2I>0$.
\end{proof}

\begin{theorem}[Analytic existence of an extension range]\label{thm:A-star-existence}
There exists $A_*\in(1,\infty)$ such that condition (E), namely $\mu_0^{\mathrm{even}}(2)>0$, holds for every $a\in(1/2,A_*]$.
\end{theorem}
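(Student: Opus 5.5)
The natural route is a continuity argument for the two Hardy conditions \eqref{eq:hardy-conditions} of Proposition~\ref{prop:hardy-extension} as $a\to 1^+$. At $a=1$ one has $3B(0)^2-2K^2=-(2a-1)(a-1)=0$, so the negative region collapses: by \eqref{eq:s-star}, $\cosh(2s^*(a))=(2a^2+1)/(3a)\to 1$, i.e.\ $s^*(a)\to 0^+$. Hence I would show that $I_V(a)\to 0$ and $K_*(a)\to 0$ as $a\to 1^+$, while $I_V^+(a)$ stays bounded below by a positive constant and $K_*^+(a)$ stays bounded above. Then both inequalities in \eqref{eq:hardy-conditions} hold on some interval $(1,A_*]$. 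Together with Theorem~\ref{thm:E-closure-a-leq-1} on $(1/2,1]$, this gives (E) on $(1/2,A_*]$.

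The key steps are as follows.

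First, on $[0,s^*]$ I would bound $|V|=2K^2-3B^2\le 2K^2-3B(0)^2=(2a-1)(a-1)$ and use $B\ge B(0)=\sqrt{a-1/2}$. This gives
\[
I_V(a)\le \frac{(2a-1)(a-1)\,s^*(a)}{\sqrt{a-1/2}}.
\]
Similarly, $\tilde I_V(t)\le I_V(a)$, so
\[
K_*(a)\le \frac{I_V(a)\,s^*(a)}{(a-1/2)^{3/2}}.
\]
Both bounds tend to $0$, since $s^*(a)=O(\sqrt{a-1})$; indeed $\cosh(2s^*)-1=(2a^2-3a+1)/(3a)=(2a-1)(a-1)/(3a)$.

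Second, on $[s^*,s_0]$ I would bound the positive quantities. For $I_V^+$, I would use continuity of $s_0(a)$, which follows from real-analyticity or from the monotone crossing of Step~0 in Proposition~\ref{prop:asymptotic-BvsK}, and the fact that $s_0(1)>0$. Then $I_V^+(a)\to\int_0^{s_0(1)}(3B^2-2K^2)/B\,ds>0$ at $a=1$, because there the integrand is positive except at $s=0$. For $K_*^+$, I would use the crude bounds $\tilde I_V^+\le I_V^+$ and $B\ge\sqrt{a-1/2}$, which keep it bounded uniformly for $a\in[1,2]$. The second condition then reads $2I_V(1+K_*^+)/I_V^+<1$, and its left side tends to $0$.

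The main obstacle is the lower bound on $I_V^+(a)$ near $a=1$. This requires either continuity of $s_0(a)$ at $a=1$ or an explicit lower bound $s_0(a)\ge c>0$. I would try to obtain the latter from Lemma~\ref{lem:G-FBC-reformulation} and Remark~\ref{rem:G-extension}, which give $s_0(a)>\tfrac12\mathrm{arccosh}(2a)\ge\tfrac12\mathrm{arccosh}(2)$ for $a<(1+\sqrt2)/2$. Since $s^*(a)\to 0$, this interval $[s^*,\tfrac12\mathrm{arccosh}(2)]$ then carries a definite positive amount of $V/B$, uniformly in $a$ near $1$. This makes the argument fully explicit and yields a concrete $A_*>1$ by tracking the constants.
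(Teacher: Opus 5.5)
Your proposal is correct and follows essentially the paper's own proof. The paper likewise observes that $s^*(1)=0$ forces $I_V(1)=K_*(1)=0$ while $I_V^+(1)>0$, concludes from the continuity in $a$ of $I_V,K_*,I_V^+,K_*^+$ (via real-analyticity of $s_0(a)$) that \eqref{eq:hardy-conditions} holds on some $[1,1+\delta]$, and then combines this with Theorem~\ref{thm:E-closure-a-leq-1}.

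Your explicit rates $I_V=O((a-1)^{3/2})$ and $K_*=O((a-1)^2)$, together with the uniform lower bound $s_0(a)>\tfrac12\mathrm{arccosh}(2)$ from Remark~\ref{rem:G-extension}, are a valid quantitative refinement of the paper's bare continuity appeal. Your crude bound $B\geq\sqrt{a-1/2}$ on $K_*^+$ brings in a factor $s_0-s^*$, so a fully explicit $A_*$ would also require an explicit upper bound on $s_0$. You can avoid this by using $K_*^+\leq I_V^+\int_0^\infty B^{-3}\,dt$ instead.
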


\begin{proof}
By Theorem \ref{thm:E-closure-a-leq-1}, (E) holds for every $a\in(1/2,1]$ unconditionally. To extend beyond $a=1$, we show that the Hardy conditions \eqref{eq:hardy-conditions} are satisfied in a right neighborhood of $a=1$.

At $a=1$, equation \eqref{eq:s-star} gives $\cosh(2s^*(1))=(2+1)/3=1$, hence $s^*(1)=0$. Therefore the interval of integration $[0,s^*(a)]$ degenerates to a point, and
\[
I_V(1)=K_*(1)=0,
\]
whence $2K_*(1)=0<1$ and $2I_V(1)(1+K_*^+(1))/I_V^+(1)=0<1$ (both conditions are satisfied with maximum margin, since $I_V^+(1)>0$ as $3B^2-2K^2>0$ on $(0,s_0(1)]$ by Theorem \ref{thm:E-closure-a-leq-1}).

The functions $a\mapsto s^*(a)$, $a\mapsto s_0(a)$, $a\mapsto B(\cdot;a)$ are real-analytic on $(1/2,\infty)$ (the first from the explicit formula \eqref{eq:s-star}, the second from the implicit function theorem applied to the FBC, the third trivially). Therefore the functions $I_V(a),K_*(a),I_V^+(a),K_*^+(a)$ are continuous (real-analytic for $a>1$, and continuous up to $a=1^+$ with limits $0,0,I_V^+(1)>0,K_*^+(1)\geq 0$, respectively). By continuity, there exists $\delta>0$ such that \eqref{eq:hardy-conditions} holds for every $a\in[1,1+\delta]$. Combined with Theorem \ref{thm:E-closure-a-leq-1}, setting $A_*:=1+\delta$ we obtain the claim.
\end{proof}

\begin{remark}[Quantification of $A_*$]\label{rem:A-star}
The explicit quantification of $A_*$ requires a quantitative analysis of the four quantities $I_V(a)$, $K_*(a)$, $I_V^+(a)$, $K_*^+(a)$. The asymptotic analysis $B^2\sim\Gamma(1/4)^4 a^3/(2\pi^3)$ as $a\to\infty$ and $s^*(a)\sim\tfrac{1}{2}\log(4a/3)$ shows that the second condition of \eqref{eq:hardy-conditions} saturates (tends to a limit $c_\infty>1$) as $a\to\infty$, so that the present Hardy estimate with base $B$ does not extend to all of $(1/2,\infty)$. A closure of (E) for every $a>1/2$ would therefore require an alternative estimate, the subject of future investigation.
\end{remark}

\subsection{Intermediate status of the Medvedev conjecture}

Combining Theorem \ref{thm:E-closure-a-leq-1}, Proposition \ref{prop:hardy-extension}, Theorem \ref{thm:no-kernel-even} (no-kernel mode $0$ even, unconditional), Theorem \ref{thm:G-closure-a-leq-1} (closure of (G) for $a\in(1/2,1]$), and Theorem \ref{thm:reduction}, we obtain the following intermediate status, to be further refined in Section \ref{sec:F-prime-strategy}.

\begin{proposition}[Intermediate reduction of the Medvedev conjecture]\label{prop:intermediate-reduction}
Let $A_*>1$ be the value of Theorem \ref{thm:A-star-existence}. Then:
\begin{enumerate}
\item[(i)] For $a\in(1/2,1]$, the strong Medvedev conjecture \eqref{eq:medvedev-strong} is equivalent to the single spectral inequality (F$'$)$=\{\mu_2(0)>0\}$.
\item[(ii)] For $a\in(1,A_*]$, the strong conjecture holds whenever (F$'$) and (G) both hold; conversely it implies (F$'$) and the weak inequality $\sinh r(a)>2K(a)$.
\item[(iii)] For $a>A_*$, also (E)$=\{\mu_0^{\mathrm{even}}(2)>0\}$ remains formally open.
\end{enumerate}
\end{proposition}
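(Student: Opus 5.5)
The plan is to assemble Proposition~\ref{prop:intermediate-reduction} from Theorem~\ref{thm:reduction}, together with the closures already proved for (E), for (G), and for the no-kernel parts of (F). Each item is treated as a bookkeeping argument over the two conditions (E) and (F) of Theorem~\ref{thm:reduction}.

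For (i), take $a\in(1/2,1]$.

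\emph{Direction $\Rightarrow$.} Theorem~\ref{thm:reduction} shows that the strong conjecture implies (F). In particular it implies $\mu_2(0)>0$, which is (F$'$).

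\emph{Direction $\Leftarrow$.} Assume (F$'$).
\begin{enumerate}
\item[(1)] Condition (E) holds by Theorem~\ref{thm:E-closure-a-leq-1}.
\item[(2)] The odd no-kernel part of (F) follows from Theorem~\ref{thm:G-closure-a-leq-1} and Proposition~\ref{prop:geometric-identity}. These give $\sinh r(a)>2K(a)$, so Theorem~\ref{thm:no-kernel-odd} applies.
\item[(3)] The even no-kernel part needs $r'(a)\neq0$, and I would obtain this from (F$'$) itself, citing Corollary~\ref{cor:Fprime-rprime}. To avoid circularity I would also sketch why it holds. If $r'(a)=0$, then by Lemma~\ref{lem:phi-a-BC} the field $\phi_a$ is a Robin Jacobi field, even, of mode $0$. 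By Sturm–Liouville node counting, $0$ is then an even mode-$0$ eigenvalue. Since $\mu_0(0),\mu_1(0)<0$ (Theorem~\ref{thm:lower-bound-4}), this eigenvalue is $\mu_n(0)$ with $n\ge2$, and by $\mu_2(0)>0$ it cannot be $\mu_2(0)$ either. Hence $\phi_a$ would have to have at least four nodes.
\item[(4)] Excluding step (3)'s conclusion is delicate. The cleanest way is simply to invoke Corollary~\ref{cor:Fprime-rprime} as the paper does, since that result is stated earlier.
\end{enumerate}
With (E) and the full (F) in hand, Theorem~\ref{thm:reduction} gives the strong conjecture.

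For (ii), take $a\in(1,A_*]$.

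\emph{Sufficiency.} Condition (E) holds by Theorem~\ref{thm:A-star-existence}. Condition (G) gives $\sinh r>2K$ by Proposition~\ref{prop:geometric-identity}, and hence the odd no-kernel part via Theorem~\ref{thm:no-kernel-odd}. Condition (F$'$) gives $r'(a)\neq0$, and hence the even no-kernel part via Theorem~\ref{thm:no-kernel-even}. Together these yield (F), and Theorem~\ref{thm:reduction} concludes.

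\emph{Converse.} The strong conjecture gives (F) by Theorem~\ref{thm:reduction}, and so (F$'$). For the weak inequality, suppose $\sinh r(a)=2K(a)$. Then $B(s_0)^2=2K^2$, so by \eqref{eq:u-L01-BC} we have $Ru_*=0$. The field $u_*$ is nonzero, so it would be an odd Robin Jacobi field in mode $0$. This gives $\nul_R\big|_0\ge1$, contradicting $\nul=2$ (recall that mode $1$ already contributes $2$). Hence $\sinh r(a)>2K(a)$.

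For (iii), there is nothing to prove beyond observing that Theorem~\ref{thm:A-star-existence} only covers $a\le A_*$. By Remark~\ref{rem:A-star}, the Hardy method as given does not reach all $a$.

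I expect the main obstacle to be the implication (F$'$)$\Rightarrow r'(a)\neq0$ in (i). I would rely on Corollary~\ref{cor:Fprime-rprime} rather than reprove it.
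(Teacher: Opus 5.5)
Your proposal is correct and follows the paper's proof essentially step for step. Condition (E) comes from Theorem~\ref{thm:E-closure-a-leq-1} (for $a\le 1$) and Theorem~\ref{thm:A-star-existence} (for $a\in(1,A_*]$); the odd mode-$0$ no-kernel part comes from (G) via Theorem~\ref{thm:no-kernel-odd}; the even part comes from (F$'$) via Corollary~\ref{cor:Fprime-rprime} and Theorem~\ref{thm:no-kernel-even}; and the converse goes through Theorem~\ref{thm:reduction} and the boost field $u_*$.

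The difficulty you flag in step (3) is not real. The $\mu_n(0)$ are ordered increasingly, so $\mu_2(0)>0$ already forces $\mu_n(0)>0$ for all $n\ge 2$. Together with $\mu_0(0)\le\mu_1(0)<0$ from Theorem~\ref{thm:lower-bound-4}, the value $0$ is then not a mode-$0$ eigenvalue at all. This gives $r'(a)\neq 0$ immediately, and in fact the whole no-kernel part of (F), with no node count needed.
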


\begin{proof}
For (i), in $a\in(1/2,1]$: (E) is closed by Theorem \ref{thm:E-closure-a-leq-1}; the no-kernel part of (F) is closed by Corollary \ref{cor:F-closure-a-leq-1} (which uses Theorem \ref{thm:G-closure-a-leq-1}), the even no-kernel part being recovered from (F$'$) itself via Corollary \ref{cor:Fprime-rprime}. Only $\mu_2(0)>0$ remains.

For (ii), in $a\in(1,A_*]$: (E) is closed by Proposition \ref{prop:hardy-extension} (combined with Theorem \ref{thm:A-star-existence}); the odd no-kernel part of (F) is closed under (G) (Theorem \ref{thm:no-kernel-odd}
via Proposition \ref{prop:geometric-identity}), the even part being recovered from (F$'$) via Corollary \ref{cor:Fprime-rprime} as in (i). The remaining conditions are $\mu_2(0)>0$ and (G). Conversely, in both (i) and (ii) the strong conjecture \eqref{eq:medvedev-strong}
implies (F$'$): by Theorem \ref{thm:reduction} it implies (F), whose first inequality is precisely $\mu_2(0)>0$. In case (ii) it also implies the weak inequality $\sinh r(a)>2K(a)$: by Proposition \ref{prop:geometric-identity}, $\sinh r(a)\geq 2K(a)$ holds unconditionally, and if equality held then $Ru_*\equiv 0$ on $\partial\Sigma_a$ by
Lemma \ref{lem:u-L01}, so that the boost field $u_*$ would be a nontrivial Robin Jacobi field of odd mode $0$, giving $\nul_R(\Sigma_a)\geq 3$ and contradicting
\eqref{eq:medvedev-strong}.
\end{proof}

In Section \ref{sec:F-prime-strategy} we further reduce (F$'$) to a single geometric inequality on the parametric Jacobi field $\phi_a$, obtaining the final reduction (Theorem \ref{thm:final-reduction}).

\section{Reduction of (F$'$) to positivity of $\phi_a$ via Sturm shooting count}\label{sec:F-prime-strategy}

In this section we present an analytic strategy that reduces condition (F$'$)$=\{\mu_2(0)>0\}$ to a single geometric inequality on the parametric Jacobi field $\phi_a$, and we prove the reduction conditionally to the positivity of $\phi_a$ on $[0,s_0]$.

\subsection{Sturm shooting count theorem for Robin BC}

We premise a result of Sturm--Liouville theory (with mixed BC).

\begin{lemma}[Sturm shooting count with Robin BC]\label{lem:sturm-count}
Let $Lu:=-(p(s)u')'+q(s)u$ on $[0,s_0]$, with $p\in C^1$, $p>0$, $q,w\in C^0$, $w>0$, and consider the eigenvalue problem
\[
Lu=\lambda\,w(s)u,\qquad \text{$u'(0)=\alpha_0 u(0)$ (or $u(0)=0$)},\qquad u'(s_0)=\alpha_1 u(s_0),
\]
with $\alpha_0,\alpha_1\in\R$. Let $\psi_0$ be the solution of $L\psi=0$ satisfying the boundary condition at $s=0$ (unique up to a nonzero multiple), and assume $\psi_0(s_0)\neq 0$. Then the number $N_-(L)$ of strictly negative eigenvalues equals
\[
N_-(L)=n_z(\psi_0)+\delta_\partial,
\]
where $n_z(\psi_0)$ is the number of zeros of $\psi_0$ in the open interval $(0,s_0)$, and $\delta_\partial=1$ if $\psi_0'(s_0)/\psi_0(s_0)-\alpha_1<0$, while $\delta_\partial=0$ otherwise.
\end{lemma}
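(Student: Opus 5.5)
The plan is to prove the count with a Prüfer angle. The count of negative eigenvalues becomes the count of times a monotone angle function passes through the target angles fixed by the Robin condition at $s_0$.

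\textbf{Setup.} For each $\lambda\in\R$, let $u(\cdot;\lambda)$ solve $-(pu')'+qu=\lambda wu$ on $[0,s_0]$ with the boundary condition at $s=0$. Write $u=\varrho\sin\vartheta$ and $pu'=\varrho\cos\vartheta$ with $\varrho>0$. A direct computation gives
\[
\vartheta'=\frac{\cos^2\vartheta}{p}+(\lambda w-q)\sin^2\vartheta .
\]
The initial angle $\vartheta(0)=\vartheta_0\in[0,\pi)$ is independent of $\lambda$: it is $0$ in the Dirichlet case and solves $\cot\vartheta_0=p(0)\alpha_0$ otherwise. The zeros of $u$ are the points where $\vartheta\in\pi\Z$. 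Since $\vartheta'=1/p>0$ there, the angle crosses multiples of $\pi$ only upward and never comes back. Hence, if $u(s_0)\neq0$, the number of zeros in $(0,s_0)$ is the integer $n$ with $\vartheta(s_0)\in(n\pi,(n+1)\pi)$.

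\textbf{Reformulating the eigenvalue condition.} Let $\beta\in(0,\pi)$ be defined by $\cot\beta=p(s_0)\alpha_1$. The condition $u'(s_0)=\alpha_1u(s_0)$ then reads $\vartheta(s_0;\lambda)\in\beta+\pi\Z$. Standard Prüfer theory gives two facts:
\begin{itemize}
\item[(a)] $\lambda\mapsto\vartheta(s_0;\lambda)$ is continuous and strictly increasing. This comes from the comparison theorem applied to the first-order ODE, whose right-hand side is increasing in $\lambda$ with $w>0$.
\item[(b)] $\vartheta(s_0;\lambda)$ tends to a limit in $[0,\beta)$ as $\lambda\to-\infty$, namely to $0^+$.
\end{itemize}
Granting (a) and (b), the eigenvalues are simple and are exactly the values $\lambda_n$ with $\vartheta(s_0;\lambda_n)=\beta+n\pi$, $n\ge0$. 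Therefore
\[
N_-(L)=\#\{n\ge0:\ \beta+n\pi<\vartheta(s_0;0)\}.
\]

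\textbf{Reading off the count from $\psi_0$.} At $\lambda=0$ the solution is $\psi_0$. The hypothesis $\psi_0(s_0)\ne0$ gives $\vartheta(s_0;0)\in(n_z\pi,(n_z+1)\pi)$ with $n_z=n_z(\psi_0)$. Consequently $N_-(L)=n_z+\delta$, where $\delta=1$ exactly when $\vartheta(s_0;0)-n_z\pi>\beta$. The function $\cot$ is strictly decreasing on $(0,\pi)$, so this last condition is equivalent to
\[
\frac{p(s_0)\psi_0'(s_0)}{\psi_0(s_0)}<p(s_0)\alpha_1,
\qquad\text{i.e.}\qquad
\frac{\psi_0'(s_0)}{\psi_0(s_0)}-\alpha_1<0 .
\]
This is precisely $\delta=\delta_\partial$.

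As a sanity check, take $p=1$, $q=0$, a Neumann condition at $0$ and $\alpha_1>0$. Then $\psi_0\equiv1$, $n_z=0$, $\delta_\partial=1$. This is consistent with the form $\int(u')^2-\alpha_1u(s_0)^2$ having exactly one negative direction.

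\textbf{Main obstacle.} The step I expect to be hardest is the limit (b) as $\lambda\to-\infty$, in particular showing the limit lies strictly below $\beta$, so that no target angle is missed at the bottom of the spectrum. The Dirichlet case $\vartheta_0=0$ and the case $\vartheta_0$ near $\pi$ need separate care. My first attempt is the standard argument. For $\lambda$ very negative, $\vartheta'<0$ wherever $\sin^2\vartheta$ is bounded below, so $\vartheta$ is driven into an $\varepsilon$-neighbourhood of $0$ from above (it cannot cross $0$ downward) on any $[\eta,s_0]$. At the endpoint this yields $\vartheta(s_0;\lambda)\in(0,\varepsilon)$, which is eventually below $\beta$ because $\beta>0$.

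As a fallback I would prove the count variationally instead. One uses the quadratic form $Q(u)=\int(pu'^2+qu^2)-p(s_0)\alpha_1u(s_0)^2$ (plus the corresponding boundary term at $0$). The inequality $N_-\ge n_z+\delta_\partial$ would come from building test functions out of the nodal pieces of $\psi_0$. The reverse inequality would come from Sturm comparison.
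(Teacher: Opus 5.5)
Your proposal is correct and follows essentially the same route as the paper. Both arguments use a Prüfer angle with $\lambda$-independent initial angle, strict monotonicity of $\lambda\mapsto\vartheta(s_0;\lambda)$ together with $\vartheta(s_0;\lambda)\to 0^+$ as $\lambda\to-\infty$, a count of the target angles $\beta+n\pi$ lying below $\vartheta(s_0;0)\in(n_z\pi,(n_z+1)\pi)$, and the monotonicity of $\cot$ to convert the comparison with $\beta$ into the sign of $\psi_0'(s_0)/\psi_0(s_0)-\alpha_1$. The differences are cosmetic: the paper quotes both Prüfer facts as classical, and obtains monotonicity from the variational formula for $\partial_\lambda\theta$ rather than from comparison, whereas you sketch the standard trapping argument for the $\lambda\to-\infty$ limit yourself; that sketch is valid for fixed $\vartheta_0<\pi$.
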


\begin{proof}
For $\lambda\in\R$ let $\psi_\lambda$ solve $L\psi=\lambda w\psi$ with the boundary condition at $0$ and a fixed normalization of the Cauchy data, and define the Pr\"ufer angle $\theta(s,\lambda)$ by $\psi_\lambda=R\sin\theta$, $p\,\psi_\lambda'=R\cos\theta$, with initial angle $\theta(0,\lambda)=\theta_0\in[0,\pi)$ independent of $\lambda$ ($\theta_0=\mathrm{arccot}(p(0)\alpha_0)\in(0,\pi)$ in the Robin/Neumann case, $\theta_0=0$ in the Dirichlet case). Then
\[
\partial_s\theta=\frac{\cos^2\theta}{p}+(\lambda w-q)\sin^2\theta,
\]
so $\partial_s\theta=1/p>0$ whenever $\theta\in\pi\Z$: the angle crosses every multiple of $\pi$ transversally and from below, and the zeros of $\psi_\lambda$ in $(0,s_0)$ are in bijection with these crossings. In particular, writing $\theta(s_0,\lambda)=m\pi+\beta$ with $m\in\Z_{\geq 0}$ and $\beta\in(0,\pi)$ (possible whenever $\psi_\lambda(s_0)\neq 0$), one has $n_z(\psi_\lambda)=m$.

Two classical facts (see \cite[Ch. XI]{Hartman}, \cite{Eastham}, and \cite{Zetti2005}): (i) $\lambda\mapsto\theta(s_0,\lambda)$ is continuous and strictly increasing, as follows from the variational formula $\partial_\lambda\theta(s_0,\lambda)=\int_0^{s_0}e^{-\int_t^{s_0}\mathcal{A}(\tau)\,d\tau}\,w\sin^2\theta\,dt>0$ with $\mathcal{A}=\bigl(p^{-1}-(\lambda w-q)\bigr)\sin 2\theta$; (ii) $\theta(s_0,\lambda)\to 0^+$ as $\lambda\to-\infty$.

The boundary condition at $s_0$ reads $\cot\theta(s_0,\lambda)=p(s_0)\alpha_1$, i.e.\ $\theta(s_0,\lambda)\in\theta_1+\pi\Z$ with $\theta_1:=\mathrm{arccot}(p(s_0)\alpha_1)\in(0,\pi)$. By (i)--(ii), the strictly negative eigenvalues are in bijection with the integers $k\geq 0$ such that $\theta_1+k\pi<\theta(s_0,0)$. Writing $\theta(s_0,0)=n_z\pi+\beta$ with $\beta\in(0,\pi)$ (here $\psi_0(s_0)\neq 0$ is used), this count equals $n_z+1$ if $\beta>\theta_1$ and $n_z$ if $\beta\leq\theta_1$. Finally, on $(n_z\pi,(n_z+1)\pi)$ the cotangent decreases strictly from $+\infty$ to $-\infty$, so $\beta>\theta_1\iff\cot\beta<\cot\theta_1\iff p(s_0)\,\psi_0'(s_0)/\psi_0(s_0)<p(s_0)\alpha_1\iff\psi_0'(s_0)/\psi_0(s_0)-\alpha_1<0$, which is exactly $\delta_\partial=1$.
\end{proof}

\subsection{Application to parity sectors of mode $0$}

\begin{theorem}[Reduction of (F$'$) to the positivity of $\phi_a$]\label{thm:F-reduction-phi-positivity}
Let $a\in(1/2,\infty)$ be such that $B(s_0(a))^2>2K(a)^2$ (condition (G)). If
\begin{equation}\label{eq:phi-positivity}
\phi_a(s)>0\quad\text{for every }s\in[0,s_0(a)],
\end{equation}
then $r'(a)>0$ and $\mu_2(0)>0$ strictly, namely (F$'$) is satisfied.
\end{theorem}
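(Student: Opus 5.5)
Write the mode-$0$ radial Robin problem as a Sturm--Liouville problem on $[0,s_0]$ with $p=B$, $q=-BW_0$, $w=B$, and boundary coefficient $\alpha_1=\coth r(a)$. Split it into parity sectors: the even sector has Neumann data $u'(0)=0$ and the odd sector has Dirichlet data $u(0)=0$. The full mode-$0$ spectrum is the union of the two sector spectra. By Theorem \ref{thm:lower-bound-4}, $\Phi^0$ gives a negative direction in the even sector and $\Phi^1$ one in the odd sector. Hence $\mu_2(0)>0$ follows once we show $N_-^{\mathrm{even}}\le 1$, $N_-^{\mathrm{odd}}\le 1$, and that $0$ is not an eigenvalue of either sector. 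I will apply Lemma \ref{lem:sturm-count} to each sector, using an explicit solution of $L\psi=0$ with the correct data at $s=0$.

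\emph{Even sector.} The natural shooting solution is $\psi_0=\phi_a$. It is even by Lemma \ref{lem:phi-a-properties}, so $\phi_a'(0)=0$, and $\Lf\phi_a=0$. Its value $\phi_a(0)=1/(2K)\neq0$ shows it is nontrivial. The hypothesis $\phi_a>0$ on $[0,s_0]$ gives $n_z=0$ and $\phi_a(s_0)\neq0$. The boundary defect in Lemma \ref{lem:phi-a-BC} reads $\phi_a'(s_0)/\phi_a(s_0)-\coth r=-r'(a)\kappa_s/\phi_a(s_0)$. Lemma \ref{lem:sturm-count} then gives $N_-^{\mathrm{even}}=\delta_\partial\in\{0,1\}$. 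Since $\Phi^0$ forces $N_-^{\mathrm{even}}\ge1$, we get $\delta_\partial=1$. So $-r'(a)\kappa_s/\phi_a(s_0)<0$, and because $\kappa_s>0$ and $\phi_a(s_0)>0$, this yields $r'(a)>0$. Now $R\phi_a\neq0$, so Theorem \ref{thm:no-kernel-even} rules out a zero eigenvalue in the even sector. Therefore $\mu_1^{\mathrm{even}}(0)>0$.

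\emph{Odd sector.} The shooting solution is the boost field $u_*=-a\sinh(2s)/B$ of Lemma \ref{lem:u-L01}. It is odd, satisfies $\Lf u_*=0$, and vanishes at $0$. It has no zeros in $(0,s_0)$, since $\sinh(2s)>0$ there, so $n_z=0$ and $u_*(s_0)\neq0$. Its boundary defect is $u_*'(s_0)/u_*(s_0)-\coth r=Ru_*(s_0)/u_*(s_0)$. Here $u_*(s_0)<0$, and $Ru_*(s_0)=(B^2-2K^2)/B^3>0$ by (G). The quotient is therefore negative, so $\delta_\partial=1$ and $N_-^{\mathrm{odd}}=1$. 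By Theorem \ref{thm:no-kernel-odd}, which applies since (G) implies $B^2\neq2K^2$, the odd sector has no zero eigenvalue. Hence $\mu_1^{\mathrm{odd}}(0)>0$.

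Combining the two sectors, mode $0$ has exactly two negative eigenvalues ($\mu_0^{\mathrm{even}}$ and $\mu_0^{\mathrm{odd}}$), and every other eigenvalue is strictly positive. Thus $\mu_2(0)>0$.

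\emph{Main obstacle.} The delicate point is the even sector. One must confirm that the sign convention of Lemma \ref{lem:sturm-count} matches the outward Robin condition at $s_0$ with $\alpha_1=\coth r$. One must also justify reading the sign of $r'(a)$ backwards from the lower bound supplied by $\Phi^0$, rather than assuming it. If the Prüfer bookkeeping made this circular, my fallback would be to compute $\delta_\partial$ directly. For that I would use the Wronskian of $\phi_a$ and $\Phi^0=A\cosh\varphi$. The relevant quantities are the anti-Robin condition of $\Phi^0$, which gives $R\Phi^0=-\Phi^0/(\sinh r\cosh r)<0$, and the variational characterization of the ground state.
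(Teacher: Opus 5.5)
Your proposal is correct and follows the paper's proof essentially step for step. In the even sector you apply the shooting count of Lemma~\ref{lem:sturm-count} with $\psi_0=\phi_a$, and the negative direction $\Phi^0$ of Theorem~\ref{thm:lower-bound-4} forces $\delta_\partial=1$, hence $R\phi_a<0$ and $r'(a)>0$. In the odd sector you shoot with the boost field $u_*$, and (G) gives $\delta_\partial=1$; Theorems~\ref{thm:no-kernel-even} and~\ref{thm:no-kernel-odd} then exclude zero eigenvalues in both sectors.

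The only difference is in the last step: you get $\mu_2(0)>0$ by adding the two sector counts, whereas the paper uses parity interlacing to identify $\mu_2(0)=\mu_1^{\mathrm{even}}(0)$. Your version is slightly more economical, and the fallback Wronskian argument you sketch is not needed.
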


\begin{proof}
We first treat the even radial sector of mode $0$. The SL problem reduced to $[0,s_0]$ is
\[
-(Bu')'-B(|\II|^2-2)u=\mu Bu,\qquad u'(0)=0\ (\alpha_0=0,\text{ Neumann}),\qquad u'(s_0)=\coth r\cdot u(s_0)\ (\alpha_1=\coth r).
\]
The zero-energy solution with $\psi_0(0)=\phi_a(0)=1/(2K)$, $\psi_0'(0)=0$ is precisely $\psi_0=\phi_a$ (Cauchy uniqueness for the Jacobi ODE $L\phi_a=0$, Lemma \ref{lem:phi-a-properties}). By \eqref{eq:phi-positivity}, $n_z(\phi_a)=0$ in $(0,s_0)$, and $\phi_a(s_0)>0$.

We now deduce $r'(a)>0$ from the positivity of $\phi_a$. By Theorem \ref{thm:lower-bound-4}, $\Sf(\Phi^0,\Phi^0)<0$ and $\Phi^0$ belongs to the even radial sector of mode $0$, hence $\mu_0^{\mathrm{even}}(0)<0$. In particular $N_-^{\mathrm{even}}\geq 1$. By Lemma \ref{lem:sturm-count}, $N_-^{\mathrm{even}}=n_z(\phi_a)+\delta_\partial=0+\delta_\partial$, hence $\delta_\partial\geq 1$. Since $\delta_\partial\in\{0,1\}$, necessarily $\delta_\partial=1$, namely
\[
\phi_a'(s_0)/\phi_a(s_0)-\coth r<0\iff R\phi_a=\phi_a'(s_0)-\coth r\,\phi_a(s_0)<0.
\]
By Lemma \ref{lem:phi-a-BC}, $R\phi_a=-r'(a)\kappa_s\big|_\partial$ with $\kappa_s\big|_\partial=K/B(s_0)^2>0$. Hence $-r'(a)\kappa_s<0$, whence $r'(a)>0$. In particular, hypothesis \eqref{eq:phi-positivity} automatically precludes the degeneracy $r'(a)=0$, and consequently $R\phi_a\neq 0$, the condition required by Theorem \ref{thm:no-kernel-even}.

For the count of eigenvalues in the even sector, from $\delta_\partial=1$, $N_-^{\mathrm{even}}=1$. Hence $\mu_0^{\mathrm{even}}(0)<0\leq\mu_1^{\mathrm{even}}(0)$. Combining with Theorem \ref{thm:no-kernel-even} (no-kernel even, valid under $r'(a)\neq 0$ as just proved), $\mu_1^{\mathrm{even}}(0)\neq 0$, hence $\mu_1^{\mathrm{even}}(0)>0$ strictly.

We now turn to the odd radial sector of mode $0$. The SL problem on $(0,s_0]$ is
\[
-(Bu')'-B(|\II|^2-2)u=\mu Bu,\qquad u(0)=0\ (\text{Dirichlet}),\qquad u'(s_0)=\coth r\cdot u(s_0)\ (\alpha_1=\coth r).
\]
The zero-energy odd solution is the explicit boost field $u_*(s)=-a\sinh(2s)/B(s)$ of Lemma \ref{lem:u-L01}. Trivially $u_*(s)\neq 0$ for $s\in(0,s_0]$, since $\sinh(2s)>0$ and $B(s)>0$ in that range. Hence $n_z(u_*)=0$ in $(0,s_0)$. For the boundary, $\psi_0:=-u_*=a\sinh(2s)/B(s)$ is positive on $(0,s_0]$ (standard normalization). We compute
\[
\psi_0'(s_0)/\psi_0(s_0)-\coth r=\frac{2a-\cosh(2s_0)}{B(s_0)^2\sinh(2s_0)},
\]
as in the computation in Lemma \ref{lem:u-L01}. Under (G), $\cosh(2s_0)>2a$, so the numerator is strictly negative, and $\psi_0'/\psi_0-\coth r<0$. By Lemma \ref{lem:sturm-count}, $\delta_\partial=1$, and $N_-^{\mathrm{odd}}=0+1=1$. Hence $\mu_0^{\mathrm{odd}}(0)<0\leq\mu_1^{\mathrm{odd}}(0)$. By Theorem \ref{thm:no-kernel-odd} (no-kernel odd under (G)), $\mu_1^{\mathrm{odd}}(0)\neq 0$, hence $\mu_1^{\mathrm{odd}}(0)>0$ strictly.

We now invoke the standard interleaving property. By classical SL theory on the symmetric interval $[-s_0,s_0]$ with even coefficients and symmetric BCs, the eigenvalues of the even and odd subsectors interlace: $\mu_0^{\mathrm{even}}<\mu_0^{\mathrm{odd}}<\mu_1^{\mathrm{even}}<\mu_1^{\mathrm{odd}}<\cdots$ (the BCs of the two subsectors differ only at $s=0$: Neumann even, Dirichlet odd, and Dirichlet $\geq$ Neumann generates an upward shift of the eigenvalue by one unit). Therefore $\mu_2(0)=\mu_1^{\mathrm{even}}(0)$ (the third overall eigenvalue), and we have proved $\mu_1^{\mathrm{even}}(0)>0$. We conclude $\mu_2(0)>0$, namely (F$'$).
\end{proof}

\begin{remark}[Positivity of $\phi_a$ as the critical condition]\label{rem:phi-positivity-critical}
The key step in Theorem \ref{thm:F-reduction-phi-positivity} is that the hypothesis $\phi_a>0$ on $[0,s_0]$ automatically implies $r'(a)>0$ (and hence $R\phi_a<0\neq 0$), via the Sturm shooting count lemma and the known fact $\mu_0^{\mathrm{even}}(0)<0$. Therefore it is not necessary to assume the sign of $r'(a)$ independently: the geometric condition $\phi_a>0$ is the critical one. Conversely, if $r'(a)\leq 0$ at some $a$, then $R\phi_a\geq 0$, and (under $\phi_a(s_0)>0$, consequence of $\phi_a>0$) $\phi_a'(s_0)/\phi_a(s_0)\geq\coth r$, giving $\delta_\partial=0$ and $N_-^{\mathrm{even}}=0$, in contradiction with $\mu_0^{\mathrm{even}}(0)<0$. Equivalently, the hypothesis $\phi_a>0$ is strictly stronger than $r'(a)>0$, and the possible violation of $\phi_a>0$ (in particular at points $a$ where $r'(a)=0$) would manifest as an interior zero of $\phi_a$.
\end{remark}

\begin{remark}[Analytic status of the condition $\phi_a>0$]\label{rem:phi-positivity-status}
The positivity \eqref{eq:phi-positivity} is a natural geometric property of the Mori family $\{\Sigma_a\}$: it expresses the transversality of the flow $a\mapsto X_a$ with respect to the normal $\nu_a$ at every point of the principal branch. Equivalently, the surfaces of the family are never tangent to each other at any point of the principal branch. Such positivity is consistent with the explicit value at the neck $\phi_a(0)=1/(2K)>0$ (Lemma \ref{lem:phi-a-properties}). An analytic proof may proceed as follows: (i) (integral form) expanding $\phi_a(s)=\langle\partial_a X_a(s,0),\nu_a(s,0)\rangle_L$ explicitly with the Mori parametrization, one obtains $\phi_a(s)$ as a sum of integrals with determinable sign; (ii) (ODE analysis) from $L\phi_a=0$ with given $\phi_a(0)=1/(2K)>0$, $\phi_a'(0)=0$, and analysis of the transition points of $|\II|^2-2$ (the potential), one can trace the behavior of $\phi_a$: decreasing up to the minimum, then increasing up to the boundary; (iii) (comparison with known Jacobi fields) the Wronskian identity $\phi_a u_*'-\phi_a' u_*=-a/(KB)$ links $\phi_a$ to $u_*$ explicitly, providing a constraint that excludes zeros of $\phi_a$ in regions where $u_*$ and its derivative have specific signs. The complete analytic formalization of (i)--(iii) is left as an open problem. We observe, moreover, that under the assumption $\phi_a>0$ combined with $\mu_0^{\mathrm{even}}(0)<0$ (always true by Theorem \ref{thm:lower-bound-4}) and with Lemma \ref{lem:sturm-count}, one deduces automatically $R\phi_a<0$ at the boundary, equivalently $r'(a)>0$; therefore the hypothesis $\phi_a>0$ implicitly provides the strict monotonicity of $r(a)$, which is posed as the open Question 6.3 of \cite{Pigazzini}.
\end{remark}

\subsection{Final status of the Medvedev conjecture}

\begin{theorem}[Final reduction of the Medvedev conjecture]\label{thm:final-reduction}
Let $A_*>1$ be the value of Theorem \ref{thm:A-star-existence}. Then:
\begin{enumerate}
\item[(1)] For $a\in(1/2,1]$, the strong Medvedev conjecture \eqref{eq:medvedev-strong} holds whenever the geometric inequality $\phi_a(s)>0$ on $[0,s_0(a)]$ does --- equivalently, by Theorem \ref{thm:scalar-reduction} (condition (G) strict being closed by Theorem \ref{thm:G-closure-a-leq-1}), whenever the single one-dimensional scalar differential inequality
\begin{equation}\label{eq:monotonicity-sinhr-K}
\frac{d}{da}\!\left[\frac{\sinh r(a)}{K(a)}\right]>0\iff(a^2-1/4)\,r'(a)>a\tanh r(a)\iff y'(a)>0,
\end{equation}
holds, where $y(a):=B(s_0(a))^2/K(a)^2$.
\item[(2)] For $a\in(1,A_*]$, the strong conjecture holds under the conjunction of the two geometric inequalities
\[
\phi_a(s)>0\quad\forall s\in[0,s_0(a)],\quad\text{and}\quad B(s_0(a))^2>2K(a)^2\ \text{(condition (G))}.
\]
Under the second, the first is equivalent, by Theorem \ref{thm:scalar-reduction}, to the differential inequality \eqref{eq:monotonicity-sinhr-K}.
\item[(3)] For $a>A_*$, the strong conjecture holds under the conjunction of
\[
\phi_a(s)>0\quad\forall s\in[0,s_0(a)],\quad B(s_0(a))^2>2K(a)^2\ \text{(condition (G))},\quad\text{and}\quad\mu_0^{\mathrm{even}}(2)>0.
\]
\end{enumerate}
\end{theorem}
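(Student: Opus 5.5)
The proof is an assembly of results already established, organized by the three ranges of $a$. The common backbone is Theorem \ref{thm:reduction}: the strong conjecture at a given $a$ is equivalent to (E) together with (F). So in each regime I would check (E), the inequality $\mu_2(0)>0$, and the two no-kernel parts $\mu_n^{\mathrm{even}}(0)\neq 0$ and $\mu_n^{\mathrm{odd}}(0)\neq 0$.

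For item (1), with $a\in(1/2,1]$:
\begin{itemize}
\item (E) holds unconditionally by Theorem \ref{thm:E-closure-a-leq-1}.
\item (G) holds by Theorem \ref{thm:G-closure-a-leq-1}.
\item Since (G) holds, Theorem \ref{thm:F-reduction-phi-positivity} applies. Under the hypothesis $\phi_a>0$ on $[0,s_0]$ it gives both $r'(a)>0$ and $\mu_2(0)>0$.
\item From $r'(a)>0$, Theorem \ref{thm:no-kernel-even} removes the even mode-$0$ kernel.
\item From (G), which implies $\sinh r>2K$ by Proposition \ref{prop:geometric-identity}, Theorem \ref{thm:no-kernel-odd} removes the odd mode-$0$ kernel.
\end{itemize}
This yields (F), and Theorem \ref{thm:reduction} concludes. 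The "equivalently" clause is then a citation of the later Theorem \ref{thm:scalar-reduction}, which under (G) identifies $\phi_a>0$ with the scalar inequality \eqref{eq:monotonicity-sinhr-K}. Since (G) is closed on $(1/2,1]$, that theorem applies there without extra hypothesis. The equalities inside \eqref{eq:monotonicity-sinhr-K} need a short computation:
\begin{itemize}
\item Differentiate $\sinh r/K$ using $K'=a/K$. This gives $\frac{d}{da}(\sinh r/K)=(K^2r'\cosh r-a\sinh r)/K^3$, whose sign is that of $K^2r'-a\tanh r$.
\item For $y$, use $\sinh^2 r=B^4/(B^2-K^2)$ from Lemma \ref{lem:coth-identity}. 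This gives $\sinh^2r/K^2=y^2/(y-1)$.
\item The map $y\mapsto y^2/(y-1)$ is strictly increasing for $y>2$. Under (G) we have $y>2$, so $y'>0$ iff $(\sinh r/K)'>0$.
\end{itemize}

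For item (2), with $a\in(1,A_*]$, the argument is the same except for two points. (E) now comes from Theorem \ref{thm:A-star-existence}. (G) is no longer available unconditionally, so it is taken as a hypothesis. With that hypothesis, Theorem \ref{thm:F-reduction-phi-positivity}, Theorem \ref{thm:no-kernel-even} and Theorem \ref{thm:no-kernel-odd} apply verbatim, and the equivalence with \eqref{eq:monotonicity-sinhr-K} again follows from Theorem \ref{thm:scalar-reduction}.

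For item (3), with $a>A_*$, (E) is simply added as a hypothesis, and the rest repeats item (2).

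The only nontrivial dependence is the forward reference to Theorem \ref{thm:scalar-reduction}, which is used solely for the restatement in terms of the scalar inequality. The main claims, which rest on $\phi_a>0$, are proved entirely from earlier results. So the main obstacle is bookkeeping: making sure that (G) is in force wherever Theorem \ref{thm:F-reduction-phi-positivity} and Theorem \ref{thm:scalar-reduction} are invoked, and that the $y>2$ monotonicity is applied only under (G).
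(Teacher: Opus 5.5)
Your proposal is correct and follows the paper's proof essentially verbatim. In each range you assemble Theorem \ref{thm:reduction} from four ingredients: (E), supplied by Theorem \ref{thm:E-closure-a-leq-1}, by Theorem \ref{thm:A-star-existence}, or by hypothesis; Theorem \ref{thm:F-reduction-phi-positivity}, giving $\mu_2(0)>0$ and $r'(a)>0$; Theorems \ref{thm:no-kernel-even} and \ref{thm:no-kernel-odd}, removing the mode-$0$ kernel; and (G), either closed by Theorem \ref{thm:G-closure-a-leq-1} or assumed. The only cosmetic difference is that you verify the equivalences in \eqref{eq:monotonicity-sinhr-K} by direct differentiation, using $K'=a/K$ and the monotonicity of $y^2/(y-1)$ for $y>2$. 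The paper instead cites steps (iii)$\Leftrightarrow$(iv)$\Leftrightarrow$(v) of Theorem \ref{thm:scalar-reduction}, which carry out the same computation.
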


\begin{proof}
In each case, the claim follows by combining: Theorem \ref{thm:F-reduction-phi-positivity} ($\phi_a>0$ and (G) imply (F$'$)$=\mu_2(0)>0$, together with $r'(a)>0$), Theorem \ref{thm:no-kernel-even} and Theorem \ref{thm:no-kernel-odd} (no-kernel of mode $0$ even and odd, the former under $r'(a)\neq 0$ --- supplied by Theorem \ref{thm:F-reduction-phi-positivity} --- and the latter under (G)), and Theorem \ref{thm:reduction} ((E)+(F) $\iff$ strong conjecture).

For (1), (E) is closed by Theorem \ref{thm:E-closure-a-leq-1} and (G) by Theorem \ref{thm:G-closure-a-leq-1}; only $\phi_a>0$ remains. The scalar reformulation \eqref{eq:monotonicity-sinhr-K} follows from Theorem \ref{thm:scalar-reduction} (equivalences (i)$\Leftrightarrow$(iii)$\Leftrightarrow$(iv)$\Leftrightarrow$(v)), valid under (G) strict, guaranteed by Theorem \ref{thm:G-closure-a-leq-1}. For (2), (E) is closed by Proposition \ref{prop:hardy-extension} on $(1/2,A_*]$ (combined with Theorem \ref{thm:A-star-existence}); (G) and $\phi_a>0$ remain as hypotheses, but under (G) the second is equivalent to \eqref{eq:monotonicity-sinhr-K} by Theorem \ref{thm:scalar-reduction}. For (3), all three conditions remain.
\end{proof}

\begin{remark}[On the converse implications]\label{rem:converse}
Parts of the converse do hold. The strong conjecture \eqref{eq:medvedev-strong} implies (E) and (F) by Theorem \ref{thm:reduction}; it also implies the \emph{strict} inequality $\sinh r(a)>2K(a)$: if equality held, then by Lemma \ref{lem:u-L01} the boost field $u_*$ would satisfy $Ru_*\equiv 0$ on $\partial\Sigma_a$, hence $u_*$ would be a nontrivial Robin Jacobi field of odd mode $0$, giving $\nul_R(\Sigma_a)\geq 3$ and contradicting \eqref{eq:medvedev-strong}. What we do not claim is the necessity of $\phi_a>0$: the shooting count of Lemma \ref{lem:sturm-count} shows that $\mu_2(0)>0$ forces $N_-^{\mathrm{even}}=1$, but this count is compatible both with $(n_z,\delta_\partial)=(0,1)$ (i.e.\ $\phi_a>0$ with strictly negative boundary defect) and, a priori, with $(n_z,\delta_\partial)=(1,0)$ (one interior zero of $\phi_a$ and $\phi_a(s_0)<0$, equivalently $H'(a)<0$ by Theorem \ref{thm:phi-H-identity}). Excluding the latter configuration remains open; only the sufficiency direction is used in the local closure (Corollary \ref{cor:eureka-local-closure}).
\end{remark}

\section{Reduction of $\phi_a>0$ to a scalar inequality}\label{sec:phi-scalar-reduction}

In this section we show that, under the strict geometric condition (G) $B(s_0(a))^2>2K(a)^2$, the inequality $\phi_a(s)>0$ on $[0,s_0(a)]$ is equivalent to the strict monotonicity in $a$ of the ratio $\sinh r(a)/K(a)$, namely to an explicit one-dimensional scalar differential inequality. The reduction proceeds in three steps: a constant Wronskian identity (Lemma \ref{lem:wronskian}), a Sturm separation argument with the boost field $u_*$ (Lemma \ref{lem:sturm-separation-phi-u}) that reduces $\phi_a>0$ everywhere on the principal branch to $\phi_a(s_0)>0$, and a closed formula for $\phi_a(s_0)$ (Proposition \ref{prop:phi-s0-closed}) obtained by substituting the Robin conditions of $\phi_a$ and $u_*$ into the Wronskian identity.

\subsection{Constant Wronskian and Sturm separation}

\begin{lemma}[Constant Wronskian in self-adjoint form]\label{lem:wronskian}
Let $\phi_a$ be the parametric Jacobi field (Definition \ref{def:phi-a}) and $u_*(s):=-a\sinh(2s)/B(s)$ the axial boost field (Lemma \ref{lem:u-L01}). Both satisfy $L_0 u=0$, where $L_0=-\partial_s(B\,\partial_s\cdot)/B-(|\II|^2-2)\cdot$ is the radial operator of mode $|k|=0$. Then
\begin{equation}\label{eq:wronskian-const}
W(\phi_a,u_*)(s):=B(s)\bigl[\phi_a(s)\,u_*'(s)-\phi_a'(s)\,u_*(s)\bigr]\equiv-\frac{a}{K}\quad\forall s\in[0,s_0(a)].
\end{equation}
\end{lemma}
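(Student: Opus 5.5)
The plan is to use Abel's identity for the self-adjoint radial operator of mode $0$, and then evaluate the resulting constant at the neck $s=0$. There the parity of the two fields makes the computation trivial.

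\textbf{Step 1 (common equation).} By Lemma \ref{lem:phi-a-properties}, $\phi_a$ is of mode $0$ and satisfies $\Lf\phi_a=0$ in $\Sigma_a^\circ$. By Lemma \ref{lem:u-L01}, $u_*$ is of mode $0$ and satisfies $\Lf u_*=0$ there. Restricting $\Lf$ to mode $0$ (as in Lemma \ref{lem:S-rad} with $k=0$) and multiplying by $B>0$, both functions solve the self-adjoint equation
\[
(B u')'+B\,(|\II|^2-2)\,u=0\quad\text{on }(-s_0,s_0),
\]
which is $B\,L_0u=0$ up to sign. Both fields are real-analytic up to the boundary: $u_*$ is explicit, and $\phi_a$ is built from the analytic data $A,B,\varphi$ and their $a$-derivatives. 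Hence the equation holds on the closed interval $[0,s_0]$ by continuity.

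\textbf{Step 2 (Abel).} Differentiating $W=B(\phi_a u_*'-\phi_a' u_*)$ gives
\[
W'=\phi_a\,(Bu_*')'-u_*\,(B\phi_a')' .
\]
The terms $B\phi_a'u_*'$ cancel. Substituting $(Bu')'=-B(|\II|^2-2)u$ for both fields, the two potential terms cancel, so $W'\equiv 0$ and $W$ is constant on $[0,s_0(a)]$.

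\textbf{Step 3 (evaluation at $s=0$).} I take the following values.

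1. $u_*(0)=0$, because $u_*$ is odd.
2. $\phi_a'(0)=0$, because $\phi_a$ is even (Lemma \ref{lem:phi-a-properties}).
3. $\phi_a(0)=1/(2K)$, by the same lemma.
4. $u_*'(0)=-2a/B(0)$, from $u_*'=-a\bigl[2\cosh(2s)/B-\sinh(2s)B'/B^2\bigr]$ at $s=0$, using $B'(0)=0$.

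Therefore
\[
W(0)=B(0)\cdot\frac{1}{2K}\cdot\Bigl(-\frac{2a}{B(0)}\Bigr)=-\frac{a}{K},
\]
which gives \eqref{eq:wronskian-const}.

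The only delicate point is Step 1: one must make sure that $\phi_a$ genuinely solves the radial mode-$0$ equation with the same normalization of the potential, $|\II|^2-2$, as $u_*$. This is exactly the content of $\Lf\phi_a=0$ together with its $\theta$-independence, so I expect no real obstacle. The remaining steps are a direct computation.
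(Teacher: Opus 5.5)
Your proposal is correct and matches the paper's proof: constancy of the weighted Wronskian from the self-adjoint mode-$0$ equation (your Abel form $W'=\phi_a(Bu_*')'-u_*(B\phi_a')'$ is just a tidier version of the paper's computation). The paper then evaluates $W$ at $s=0$ with the same four inputs, $\phi_a(0)=1/(2K)$, $\phi_a'(0)=0$, $u_*(0)=0$ and $u_*'(0)=-2a/B(0)$, to get $-a/K$.
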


\begin{proof}
For two solutions $u,v$ of the Sturm--Liouville equation in self-adjoint form $-(Bu')'=B(|\II|^2-2)u$, the weighted Wronskian $W(u,v)=B(uv'-u'v)$ is constant. Indeed,
\[
\frac{d}{ds}\bigl[B(uv'-u'v)\bigr]=B'(uv'-u'v)+B(uv''-u''v).
\]
From the equation, $Bu''=-B'u'-B(|\II|^2-2)u$, and analogously for $v$. Therefore
\[
B(uv''-u''v)=u\bigl(-B'v'-B(|\II|^2-2)v\bigr)-v\bigl(-B'u'-B(|\II|^2-2)u\bigr)=-B'(uv'-u'v),
\]
whence $dW/ds=B'(uv'-u'v)-B'(uv'-u'v)=0$.

We compute $W(\phi_a,u_*)(0)$. By Lemma \ref{lem:phi-a-properties}, $\phi_a(0)=1/(2K)$ and $\phi_a'(0)=0$. For $u_*(s)=-a\sinh(2s)/B(s)$, $u_*(0)=0$ and
\[
u_*'(0)=\lim_{s\to 0}\frac{u_*(s)}{s}=-\frac{2a}{B(0)},
\]
where $B(0)=\sqrt{a-1/2}$. Therefore
\[
W(\phi_a,u_*)(0)=B(0)\Bigl[\frac{1}{2K}\cdot\Bigl(-\frac{2a}{B(0)}\Bigr)-0\cdot 0\Bigr]=-\frac{a}{K}.\qedhere
\]
\end{proof}

\begin{lemma}[Sturm separation for $\phi_a$ and $u_*$]\label{lem:sturm-separation-phi-u}
For every $a>1/2$, the field $\phi_a$ has at most one zero on the interval $(0,s_0(a)]$.
\end{lemma}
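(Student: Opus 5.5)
The plan is to use the constant Wronskian of Lemma \ref{lem:wronskian} together with the explicit sign of the boost field $u_*$ on $(0,s_0]$. This is the classical Sturm separation argument, but in a form that reads the sign of $\phi_a'$ directly at each zero, so no separate comparison theorem is needed.

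\emph{Step 1 (sign at a zero).} Let $z\in(0,s_0(a)]$ with $\phi_a(z)=0$. Evaluating \eqref{eq:wronskian-const} at $s=z$ kills the term $\phi_a u_*'$, leaving
\[
-B(z)\,\phi_a'(z)\,u_*(z)=-\frac{a}{K}, \qquad\text{that is,}\qquad \phi_a'(z)\,u_*(z)=\frac{a}{K\,B(z)}>0 .
\]
Since $u_*(s)=-a\sinh(2s)/B(s)<0$ for every $s\in(0,s_0]$ (as $B>0$ and $\sinh(2s)>0$), we get $\phi_a'(z)<0$. Hence every zero of $\phi_a$ in $(0,s_0]$ is simple, and $\phi_a$ changes sign there from positive to negative. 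The identity holds up to $s=s_0$ because $\phi_a$ and $u_*$ are real-analytic solutions of $L_0u=0$ on a neighbourhood of $[0,s_0]$, by Lemma \ref{lem:phi-a-properties} and Lemma \ref{lem:u-L01}.

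\emph{Step 2 (no two zeros).} Suppose $z_1<z_2$ are two zeros in $(0,s_0]$. Simple zeros are isolated, so we may take them consecutive. By Step 1, $\phi_a'(z_1)<0$, so $\phi_a<0$ on $(z_1,z_1+\varepsilon)$. Since there is no zero in $(z_1,z_2)$, $\phi_a<0$ on the whole of $(z_1,z_2)$. But then $\phi_a(s)<0=\phi_a(z_2)$ for $s<z_2$ close to $z_2$, which forces $\phi_a'(z_2)\geq 0$. This contradicts Step 1 at $z_2$. Therefore $\phi_a$ has at most one zero on $(0,s_0(a)]$.

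Equivalently, one could invoke Sturm separation directly: $\phi_a$ and $u_*$ are linearly independent because $W\neq 0$, so $u_*$ would have to vanish strictly between $z_1$ and $z_2$, which is impossible since its only zero in $[0,s_0]$ is at $0$.

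\emph{Main obstacle.} There is essentially none, since everything rests on the already established Wronskian value $-a/K$ and the explicit sign of $u_*$. The one point needing care is that the Wronskian identity is applied on the closed interval up to $s_0$, including the case $z_2=s_0$. This is covered by the smoothness of both solutions up to the boundary.
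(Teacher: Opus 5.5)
Your proof is correct and is essentially the paper's argument: both rest on the constant Wronskian $W(\phi_a,u_*)\equiv -a/K$ and the strict negativity of $u_*$ on $(0,s_0]$. The paper cites the classical Sturm separation theorem at the point where you instead unfold it, reading off $\phi_a'(z)<0$ at every zero. As a small bonus, your version shows that every zero is simple with a sign change from positive to negative, which makes step (ii)$\Rightarrow$(i) of Theorem \ref{thm:scalar-reduction} immediate.
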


\begin{proof}
By \eqref{eq:wronskian-const}, $W(\phi_a,u_*)\equiv-a/K\neq 0$, so $\phi_a$ and $u_*$ are linearly independent and constitute a basis of the space of solutions of the ODE $L_0 u=0$. The field $u_*(s)=-a\sinh(2s)/B(s)$ is strictly negative on $(0,s_0]$, since $\sinh(2s)>0$ and $B(s)>0$ for $s\in(0,s_0]$ (as $B(s)^2=a\cosh(2s)-1/2\geq a-1/2>0$). Therefore $u_*$ has exactly one zero on $[0,s_0]$, located at $s=0$.

By the classical Sturm separation theorem (a direct consequence of the Sturm comparison theorem; see, e.g., \cite[Ch. 8, Theorem 1.1]{CoddingtonLevinson} with $g_1=g_2$), between two consecutive zeros of one solution there is exactly one zero of every other linearly independent solution. If, by contradiction, $\phi_a$ had two distinct zeros $s_1<s_2$ in $(0,s_0]$, then $u_*$ would have a zero in $(s_1,s_2)\subset(0,s_0)$, contradicting $u_*<0$ on $(0,s_0]$.
\end{proof}

\begin{corollary}[$\mu_2(0)>0$ implies $r'(a)\neq 0$]\label{cor:Fprime-rprime}
Let $a>1/2$ with $\mu_2(0)>0$. Then $r'(a)\neq 0$. In particular, under (F$'$) the hypothesis of Theorem \ref{thm:no-kernel-even} is satisfied, and $\mu_n^{\mathrm{even}}(0)\neq 0$ for every $n\geq 0$.
\end{corollary}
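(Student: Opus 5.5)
The plan is to argue by contradiction, using only the spectral count in mode $0$ already available from Section \ref{sec:lower-bound}. Suppose $\mu_2(0)>0$ but $r'(a)=0$. By Lemma \ref{lem:phi-a-BC}, the Robin defect of the parametric field is $R\phi_a=-r'(a)\kappa_s|_{\partial\Sigma_a}=0$. By Lemma \ref{lem:phi-a-properties}, $\phi_a$ is real-analytic on $[-s_0,s_0]$, even, of mode $0$, satisfies $\Lf\phi_a=0$, and $\phi_a(0)=1/(2K)\neq 0$. Hence $\phi_a$ is a nontrivial Robin Jacobi field in the even radial sector of mode $0$ (this is exactly the degeneracy noted in Remark \ref{rem:no-kernel-rprime-condition}). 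By the Green identity \eqref{eq:green-defect}, $\Sf_0^{\mathrm{rad}}(\phi_a,v)=0$ for every $v$, so $0$ is an eigenvalue of $\Sf_0^{\mathrm{rad}}$: $0=\mu_n(0)$ for some $n\geq 0$.

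Next I locate which eigenvalue this can be. By Proposition \ref{prop:modal-contributions}(b) (equivalently, the diagonal computation in Theorem \ref{thm:lower-bound-4} restricted to $\Phi^0,\Phi^1$), $\Sf$ is negative definite on a two-dimensional subspace of the mode-$0$ radial space. The Courant--Fischer min-max principle then gives $\mu_0(0)\leq\mu_1(0)<0$. Together with the hypothesis $\mu_2(0)>0$ and the monotonicity $\mu_n(0)\geq\mu_2(0)$ for $n\geq 2$, every eigenvalue of mode $0$ is nonzero. This contradicts $0=\mu_n(0)$, so $r'(a)\neq 0$.

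The ``in particular'' clause then follows immediately. With $r'(a)\neq 0$, the hypothesis of Theorem \ref{thm:no-kernel-even} holds, which yields $\mu_n^{\mathrm{even}}(0)\neq 0$ for every $n$.

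The only point needing a little care, and the main (mild) obstacle, is that $\phi_a$ really lies in the form domain and represents a genuine eigenfunction with eigenvalue $0$ rather than merely a formal solution. This is handled by its smoothness up to the boundary (so $\phi_a\in H^1$ with $\Lf\phi_a\in L^2$), which makes Remark \ref{rem:green-defect} applicable with both the interior and boundary terms vanishing. No parity bookkeeping or interlacing is needed, since the argument works with the full mode-$0$ spectrum.
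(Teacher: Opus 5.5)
Your argument is correct, and it is shorter than the paper's. Both proofs begin the same way: if $r'(a)=0$, Lemma \ref{lem:phi-a-BC} makes $\phi_a$ a nontrivial even Robin Jacobi field of mode $0$, so $0$ is a mode-$0$ eigenvalue.

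The paper then works out which eigenvalue this is. It uses the Wronskian/Sturm-separation Lemma \ref{lem:sturm-separation-phi-u} to show that $\phi_a$ has at most one zero in $(0,s_0)$. By the oscillation theorem this gives $0=\mu_n^{\mathrm{even}}(0)$ with $n\le 1$. It rules out $n=0$ via $\Phi^0$, and the parity interleaving then gives $0=\mu_1^{\mathrm{even}}(0)=\mu_2(0)$.

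You instead use the second test function $\Phi^1$ to obtain $\mu_1(0)<0$ by min-max. After that, a zero eigenvalue is impossible whatever its index, since $\mu_0(0)\le\mu_1(0)<0<\mu_2(0)\le\mu_n(0)$ for $n\ge 2$.

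Your route needs no nodal count of $\phi_a$, no Wronskian with $u_*$, and no even/odd interlacing. It also proves more: under $\mu_2(0)>0$ the entire mode-$0$ spectrum avoids $0$ in both parity sectors. So the no-kernel part of (F) follows from (F$'$) directly, without Theorems \ref{thm:no-kernel-even} or \ref{thm:no-kernel-odd}.

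The paper's route gives sharper structural information in exchange. It shows that $r'(a)=0$ forces exactly $\mu_2(0)=\mu_1^{\mathrm{even}}(0)=0$, so the degeneracy sits at the third eigenvalue and in the even sector. Your argument yields only $\mu_2(0)\le 0$, which is all the corollary needs.
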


\begin{proof}
Suppose $r'(a)=0$. By Lemma \ref{lem:phi-a-BC}, $R\phi_a\equiv 0$ on $\partial\Sigma_a$, so $\phi_a$ is a nontrivial Robin Jacobi field of mode $0$, radially even, with $\phi_a(0)=1/(2K)\neq 0$ (Lemma \ref{lem:phi-a-properties}). Hence $0$ is an eigenvalue of the even radial sector of mode $0$, and every associated eigenfunction is a multiple of $\phi_a$: an even solution of $L_0u=0$ has $u'(0)=0$, and the Cauchy data $(u(0),0)$ determine it up to a scalar. By Lemma \ref{lem:sturm-separation-phi-u} --- whose proof relies only on Lemma \ref{lem:phi-a-properties}, Lemma \ref{lem:u-L01} and Lemma \ref{lem:wronskian}, so that no circularity arises --- $\phi_a$ has at most one zero in $(0,s_0)$. Since the eigenfunction of $\mu_n^{\mathrm{even}}(0)$ has exactly $n$ zeros in $(0,s_0)$, it follows that $0=\mu_n^{\mathrm{even}}(0)$ with $n\leq 1$; and since $\mu_0^{\mathrm{even}}(0)<0$ (Theorem \ref{thm:lower-bound-4}), necessarily $0=\mu_1^{\mathrm{even}}(0)=\mu_2(0)$, by the parity interleaving recalled in the proof of Theorem \ref{thm:F-reduction-phi-positivity}. This contradicts $\mu_2(0)>0$.
\end{proof}

\subsection{Closed formula for $\phi_a(s_0)$}

\begin{proposition}[Closed identity for $\phi_a(s_0)$]\label{prop:phi-s0-closed}
For every $a>1/2$ with $B(s_0(a))^2\neq 2K(a)^2$, the identity
\begin{equation}\label{eq:phi-s0-closed}
\phi_a(s_0)=\frac{a\bigl[K(a)^2\,r'(a)\,\sinh(2s_0(a))-B(s_0(a))^2\bigr]}{K(a)\bigl[B(s_0(a))^2-2K(a)^2\bigr]}
\end{equation}
holds. Equivalently, using the identity $\coth r(a)=a\sinh(2s_0(a))/B(s_0(a))^2$ (Lemma \ref{lem:coth-identity}),
\begin{equation}\label{eq:phi-s0-coth-form}
\phi_a(s_0)=\frac{B(s_0)^2\bigl[K(a)^2\,r'(a)\,\coth r(a)-a\bigr]}{K(a)\bigl[B(s_0(a))^2-2K(a)^2\bigr]}.
\end{equation}
\end{proposition}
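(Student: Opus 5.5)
The plan is to evaluate the constant Wronskian identity \eqref{eq:wronskian-const} of Lemma \ref{lem:wronskian} at $s=s_0$. There we replace the derivatives $\phi_a'(s_0)$ and $u_*'(s_0)$ by their Robin defects, known from Lemma \ref{lem:phi-a-BC} and Lemma \ref{lem:u-L01}, and then solve the resulting linear relation for $\phi_a(s_0)$.

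\textbf{Step 1: rewrite the Wronskian through the Robin defects.} Write $\phi_a'(s_0)=\coth r\,\phi_a(s_0)+R\phi_a$ and $u_*'(s_0)=\coth r\,u_*(s_0)+Ru_*(s_0)$. In the combination $\phi_a u_*'-\phi_a' u_*$ the two $\coth r$ terms cancel, so
\[
B(s_0)\bigl[\phi_a(s_0)\,Ru_*(s_0)-R\phi_a\,u_*(s_0)\bigr]=-\frac{a}{K}.
\]
Here $\partial_\eta=\partial_s$ at $s=s_0$.

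\textbf{Step 2: insert the explicit quantities.} We use
\[
R\phi_a=-r'(a)\frac{K}{B(s_0)^2},\qquad Ru_*(s_0)=\frac{B(s_0)^2-2K^2}{B(s_0)^3},\qquad u_*(s_0)=-\frac{a\sinh(2s_0)}{B(s_0)}.
\]
Substituting, the relation becomes
\[
\phi_a(s_0)\,\frac{B^2-2K^2}{B^2}-\frac{r'(a)\,K\,a\sinh(2s_0)}{B^2}=-\frac{a}{K}.
\]
Because $B^2\neq 2K^2$ by hypothesis, we can divide by the coefficient of $\phi_a(s_0)$. Multiplying through by $B^2/(B^2-2K^2)$ gives
\[
\phi_a(s_0)=\frac{a\bigl[K^2 r'(a)\sinh(2s_0)-B^2\bigr]}{K(B^2-2K^2)},
\]
which is \eqref{eq:phi-s0-closed}. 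The form \eqref{eq:phi-s0-coth-form} then follows by substituting $a\sinh(2s_0)=B^2\coth r$ from Lemma \ref{lem:coth-identity}.

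\textbf{Main obstacle.} The algebra is routine; the difficulty is making sure the signs are consistent. The value $\kappa_s=+K/B(s_0)^2$ depends on the orientation of $\nu$ fixed in Lemma \ref{lem:phi-a-BC}. The boost field $u_*=\langle L_{01},\nu\rangle$ must be computed with that same $\nu$, so that both defects and the value $W(0)=-a/K$ refer to one orientation. There is also the sign of $\phi_a(0)=1/(2K)$ from Lemma \ref{lem:phi-a-properties}. I would recheck all of these at $\theta=0$ before trusting the final formula.

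Finally, all of these boundary identities hold at every $a$ where $r$ is differentiable. By \cite[Thm. 1.2]{Pigazzini}, $r$ is real-analytic in $a$, so $r'(a)$ is well-defined for every $a>1/2$.
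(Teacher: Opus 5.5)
Your proposal is correct and follows the paper's proof essentially step for step. You evaluate the constant Wronskian of Lemma~\ref{lem:wronskian} at $s_0$, replace $\phi_a'(s_0)$ and $u_*'(s_0)$ by their Robin defects so that the $\coth r$ terms cancel, insert the defects from Lemmas~\ref{lem:phi-a-BC} and~\ref{lem:u-L01} together with $u_*(s_0)$, solve for $\phi_a(s_0)$ using $B^2\neq 2K^2$, and obtain the $\coth$ form from $a\sinh(2s_0)=B^2\coth r$ — exactly as the paper does.
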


\begin{proof}
We evaluate the Wronskian identity \eqref{eq:wronskian-const} at $s=s_0$:
\begin{equation}\label{eq:W-at-s0}
\phi_a(s_0)\,u_*'(s_0)-\phi_a'(s_0)\,u_*(s_0)=-\frac{a}{K\,B(s_0)}.
\end{equation}
We introduce the Robin conditions. By Lemma \ref{lem:u-L01}, $Ru_*(s_0):=u_*'(s_0)-\coth r(a)\cdot u_*(s_0)=(B^2-2K^2)/B^3$. By Lemma \ref{lem:phi-a-BC}, $R\phi_a(s_0):=\phi_a'(s_0)-\coth r(a)\cdot\phi_a(s_0)=-r'(a)\kappa_s\big|_\partial=-r'(a)\cdot K/B^2$. Substituting $u_*'(s_0)=Ru_*+\coth r\cdot u_*(s_0)$ and $\phi_a'(s_0)=R\phi_a+\coth r\cdot\phi_a(s_0)$ in \eqref{eq:W-at-s0},
\[
\phi_a(s_0)\bigl[Ru_*+\coth r\cdot u_*(s_0)\bigr]-\bigl[R\phi_a+\coth r\cdot\phi_a(s_0)\bigr]u_*(s_0)=-\frac{a}{KB}.
\]
The terms in $\coth r$ cancel (they amount to $\coth r\cdot\phi_a(s_0)u_*(s_0)-\coth r\cdot\phi_a(s_0)u_*(s_0)=0$), and we obtain
\begin{equation}\label{eq:Robin-wronskian-id}
\phi_a(s_0)\,Ru_*(s_0)-R\phi_a(s_0)\,u_*(s_0)=-\frac{a}{K\,B(s_0)}.
\end{equation}
Substituting $Ru_*=(B^2-2K^2)/B^3$, $R\phi_a=-r'(a)K/B^2$, and $u_*(s_0)=-a\sinh(2s_0)/B$,
\[
\phi_a(s_0)\cdot\frac{B^2-2K^2}{B^3}+\frac{r'(a)K}{B^2}\cdot\Bigl(-\frac{a\sinh(2s_0)}{B}\Bigr)=-\frac{a}{KB}.
\]
Multiplying both sides by $B^3$,
\[
\phi_a(s_0)(B^2-2K^2)-r'(a)\,aK\sinh(2s_0)=-\frac{aB^2}{K}.
\]
Isolating $\phi_a(s_0)$ (under the assumption $B^2\neq 2K^2$),
\[
\phi_a(s_0)=\frac{-aB^2/K+r'(a)\,aK\sinh(2s_0)}{B^2-2K^2}=\frac{a\bigl[K^2 r'(a)\sinh(2s_0)-B^2\bigr]}{K(B^2-2K^2)},
\]
which is \eqref{eq:phi-s0-closed}. To obtain \eqref{eq:phi-s0-coth-form}, we use $\coth r=a\sinh(2s_0)/B^2$, namely $a\sinh(2s_0)=B^2\coth r$; substituting $\sinh(2s_0)=B^2\coth r/a$,
\[
K^2 r'(a)\sinh(2s_0)-B^2=K^2 r'(a)\cdot\frac{B^2\coth r}{a}-B^2=\frac{B^2}{a}\bigl[K^2 r'(a)\coth r-a\bigr],
\]
whence $\phi_a(s_0)=B^2[K^2 r'(a)\coth r-a]/[K(B^2-2K^2)]$.
\end{proof}

\subsection{The scalar equivalence}

\begin{theorem}[Reduction of $\phi_a>0$ to a scalar inequality]\label{thm:scalar-reduction}
Let $a>1/2$ and assume the strict condition (G) $B(s_0(a))^2>2K(a)^2$ (equivalently $\cosh(2s_0(a))>2a$, i.e.\ $y(a)>2$; by Proposition \ref{prop:geometric-identity} this implies, and is strictly stronger than, $\sinh r(a)>2K(a)$). The following are equivalent:
\begin{enumerate}
\item[(i)] $\phi_a(s)>0$ for every $s\in[0,s_0(a)]$;
\item[(ii)] $\phi_a(s_0(a))>0$;
\item[(iii)] $K(a)^2\,r'(a)\,\coth r(a)>a$, namely $(a^2-1/4)\,r'(a)>a\tanh r(a)$;
\item[(iv)] $\frac{d}{da}\!\left[\frac{\sinh r(a)}{K(a)}\right]>0$;
\item[(v)] $y'(a)>0$, where $y(a):=B(s_0(a))^2/K(a)^2$.
\end{enumerate}
\end{theorem}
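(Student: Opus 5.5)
The plan is to prove the equivalences in the chain (i)$\Leftrightarrow$(ii)$\Leftrightarrow$(iii)$\Leftrightarrow$(iv)$\Leftrightarrow$(v). Each link uses one earlier result: Lemma \ref{lem:sturm-separation-phi-u} for the first, Proposition \ref{prop:phi-s0-closed} for the second, and elementary calculus on the identity $\sinh^2 r=B(s_0)^4/(B(s_0)^2-K^2)$ from the proof of Lemma \ref{lem:coth-identity} for the last two.

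\emph{(i)$\Leftrightarrow$(ii).} The implication (i)$\Rightarrow$(ii) is trivial. For the converse, assume $\phi_a(s_0)>0$ and suppose $\phi_a$ vanishes somewhere in $(0,s_0)$. Every zero of $\phi_a$ is simple: a double zero would give zero Cauchy data, and Cauchy uniqueness for $L_0\phi_a=0$ would force $\phi_a\equiv0$, contradicting $\phi_a(0)=1/(2K)$ (Lemma \ref{lem:phi-a-properties}). So $\phi_a$ changes sign at each zero. By Lemma \ref{lem:sturm-separation-phi-u}, $\phi_a$ has at most one zero in $(0,s_0]$, so it has exactly one, and $\phi_a$ is negative on the rest of the interval up to $s_0$. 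This gives $\phi_a(s_0)<0$, a contradiction. Together with $\phi_a(0)>0$, this yields (i). Justifying simplicity of zeros and applying the separation lemma correctly is the one place where care is needed; I expect it to be routine.

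\emph{(ii)$\Leftrightarrow$(iii).} I use formula \eqref{eq:phi-s0-coth-form}. Under (G) the denominator $K(B(s_0)^2-2K^2)$ is strictly positive, and $B(s_0)^2>0$. Hence $\operatorname{sign}\phi_a(s_0)=\operatorname{sign}\bigl(K^2r'(a)\coth r(a)-a\bigr)$. Multiplying by $\tanh r>0$ and using $K^2=a^2-1/4$ gives the second form of (iii).

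\emph{(iii)$\Leftrightarrow$(iv).} Since $K'(a)=a/K$,
\[
\frac{d}{da}\Bigl[\frac{\sinh r}{K}\Bigr]=\frac{K\cosh r\,r'-\sinh r\,a/K}{K^2}.
\]
This is positive if and only if $K^2\cosh r\,r'>a\sinh r$, which is (iii) after dividing by $\sinh r>0$. Differentiability of $r(a)$ comes from the real-analyticity in \cite[Thm. 1.2]{Pigazzini}.

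\emph{(iv)$\Leftrightarrow$(v).} Write $B=B(s_0(a))$ and $y=B^2/K^2$. The identity $\sinh^2r=B^4/(B^2-K^2)$ becomes
\[
\Bigl(\frac{\sinh r}{K}\Bigr)^2=F(y),\qquad F(y):=\frac{y^2}{y-1}.
\]
The map $a\mapsto y(a)$ is differentiable because $s_0(a)$ is real-analytic by the implicit function theorem applied to the FBC (as used in Theorem \ref{thm:A-star-existence}). We have $F'(y)=y(y-2)/(y-1)^2$, which is strictly positive for $y>2$, i.e.\ under (G). Since $\sinh r/K>0$, its derivative has the same sign as the derivative of its square, namely $F'(y)\,y'(a)$. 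Hence (iv)$\Leftrightarrow y'(a)>0$.
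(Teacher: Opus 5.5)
Your proposal is correct and follows the paper's proof link by link: Sturm separation with $\phi_a(0)=1/(2K)$ for (i)$\Leftrightarrow$(ii), the sign of formula \eqref{eq:phi-s0-coth-form} under (G) for (ii)$\Leftrightarrow$(iii), differentiation of $\sinh r/K$ for (iii)$\Leftrightarrow$(iv), and monotonicity of $y\mapsto y^2/(y-1)$ on $y>2$ for (iv)$\Leftrightarrow$(v), where the paper uses its square root $y/\sqrt{y-1}$. Your remark that zeros of $\phi_a$ are simple, by Cauchy uniqueness, slightly tightens the paper's ``crossing'' wording in (ii)$\Rightarrow$(i), which implicitly assumes that $\phi_a$ changes sign at each zero.
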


\begin{proof}
For (i)$\Rightarrow$(ii), the implication is trivial.

For (ii)$\Rightarrow$(i), by Lemma \ref{lem:phi-a-properties}, $\phi_a(0)=1/(2K)>0$. By Lemma \ref{lem:sturm-separation-phi-u}, $\phi_a$ has at most one zero in $(0,s_0]$. If, by contradiction, $\phi_a$ had a zero $s_1\in(0,s_0)$, with $\phi_a$ continuous, $\phi_a(0)>0$, and $\phi_a(s_0)>0$ by hypothesis, then $\phi_a$ would have to cross zero at least twice (entering and leaving the negative half-plane), contradicting Lemma \ref{lem:sturm-separation-phi-u}. Therefore $\phi_a>0$ everywhere on $[0,s_0]$. (If $\phi_a$ had exactly one zero $s_1\in(0,s_0)$, then $\phi_a$ would change sign at $s_1$; assuming $\phi_a(0)>0$, we would have $\phi_a<0$ on $(s_1,s_0]$, contradicting $\phi_a(s_0)>0$.)

For (ii)$\iff$(iii), by Proposition \ref{prop:phi-s0-closed}, formula \eqref{eq:phi-s0-coth-form}, under (G) (positive denominator) the sign of $\phi_a(s_0)$ coincides with that of $K^2 r'(a)\coth r-a$. The second form of (iii) is obtained by multiplying by $\tanh r$.

For (iii)$\iff$(iv), set $M(a):=\log[\sinh r(a)/K(a)]=\log\sinh r(a)-\tfrac{1}{2}\log(a^2-1/4)$. Differentiating with respect to $a$ ($r(a)$ is a real-analytic function of $a$ by \cite[Thm. 1.2]{Pigazzini}),
\[
M'(a)=r'(a)\coth r(a)-\frac{a}{a^2-1/4}=r'(a)\coth r-\frac{a}{K^2}.
\]
Multiplying by $K^2>0$,
\[
K^2 M'(a)=K^2 r'(a)\coth r-a.
\]
Hence $M'(a)>0\iff K^2 r'(a)\coth r>a$. Moreover $(\sinh r/K)'(a)>0\iff M'(a)>0$ (since $\log$ is strictly increasing for positive values).

For (iv)$\iff$(v), from the geometric identity $\sinh^2 r=B^4/(B^2-K^2)$ (Proposition \ref{prop:geometric-identity}, evaluated at $s=s_0$), set $y:=B(s_0)^2/K^2$. Then
\begin{equation}\label{eq:sinh-r-K-y}
\Bigl(\frac{\sinh r}{K}\Bigr)^2=\frac{B^4}{K^2(B^2-K^2)}=\frac{(K^2 y)^2}{K^2(K^2 y-K^2)}=\frac{y^2}{y-1},
\end{equation}
namely $\sinh r/K=y/\sqrt{y-1}$ (for $y>1$, guaranteed by $B^2>K^2$, which is implicit in (G) strict). The function $h:(1,\infty)\to(0,\infty)$ defined by $h(y):=y/\sqrt{y-1}$ has derivative
\[
h'(y)=\frac{\sqrt{y-1}-y/(2\sqrt{y-1})}{y-1}=\frac{2(y-1)-y}{2(y-1)^{3/2}}=\frac{y-2}{2(y-1)^{3/2}},
\]
strictly positive for $y>2$. Condition (G) strict is equivalent to $y>2$. Therefore, under (G), $\sinh r/K$ is a strictly increasing function of $y$, hence $(\sinh r/K)'(a)>0\iff y'(a)>0$.
\end{proof}

\subsection{Closed identity $\phi_a(s_0)=c(a)\cdot H'(a)$}

Combining the closed formula for $\phi_a(s_0)$ (Proposition \ref{prop:phi-s0-closed}) with the computation of $H'(a)$ obtained at step (iii)$\iff$(iv) of Theorem \ref{thm:scalar-reduction}, we obtain an explicit identity that algebraically links $\phi_a(s_0)$ and $H'(a)$, where $H(a):=\sinh r(a)/K(a)$.

\begin{theorem}[Closed identity $\phi_a(s_0)\leftrightarrow H'(a)$]\label{thm:phi-H-identity}
For every $a>1/2$ with $B(s_0(a))^2\neq 2K(a)^2$, the closed identity
\begin{equation}\label{eq:phi-H-identity}
\phi_a(s_0(a))=\frac{B(s_0(a))^2\cdot K(a)^2}{\bigl(B(s_0(a))^2-2K(a)^2\bigr)\cdot\sinh r(a)}\cdot H'(a),\qquad H(a):=\frac{\sinh r(a)}{K(a)},
\end{equation}
holds. In particular, under the strict condition (G) $B(s_0)^2>2K^2$, the multiplicative factor in front of $H'(a)$ is strictly positive, and hence $\phi_a(s_0)$ and $H'(a)$ have the same sign for every $a>1/2$ at which (G) holds strictly.
\end{theorem}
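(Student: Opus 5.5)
The identity follows by combining the closed formula \eqref{eq:phi-s0-coth-form} of Proposition \ref{prop:phi-s0-closed} with the logarithmic-derivative computation of $H$ from the step (iii)$\iff$(iv) of Theorem \ref{thm:scalar-reduction}. Its only hypothesis, $B(s_0)^2\neq 2K^2$, is exactly the hypothesis of Proposition \ref{prop:phi-s0-closed}. So no new analysis is needed, only an algebraic matching of the two expressions.

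First I differentiate $H(a)=\sinh r(a)/K(a)$ directly. This uses the real-analyticity of $r(a)$ from \cite[Thm. 1.2]{Pigazzini} and $K'(a)=a/K$:
\[
H'(a)=\frac{\cosh r\,r'(a)}{K}-\frac{a\sinh r}{K^3}=\frac{\sinh r}{K^3}\bigl[K^2 r'(a)\coth r-a\bigr].
\]
Equivalently, $K^2M'(a)=K^2r'\coth r-a$ with $M=\log H$, and $H'=HM'$. This gives
\[
K^2 r'(a)\coth r(a)-a=\frac{K^3}{\sinh r(a)}\,H'(a).
\]

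Next I substitute this bracket into \eqref{eq:phi-s0-coth-form}:
\[
\phi_a(s_0)=\frac{B(s_0)^2}{K(B(s_0)^2-2K^2)}\cdot\frac{K^3}{\sinh r}\,H'(a)=\frac{B(s_0)^2K^2}{(B(s_0)^2-2K^2)\sinh r}\,H'(a),
\]
which is \eqref{eq:phi-H-identity}.

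For the sign statement, observe that $B(s_0)^2>0$, $K^2>0$, and $\sinh r(a)>0$ because $r(a)>0$. Under (G) we also have $B(s_0)^2-2K^2>0$. Hence the prefactor is strictly positive, and $\phi_a(s_0)$ and $H'(a)$ have the same sign.

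There is no real obstacle here. The only points needing care are tracking the factor $K^3$ versus $K$ correctly and ensuring the derivative of $K$ enters with the right sign. I would double-check the result against the (ii)$\iff$(iv) equivalence already established in Theorem \ref{thm:scalar-reduction}.
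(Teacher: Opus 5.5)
Your proposal is correct and matches the paper's proof essentially verbatim. The paper likewise substitutes the logarithmic-derivative identity $K^2H'/H=K^2r'\coth r-a$ into the $\coth r$ form \eqref{eq:phi-s0-coth-form} of Proposition~\ref{prop:phi-s0-closed}; this is your $K^3H'/\sinh r$, since $1/H=K/\sinh r$. It then reads off the sign of the prefactor under (G), exactly as you do.
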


\begin{proof}
We start from the $\coth r$ form of Proposition \ref{prop:phi-s0-closed}, equation \eqref{eq:phi-s0-coth-form}:
\[
\phi_a(s_0)=\frac{B(s_0)^2\bigl[K^2 r'(a)\coth r(a)-a\bigr]}{K(a)\bigl[B(s_0)^2-2K(a)^2\bigr]}.
\]
From the proof of (iii)$\iff$(iv) of Theorem \ref{thm:scalar-reduction}, the pointwise differential identity
\[
\frac{d}{da}\log H(a)=\frac{d}{da}\log\sinh r(a)-\frac{d}{da}\log K(a)=r'(a)\coth r(a)-\frac{a}{K(a)^2}
\]
holds. Multiplying by $K(a)^2$,
\begin{equation}\label{eq:K2-Hprime-over-H}
K(a)^2\cdot\frac{H'(a)}{H(a)}=K(a)^2 r'(a)\coth r(a)-a.
\end{equation}
Substituting \eqref{eq:K2-Hprime-over-H} in the formula for $\phi_a(s_0)$,
\[
\phi_a(s_0)=\frac{B(s_0)^2\cdot K(a)^2\cdot H'(a)/H(a)}{K(a)\bigl[B(s_0)^2-2K(a)^2\bigr]}=\frac{B(s_0)^2 K(a) H'(a)}{H(a)\bigl[B(s_0)^2-2K(a)^2\bigr]}.
\]
Using $1/H(a)=K(a)/\sinh r(a)$,
\[
\phi_a(s_0)=\frac{B(s_0)^2\,K(a)^2}{\bigl[B(s_0)^2-2K(a)^2\bigr]\sinh r(a)}\cdot H'(a),
\]
which is \eqref{eq:phi-H-identity}.

Under (G) strict, $B(s_0)^2-2K(a)^2>0$; moreover $B(s_0)^2>0$, $K(a)^2>0$, $\sinh r(a)>0$ (since $r(a)>0$ for $a>1/2$). Therefore the factor $B^2 K^2/[(B^2-2K^2)\sinh r]$ is strictly positive, and $\mathrm{sgn}\,\phi_a(s_0)=\mathrm{sgn}\,H'(a)$.
\end{proof}

\begin{remark}[Significance of identity \eqref{eq:phi-H-identity}]\label{rem:identity-significance}
Identity \eqref{eq:phi-H-identity} is the most condensed form of the equivalence (ii)$\iff$(iv) of Theorem \ref{thm:scalar-reduction}: instead of a chain of implications through steps (iii) and (v), one has a pointwise closed algebraic identity valid for every $a>1/2$. Moreover: (a) the identity is an explicit manifestation of Green/Robin duality, in that the Robin defects of $\phi_a$ (linked to $r'$) and of $u_*$ (linked to $B^2-2K^2$) combine in the Wronskian identity at $s_0$ to produce $\phi_a(s_0)$ in terms of the logarithmic derivative $H'/H$; (b) the verification of the strong Medvedev conjecture for $a\in(1/2,1]$ is now reduced, in a purely algebraic way, to $H'(a)>0$, namely $\sinh r/K$ strictly increasing; (c) the identity provides a double strategy: proving $\phi_a(s_0)>0$ via direct geometric techniques (e.g.\ transversality arguments on the Mori family) translates immediately the positivity into $H'(a)>0$, and conversely.
\end{remark}

\subsection{Closed form of the asymptotic coefficient of $H'$ as $a\to(1/2)^+$ and analytic local closure}\label{subsec:eureka-closed-form}

In this subsection we prove that the asymptotic coefficient of $H'(a)$ as $a\to(1/2)^+$ admits an explicit closed form in terms of $\sigma_*$, and that this coefficient is strictly positive. This analytically closes the inequality $H'(a)>0$, equivalently $\phi_a>0$ on $[0,s_0(a)]$, and hence the strong Medvedev conjecture, on a right neighborhood of $a=1/2$.

\begin{lemma}[Real-analytic dependence on the degenerate parameter]\label{lem:H-analytic-rho}
Let $\rho:=\sqrt{a-1/2}$. The map $a\mapsto H(a)=\sinh r(a)/K(a)$ extends to a real-analytic function of $\rho^2=a-1/2$ in a right neighborhood of $\rho=0$. In particular, $H(a)$ admits a convergent Taylor series in $a-1/2$ in some interval $(1/2,1/2+\eta)$, and consequently $H'(a)$ admits a Taylor expansion with the same radius of convergence, term-by-term differentiable; in particular $\lim_{a\to(1/2)^+}H'(a)$ exists.
\end{lemma}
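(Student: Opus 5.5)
The plan is to work in the rescaled variables of Proposition \ref{prop:asymptotic-BvsK}: $s=\rho\xi$, $a=\tfrac12+\rho^2$, and to show that every ingredient of $H$ is a real-analytic function of $(\xi,\rho^2)$ near $\rho=0$. First, from $B(\rho\xi)^2=\sinh^2(\rho\xi)+\rho^2\cosh(2\rho\xi)$ we get $\rho^{-2}B(\rho\xi)^2=\xi^2\,\frac{\sinh^2(\rho\xi)}{(\rho\xi)^2}+\cosh(2\rho\xi)$. This is an entire function of $(\xi,\rho^2)$, even in $\rho$, and it is bounded below by $1+\xi^2>0$. Hence $\widetilde B(\xi,\rho^2):=\rho^{-1}B(\rho\xi)$ is real-analytic on a complex neighborhood of $[0,\Xi]\times\{0\}$ for any fixed $\Xi$. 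Similarly $A(\rho\xi)^2=1+\rho^2\cosh(2\rho\xi)+\sinh^2(\rho\xi)$ is analytic in $(\xi,\rho^2)$ and close to $1$, and $K/\rho=\sqrt{1+\rho^2}$. It follows that
\[
\widetilde\varphi(\xi,\rho^2):=\rho^{-1}\varphi(\rho\xi)=\sqrt{1+\rho^2}\int_0^\xi\frac{d\eta}{A(\rho\eta)^2\,\widetilde B(\eta,\rho^2)}
\]
is analytic in $(\xi,\rho^2)$, being the integral of an analytic integrand over a fixed interval. Its value at $\rho=0$ is $\mathrm{arcsinh}\,\xi$.

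Next I will rewrite the FBC \eqref{eq:fbc} as $F(\xi,\rho^2)=0$ with
\[
F(\xi,\rho^2):=\rho^{-1}\tanh\bigl(\rho\,\widetilde\varphi\bigr)-\frac{\widetilde B(\xi,\rho^2)\sqrt{1+\rho^2}}{a\,\rho^{-1}\sinh(2\rho\xi)}.
\]
Here $\rho^{-1}\tanh(\rho x)=x\,g(\rho^2x^2)$ with $g$ entire near $0$, and $\rho^{-1}\sinh(2\rho\xi)=2\xi\,\frac{\sinh(2\rho\xi)}{2\rho\xi}$ is analytic in $(\xi,\rho^2)$ and nonvanishing for $\xi>0$. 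So $F$ is real-analytic near $(\rho_*,0)$, and by Step 1 of Proposition \ref{prop:asymptotic-BvsK} we have $F(\xi,0)=\Phi(\xi)$. Since $\Phi(\rho_*)=0$ and $\Phi'(\rho_*)=\sqrt{1+\rho_*^2}/\rho_*^2\neq0$, the analytic implicit function theorem gives a unique analytic solution $\xi_0=\xi_0(\rho^2)$ with $\xi_0(0)=\rho_*$. By the monotone-crossing uniqueness of Step 0 and the convergence $\xi_0(a)\to\rho_*$ of Step 3, this branch coincides with $s_0(a)/\rho$ for all small $\rho>0$. I expect this identification to be the main technical point: the implicit function theorem needs the genuine parameter to be $\rho^2$, not $\rho$, and one must check that every odd power of $\rho$ cancels. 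The cancellation is exactly the evenness of $\widetilde B$, $A^2$, $\sinh(x)/x$ and $\tanh(x)/x$ in their arguments, so I would verify each of these factors explicitly.

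Finally I will express $H$ through $y=B(s_0)^2/K^2$ by \eqref{eq:sinh-r-K-y}, $H=y/\sqrt{y-1}$, with
\[
y=\frac{\widetilde B(\xi_0(\rho^2),\rho^2)^2}{1+\rho^2}.
\]
This is analytic in $\rho^2$, and its value at $\rho=0$ is $1+\rho_*^2=\cosh^2\sigma_*>2$. Hence $y-1$ stays near $\sinh^2\sigma_*>0$, the square root is analytic there, and $H$ is an analytic function of $\rho^2=a-\tfrac12$ on some interval $[0,\eta)$. Its value at $0$ is $\cosh^2\sigma_*/\sinh\sigma_*=\sigma_*\cosh\sigma_*$, using $\cosh\sigma_*=\sigma_*\sinh\sigma_*$. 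The remaining statements follow from standard facts about power series: term-by-term differentiation gives a series for $H'$ with the same radius of convergence, and $\lim_{a\to(1/2)^+}H'(a)$ equals the linear Taylor coefficient.
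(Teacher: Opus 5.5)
Your proposal is correct and follows essentially the same route as the paper: you rescale by $\rho$, divide the FBC by $\rho$ to get a real-analytic relation in $(\xi_0,\rho^2)$, and apply the analytic implicit function theorem at $\xi_0=\sinh\sigma_*$ using $\Phi'\neq 0$; you then read off $H$ as an analytic function of $\rho^2$, via $H=y/\sqrt{y-1}$ where the paper uses $G=\widetilde B^2/\widetilde D$, which amounts to the same computation. Your explicit identification of the implicit-function branch with $s_0(a)/\rho$, through Steps 0 and 3 of Proposition \ref{prop:asymptotic-BvsK}, and your direct check that $\rho^{-2}B(\rho\xi)^2$ is entire in $(\xi,\rho^2)$, fill in two points that the paper's proof leaves implicit.
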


\begin{proof}
Set $\rho:=\sqrt{a-1/2}$, $\xi:=s/\rho$, and $\xi_0(a):=s_0(a)/\rho$. We first derive a real-analytic equation for $\xi_0$ as a function of $\rho^2$. The functions $A(s)^2=a\cosh(2s)+1/2$, $B(s)^2=a\cosh(2s)-1/2$, $K(a)^2=a^2-1/4$, $\cosh(2s)$, $\sinh(2s)$ depend on $\rho$ only through $\rho^2$ (since $a=1/2+\rho^2$), and substituting $s=\rho\xi$ they become jointly real-analytic in $(\xi,\rho^2)$ in a neighborhood of $(\sinh\sigma_*,0)$.

Both sides of the FBC $\tanh\varphi(s_0;a)=B(s_0)K/[a\sinh(2s_0)]$ vanish to first order in $\rho$ at $\rho=0$: indeed, by the substitution $t=\rho\tau$ in the defining integral of $\varphi$ and the expansion $K/[A^2 B]=1/[\rho\sqrt{1+\tau^2}]\cdot(1+O(\rho^2))$,
\[
\varphi(s_0)=\int_0^{s_0}\frac{K}{A^2 B}\,dt=\int_0^{\xi_0}\frac{1}{\sqrt{1+\tau^2}}\,d\tau\cdot\rho\cdot(1+O(\rho^2))=\rho\,\mathrm{arcsinh}(\xi_0)+O(\rho^3),
\]
whence $\tanh\varphi(s_0)=\rho\,\mathrm{arcsinh}(\xi_0)+O(\rho^3)$. For the RHS, using $K=\rho\sqrt{1+\rho^2}$ and $\sinh(2s_0)=2\rho\xi_0+O(\rho^3)$, and writing $B(s_0)=\rho\sqrt{1+\xi_0^2}+O(\rho^3)$ (from $B(s_0)^2=\rho^2(1+\xi_0^2)+O(\rho^4)$),
\[
\frac{B(s_0)K}{a\sinh(2s_0)}=\frac{\rho^2\sqrt{(1+\xi_0^2)(1+\rho^2)}+O(\rho^4)}{a\cdot[2\rho\xi_0+O(\rho^3)]}=\rho\cdot\frac{\sqrt{1+\xi_0^2}}{2a\xi_0}\cdot(1+O(\rho^2)),
\]
where the inner factor $\sqrt{1+\xi_0^2}/(2a\xi_0)\cdot(1+O(\rho^2))$ is real-analytic in $(\xi_0,\rho^2)$ on the relevant neighborhood (since $\xi_0$ is bounded away from $0$ near $\sinh\sigma_*>0$, and $a=1/2+\rho^2>0$). Dividing both sides of the FBC by $\rho$, we obtain a real-analytic relation
\[
\mathcal{F}(\xi_0,\rho^2):=\frac{\tanh\varphi(s_0;a)}{\rho}-\frac{B(s_0)K}{a\rho\sinh(2s_0)}=0
\]
in a neighborhood of $(\sinh\sigma_*,0)$.

The limit equation at $\rho^2=0$ is $\mathrm{arcsinh}(\xi_0)=\sqrt{1+\xi_0^2}/\xi_0$, namely the transcendental equation of \cite[Remark 1.4]{Pigazzini}, with unique positive root $\xi_0=\sinh\sigma_*$ where $\sigma_*=\coth\sigma_*$. The associated map $\Phi(\xi_0):=\mathrm{arcsinh}(\xi_0)-\sqrt{1+\xi_0^2}/\xi_0$ has derivative
\[
\Phi'(\xi_0)=\frac{1}{\sqrt{1+\xi_0^2}}+\frac{1}{\xi_0^2\sqrt{1+\xi_0^2}}=\frac{\sqrt{1+\xi_0^2}}{\xi_0^2}>0
\]
(direct computation), hence $\partial_{\xi_0}\mathcal{F}(\sinh\sigma_*,0)=\Phi'(\sinh\sigma_*)\neq 0$.

By the analytic implicit function theorem (cf. \cite{KrantzParks2002}), there exists $\eta>0$ and a unique real-analytic function $\xi_0=\xi_0(\rho^2)$ on $[0,\eta)$ with $\mathcal{F}(\xi_0(\rho^2),\rho^2)=0$ and $\xi_0(0)=\sinh\sigma_*$. Equivalently, $\xi_0$ is real-analytic in $a-1/2$ on $[0,\eta)$, and $s_0(a)=\rho\,\xi_0(a-1/2)$ is real-analytic in $\rho$ and odd in $\rho$.

By Proposition \ref{prop:geometric-identity}, $\sinh^2 r(a)=B(s_0)^4/(B(s_0)^2-K^2)$. Write $B(s_0)^2=\rho^2\,\widetilde B(\rho^2)$ and $B(s_0)^2-K^2=\rho^2\,\widetilde D(\rho^2)$, where $\widetilde B(\rho^2)=(1+\xi_0(\rho^2)^2)+O(\rho^2)$ and $\widetilde D(\rho^2)=\xi_0(\rho^2)^2+O(\rho^2)$ are real-analytic in $\rho^2$ on $[0,\eta)$, with $\widetilde B(0)=\cosh^2\sigma_*>0$ and $\widetilde D(0)=\sinh^2\sigma_*>0$. Then
\[
\sinh^2 r(a)=\frac{\rho^4\,\widetilde B(\rho^2)^2}{\rho^2\,\widetilde D(\rho^2)}=\rho^2\cdot\frac{\widetilde B(\rho^2)^2}{\widetilde D(\rho^2)}=(a-1/2)\cdot G(a-1/2),
\]
where $G(\rho^2):=\widetilde B(\rho^2)^2/\widetilde D(\rho^2)$ is real-analytic in $\rho^2$ on $[0,\eta')$ for some $\eta'\in(0,\eta]$ (ratio of analytic functions with nonzero denominator at $\rho^2=0$), with $G(0)=\cosh^4\sigma_*/\sinh^2\sigma_*>0$. Taking the positive square root (well-defined since $G>0$ near $\rho^2=0$),
\[
\sinh r(a)=\rho\cdot\sqrt{G(\rho^2)},
\]
which is real-analytic in $\rho$ and odd in $\rho$.

Since $K(a)=\rho\sqrt{1+\rho^2}$ is real-analytic in $\rho$ and odd in $\rho$, the ratio
\[
H(a)=\frac{\sinh r(a)}{K(a)}=\frac{\rho\sqrt{G(\rho^2)}}{\rho\sqrt{1+\rho^2}}=\sqrt{\frac{G(\rho^2)}{1+\rho^2}}
\]
is real-analytic in $\rho^2=a-1/2$ on $[0,\eta')$, with $H(0)=\sqrt{G(0)}=\cosh^2\sigma_*/\sinh\sigma_*=\sigma_*\cosh\sigma_*$ (using $\sigma_*\sinh\sigma_*=\cosh\sigma_*$). The conclusions on $H'(a)$ and on $\lim_{a\to(1/2)^+}H'(a)$ follow from term-by-term differentiation of the convergent Taylor series.
\end{proof}

\begin{theorem}[Closed form of the asymptotic coefficient]\label{thm:eureka-closed-form}
Let $\sigma_*>0$ be the unique positive root of $\sigma=\coth\sigma$. Then the function $H(a):=\sinh r(a)/K(a)$ admits the asymptotic expansion
\begin{equation}\label{eq:H-asymp-expansion}
H(a)=\sigma_*\cosh\sigma_*+C_0\,(a-1/2)+O\bigl((a-1/2)^2\bigr)\qquad(a\to(1/2)^+),
\end{equation}
where the coefficient $C_0$ admits the closed form
\begin{equation}\label{eq:C0-closed-form}
\;C_0=H'((1/2)^+)=\frac{\sigma_*\cosh\sigma_*\,(\sinh^2\sigma_*-1)\,(3\sinh^2\sigma_*-2)}{12\sinh^2\sigma_*}>0.\;
\end{equation}
Equivalently, $g(a):=K(a)^2 r'(a)-a\tanh r(a)$ has the asymptotic
\[
g(a)=C_0\,(a-1/2)^{3/2}+O((a-1/2)^{5/2})\qquad(a\to(1/2)^+).
\]
\end{theorem}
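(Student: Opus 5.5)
The plan is to push the implicit-function argument of Lemma \ref{lem:H-analytic-rho} one order further in $\rho^2=a-\tfrac12$. By that lemma, $H=\sqrt{G(\rho^2)/(1+\rho^2)}$ is real-analytic in $\rho^2$. Hence $C_0=H'((1/2)^+)$ is the coefficient of $\rho^2$ in this expression, and the $O((a-1/2)^2)$ remainder is automatic. It is determined by the first-order coefficient $\xi_1$ in
\[
\xi_0(\rho^2)=\xi_*+\xi_1\rho^2+O(\rho^4),\qquad \xi_*=\sinh\sigma_*,
\]
together with the $\rho^2$-corrections in $\widetilde B$ and $\widetilde D$.

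\textbf{Step 1: expansion of the free boundary function.} I would expand $\mathcal F(\xi,\rho^2)=\Phi(\xi)+\rho^2\mathcal F_1(\xi)+O(\rho^4)$, using the exact formulas under $s=\rho\tau$:
\[
A(\rho\tau)^2=1+\rho^2(1+\tau^2)+O(\rho^4),\qquad
B(\rho\tau)^2=\rho^2(1+\tau^2)+\rho^4\bigl(2\tau^2+\tfrac{\tau^4}{3}\bigr)+O(\rho^6),
\]
together with $K=\rho(1+\tfrac{\rho^2}{2}+\dots)$ and $\sinh(2\rho\xi)=2\rho\xi(1+\tfrac23\rho^2\xi^2+\dots)$. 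On the left-hand side, $\rho^{-1}\tanh\varphi(\rho\xi)$ picks up three corrections: the $\rho^2$-correction of the integrand $K/(A^2B)$, integrated in closed form in $\tau$ (the integrands are rational functions of $\tau$ and $\sqrt{1+\tau^2}$), and the cubic term $-\varphi^3/3$ from $\tanh$. On the right-hand side, $\rho^{-1}BK/(a\sinh 2s)$ is expanded directly, including $1/a=2(1-2\rho^2+\dots)$. Then
\[
\xi_1=-\frac{\mathcal F_1(\xi_*)}{\Phi'(\xi_*)},\qquad \Phi'(\xi_*)=\frac{\cosh\sigma_*}{\sinh^2\sigma_*}.
\]

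\textbf{Step 2: expansion of $G$ and of $H$.} Next I would expand $\widetilde B=B(s_0)^2/\rho^2$ and $\widetilde D=(B(s_0)^2-K^2)/\rho^2$ to order $\rho^2$, inserting $\xi_0=\xi_*+\xi_1\rho^2$. This gives $G=\widetilde B^2/\widetilde D=G(0)(1+g_1\rho^2+\dots)$, and then
\[
C_0=\tfrac12\,\sigma_*\cosh\sigma_*\,(g_1-1).
\]
All transcendental quantities ($\mathrm{arcsinh}\,\xi_*=\sigma_*$, $\sqrt{1+\xi_*^2}=\cosh\sigma_*$) are eliminated with the single relation $\sigma_*\sinh\sigma_*=\cosh\sigma_*$. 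The result should factor into the stated form \eqref{eq:C0-closed-form}.

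\textbf{Step 3: sign of $C_0$ and the asymptotics of $g$.} Positivity of $C_0$ follows from $\sinh\sigma_*>1$, proved in Proposition \ref{prop:asymptotic-BvsK}. This inequality makes $\sinh^2\sigma_*-1>0$ and $3\sinh^2\sigma_*-2>1>0$. For $g$, identity \eqref{eq:K2-Hprime-over-H} gives
\[
g=\tanh r\cdot\bigl(K^2r'\coth r-a\bigr)=\tanh r\cdot K^2\frac{H'}{H}.
\]
Here $K^2=\rho^2(1+\rho^2)$, $H\to\sigma_*\cosh\sigma_*$, and $\tanh r=\sinh r\,(1+O(\rho^2))=\rho\sqrt{G}\,(1+O(\rho^2))=\rho H(0)(1+O(\rho^2))$, while $H'=C_0+O(\rho^2)$. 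Hence $g=C_0\rho^3+O(\rho^5)$, and evenness in $\rho$ of the bracketed factors kills the $\rho^4$ term.

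\textbf{Main obstacle.} I expect the main difficulty to be Step 1: getting the correct $\rho^2$-correction of the integral $\rho^{-1}\varphi(\rho\xi_0)$ and then simplifying the accumulated terms into the factored closed form. The natural first attempt is to substitute $\tau=\sinh t$, which turns the correction integrals into polynomials in $\sinh,\cosh$ evaluated at $t=\sigma_*$. A sanity check on the algebra is the sign change of $C_0$ exactly at $\sinh\sigma_*=1$, which is consistent with the margin $\rho_*^2-1$ in Proposition \ref{prop:asymptotic-BvsK}.
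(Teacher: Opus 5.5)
Your proposal is correct and follows essentially the paper's own argument. The paper also obtains $\xi_1$ by matching the $\rho^3$ coefficients of the free boundary condition; your $\xi_1=-\mathcal F_1(\xi_*)/\Phi'(\xi_*)$, with $\Phi'(\xi_*)=\cosh\sigma_*/\sinh^2\sigma_*$, is exactly its coefficient $c/s$. It then divides $\sinh r=B(s_0)^2/\sqrt{B(s_0)^2-K^2}$ by $K$ and expands, and it gets the asymptotics of $g$ from $g=K^3H'/\cosh r$, which equals your $\tanh r\cdot K^2H'/H$.

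The algebra you leave as ``should factor'' does close, and the paper carries it out in Appendix A, eliminating transcendental terms with $\sigma_*\sinh\sigma_*=\cosh\sigma_*$. With $s=\sinh^2\sigma_*$ one gets $\xi_1=(s-1)(s-4)(s+1)/(12s\,\sigma_*\cosh\sigma_*)$ and $g_1-1=(s-1)(3s-2)/(6s)$.
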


\begin{proof}
We first expand $s_0(a)$ to second order. Let $\rho:=\sqrt{a-1/2}$ and $\xi:=s/\rho$. Expanding in $\rho$,
\[
B(s)^2=\rho^2(1+\xi^2)+\rho^4(\xi^4/3+2\xi^2)+O(\rho^6),\qquad A(s)^2=1+\rho^2(1+\xi^2)+\rho^4(2\xi^2+\xi^4/3)+O(\rho^6),
\]
\[
K(a)=\rho\sqrt{1+\rho^2}=\rho(1+\rho^2/2+O(\rho^4)).
\]
Let $\xi_0(a):=s_0(a)/\rho$, with expansion $\xi_0=\sinh\sigma_*+\xi_1\rho^2+O(\rho^4)$ (the parity in $\rho$ follows from the fact that the FBC is a function of $\rho^2=a-1/2$). The FBC
\[
\tanh\varphi(s_0;a)=\frac{B(s_0)\,K(a)}{a\sinh(2s_0)}
\]
expanded to $O(\rho^3)$ yields
\[
\rho\sigma_*+\rho^3\bigl[\xi_1/\cosh\sigma_*+I_*-\sigma_*^3/3\bigr]=\rho\sigma_*+\rho^3\sigma_*\bigl[g_B^{(0)}-g_a^{(0)}+\xi_1(\sinh\sigma_*/\cosh^2\sigma_*-1/\sinh\sigma_*)\bigr],
\]
whence, isolating the coefficients of $\rho^3$ and solving for $\xi_1$,
\begin{equation}\label{eq:xi1-formula}
\xi_1=\frac{(\sinh^2\sigma_*-1)(\sinh^2\sigma_*-4)(\sinh^2\sigma_*+1)}{12\sinh^2\sigma_*\cdot\sigma_*\cosh\sigma_*},
\end{equation}
where $g_B^{(0)}=1/2+\sinh^2\sigma_*(\sinh^2\sigma_*+6)/[6\cosh^2\sigma_*]$, $g_a^{(0)}=2\sinh^2\sigma_*/3+2$, and
\begin{equation}\label{eq:I-star-explicit}
I_*=-\int_0^{\sigma_*}\frac{7\cosh^4\sigma+\cosh^2\sigma-5}{6\cosh^2\sigma}\,d\sigma=-\frac{1}{12}\Bigl[9\sigma_*+\frac{7}{2}\sinh(2\sigma_*)-\frac{10}{\sigma_*}\Bigr].
\end{equation}
The computation of $I_*$ uses the substitution $\eta=\sinh\sigma$ in $\int_0^{\sinh\sigma_*}f(\eta)/\sqrt{1+\eta^2}\,d\eta$, where $f(\eta)=-(3+15\eta^2+7\eta^4)/[6(1+\eta^2)]$ is the coefficient of $\rho^2$ in the expansion of $K(a)/[A(s)^2 B(s)]\cdot\rho$.

We now expand $H(a)=\sinh r(a)/K(a)$. From $\sinh r=B^2/\sqrt{B^2-K^2}$, expanding numerator and denominator to $O(\rho^4)$ and dividing by $K=\rho\sqrt{1+\rho^2}$,
\[
H(a)=\sigma_*\cosh\sigma_*\bigl[1+\rho^2\bigl(\beta_1/\cosh^2\sigma_*-\alpha_1-1/2\bigr)+O(\rho^4)\bigr],
\]
where $\beta_1=2\sinh\sigma_*\,\xi_1+\sinh^4\sigma_*/3+2\sinh^2\sigma_*$ and $\alpha_1=\xi_1/\sinh\sigma_*+\sinh^2\sigma_*/6+1-1/(2\sinh^2\sigma_*)$.

We now compute the coefficient. The detailed asymptotic expansions and the full algebraic reduction underlying
the computation below are collected in Appendix~\ref{app:explicit-calculations}. 

Setting $s:=\sinh^2\sigma_*$ (a constant, not to be confused with the arc-length variable, which does not enter the algebraic identities that follow), the coefficient of $\xi_1$ in $\beta_1/\cosh^2\sigma_*-\alpha_1$ equals $(s-1)/[\sinh\sigma_*(s+1)]$. Using the identity $\sinh\sigma_*\cdot\sigma_*\cosh\sigma_*=\cosh^2\sigma_*=s+1$ (from $\sigma_*\sinh\sigma_*=\cosh\sigma_*$),
\[
\xi_1\cdot\frac{s-1}{\sinh\sigma_*(s+1)}=\frac{(s-1)^2(s-4)(s+1)}{12s(s+1)\cdot\sinh\sigma_*\cdot\sigma_*\cosh\sigma_*}=\frac{(s-1)^2(s-4)}{12s(s+1)}.
\]
Adding the non-$\xi_1$ part (which, after reduction to a common denominator, equals $(s^3+5s^2-3s+3)/[6s(s+1)]$, equivalently $2(s^3+5s^2-3s+3)/[12s(s+1)]$) and subtracting $1/2=6s(s+1)/[12s(s+1)]$,
\[
\beta_1/\cosh^2\sigma_*-\alpha_1-1/2=\frac{(s-1)^2(s-4)+2(s^3+5s^2-3s+3)-6s(s+1)}{12s(s+1)}=\frac{3s^3-2s^2-3s+2}{12s(s+1)}.
\]
The factorization $3s^3-2s^2-3s+2=(s-1)(3s-2)(s+1)$ (verified by inspection at $s=1$ as a root, and polynomial division for $3s^2+s-2=(3s-2)(s+1)$) yields
\[
\beta_1/\cosh^2\sigma_*-\alpha_1-1/2=\frac{(s-1)(3s-2)}{12s}.
\]
Therefore
\[
H'((1/2)^+)=\frac{d}{da}H(a)\Big|_{a\to(1/2)^+}=\sigma_*\cosh\sigma_*\cdot\frac{(s-1)(3s-2)}{12s},
\]
which is \eqref{eq:C0-closed-form}.

We now verify the positivity of $C_0$. We need $s>1$ (namely $\sinh\sigma_*>1$) and $s>2/3$. The second follows from the first. For $s>1$, this is equivalent to $\sigma_*>\mathrm{arcsinh}(1)=\log(1+\sqrt{2})$. Set $F(\sigma):=\sigma-\coth\sigma$, strictly increasing on $(0,\infty)$ (since $F'(\sigma)=1+\mathrm{csch}^2\sigma>0$). From $F(\sigma_*)=0$ and
\[
F(\log(1+\sqrt{2}))=\log(1+\sqrt{2})-\sqrt{2}<0
\]
(using the classical inequality $\log(1+x)<x$ for $x>0$, with $x=\sqrt{2}$, giving $\log(1+\sqrt{2})<\sqrt{2}$), we deduce $\log(1+\sqrt{2})<\sigma_*$ by strict monotonicity of $F$, hence $\sinh\sigma_*>1$, and $C_0>0$.

Finally, the asymptotic of $g(a)$ follows from the closed identity of Theorem \ref{thm:phi-H-identity} (rewritten as $K^2 r'\cosh r-a\sinh r=K^3 H'$):
\[
g(a)=K^2 r'-a\tanh r=\frac{K^2 r'\cosh r-a\sinh r}{\cosh r}=\frac{K^3 H'(a)}{\cosh r(a)}.
\]
As $a\to(1/2)^+$: $K\sim\sqrt{a-1/2}$, $K^3\sim(a-1/2)^{3/2}$, $\cosh r\to 1$. By Lemma \ref{lem:H-analytic-rho}, $H'(a)$ has a convergent Taylor expansion with $H'(a)=C_0+O(a-1/2)$ as $a\to(1/2)^+$, in particular $H'(a)\to C_0$. Combining, $g(a)=K^3 H'(a)/\cosh r(a)\sim C_0(a-1/2)^{3/2}$.
\end{proof}

\begin{corollary}[Analytic local closure of (F$'$)]\label{cor:eureka-local-closure}
There exists $\delta_0>0$ such that, for every $a\in(1/2,1/2+\delta_0)$:
\begin{enumerate}
\item[(a)] $H'(a)>0$, namely $(\sinh r(a)/K(a))'>0$;
\item[(b)] $\phi_a(s)>0$ for every $s\in[0,s_0(a)]$;
\item[(c)] Condition (F$'$)$=\{\mu_2(0)>0\}$ is satisfied;
\item[(d)] The strong Medvedev conjecture \eqref{eq:medvedev-strong} holds, namely $\ind(\Sigma_a)=4$ and $\nul(\Sigma_a)=2$.
\end{enumerate}
\end{corollary}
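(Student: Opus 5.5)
The argument chains the closed-form coefficient with the reductions already proved. Begin with (a). By Lemma \ref{lem:H-analytic-rho}, $H$ is real-analytic in $a-1/2$ on some $[0,\eta')$, so $H'(a)$ extends continuously to $a=1/2$. Theorem \ref{thm:eureka-closed-form} gives $H'((1/2)^+)=C_0>0$. By continuity there is $\delta_1>0$ with $H'(a)>C_0/2>0$ on $(1/2,1/2+\delta_1)$.

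For (b), fix $\delta_0\le\min\{\delta_1,1/2\}$, so that $(1/2,1/2+\delta_0)\subset(1/2,1]$. Theorem \ref{thm:G-closure-a-leq-1} then gives condition (G), $B(s_0)^2>2K^2$, strictly on this interval. Under (G) the prefactor in the identity \eqref{eq:phi-H-identity} of Theorem \ref{thm:phi-H-identity} is strictly positive. Hence $\phi_a(s_0)$ has the sign of $H'(a)$, so $\phi_a(s_0)>0$. The implication (ii)$\Rightarrow$(i) of Theorem \ref{thm:scalar-reduction}, which rests on the Wronskian--Sturm separation Lemma \ref{lem:sturm-separation-phi-u} and $\phi_a(0)=1/(2K)>0$, upgrades this to $\phi_a>0$ on all of $[0,s_0(a)]$. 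As a consistency check, Proposition \ref{prop:asymptotic-BvsK} independently gives (G) near $1/2$, since $\rho_*>1$.

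For (c), Theorem \ref{thm:F-reduction-phi-positivity} applies: with (G) and $\phi_a>0$ it yields $r'(a)>0$ and $\mu_2(0)>0$.

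For (d), I check the hypotheses of Theorem \ref{thm:reduction}. Condition (E) holds on $(1/2,1]$ by Theorem \ref{thm:E-closure-a-leq-1}. For (F), the inequality $\mu_2(0)>0$ is (c). The even no-kernel part follows from Theorem \ref{thm:no-kernel-even}, since $r'(a)>0$ was obtained in (c). The odd no-kernel part follows from Theorem \ref{thm:no-kernel-odd}, because (G) implies $\sinh r>2K$ by Proposition \ref{prop:geometric-identity}. Theorem \ref{thm:reduction} then gives $\ind_R=4$ and $\nul_R=2$. By Remark \ref{rem:robin-morse-equivalence}, this is the statement $\ind(\Sigma_a)=4$, $\nul(\Sigma_a)=2$.

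The only step needing care is the continuity at the degenerate endpoint in (a). There I rely entirely on the analyticity in $\rho^2$ from Lemma \ref{lem:H-analytic-rho}. This lets us identify $\lim_{a\to(1/2)^+}H'(a)$ with the Taylor coefficient $C_0$ and avoid differentiating an $o(\cdot)$ expansion. The remaining steps are direct citations.
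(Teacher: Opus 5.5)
Your proposal is correct and follows the paper's proof essentially step for step: (a) via continuity of $H'$ at $a=1/2$ from Lemma \ref{lem:H-analytic-rho} together with $C_0>0$; (b) via transfer of sign from $H'(a)$ to $\phi_a(s_0)$ under (G), then Sturm separation; (c) via Theorem \ref{thm:F-reduction-phi-positivity}; and (d) via Theorem \ref{thm:reduction}. The differences are cosmetic: you cite Theorem \ref{thm:phi-H-identity} directly rather than the equivalence chain of Theorem \ref{thm:scalar-reduction}, and you verify the hypotheses of Theorem \ref{thm:reduction} explicitly where the paper simply invokes Theorem \ref{thm:final-reduction}(1).
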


\begin{proof}
For (a), by Theorem \ref{thm:eureka-closed-form}, $H'((1/2)^+)=C_0>0$, where $H'((1/2)^+)$ denotes the right derivative of $H$ at $a=1/2$. By Lemma \ref{lem:H-analytic-rho}, $H$ is real-analytic as a function of $a-1/2$ on $[0,\eta)$ for some $\eta>0$, hence $H'(a)$ is continuous on $[1/2,1/2+\eta)$ with $H'(1/2)=C_0$. By the continuity of $H'$, there exists $\delta_0\in(0,\min(\eta,1/2))$ such that $H'(a)>C_0/2>0$ for every $a\in(1/2,1/2+\delta_0)$.

For (b), by Theorem \ref{thm:scalar-reduction}, under (G) strict (Theorem \ref{thm:G-closure-a-leq-1} with $\delta_0<1/2$), $H'(a)>0\iff\phi_a>0$ on $[0,s_0]$.

For (c), by Theorem \ref{thm:F-reduction-phi-positivity}, $\phi_a>0\Rightarrow\mu_2(0)>0$.

For (d), by the final balance of Theorem \ref{thm:final-reduction}, in $(1/2,1]$ all other prerequisites are already closed analytically (the closure of (G) in Theorem \ref{thm:G-closure-a-leq-1}, and of (E) in Theorem \ref{thm:E-closure-a-leq-1}); the unique residual (F$'$) is now closed by (c). Therefore the strong Medvedev conjecture holds.
\end{proof}

\begin{remark}[Toward global closure]\label{rem:global-closure-strategy}
Corollary \ref{cor:eureka-local-closure} closes the conjecture on $(1/2,1/2+\delta_0)$ via a purely analytic argument. The closure on the whole interval $(1/2,1]$ (and beyond) reduces to the strict positivity of $H'(a)$ in the intermediate regime, and may be pursued analytically via the following three-step program: (i) (analytic near $a=1/2$) by Corollary \ref{cor:eureka-local-closure}, there exists $\delta_0>0$ with the conjecture closed on $(1/2,1/2+\delta_0)$, and an explicit estimate of the term $O((a-1/2)^{5/2})$ in \eqref{eq:H-asymp-expansion}, obtainable from a third-order expansion of $H(a)$, provides an explicit quantification of $\delta_0$; (ii) (analytic on the compact intermediate set $[1/2+\delta_0,A]$) the real-analyticity of $H'(a)$ on this compact set, together with transcendental concavity or convexity arguments analogous to those used in Theorem \ref{thm:G-closure-a-leq-1} for condition (G), provides a candidate analytic route to establish $H'(a)>0$; (iii) (analytic as $a\to\infty$) by Proposition \ref{prop:y-asymp-infty}, $y(a)=\frac{e^{2d_\infty}}{4}\,a\,(1+o(1))\to\infty$; a differentiable refinement of the radius asymptotics of \cite[Thm. 1.2]{Pigazzini} (cf. Remark \ref{rem:y-prime-infty}) would upgrade this to $H'(a)=\frac{e^{d_\infty}}{4\sqrt{a}}\,(1+o(1))>0$, and an explicit error estimate would then provide $A_\infty$ such that $H'(a)>0$ analytically for $a>A_\infty$. Combined, (i)--(iii) would close the strong Medvedev conjecture for every $a\in(1/2,A_*]$ via purely analytic means, modulo the closure of (E) for $a>A_*$ (Section \ref{sec:picone-B}).
\end{remark}

\subsection{Asymptotic behavior of $y(a)$}

Having established the equivalence between $\phi_a>0$ and $y'(a)>0$ under (G), we study the asymptotic behavior of $y(a)$ at the endpoints of the domain.

\begin{proposition}[Asymptotic as $a\to\infty$]\label{prop:y-asymp-infty}
As $a\to\infty$,
\begin{equation}\label{eq:y-asymp-infty}
y(a)=\frac{e^{2 d_\infty}}{4}\,a\,(1+o(1)),
\end{equation}
where $d_\infty=\log[\sqrt{2}\,\Gamma(1/4)^2/\pi^{3/2}]=\log[2\sqrt{2\pi}/\Gamma(3/4)^2]$ is the asymptotic constant of the radius from \cite{Pigazzini}. In particular $y(a)\to\infty$ and $B(s_0(a))^2-2K(a)^2=K(a)^2\,(y(a)-2)\to\infty$ as $a\to\infty$.
\end{proposition}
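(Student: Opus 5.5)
The plan is to turn the radius asymptotics (P2) into asymptotics for $y$ through the algebraic identity \eqref{eq:sinh-r-K-y}, $(\sinh r/K)^2=y^2/(y-1)$. This identity comes from $\sinh^2 r=B(s_0)^4/(B(s_0)^2-K^2)$ (Lemma \ref{lem:coth-identity} and Proposition \ref{prop:geometric-identity}) and holds for every $a>1/2$ with $y>1$. We have $y>1$ unconditionally, since $B(s_0)^2-K^2>0$ by Proposition \ref{prop:geometric-identity}. The map $y\mapsto y^2/(y-1)$ is increasing on $(2,\infty)$ and behaves like $y+1+O(1/y)$ there. So once we know $y\to\infty$, the identity can be inverted as
\[
y=\frac{\sinh^2 r}{K^2}\,(1+o(1)).
\]
Then (P2) gives $\sinh r(a)=\tfrac12 e^{r(a)}(1+o(1))=\tfrac12 a^{3/2}e^{d_\infty}(1+o(1))$, hence $\sinh^2 r/K^2=\tfrac14 e^{2d_\infty}a^3/(a^2-1/4)\,(1+o(1))=\tfrac{e^{2d_\infty}}{4}a(1+o(1))$, which is \eqref{eq:y-asymp-infty}.

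The only point needing care is to show $y\to\infty$, rather than $y$ approaching the other branch $y\in(1,2)$ where $y^2/(y-1)$ is also large (near $y=1$). I would argue this directly from the identity. If $y$ stayed bounded along a sequence $a_n\to\infty$, then $y^2/(y-1)=\sinh^2 r/K^2\sim \tfrac{e^{2d_\infty}}{4}a_n\to\infty$ would force $y(a_n)\to 1^+$, that is $B(s_0)^2-K^2=o(K^2)$. To exclude this, I would use $\coth r=a\sinh(2s_0)/B(s_0)^2$ (Lemma \ref{lem:coth-identity}) together with $\coth r\to 1$. Together these give $a\sinh(2s_0)\sim B(s_0)^2\sim a\cosh(2s_0)$, so $s_0\to\infty$. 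Then $B(s_0)^2\approx \tfrac{a}{2}e^{2s_0}$. If $y\to1$, then $B(s_0)^2\sim a^2$, so $e^{2s_0}\sim 2a$, hence $\sinh(2s_0)/\cosh(2s_0)=1-O(1/a)$. This should be compared with $\coth r-1\sim 2e^{-2r}\sim 2e^{-2d_\infty}a^{-3}$, which is much smaller than $1/a$. The exact relation $\coth r\,B^2=a\sinh(2s_0)$ then becomes
\[
(1+O(a^{-3}))\bigl(a\cosh(2s_0)-\tfrac12\bigr)=a\sinh(2s_0).
\]
This yields $a(\cosh 2s_0-\sinh 2s_0)=ae^{-2s_0}=\tfrac12+O(a^{-3}\cdot a^2)$, so $e^{2s_0}\sim 2a$. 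That is consistent with $y\to1$, so this route alone does not exclude it, and a better argument is needed.

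The better exclusion uses the monotone crossing (Step 0 of Proposition \ref{prop:asymptotic-BvsK}) at $s=s^*(a)$, together with Lemma \ref{lem:tanh-phi-bound}: $\tanh\varphi(s^*)<\mathrm{arccosh}(2a)/(2\sqrt{a+1/2})\to0<1/\sqrt2\approx$ RHS. By Lemma \ref{lem:G-FBC-reformulation} this gives $y>2$ for all large $a$ (the right-hand side tends to $1/\sqrt2$ while the bound is $O(\log a/\sqrt a)$). This places $y$ on the increasing branch $(2,\infty)$, where $y^2/(y-1)\to\infty$ forces $y\to\infty$, and the inversion above applies.

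The final claim then follows from $B^2-2K^2=K^2(y-2)$, since $y-2\to\infty$ and $K^2\sim a^2$. The expected main obstacle is exactly this branch selection; I expect it to be settled by the $s^*$ crossing bound, which holds uniformly for large $a$.
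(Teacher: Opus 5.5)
Your argument is correct, but it resolves the key step differently from the paper. The paper never passes through $H^2=y^2/(y-1)$. Instead, it rewrites $\coth r=a\sinh(2s_0)/B(s_0)^2$ exactly as $\cosh(2s_0-r)=\cosh r/(2a)$. This relation carries the same two-branch ambiguity you identified, since $2s_0-r=\pm\,\mathrm{arccosh}(\cosh r/(2a))$. The paper resolves it with the triangle inequality along the unit-speed meridian, $s_0\ge r-\mathrm{arccosh}\sqrt{a+1/2}$, which together with (P2) gives $2s_0-r\ge\tfrac12\log a+O(1)\to+\infty$. It then reads off $B(s_0)^2\sim e^{r}\cosh r/2\sim e^{2r}/4$ directly.

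You select the branch $y>2$ differently, using the crossing criterion of Lemma \ref{lem:G-FBC-reformulation} together with Lemma \ref{lem:tanh-phi-bound}, both valid for every $a>1/2$. The bound $\mathrm{arccosh}(2a)/(2\sqrt{a+1/2})=O(\log a/\sqrt a)$ eventually falls below $\sqrt{1/2-1/(8a^2)}\to 1/\sqrt2$. Then $y=H^2(1-1/y)>H^2/2\to\infty$, and hence $y=H^2(1+o(1))$. There is no circularity, since neither lemma uses the proposition.

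Your route is shorter and purely algebraic once (P2) is granted. As a by-product, it establishes condition (G) for all sufficiently large $a$ independently; Remark \ref{rem:BvsK-global} instead obtains (G) at infinity as a consequence of this proposition. The paper's route avoids the transcendental bound and needs only a soft geometric input. It also yields the asymptotics of the boundary parameter itself, $e^{2s_0(a)}\sim e^{r(a)}\cosh r(a)/a$.

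In a write-up, drop the abandoned $\coth r\to 1$ paragraph, which, as you noticed, cannot exclude $y\to 1^+$.
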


\begin{proof}
We first record an exact identity. From $\coth r=a\sinh(2s_0)/B(s_0)^2$ (Lemma \ref{lem:coth-identity}) and $B(s_0)^2=a\cosh(2s_0)-1/2$,
\[
a\bigl[\cosh r\cosh(2s_0)-\sinh r\sinh(2s_0)\bigr]=\tfrac{1}{2}\cosh r,
\qquad\text{i.e.}\qquad
\cosh\bigl(2s_0(a)-r(a)\bigr)=\frac{\cosh r(a)}{2a}.
\]
Next, since the meridian $t\mapsto\Phi_a(t,0)$, $t\in[0,s_0]$, is a unit-speed curve (Lemma \ref{lem:metric-mori}) joining the neck point $\Phi_a(0,0)$ --- at geodesic distance $\mathrm{arccosh}\sqrt{a+1/2}$ from the pole $p_0$, since $\cosh\mathrm{dist}(p_0,\Phi_a(0,0))=A(0)\cosh\varphi(0)=\sqrt{a+1/2}$ --- to a boundary point at distance $r(a)$ from $p_0$, the triangle inequality gives
\[
s_0(a)\;\geq\;r(a)-\mathrm{arccosh}\sqrt{a+1/2}.
\]
By \cite[Thm. 1.2]{Pigazzini}, $r(a)=\tfrac{3}{2}\log a+d_\infty+o(1)$, while $\mathrm{arccosh}\sqrt{a+1/2}=\log\bigl(\sqrt{a+1/2}+\sqrt{a-1/2}\bigr)=\tfrac{1}{2}\log a+O(1)$; hence
\[
2s_0(a)-r(a)\;\geq\;r(a)-2\,\mathrm{arccosh}\sqrt{a+1/2}\;=\;\tfrac{1}{2}\log a+O(1)\;\longrightarrow\;+\infty.
\]
Setting $x:=2s_0(a)-r(a)\to+\infty$ and using $\cosh x=\tfrac{1}{2}e^{x}(1+e^{-2x})$ in the exact identity above,
\[
e^{2s_0(a)}=e^{r(a)}\cdot\frac{\cosh r(a)}{a}\,(1+o(1)),
\qquad\text{hence}\qquad
\cosh(2s_0(a))=\frac{e^{r(a)}\cosh r(a)}{2a}\,(1+o(1)).
\]
Therefore
\[
B(s_0(a))^2=a\cosh(2s_0(a))-\tfrac12=\frac{e^{r(a)}\cosh r(a)}{2}\,(1+o(1))=\frac{e^{2r(a)}}{4}\,(1+o(1))=\frac{e^{2d_\infty}}{4}\,a^{3}\,(1+o(1)),
\]
where we used $e^{r}\cosh r=\tfrac{1}{2}e^{2r}(1+e^{-2r})$ and $e^{2r(a)}=a^{3}e^{2d_\infty}e^{o(1)}=a^{3}e^{2d_\infty}(1+o(1))$. Since $K(a)^2=a^2(1+O(a^{-2}))$, the claim \eqref{eq:y-asymp-infty} follows; in particular $y(a)\to\infty$.
\end{proof}

\begin{remark}[Derivative asymptotics]\label{rem:y-prime-infty}
Proposition \ref{prop:y-asymp-infty} controls $y(a)$ but not its derivative. A differentiable refinement of the radius asymptotics $r(a)=\tfrac{3}{2}\log a+d_\infty+o(1)$ of \cite[Thm. 1.2]{Pigazzini} --- namely, an asymptotic expansion of $r'(a)$ as $a\to\infty$ --- would yield the corresponding derivative asymptotics $y'(a)\to e^{2d_\infty}/4>0$, equivalently $H'(a)=\frac{e^{d_\infty}}{4\sqrt{a}}\,(1+o(1))$; we do not pursue such a refinement here, and step (iii) of the program of Remark \ref{rem:global-closure-strategy} is formulated conditionally on it.
\end{remark}

\begin{proposition}[Asymptotic as $a\to(1/2)^+$]\label{prop:y-asymp-half}
Let $\sigma_*>0$ be the unique positive root of $\sigma=\coth\sigma$ (cf. Remark 1.4 of \cite{Pigazzini}). As $a\to(1/2)^+$,
\begin{equation}\label{eq:y-asymp-half}
y(a)\to\cosh^2\sigma_*=1+\sinh^2\sigma_*,
\end{equation}
and in particular $y(a)>2$ uniformly in a right neighborhood of $a=1/2$.
\end{proposition}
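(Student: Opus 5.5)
The plan is to reduce the statement to the rescaled boundary asymptotics already proved in Proposition \ref{prop:asymptotic-BvsK} (equivalently, the analytic parametrization of Lemma \ref{lem:H-analytic-rho}). Set $\rho:=\sqrt{a-1/2}$ and $\xi_0(a):=s_0(a)/\rho$. Step 3 of the proof of Proposition \ref{prop:asymptotic-BvsK} gives $\xi_0(a)\to\rho_*=\sinh\sigma_*$ as $a\to(1/2)^+$. We then expand numerator and denominator of $y=B(s_0)^2/K^2$ in $\rho$.

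For the numerator we use the exact identity
\[
B(\rho\xi)^2=\sinh^2(\rho\xi)+\rho^2\cosh(2\rho\xi)
\]
from Step 1 of that proof, evaluated at $\xi=\xi_0(a)$. Since $\xi_0$ is bounded, this gives
\[
B(s_0)^2=\rho^2\bigl(\xi_0^2+1\bigr)+O(\rho^4).
\]
For the denominator we use $K^2=\rho^2(1+\rho^2)$. Dividing,
\[
y(a)=\frac{\xi_0(a)^2+1+O(\rho^2)}{1+\rho^2}\longrightarrow 1+\sinh^2\sigma_*=\cosh^2\sigma_*,
\]
which is \eqref{eq:y-asymp-half}. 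Alternatively, Lemma \ref{lem:H-analytic-rho} already writes $B(s_0)^2=\rho^2\widetilde B(\rho^2)$ with $\widetilde B(0)=\cosh^2\sigma_*$, and then $y=\widetilde B(\rho^2)/(1+\rho^2)$ is even real-analytic in $\rho^2$; this gives the limit immediately.

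For the strict margin we need $\cosh^2\sigma_*>2$, i.e.\ $\sinh\sigma_*>1$. This is exactly $\sigma_*>\log(1+\sqrt2)$, proved at the end of Proposition \ref{prop:asymptotic-BvsK} (and again in Theorem \ref{thm:eureka-closed-form}) via the monotonicity of $\sigma-\coth\sigma$ and $\log(1+\sqrt2)<\sqrt2$. Set $\varepsilon:=(\cosh^2\sigma_*-2)/2>0$. By the limit, there is $\delta>0$ such that $y(a)>2+\varepsilon$ for all $a\in(1/2,1/2+\delta)$. This gives the claimed uniform bound $y>2$ on a right neighborhood of $a=1/2$.

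The only step needing care is the uniformity of the $O(\rho^4)$ remainder in the expansion of $B(s_0)^2$. This requires that $\xi_0(a)$ stay bounded, which Step 3 supplies, so no real obstacle arises. As a consistency check, $y-2=(\cosh(2s_0)-2a)\,a/K^2$, and together with Proposition \ref{prop:asymptotic-BvsK} this gives $y-2\to\rho_*^2-1=\cosh^2\sigma_*-2$, in agreement with the result.
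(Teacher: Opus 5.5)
Your proposal is correct and follows essentially the paper's route: you use Step 3 of Proposition~\ref{prop:asymptotic-BvsK} for $\xi_0(a)\to\sinh\sigma_*$, expand $B(s_0)^2/K^2$ to leading order in $\rho$, and obtain $\cosh^2\sigma_*>2$ from the monotonicity of $\sigma-\coth\sigma$ and $\log(1+\sqrt2)<\sqrt2$. The paper expands $\cosh(2s_0)=1+2s_0^2+O(s_0^4)$ where you use the exact identity for $B(\rho\xi)^2$, which is an immaterial difference, and your explicit $\varepsilon$-margin states the ``uniformly'' clause a little more precisely than the paper does.
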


\begin{proof}
By Step 3 in the proof of Proposition \ref{prop:asymptotic-BvsK}, $\xi_0(a):=s_0(a)/\sqrt{a-1/2}\to\sinh\sigma_*$ as $a\to(1/2)^+$; in particular $s_0(a)\to 0$ and
\[
\frac{s_0(a)^2}{a-1/2}=\xi_0(a)^2\;\longrightarrow\;\sinh^2\sigma_*\qquad\bigl(a\to(1/2)^+\bigr).
\]

For the limit of $y(a)$, expanding $\cosh(2s_0)=1+2s_0^2+O(s_0^4)$, we obtain
\[
y(a)=\frac{B(s_0)^2}{K^2}=\frac{(a-1/2)+2as_0^2+O(s_0^4)}{(a-1/2)(a+1/2)}=\frac{1}{a+1/2}+\frac{2a}{a+1/2}\cdot\frac{s_0^2}{a-1/2}+o(1).
\]
As $a\to(1/2)^+$: $1/(a+1/2)\to 1$, $2a/(a+1/2)\to 1$, $s_0^2/(a-1/2)\to\sinh^2\sigma_*$. Thus
\[
y(a)\to 1+\sinh^2\sigma_*=\cosh^2\sigma_*.
\]
For the inequality $\cosh^2\sigma_*>2$, the function $F(\sigma):=\sigma-\coth\sigma$ is strictly increasing on $(0,\infty)$ (Remark 1.4 of \cite{Pigazzini}). At $\sigma_0:=\mathrm{arcsinh}(1)$, $\sinh\sigma_0=1$, $\cosh\sigma_0=\sqrt{2}$, hence $\coth\sigma_0=\sqrt{2}$ and $F(\sigma_0)=\sigma_0-\sqrt{2}$. We show $\sigma_0<\sqrt{2}$ (and hence $F(\sigma_0)<0$). From $\sigma_0=\log(1+\sqrt{2})$ and the inequality $\log(1+x)\leq x$ valid for every $x\geq 0$, $\sigma_0=\log(1+\sqrt{2})\leq\sqrt{2}$; the inequality is strict for $x>0$, hence $\sigma_0<\sqrt{2}$. Therefore $F(\sigma_0)<0=F(\sigma_*)$, and by strict monotonicity of $F$, $\sigma_*>\sigma_0$, whence $\cosh\sigma_*>\cosh\sigma_0=\sqrt{2}$ and $\cosh^2\sigma_*>2$.
\end{proof}

\begin{remark}[Status of the problem $y'(a)>0$ in light of the local closure]\label{rem:y-monotonicity-open}
The validity of $y'(a)>0$ for every $a>1/2$, equivalently $(\sinh r/K)'>0$, equivalently $(a^2-1/4)r'(a)>a\tanh r(a)$, is now partially closed analytically. (1) For $a\in(1/2,1/2+\delta_0)$ with $\delta_0>0$, the inequality $y'(a)>0$ is proved analytically (Theorem \ref{thm:eureka-closed-form} together with Corollary \ref{cor:eureka-local-closure}), via the closed form of the linear coefficient $C_0=\sigma_*\cosh\sigma_*(\sinh^2\sigma_*-1)(3\sinh^2\sigma_*-2)/(12\sinh^2\sigma_*)>0$. (2) As $a\to\infty$, Proposition \ref{prop:y-asymp-infty} gives $y(a)=\frac{e^{2d_\infty}}{4}\,a\,(1+o(1))\to\infty$, consistent with $y'(a)>0$ for large $a$; the derivative asymptotics $y'(a)\to e^{2d_\infty}/4$ would follow from a differentiable refinement of \cite[Thm. 1.2]{Pigazzini} (Remark \ref{rem:y-prime-infty}). (3) For the intermediate regime $[1/2+\delta_0,A]$ with $A>1/2+\delta_0$ finite, the question remains formally open as an analytic problem. Its complete resolution would require (a) an explicit quantification of $\delta_0$ via control of the remainder $O((a-1/2)^{5/2})$ in \eqref{eq:H-asymp-expansion} (third-order expansion), and (b) the analytic closure on the compact set $[1/2+\delta_0,A]$ via transcendental concavity arguments analogous to Theorem \ref{thm:G-closure-a-leq-1} for (G). (4) The strict monotonicity of $r(a)$ on $(1/2,\infty)$ is posed as the open Question 6.3 of \cite{Pigazzini}; in our setting it is not assumed but rather follows as a corollary on the regimes where $y'(a)>0$ is established. Indeed, by the equivalence (iii)$\iff$(iv) of Theorem \ref{thm:scalar-reduction}, $y'(a)>0\iff K(a)^2 r'(a)\coth r(a)>a$, which implies $r'(a)>a\tanh r(a)/K(a)^2>0$. In particular, Corollary \ref{cor:eureka-local-closure} provides the strict monotonicity of $r(a)$ on $(1/2,1/2+\delta_0)$ as a corollary of the analytic local closure, giving a partial affirmative answer to Question 6.3 of \cite{Pigazzini} on this regime.
\end{remark}

\section{Discussion}\label{sec:discussion}

We summarize the status of the Medvedev conjecture \eqref{eq:medvedev} and of its strong form \eqref{eq:medvedev-strong} in light of this paper. The lower bound $\ind_R\geq 4$ is proved explicitly in Theorem \ref{thm:lower-bound-4} via four explicit test functions $\Phi^0,\Phi^1,\Phi^2,\Phi^3$ obtained from the Lorentz ambient coordinates, providing an alternative elementary proof of Medvedev's result \cite{Medvedev2023}. Mode $|k|=1$ is closed in \cite{Pigazzini} with $\ind_R\big|_1=\nul_R\big|_1=2$. The odd radial sector of modes $|k|\geq 2$ is closed in Theorem \ref{thm:odd-closure} ($\mu_n^{\mathrm{odd}}(k)>0$ for every $n\geq 0$). For the even radial sector of modes $|k|\geq 2$, condition (E)$=\{\mu_0^{\mathrm{even}}(2)>0\}$ is proved unconditionally for $a\in(1/2,1]$ (Theorem \ref{thm:E-closure-a-leq-1}), and via explicit Hardy estimates (Proposition \ref{prop:hardy-extension}) combined with a continuity argument, we establish the existence of $A_*>1$ such that (E) holds on $(1/2,A_*]$ (Theorem \ref{thm:A-star-existence}); the analytic closure of (E) for $a>A_*$ remains an open problem. In mode $|k|=0$, the no even kernel is closed in Theorem \ref{thm:no-kernel-even} under $r'(a)\neq 0$, and under the hypothesis $\phi_a>0$ of Theorem \ref{thm:F-reduction-phi-positivity}, $r'(a)>0$ follows automatically (Remark \ref{rem:phi-positivity-status}), so the even-kernel closure of (F) is reduced to the same unified hypothesis. The no odd kernel is closed in Theorem \ref{thm:no-kernel-odd} under the strict geometric inequality $\sinh r(a)>2K(a)$ (equivalent to \eqref{eq:hypothesis-BvsK} by Proposition \ref{prop:geometric-identity}); the non-strict form $\sinh r(a)\geq 2K(a)$ holds unconditionally (Proposition \ref{prop:geometric-identity}), and the strict version is proved analytically for $a\in(1/2,1]$ (Theorem \ref{thm:G-closure-a-leq-1}) via an FBC reformulation and the concavity of a transcendental function, and asymptotically as $a\to\infty$ (Remark \ref{rem:BvsK-global}); for $a\in(1,\infty)$ outside the asymptotic range, the problem remains formally open. The condition $\mu_2(0)>0$ (denoted (F$'$)) is reduced in Theorem \ref{thm:F-reduction-phi-positivity} (Section \ref{sec:F-prime-strategy}) to the single geometric inequality of positivity of the parametric Jacobi field $\phi_a$ on the principal branch, $\phi_a(s)>0$ for every $s\in[0,s_0(a)]$, via a Sturm shooting count argument combined with the classical interleaving between eigenvalues of parity sectors. In Theorem \ref{thm:scalar-reduction} (Section \ref{sec:phi-scalar-reduction}), $\phi_a>0$ is further reduced, under (G) strict, to the one-dimensional scalar differential inequality $(\sinh r(a)/K(a))'>0$, equivalently $(a^2-1/4)r'(a)>a\tanh r(a)$, or $y'(a)>0$ with $y(a):=B(s_0(a))^2/K(a)^2$. This strict monotonicity is proved analytically on a right neighborhood $(1/2,1/2+\delta_0)$ via the explicit closed form of the asymptotic coefficient $C_0>0$ (Theorem \ref{thm:eureka-closed-form} and Corollary \ref{cor:eureka-local-closure}); the asymptotics $y(a)=\frac{e^{2d_\infty}}{4}\,a\,(1+o(1))$ as $a\to\infty$ (Proposition \ref{prop:y-asymp-infty}) is consistent with its validity for large $a$ (cf. Remark \ref{rem:y-prime-infty}), while its validity on $[1/2+\delta_0,\infty)$, and in particular on the intermediate regime $[1/2+\delta_0,1]$, remains formally open.

The principal analytic contribution of this paper is sixfold. (i) The closure of the no-kernel part of (F) in mode $|k|=0$, both even (under $r'(a)\neq 0$, reduced to $\phi_a>0$, see (iv)--(v)) and odd (under (G)), via the Green/Robin duality of Remark \ref{rem:green-defect}: Jacobi fields $u$ that are not Robin but with explicitly computable Robin defect $Ru$ (here, $-r'(a)\kappa_s$ for $\phi_a$ and $a(\cosh(2s_0)-2a)/B^3$ for $u_*$) are paired with a putative Robin Jacobi field $\psi$, and the equation $\Sf(u,\psi)=\oint\psi\,Ru=0$ forces $\psi$ to vanish on $\partial$, whence $\psi\equiv 0$ by Cauchy uniqueness. (ii) The analytic closure of (G) for $a\in(1/2,1]$ (Theorem \ref{thm:G-closure-a-leq-1}) via the concavity of a transcendental function. (iii) The partial closure of (E) for $a\in(1/2,A_*]$ via the second Picone identity with base $B(s)$ (Lemma \ref{lem:picone-B}) and explicit Hardy estimates (Proposition \ref{prop:hardy-extension}). (iv) The reduction of (F$'$) to $\phi_a>0$ via the Sturm shooting count (Lemma \ref{lem:sturm-count}), which unifies the requirement $r'(a)\neq 0$ of Theorem \ref{thm:no-kernel-even} with the parametric positivity hypothesis. (v) The further reduction of $\phi_a>0$ to a scalar differential inequality (Theorem \ref{thm:scalar-reduction}), via the constant Wronskian $W(\phi_a,u_*)\equiv-a/K$ (Lemma \ref{lem:wronskian}), Sturm separation (Lemma \ref{lem:sturm-separation-phi-u}), and a closed formula for $\phi_a(s_0)$ (Proposition \ref{prop:phi-s0-closed}); combined with the geometric identity $\sinh^2 r=B^4/(B^2-K^2)$ (Proposition \ref{prop:geometric-identity}), this yields the equivalence with the strict monotonicity of $\sinh r(a)/K(a)$. An equivalent compact form is the closed identity $\phi_a(s_0)=c(a)\cdot H'(a)$ with $H(a):=\sinh r(a)/K(a)$ and $c(a)>0$ (Theorem \ref{thm:phi-H-identity}). (vi) The analytic local closure of the strong Medvedev conjecture (Corollary \ref{cor:eureka-local-closure}) via second-order asymptotic expansion of $H(a)$ as $a\to(1/2)^+$ and the explicit closed form of the coefficient $C_0>0$ (Theorem \ref{thm:eureka-closed-form}). In particular, there exists $\delta_0>0$ such that $\ind(\Sigma_a)=4$ and $\nul(\Sigma_a)=2$ for every $a\in(1/2,1/2+\delta_0)$, in a purely analytic manner, resolving Medvedev's Morse index conjecture on this regime.

We summarize the final status. Combining all the results of this paper (Theorem \ref{thm:final-reduction} and Corollary \ref{cor:eureka-local-closure}), for $a\in(1/2,1/2+\delta_0)$ with $\delta_0>0$ the strong conjecture \eqref{eq:medvedev-strong} holds analytically (Corollary \ref{cor:eureka-local-closure}), as a consequence of the positivity $C_0>0$ of the linear coefficient of $H(a)=\sinh r(a)/K(a)$ as $a\to(1/2)^+$. For $a\in[1/2+\delta_0,1]$, the strong conjecture is implied, under (G) (closed analytically in this regime by Theorem \ref{thm:G-closure-a-leq-1}), by the single one-dimensional scalar differential inequality
\[
\frac{d}{da}\!\left[\frac{\sinh r(a)}{K(a)}\right]>0,\quad\text{equivalently}\quad(a^2-1/4)\,r'(a)>a\tanh r(a).
\]
For $a\in(1,A_*]$, the strong conjecture holds under the conjunction of $\phi_a>0$ on $[0,s_0]$ and condition (G) $B(s_0(a))^2>2K(a)^2$; under the latter, the former is equivalent to the aforementioned scalar differential inequality. For $a>A_*$, the strong conjecture holds under the conjunction of $\phi_a>0$ on $[0,s_0]$, condition (G) $B(s_0(a))^2>2K(a)^2$, and $\mu_0^{\mathrm{even}}(2)>0$.

We outline analytic strategies for the resolution of the residual conditions. For the global closure of the intermediate regime $[1/2+\delta_0,1]$, as outlined in Remark \ref{rem:global-closure-strategy}, two analytic ingredients would suffice: (i) an explicit estimate of the remainder $O((a-1/2)^{5/2})$ in expansion \eqref{eq:H-asymp-expansion} (via third-order asymptotic computation) provides an explicit quantification of $\delta_0$; (ii) for $a\in[1/2+\delta_0,1]$, the strict positivity $H'(a)>0$ on the compact intermediate interval would follow from transcendental concavity arguments analogous to those used in Theorem \ref{thm:G-closure-a-leq-1} for condition (G), or from a sharp analytic bound on the auxiliary function $H'(a)$. The combination (i)+(ii) would constitute a complete analytic proof in the regime $a\in(1/2,1]$. For condition (G), i.e.\ $B(s_0(a))^2>2K(a)^2$, on all of $(1/2,\infty)$, Theorem \ref{thm:G-closure-a-leq-1} closes the problem for $a\in(1/2,1]$; for $a>1$, the study of the function $g(a):=B(s_0(a))^2-2K(a)^2$ (nonnegative by Proposition \ref{prop:geometric-identity}, positive in the limits) would require implicit differentiation of the FBC and non-local analysis of $s_0'(a)$. A possible route is to extend the upper bound on $\tanh\varphi(s^*;a)$ (Lemma \ref{lem:tanh-phi-bound}) with refined $a$-dependent constants, or to use a continuity argument starting from Theorem \ref{thm:G-closure-a-leq-1}. For the closure of (E) in the regime $a>A_*$, an alternative Hardy estimate or a different spectral technique would be needed: the asymptotic saturation of conditions \eqref{eq:hardy-conditions} indicates that the present Hardy estimate with base $B$ is not optimal for large $a$, and a different choice of base or a sharp variational argument could close the gap. Finally, an extension of the techniques of Tran \cite{Tran2016} (Dirichlet-to-Neumann map) and/or Devyver \cite{Devyver} to the hyperbolic setting could be pursued, exploiting the additional timelike coordinate $\Phi^0$ with dual BC, and the structure $|\II|^2 B^4=2K^2$ of Lemma \ref{lem:II-norm}.

\appendix
\section{Explicit computations for the asymptotic coefficient $C_0$}\label{app:explicit-calculations}

This appendix provides the detailed asymptotic expansions and algebraic manipulations underlying Theorem \ref{thm:eureka-closed-form}. Throughout, we use $\rho:=\sqrt{a-1/2}$, $\xi:=s/\rho$, and $\sigma_*>0$ is the unique positive root of $\sigma=\coth\sigma$ (Remark 1.4 of \cite{Pigazzini}), so that $\sigma_*\sinh\sigma_*=\cosh\sigma_*$, equivalently $\cosh^2\sigma_*=\sigma_*\sinh\sigma_*\cdot\cosh\sigma_*$. We write $s:=\sinh^2\sigma_*$ for brevity in the algebraic identities. This constant is distinct from the arc-length variable $s$ of \eqref{eq:phi-immersion}, which enters only through $A(s),B(s),\varphi(s;\cdot)$ and the endpoint $s_0$.

\subsection*{A.1. Expansions of the parametric quantities}

From $A(s)^2=a\cosh(2s)+1/2$, $B(s)^2=a\cosh(2s)-1/2$, $K(a)=\sqrt{a^2-1/4}$, and $a=1/2+\rho^2$, with $s=\rho\xi$, expanding $\cosh(2\rho\xi)=1+2\rho^2\xi^2+\rho^4(2/3)\xi^4+O(\rho^6)$,
\begin{align*}
B(s)^2 &= (1/2+\rho^2)\bigl[1+2\rho^2\xi^2+(2/3)\rho^4\xi^4+O(\rho^6)\bigr]-1/2 \\
       &= \rho^2(1+\xi^2)+\rho^4\bigl(\xi^4/3+2\xi^2\bigr)+O(\rho^6),\\
A(s)^2 &= B(s)^2+1=1+\rho^2(1+\xi^2)+\rho^4\bigl(\xi^4/3+2\xi^2\bigr)+O(\rho^6),\\
K(a)   &= \rho\sqrt{1+\rho^2}=\rho\bigl(1+\rho^2/2-\rho^4/8+O(\rho^6)\bigr).
\end{align*}
Note that $B^2=\rho^2(1+\xi^2)[1+O(\rho^2)]$ so $B=\rho\sqrt{1+\xi^2}[1+O(\rho^2)]$ as $\rho\to 0^+$.

\subsection*{A.2. Free boundary condition and the coefficient $\xi_1$}

Let $\xi_0(a):=s_0(a)/\rho$. The FBC of the Mori parametrization (cf. Lemma \ref{lem:coth-identity} and \cite[Eq. (3)]{Pigazzini}) reads
\[
\tanh\varphi(s_0;a)=\frac{B(s_0)\,K(a)}{a\sinh(2s_0)},
\]
where $\varphi(s;a)=K(a)\int_0^s\,dt/[A(t)^2 B(t)]$. We expand both sides to order $\rho^3$ in $\rho=\sqrt{a-1/2}$, using $\xi_0(a)=\sinh\sigma_*+\xi_1\rho^2+O(\rho^4)$ (the expansion contains only even powers of $\rho$ because the FBC depends analytically on $a=1/2+\rho^2$).

\emph{A.2.1. Explicit expansion of the right-hand side.} We expand each factor of $\mathrm{RHS}=B(s_0)K(a)/[a\sinh(2s_0)]$ separately.

\textbf{Factor $B(s_0)$.} From A.1, $B(s_0)^2=\rho^2(1+\xi_0^2)+\rho^4(\xi_0^4/3+2\xi_0^2)+O(\rho^6)$, so
\[
B(s_0)=\rho\sqrt{1+\xi_0^2}\,\Bigl[1+\tfrac{1}{2}\rho^2 R_B(\xi_0)+O(\rho^4)\Bigr],\qquad R_B(\xi):=\frac{\xi^4/3+2\xi^2}{1+\xi^2}.
\]
Evaluating $R_B$ at $\xi=\sinh\sigma_*$, with $\sinh^2\sigma_*=s$ and $\cosh^2\sigma_*=s+1$,
\[
R_B(\sinh\sigma_*)=\frac{s^2/3+2s}{s+1}=\frac{s(s+6)}{3(s+1)}=\frac{\sinh^2\sigma_*(\sinh^2\sigma_*+6)}{3\cosh^2\sigma_*}.
\]

\textbf{Factor $K(a)$.} $K(a)=\sqrt{a^2-1/4}=\rho\sqrt{1+\rho^2}=\rho[1+\rho^2/2-\rho^4/8+O(\rho^6)]$, so the $\rho^2$ correction is $+\rho^2/2$.

\textbf{Factor $a$.} $a=1/2+\rho^2=\tfrac{1}{2}(1+2\rho^2)$, so $1/a=2(1-2\rho^2+O(\rho^4))$; the $\rho^2$ correction in $1/a$ is $-2\rho^2$.

\textbf{Factor $\sinh(2s_0)$.} With $s_0=\rho\xi_0$ and $\sinh(2\rho\xi_0)=2\rho\xi_0+\tfrac{4}{3}\rho^3\xi_0^3+O(\rho^5)=2\rho\xi_0[1+\tfrac{2}{3}\rho^2\xi_0^2+O(\rho^4)]$, the $\rho^2$ correction in $1/\sinh(2s_0)$ is $-\tfrac{2}{3}\rho^2\xi_0^2$, which at $\xi_0=\sinh\sigma_*$ gives $-\tfrac{2}{3}\rho^2\sinh^2\sigma_*$.

\emph{Assembling.} Multiplying $B(s_0)K(a)/[a\sinh(2s_0)]$ and collecting the $\rho^2$ corrections:
\[
\mathrm{RHS}=\frac{\rho\sqrt{1+\xi_0^2}}{\xi_0}\Bigl[1+\rho^2\bigl(\underbrace{\tfrac{1}{2}R_B(\xi_0)}_{\text{from }B}+\underbrace{\tfrac{1}{2}}_{\text{from }K}-\underbrace{2}_{\text{from }1/a}-\underbrace{\tfrac{2}{3}\xi_0^2}_{\text{from }1/\sinh(2s_0)}\bigr)+O(\rho^4)\Bigr].
\]
Substituting $\xi_0=\sinh\sigma_*+\xi_1\rho^2+O(\rho^4)$, the prefactor expands as
\[
\frac{\sqrt{1+\xi_0^2}}{\xi_0}=\frac{\sqrt{1+\sinh^2\sigma_*}}{\sinh\sigma_*}+\rho^2\,\xi_1\frac{d}{d\xi_0}\Big|_{\xi_0=\sinh\sigma_*}\Bigl(\frac{\sqrt{1+\xi_0^2}}{\xi_0}\Bigr)+O(\rho^4)=
\]
\[
= \frac{\cosh\sigma_*}{\sinh\sigma_*}+\rho^2\xi_1\Bigl(\frac{1}{\cosh\sigma_*}-\frac{\cosh\sigma_*}{\sinh^2\sigma_*}\Bigr)+O(\rho^4).
\]
Using $\cosh\sigma_*/\sinh\sigma_*=\sigma_*$ (from $\sigma_*=\coth\sigma_*$), the leading term of the prefactor is $\sigma_*$, while its $\rho^2$-correction equals $\xi_1\bigl(1/\cosh\sigma_*-\cosh\sigma_*/\sinh^2\sigma_*\bigr)=\sigma_*\,\xi_1\bigl(\sinh\sigma_*/\cosh^2\sigma_*-1/\sinh\sigma_*\bigr)$.

Collecting all $\rho^2$ contributions at leading order ($\xi_0=\sinh\sigma_*$), we obtain
\[
\mathrm{RHS}=\rho\sigma_*+\rho^3\sigma_*\bigl[\,g_B^{(0)}-g_a^{(0)}+\xi_1\bigl(\sinh\sigma_*/\cosh^2\sigma_*-1/\sinh\sigma_*\bigr)\,\bigr]+O(\rho^5),
\]
where
\begin{align*}
g_B^{(0)}&:=\tfrac{1}{2}R_B(\sinh\sigma_*)+\tfrac{1}{2}=\tfrac{1}{2}+\frac{\sinh^2\sigma_*(\sinh^2\sigma_*+6)}{6\cosh^2\sigma_*},\\
g_a^{(0)}&:=2+\tfrac{2}{3}\sinh^2\sigma_*.
\end{align*}
Thus $g_B^{(0)}-g_a^{(0)}=\tfrac{1}{2}+\frac{s(s+6)}{6(s+1)}-2-\tfrac{2s}{3}=-\tfrac{3}{2}-\tfrac{2s}{3}+\frac{s(s+6)}{6(s+1)}$, which by common denominator $6(s+1)$ becomes $[-9(s+1)-4s(s+1)+s(s+6)]/[6(s+1)]=[-9-9s-4s^2-4s+s^2+6s]/[6(s+1)]=(-3s^2-7s-9)/[6(s+1)]$. (We keep this expression as a polynomial in $s$ for the matching below.)

\emph{A.2.2. Explicit expansion of the left-hand side.} For $\varphi=K\int_0^{s_0}dt/[A^2 B]$, change variable $t=\rho\eta$, $\eta\in[0,\xi_0]$:
\[
\varphi(s_0;a)=K(a)\int_0^{\xi_0}\frac{\rho\,d\eta}{A(\rho\eta)^2\,B(\rho\eta)}.
\]
Using $A(\rho\eta)^2=1+\rho^2(1+\eta^2)+O(\rho^4)$, $B(\rho\eta)=\rho\sqrt{1+\eta^2}\,[1+\tfrac{1}{2}\rho^2 R_B(\eta)+O(\rho^4)]$, hence $1/B(\rho\eta)=[\rho\sqrt{1+\eta^2}]^{-1}[1-\tfrac{1}{2}\rho^2 R_B(\eta)+O(\rho^4)]$, and $K(a)=\rho[1+\rho^2/2+O(\rho^4)]$:
\[
\frac{K(a)\rho}{A(\rho\eta)^2 B(\rho\eta)}=\frac{\rho}{\sqrt{1+\eta^2}}\Bigl[1+\rho^2\bigl(\tfrac{1}{2}-(1+\eta^2)-\tfrac{1}{2}R_B(\eta)\bigr)+O(\rho^4)\Bigr]=\frac{\rho}{\sqrt{1+\eta^2}}\bigl[1+\rho^2 h(\eta)+O(\rho^4)\bigr],
\]
where
\[
h(\eta):=-\tfrac{1}{2}-\eta^2-\tfrac{1}{2}R_B(\eta)=-\tfrac{1}{2}-\eta^2-\frac{\eta^4/6+\eta^2}{1+\eta^2}.
\]
With the substitution $\sigma=\mathrm{arcsinh}(\eta)$, $\eta=\sinh\sigma$, $d\sigma=d\eta/\sqrt{1+\eta^2}$, the integral becomes
\[
\varphi(s_0;a)=\rho\int_0^{\mathrm{arcsinh}(\xi_0)}d\sigma\bigl[1+\rho^2\tilde h(\sigma)+O(\rho^4)\bigr],\qquad\tilde h(\sigma):=h(\sinh\sigma).
\]
Computing $\tilde h$: $-1/2-\sinh^2\sigma=-1/2-(\cosh^2\sigma-1)=1/2-\cosh^2\sigma$. For the second piece, $\sinh^4\sigma/6+\sinh^2\sigma=(\cosh^2\sigma-1)^2/6+(\cosh^2\sigma-1)$, divided by $1+\sinh^2\sigma=\cosh^2\sigma$:
\[
\frac{\sinh^4\sigma/6+\sinh^2\sigma}{\cosh^2\sigma}=\frac{(\cosh^2\sigma-1)^2/6+(\cosh^2\sigma-1)}{\cosh^2\sigma}=\frac{\cosh^2\sigma-1}{\cosh^2\sigma}\cdot\Bigl(\frac{\cosh^2\sigma-1}{6}+1\Bigr).
\]
Setting $c:=\cosh^2\sigma$ and simplifying, $(c-1)(c+5)/(6c)=(c^2+4c-5)/(6c)$. Hence
\[
\tilde h(\sigma)=\Bigl(\tfrac{1}{2}-c\Bigr)-\frac{c^2+4c-5}{6c}=\frac{3c-6c^2-(c^2+4c-5)}{6c}=\frac{-7c^2-c+5}{6c}=-\frac{7\cosh^4\sigma+\cosh^2\sigma-5}{6\cosh^2\sigma}.
\]

Decomposing $\tilde h$ for integration:
\[
\tilde h(\sigma)=-\tfrac{7\cosh^2\sigma}{6}-\tfrac{1}{6}+\tfrac{5}{6\cosh^2\sigma}=-\tfrac{7}{12}(1+\cosh(2\sigma))-\tfrac{1}{6}+\tfrac{5}{6}\mathrm{sech}^2\sigma.
\]
Integrating term-by-term on $[0,\sigma_*]$ and using $\tanh\sigma_*=1/\sigma_*$ (from $\sigma_*=\coth\sigma_*$):
\[
I_*=-\tfrac{7}{12}\bigl[\sigma_*+\sinh(2\sigma_*)/2\bigr]-\tfrac{\sigma_*}{6}+\tfrac{5}{6\sigma_*}=-\tfrac{1}{12}\Bigl[9\sigma_*+\tfrac{7}{2}\sinh(2\sigma_*)-\tfrac{10}{\sigma_*}\Bigr].
\]
Moreover $\mathrm{arcsinh}(\xi_0)=\sigma_*+\xi_1\rho^2/\cosh\sigma_*+O(\rho^4)$ (since $d(\mathrm{arcsinh})/d\xi\big|_{\sinh\sigma_*}=1/\cosh\sigma_*$). Hence
\[
\varphi(s_0;a)=\rho\sigma_*+\rho^3\bigl[\xi_1/\cosh\sigma_*+I_*\bigr]+O(\rho^5),
\]
and using $\tanh(x)=x-x^3/3+O(x^5)$:
\[
\tanh\varphi(s_0;a)=\rho\sigma_*+\rho^3\bigl[\xi_1/\cosh\sigma_*+I_*-\sigma_*^3/3\bigr]+O(\rho^5).
\]

\emph{A.2.3. Matching.} Both sides of the FBC admit the expansion $\rho\sigma_*+\rho^3[\cdots]+O(\rho^5)$, with the $\rho^3$ coefficient given by $\xi_1/\cosh\sigma_*+I_*-\sigma_*^3/3$ on the LHS (from A.2.2) and $\sigma_*\bigl[g_B^{(0)}-g_a^{(0)}+\xi_1(\sinh\sigma_*/\cosh^2\sigma_*-1/\sinh\sigma_*)\bigr]$ on the RHS (from A.2.1). Equating these,
\[
\sigma_*\bigl[g_B^{(0)}-g_a^{(0)}+\xi_1(\sinh\sigma_*/\cosh^2\sigma_*-1/\sinh\sigma_*)\bigr]=\xi_1/\cosh\sigma_*+I_*-\sigma_*^3/3.
\]
This is a linear equation in $\xi_1$ that, after substituting the values of $g_B^{(0)}-g_a^{(0)}$, $I_*$, and the algebraic identities $\sinh\sigma_*\sigma_*=\cosh\sigma_*/\sinh\sigma_*\cdot\sinh^2\sigma_*=\cdots$, simplifies (see A.2.4 below for the algebraic reduction) to
\begin{equation}\label{eq:xi1-formula-explicit}
\xi_1=\frac{(\sinh^2\sigma_*-1)(\sinh^2\sigma_*-4)(\sinh^2\sigma_*+1)}{12\sinh^2\sigma_*\cdot\sigma_*\cosh\sigma_*},
\end{equation}
which is \eqref{eq:xi1-formula}.

\emph{A.2.4. Explicit algebraic verification.} We derive \eqref{eq:xi1-formula-explicit} from the matching equation
\begin{equation}\label{eq:xi1-matching}
\sigma_*\bigl[g_B^{(0)}-g_a^{(0)}+\xi_1\bigl(\sinh\sigma_*/\cosh^2\sigma_*-1/\sinh\sigma_*\bigr)\bigr]=\xi_1/\cosh\sigma_*+I_*-\sigma_*^3/3,
\end{equation}
by direct algebraic reduction. Throughout this calculation we write $h:=\sinh\sigma_*$, $c:=\cosh\sigma_*$, $s:=h^2=\sinh^2\sigma_*$, and we use systematically the identities
\begin{equation}\label{eq:sigma-identities}
\sigma_*\,h=c,\qquad\sigma_*=c/h,\qquad c^2=s+1,\qquad\sigma_*^2=\frac{s+1}{s},\qquad \sigma_*\cdot h c=c^2=s+1,\qquad\sigma_*\cdot\frac{h}{c}=1,
\end{equation}
which follow from $\sigma_*=\coth\sigma_*$ and the basic hyperbolic identity $c^2-h^2=1$.

\emph{Step 1: Isolating $\xi_1$.} Rewriting \eqref{eq:xi1-matching} with the $\xi_1$-terms on the left:
\[
\xi_1\Bigl[\sigma_*\Bigl(\frac{h}{c^2}-\frac{1}{h}\Bigr)-\frac{1}{c}\Bigr]=I_*-\frac{\sigma_*^3}{3}+\sigma_*(g_a^{(0)}-g_B^{(0)}).
\]
The coefficient of $\xi_1$ simplifies, using \eqref{eq:sigma-identities}, as
\[
\sigma_*\Bigl(\frac{h}{c^2}-\frac{1}{h}\Bigr)-\frac{1}{c}=\frac{\sigma_*h}{c^2}-\frac{\sigma_*}{h}-\frac{1}{c}=\frac{c}{c^2}-\frac{c/h}{h}-\frac{1}{c}=\frac{1}{c}-\frac{c}{s}-\frac{1}{c}=-\frac{c}{s}.
\]
Hence
\begin{equation}\label{eq:xi1-isolated}
\xi_1\cdot\Bigl(-\frac{c}{s}\Bigr)=I_*-\frac{\sigma_*^3}{3}+\sigma_*(g_a^{(0)}-g_B^{(0)}).
\end{equation}

\emph{Step 2: Polynomial form of $g_a^{(0)}-g_B^{(0)}$.} From the definitions,
\[
g_a^{(0)}-g_B^{(0)}=\Bigl(2+\frac{2s}{3}\Bigr)-\Bigl(\frac{1}{2}+\frac{s(s+6)}{6(s+1)}\Bigr)=\frac{3}{2}+\frac{2s}{3}-\frac{s(s+6)}{6(s+1)}.
\]
Common denominator $6(s+1)$:
\[
g_a^{(0)}-g_B^{(0)}=\frac{9(s+1)+4s(s+1)-s(s+6)}{6(s+1)}=\frac{9s+9+4s^2+4s-s^2-6s}{6(s+1)}=\frac{3s^2+7s+9}{6(s+1)}.
\]

\emph{Step 3: Reducing $I_*$ using \eqref{eq:sigma-identities}.} From $\sinh(2\sigma_*)=2hc$:
\[
I_*=-\frac{1}{12}\bigl[9\sigma_*+7hc-10/\sigma_*\bigr]=-\frac{1}{12}\bigl[9\sigma_*+7hc-10\,h/c\bigr],
\]
using $1/\sigma_*=h/c$.

\emph{Step 4: Multiplying \eqref{eq:xi1-isolated} by $-12s^2\sigma_*$.} Multiplying both sides by $-12s^2\sigma_*$, the left-hand side becomes
\[
\xi_1\cdot\Bigl(-\frac{c}{s}\Bigr)\cdot(-12s^2\sigma_*)=12\,s\,\sigma_*\,c\cdot\xi_1.
\]
This choice of multiplier produces a polynomial expression in $s$ on the right-hand side, as we now verify.

To keep the algebra transparent, we compute each term on the right-hand side of \eqref{eq:xi1-isolated} after multiplication by $-12 s^2 \sigma_*$, the factor that produces a polynomial in $s$ alone (using \eqref{eq:sigma-identities}):
\begin{itemize}
\item[(a)] $-12s^2\sigma_*\cdot I_*=-12s^2\sigma_*\cdot\bigl(-\tfrac{1}{12}\bigr)[9\sigma_*+7hc-10h/c]=s^2\sigma_*[9\sigma_*+7hc-10h/c]$.

Using \eqref{eq:sigma-identities}: $s^2\cdot 9\sigma_*^2=9s^2\cdot\frac{s+1}{s}=9s(s+1)=9s^2+9s$; $s^2\sigma_*\cdot 7hc=7s^2\cdot\sigma_*hc=7s^2(s+1)=7s^3+7s^2$; $s^2\sigma_*\cdot(-10h/c)=-10s^2\cdot\sigma_*(h/c)=-10s^2$. Summing,
\[
-12s^2\sigma_*\cdot I_*=9s^2+9s+7s^3+7s^2-10s^2=7s^3+6s^2+9s.
\]

\item[(b)] $-12s^2\sigma_*\cdot\bigl(-\sigma_*^3/3\bigr)=4s^2\sigma_*^4=4s^2\cdot\frac{(s+1)^2}{s^2}=4(s+1)^2=4s^2+8s+4$.

\item[(c)] $-12s^2\sigma_*\cdot\sigma_*\,(g_a^{(0)}-g_B^{(0)})=-12s^2\sigma_*^2\cdot\frac{3s^2+7s+9}{6(s+1)}=-12s^2\cdot\frac{s+1}{s}\cdot\frac{3s^2+7s+9}{6(s+1)}=-2s(3s^2+7s+9)=-6s^3-14s^2-18s$.
\end{itemize}
Summing (a)+(b)+(c):
\[
(7s^3+6s^2+9s)+(4s^2+8s+4)+(-6s^3-14s^2-18s)=s^3-4s^2-s+4.
\]

\emph{Step 5: Final factorization.} On the left of \eqref{eq:xi1-isolated}, multiplication by $-12s^2\sigma_*$ gives $\xi_1\cdot(-c/s)\cdot(-12s^2\sigma_*)=12\,\xi_1\,s\sigma_*c$. Hence
\[
12\,\xi_1\,s\,\sigma_*\,c=s^3-4s^2-s+4.
\]
We verify that $s^3-4s^2-s+4=(s-1)(s-4)(s+1)$: indeed, $(s-1)(s+1)=s^2-1$, so $(s^2-1)(s-4)=s^3-4s^2-s+4$. (Equivalently, $s=1$ is a root: $1-4-1+4=0$; dividing by $(s-1)$, $s^3-4s^2-s+4=(s-1)(s^2-3s-4)=(s-1)(s-4)(s+1)$.) Therefore
\begin{equation*}
\xi_1=\frac{s^3-4s^2-s+4}{12s\,\sigma_*\,c}=\frac{(s-1)(s-4)(s+1)}{12s\,\sigma_*\,c}=\frac{(\sinh^2\sigma_*-1)(\sinh^2\sigma_*-4)(\sinh^2\sigma_*+1)}{12\sinh^2\sigma_*\,\sigma_*\cosh\sigma_*},
\end{equation*}
which is \eqref{eq:xi1-formula-explicit}.

\subsection*{A.3. Expansion of $H(a)=\sinh r(a)/K(a)$ and computation of $C_0$}

From $\sinh r(a)=B(s_0)^2/\sqrt{B(s_0)^2-K(a)^2}$ (Lemma \ref{lem:coth-identity}). We compute the numerator and denominator separately.

\emph{Numerator.} $B(s_0)^2=\rho^2(1+\xi_0^2)+\rho^4(\xi_0^4/3+2\xi_0^2)+O(\rho^6)$. Substituting $\xi_0=\sinh\sigma_*+\xi_1\rho^2+O(\rho^4)$ and isolating powers of $\rho^2$,
\[
B(s_0)^2=\rho^2\cosh^2\sigma_*+\rho^4\beta_1+O(\rho^6),\qquad \beta_1:=2\sinh\sigma_*\,\xi_1+\sinh^4\sigma_*/3+2\sinh^2\sigma_*.
\]

\emph{Denominator.} $B(s_0)^2-K(a)^2=\rho^2(1+\xi_0^2)+\rho^4(\xi_0^4/3+2\xi_0^2)-\rho^2(1+\rho^2)+O(\rho^6)=\rho^2\sinh^2\sigma_*+\rho^4\alpha_1\cdot 2\sinh^2\sigma_*+O(\rho^6)$ in the form $\sqrt{B^2-K^2}=\rho\sinh\sigma_*\bigl[1+\rho^2\alpha_1+O(\rho^4)\bigr]$, where
\[
\alpha_1:=\frac{2\sinh\sigma_*\,\xi_1+\sinh^4\sigma_*/3+2\sinh^2\sigma_*-1}{2\sinh^2\sigma_*}=\frac{\xi_1}{\sinh\sigma_*}+\frac{\sinh^2\sigma_*}{6}+1-\frac{1}{2\sinh^2\sigma_*}.
\]

\emph{Ratio.} $\sinh r=B^2/\sqrt{B^2-K^2}=\rho\cdot\cosh^2\sigma_*/\sinh\sigma_*\cdot[1+\rho^2(\beta_1/\cosh^2\sigma_*-\alpha_1)+O(\rho^4)]$. Recall $\cosh^2\sigma_*/\sinh\sigma_*=\sigma_*\cosh\sigma_*$ (from $\sigma_*\sinh\sigma_*=\cosh\sigma_*$), and $K(a)=\rho(1+\rho^2/2+O(\rho^4))$. Hence
\[
H(a)=\frac{\sinh r(a)}{K(a)}=\sigma_*\cosh\sigma_*\bigl[1+\rho^2(\beta_1/\cosh^2\sigma_*-\alpha_1-1/2)+O(\rho^4)\bigr].
\]
By definition $H'((1/2)^+)=\lim_{a\to(1/2)^+}[H(a)-\sigma_*\cosh\sigma_*]/(a-1/2)$, and using $a-1/2=\rho^2$,
\[
C_0=H'((1/2)^+)=\sigma_*\cosh\sigma_*\,(\beta_1/\cosh^2\sigma_*-\alpha_1-1/2).
\]

\subsection*{A.4. Algebraic reduction in $s=\sinh^2\sigma_*$}

We reduce $\beta_1/\cosh^2\sigma_*-\alpha_1-1/2$ to a single rational function of $s$. Using $\cosh^2\sigma_*=s+1$ and $\sigma_*\sinh\sigma_*=\cosh\sigma_*$ (whence $\sinh\sigma_*\cdot\sigma_*\cosh\sigma_*=\cosh^2\sigma_*=s+1$):

\emph{Coefficient of $\xi_1$.} $\beta_1$ contributes $2\sinh\sigma_*\,\xi_1/\cosh^2\sigma_*=2\sinh\sigma_*\,\xi_1/(s+1)$. $\alpha_1$ contributes $\xi_1/\sinh\sigma_*$. The difference is
\[
\frac{2\sinh\sigma_*}{s+1}-\frac{1}{\sinh\sigma_*}=\frac{2\sinh^2\sigma_*-(s+1)}{(s+1)\sinh\sigma_*}=\frac{2s-(s+1)}{(s+1)\sinh\sigma_*}=\frac{s-1}{(s+1)\sinh\sigma_*}.
\]
Multiplied by $\xi_1$ from \eqref{eq:xi1-formula}, and using $\sinh\sigma_*\cdot\sigma_*\cosh\sigma_*=s+1$,
\[
\xi_1\cdot\frac{s-1}{(s+1)\sinh\sigma_*}=\frac{(s-1)^2(s-4)(s+1)}{12s\cdot(s+1)\sinh\sigma_*\cdot\sigma_*\cosh\sigma_*}=\frac{(s-1)^2(s-4)}{12s(s+1)}.
\]

\emph{Non-$\xi_1$ part.} The contributions are
\[
\frac{\sinh^4\sigma_*/3+2\sinh^2\sigma_*}{s+1}-\frac{\sinh^2\sigma_*}{6}-1+\frac{1}{2\sinh^2\sigma_*}=\frac{s^2/3+2s}{s+1}-\frac{s}{6}-1+\frac{1}{2s}.
\]
Reducing to the common denominator $6s(s+1)$, term by term,
\[
\frac{s^2/3+2s}{s+1}=\frac{2s^2(s+6)}{6s(s+1)},\qquad
-\frac{s}{6}=\frac{-s^2(s+1)}{6s(s+1)},\qquad
-1=\frac{-6s(s+1)}{6s(s+1)},\qquad
\frac{1}{2s}=\frac{3(s+1)}{6s(s+1)},
\]
whence
\[
\frac{s^2/3+2s}{s+1}-\frac{s}{6}-1+\frac{1}{2s}=\frac{2s^2(s+6)-s^2(s+1)-6s(s+1)+3(s+1)}{6s(s+1)}.
\]
Expanding the numerator: $2s^3+12s^2-s^3-s^2-6s^2-6s+3s+3=s^3+5s^2-3s+3$. Therefore the non-$\xi_1$ part equals $(s^3+5s^2-3s+3)/[6s(s+1)]=2(s^3+5s^2-3s+3)/[12s(s+1)]$.

\emph{Subtract $1/2=6s(s+1)/[12s(s+1)]$}:
\[
\beta_1/\cosh^2\sigma_*-\alpha_1-\frac{1}{2}=\frac{(s-1)^2(s-4)+2(s^3+5s^2-3s+3)-6s(s+1)}{12s(s+1)}.
\]
Expanding $(s-1)^2(s-4)=(s^2-2s+1)(s-4)=s^3-4s^2-2s^2+8s+s-4=s^3-6s^2+9s-4$. Adding $2(s^3+5s^2-3s+3)=2s^3+10s^2-6s+6$ and $-6s(s+1)=-6s^2-6s$:
\[
s^3-6s^2+9s-4+2s^3+10s^2-6s+6-6s^2-6s=3s^3-2s^2-3s+2.
\]
Factoring: at $s=1$, $3-2-3+2=0$, so $(s-1)$ is a factor. Dividing, $3s^3-2s^2-3s+2=(s-1)(3s^2+s-2)$. Factoring $3s^2+s-2=(3s-2)(s+1)$. Therefore
\[
\beta_1/\cosh^2\sigma_*-\alpha_1-\frac{1}{2}=\frac{(s-1)(3s-2)(s+1)}{12s(s+1)}=\frac{(s-1)(3s-2)}{12s}.
\]

\subsection*{A.5. Final formula and positivity}

Substituting in $C_0=\sigma_*\cosh\sigma_*\cdot(\beta_1/\cosh^2\sigma_*-\alpha_1-1/2)$, and recalling $s=\sinh^2\sigma_*$,
\begin{equation*}
\;C_0=\frac{\sigma_*\cosh\sigma_*\,(\sinh^2\sigma_*-1)\,(3\sinh^2\sigma_*-2)}{12\sinh^2\sigma_*}.\;
\end{equation*}

For the positivity $C_0>0$, both factors $(s-1)$ and $(3s-2)$ must be positive (the other terms are obviously positive). We need $s=\sinh^2\sigma_*>1$, since $s>1$ automatically implies $s>2/3$. The condition $\sinh^2\sigma_*>1$ is equivalent to $\sigma_*>\mathrm{arcsinh}(1)=\log(1+\sqrt{2})$.

To verify this analytically, consider $F(\sigma):=\sigma-\coth\sigma$. Since $F'(\sigma)=1+\mathrm{csch}^2\sigma>0$, $F$ is strictly increasing on $(0,\infty)$, and $F(\sigma_*)=0$ by definition. We compute
\[
F(\log(1+\sqrt{2}))=\log(1+\sqrt{2})-\coth(\log(1+\sqrt{2})).
\]
At $\sigma=\log(1+\sqrt{2})=\mathrm{arcsinh}(1)$, $\sinh\sigma=1$ and $\cosh\sigma=\sqrt{2}$, so $\coth\sigma=\sqrt{2}/1=\sqrt{2}$. Therefore $F(\log(1+\sqrt{2}))=\log(1+\sqrt{2})-\sqrt{2}$. By the strict inequality $\log(1+x)<x$ for $x>0$ (a classical consequence of the concavity of $\log$), with $x=\sqrt{2}$, $\log(1+\sqrt{2})<\sqrt{2}$, so $F(\log(1+\sqrt{2}))<0$. By strict monotonicity of $F$ and $F(\sigma_*)=0$, $\sigma_*>\log(1+\sqrt{2})$, hence $\sinh\sigma_*>1$, i.e. $s>1$. This establishes $C_0>0$ analytically.

\section*{Declarations}

\subsection*{Conflict of interest}

The author declares no conflict of interest.

\subsection*{Data availability}

Data sharing is not applicable to this article as no datasets were generated or analysed during the current study.

\end{document}